\DeclareFontFamily{U}{mathc}{}
\DeclareFontShape{U}{mathc}{m}{it}%
{<->s*[1.03] mathc10}{}
\DeclareMathAlphabet{\Cal}{U}{mathc}{m}{it}
\renewcommand{\emph}[1]{\textcolor{blue}{\textit{#1}}}
\crefname{thm}{Theorem}{Theorems}
\crefname{dfn}{Definition}{Definitions}
\crefname{dfnprop}{Definition-Proposition}{Definition-Proposition}
\crefname{prop}{Proposition}{Propositions}
\crefname{lem}{Lemma}{Lemmas}
\crefname{cor}{Corollary}{Corollaries}
\crefname{clm}{Claim}{Claims}
\crefname{ass}{Assumption}{Assumption}
\crefname{cond}{Condition}{Condition}
\crefname{fct}{Fact}{Facts}
\crefname{rmk}{Remark}{Remarks}
\crefname{eg}{Example}{Examples}
\crefname{figure}{Figure}{Figures}
\crefname{table}{Table}{Tables}
\crefname{section}{Section}{Sections}
\crefname{subsection}{Subsection}{Subsections}
\crefname{appendix}{Appendix}{Appendices}
\crefname{equation}{}{}
\crefname{notation}{Notation}{Notation}
\crefname{main}{Theorem}{Theorems}
\theoremstyle{definition}
\newtheorem{thm}{Theorem}[section]
\newtheorem{main}{Theorem}
\newtheorem{dfn}[thm]{Definition}
\newtheorem{prop}[thm]{Proposition}
\newtheorem{lem}[thm]{Lemma}
\newtheorem{cor}[thm]{Corollary}
\newtheorem{rmk}[thm]{Remark}
\newtheorem{eg}[thm]{Example}
\newtheorem*{rmk*}{Remark}
\newtheorem{cond*}{Condition}
\numberwithin{equation}{section}
\let\c@equation\c@thm 
\newcommand{\A}{{\Cal A}}
\newcommand{\B}{{\Cal B}}
\newcommand{\C}{{\mathsf{C}}}
\newcommand{\D}{{\mathsf{D}}}
\newcommand{\F}{{\Cal F}}
\renewcommand{\H}{{\mathsf{H}}}
\newcommand{\K}{\mathsf{K}}
\renewcommand{\k}{\mathbf{k}}
\newcommand{\Q}{{\Cal Q}}
\newcommand{\T}{{\mathsf{T}}}
\newcommand{\V}{{\mathsf{V}}}
\newcommand{\U}{{\mathsf{U}}}
\newcommand{\X}{{\mathsf{X}}}
\newcommand{\ZZ}{\mathbb{Z}}
\newcommand{\NN}{\mathbb{N}}
\newcommand{\Hom}{\operatorname{Hom}\nolimits}
\newcommand{\Ext}{\operatorname{Ext}\nolimits}
\newcommand{\End}{\operatorname{End}\nolimits}
    \DeclareMathOperator{\HHom}{\Cal{Hom}}
\DeclareMathOperator{\ob}{Ob}
\newcommand{\op}{\mathrm{op}}
\DeclareMathOperator{\id}{id}
\DeclareMathOperator*{\colim}{colim}
\newcommand{\add}{\mathsf{add}\hspace{.01in}}
\newcommand{\Filt}{\mathsf{Filt}\hspace{.01in}}
\renewcommand{\dim}{\mathsf{dim}\hspace{.01in}}
\renewcommand{\mod}{\mathsf{mod}\hspace{.01in}}
\newcommand{\thick}{\mathsf{thick}\hspace{.01in}}
\newcommand{\Loc}{\mathsf{Loc}}
\newcommand{\per}{\mathsf{per}\hspace{.01in}}
\newcommand{\pvd}{\mathsf{pvd}\hspace{.01in}}
\newcommand{\Db}{\mathsf{D}^{\mathsf{b}}}
\newcommand{\Dfd}{\mathsf{D}_{\mathsf{fd}}}
    \newcommand{\RHom}{\mathbf{R}\strut\kern-.2em\operatorname{Hom}\nolimits}
    \newcommand{\RHOM}{\mathbf{R}\strut\kern-.2em\operatorname{HOM}\nolimits}
    \newcommand{\REnd}{\mathbf{R}\strut\kern-.2em\operatorname{End}\nolimits}
    \newcommand{\REND}{\mathbf{R}\strut\kern-.2em\operatorname{END}\nolimits}
    \DeclareMathOperator*{\ten}{\otimes}
\renewcommand{\top}{\mathsf{top}\hspace{.01in}}
\newcommand{\cone}{\operatorname{Cone}\nolimits}
\DeclareMathOperator*{\holim}{holim}
\DeclareMathOperator*{\hocolim}{hocolim}
\newcommand{\simeqto}{\stackrel{\sim}{\to}}
\newcommand{\xto}{\xrightarrow}
\newcommand{\tto}{\longrightarrow}
\newcommand{\To}{\Rightarrow}
\newcommand{\LR}{\Leftrightarrow}
\DeclareMathOperator{\Ch}{Ch}
\DeclareMathOperator{\Acy}{\mathsf{Acy}}
\newcommand{\Cdg}{{\C_{\mathrm{dg}}}}
\newcommand{\Ddg}{{\D_{\mathrm{dg}}}}
\DeclareMathOperator{\dgcat}{\mathbf{dgcat}}
    \newlength\squareheight
\title{Contravariant Koszul duality between non-positive and positive dg algebras}
\author{Riku Fushimi}
\date{\today}
\newcommand{\Addresses}{{
  \bigskip
  \footnotesize

  R. Fushimi, \textsc{Department of mathematics, Nagoya University, Chikusa-ku, Nagoya 464-8602, Japan}\par\nopagebreak
  \textit{E-mail address}: \texttt{fushimi.riku.h9@s.mail.nagoya-u.ac.jp}

}}
\begin{document}

\begin{abstract}
The Koszul dual of locally finite non-positive dg algebra is locally finite positive dg algebra. However, the Koszul dual of locally finite positive dg algebra is not necessary locally finite.
We characterize locally finite positive dg algebras whose Koszul dual is locally finite. Moreover, we show that the Koszul dual functor induces contravariant equivalences between the perfect derived category and the perfectly valued derived category. As an application of Koszul duality, we establish an ST-correspondence. We also show that, under some assumption, every covariantly finite bounded heart is a length heart, and the triangulated analogy of Smal{\o}'s symmetry holds.
\end{abstract}

\maketitle

\tableofcontents

\section{Introduction}
Duality is one of the most mysterious and powerful tools in mathematics. It is often the case that duality makes difficult problems to simple ones. 

In the representation theory of algebras, many nontrivial phenomena are explained by the duality theory known as Koszul duality. Initially, the Koszul duality between Koszul algebras of $\NN$ graded on a field $\k$ was studied, for example, in \cite{P70,L06,BF85,BGG78,BGS88,BGS96,S96}. Koszul algebras appear not only in the representation theory of algebras but also in Lie theory, algebraic geometry, algebraic topology, symplectic geometry, etc. Koszul dual of Koszul algebra $A$ is defined as the quadratic dual of $A$, which is isomorphic to Ext-algebra $\Ext^*_A(\k,\k)$. By considering derived endomorphism rings, the theory of Koszul duality can be extended from Koszul algebras to more general dg algebras and $A_\infty$-algebras \cite{Ke94,Has,LM04,LP07,A13,V15}. They have many applications, one of which is their use in establishing the one-to-one correspondence between silting objects and algebraic $t$-structures (ST-correspondence)\cite{SY19,Z23,F23,Bo24} (see also \cref{main:E} below).

In this paper, we deal with the Koszul duality between locally finite non-positive dg algebras (that is, $H^i(A)$ is finite-dimensional for every $i\in\ZZ$ and vanishes for every $i>0$) and locally finite positive dg algebras (that is, $H^i(A)$ is finite-dimensional for every $i\in\ZZ$ and vanishes for every $i<0$. The class of locally finite non-positive dg algebras is containing the class of proper non-positive dg algebras and homologically smooth non-positive dg algebras with finite-dimensional zeroth cohomology. The Koszul dual of a locally finite non-positive (resp. positive) dg algebra $A$ is defined as $\RHom_A(S_A,S_A)$ and denoted by $A^!$, where $S_A$ is a dg $A$-module such that $H^*(S_A)\simeq\top(H^0(A))$ (resp. $H^*(S_A)\simeq H^0(A)$). The Koszul dual of locally finite non-positive dg algebra is always locally finite. However, the Koszul dual of a locally finite positive dg algebra is not locally finite, in general. To avoid this problem, we first characterize locally finite positive dg algebra whose Koszul dual is locally finite.
\begin{main}[\cref{thm:equiv-cond-for-nice}]\label{main:A}
    Let $A$ be a locally finite positive dg algebra. Then the following conditions are equivalent:
    \begin{itemize}
        \item [(1)] $\H_A:=\Filt(A)(\subseteq\per(A))$ has an injective cogenerator,
        \item [(2)] $\bigcup_{n\ge0}\Sigma^n\H_A\ast\cdots\ast\Sigma\H_A\ast\H_A$ is an aisle in $\Dfd^+(A)$, where
        \begin{align*}
            \Dfd^+(A):=\{X\in\D^+(A)\mid H^i(X)\text{ is finite-dimensional for every} i\in\ZZ\}
        \end{align*}
        \item[(3)] for every $X\in\Dfd^+(A)$ and $Y\in\pvd(A)$, we have $\dim\Hom_{\D(A)}(X,Y)<\infty$.
        \item [(4)] the perfectly valued derived category $\pvd(A)$ is Hom-finite,
        \item [(5)] the endomorphism ring of $S_A$ in the derived category $\End_{\D(A)}(S_A)$ is a finite-dimensional graded algebra,
        \item [(6)] $A^!$ is locally finite.
    \end{itemize}
\end{main}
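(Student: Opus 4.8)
The plan is to prove the six statements equivalent by running a cycle of implications, for instance $(1)\Rightarrow(2)\Rightarrow(3)\Rightarrow(4)\Rightarrow(5)\Leftrightarrow(6)\Rightarrow(1)$, anchored at the equivalence $(5)\Leftrightarrow(6)$, which is essentially the definition of the Koszul dual: from $A^!=\RHom_A(S_A,S_A)$ one gets an isomorphism of graded algebras $H^*(A^!)\cong\Ext^*_A(S_A,S_A)=\End^*_{\D(A)}(S_A)$, so $A^!$ is locally finite precisely when the graded endomorphism algebra of $S_A$ is finite-dimensional in each degree. Two inputs will be used repeatedly. First, the elementary observation — proved by dévissage along a finite $\add(A)$-filtration — that every perfect complex over a locally finite dg algebra has finite-dimensional cohomology in each degree. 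Second, the (unconditional) contravariant derived Morita equivalence induced by $\RHom_A(-,S_A)$ between the thick subcategory $\thick(S_A)$ of $\D(A)$ generated by $S_A$ and $\per(A^!)$, together with whatever description of $\H_A$, of $\pvd(A)$, and of the standard $t$-structure on $\D(A)$ was fixed earlier in the paper.

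The cohomological part of the cycle is the softer one. $(3)\Rightarrow(4)$ is immediate because $\pvd(A)\subseteq\Dfd^+(A)$. $(4)\Rightarrow(5)$ holds because $S_A$ itself lies in $\pvd(A)$ — its cohomology is $H^0(A)$, finite-dimensional and concentrated in degree $0$ — so $(4)$ forces each $\Hom_{\D(A)}(S_A,\Sigma^iS_A)=H^i(A^!)$ to be finite-dimensional. For $(2)\Rightarrow(3)$ I would first identify the right orthogonal of the candidate aisle $\mathcal U=\bigcup_{n\ge0}\Sigma^n\H_A\ast\cdots\ast\H_A$: since $\Sigma^nA\in\mathcal U$ for all $n\ge0$ and $\Hom_{\D(A)}(\Sigma^nA,-)=H^{-n}(-)$, one gets $\mathcal U^{\perp}=\Dfd^+(A)\cap\D^{\ge1}$, so an object of $\pvd(A)$ shifted into $\D^{\le0}$ is a direct summand of its $\mathcal U$-truncation and hence lies in $\mathcal U$; the aisle thus supplies genuine truncation functors on $\Dfd^+(A)$. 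Given $X\in\Dfd^+(A)$ and $Y\in\pvd(A)$, I would then combine these truncations with the presentation $X=\hocolim_n\tau^{\le n}X$ and the derived Morita equivalence: $\RHom_A(X,Y)$ becomes a homotopy limit of objects of $\per(A^!)$ whose connecting cones are concentrated in degrees tending to $-\infty$, so the resulting inverse system is degreewise stationary and $\Hom_{\D(A)}(X,Y)=H^0\RHom_A(X,Y)$ is finite-dimensional, which is $(3)$.

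The structural implications carry the real content. For $(6)\Rightarrow(1)$: once $A^!$ is locally finite it is a locally finite non-positive dg algebra, so the standard $t$-structure on $\D(A^!)$ has heart $\mod H^0(A^!)$; transporting this heart through the Koszul duality identifies $\H_A$, up to passing to the opposite category, with $\mod H^0(A^!)$, and the latter has the injective cogenerator given by the $k$-dual of $H^0(A^!)$, whose preimage is the injective cogenerator of $\H_A$ demanded by $(1)$. For $(1)\Rightarrow(2)$ I would run this in reverse: an injective cogenerator $I$ of the exact category $\H_A=\Filt(A)$ provides, for each $X\in\Dfd^+(A)$, precisely the $\mathcal U^{\perp}$-approximation needed to build the right adjoint to the inclusion of $\mathcal U$, so the filtration by shifted copies of $\H_A$ is genuinely an aisle. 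I expect $(1)\Rightarrow(2)$ — that is, manufacturing truncation functors on the unbounded-above category $\Dfd^+(A)$ out of a purely homological finiteness hypothesis — to be the main obstacle: one must control homotopy limits and colimits in $\Dfd^+(A)$ and check that the candidate truncations are not merely cohomologically finite but genuinely compact, and it is here that the hypothesis must really be invoked rather than just formally manipulated. Should this be too delicate to carry out directly, an alternative is to close the cycle through $(4)\Rightarrow(1)$: having obtained $(4)\Rightarrow(5)\Leftrightarrow(6)$, one already knows $A^!$ is locally finite and can pull the injective cogenerator of $\mod H^0(A^!)$ back to $\H_A$ through the Morita equivalence, which is a cleaner route to $(1)$.
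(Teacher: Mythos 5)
The soft implications $(2)\Rightarrow(3)\Rightarrow(4)\Rightarrow(5)$ in your cycle are fine and close to what the paper does, but your anchor $(5)\Leftrightarrow(6)$ is not definitional, and this breaks the cycle at its hardest point. In the paper, condition $(5)$ refers to $\End_{\D(A)}(S_A)=\Hom_{\D(A)}(S_A,S_A)$, the cohomological-degree-zero endomorphism algebra only, not the full Ext-algebra $\bigoplus_i\Hom_{\D(A)}(S_A,\Sigma^iS_A)$; so $(6)\Rightarrow(5)$ is trivial, but $(5)\Rightarrow(6)$ is genuine content, and in your scheme nothing is ever deduced \emph{from} $(5)$. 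This is precisely the crux of the theorem, carried in the paper by Proposition~\ref{prop:equiv-cond-for-nice}: the dimension identity of Lemma~\ref{lem:ineq} shows that taking a minimal right $\add A$-approximation of $S_A$ and passing to its cone strictly decreases $\dim\Hom_{\D(A)}(-,S_A)$, so finiteness of $\Hom_{\D(A)}(S_A,S_A)$ alone forces the process to terminate and yields $S_A\in\H_A\ast\Dfd^+(A)_{>0}$; from this a concrete injective cogenerator of $\H_A$ is built (the $\H_A$-part $I_{S_A}$ of that decomposition). Your proposal contains no substitute for this argument.

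Two further points. First, your route $(6)\Rightarrow(1)$ transports the heart $\mod H^0(A^!)$ back to $\H_A$ ``through the Koszul duality'', but the equivalence you need for that, $\per(A)\simeq\pvd((A^!)^\op)^\op$, is exactly the half of Theorem~\ref{thm:Koszul-duality}(2) that the paper proves only \emph{after}, and using, pvd-finiteness of $A$; the unconditional derived Morita piece is $\pvd(A)=\thick(S_A)\simeq\per((A^!)^\op)^\op$, which gives $(4)\Leftrightarrow(6)$ but never reaches $\H_A\subseteq\per(A)$. As stated this step is circular. Second, for $(1)\Rightarrow(2)$ you correctly identify the difficulty but offer only the hope that an injective cogenerator ``provides the approximation needed''; the paper's actual mechanism is the exchange inclusion $\Dfd^+(A)_{>0}\ast\add A\subseteq\add A\ast\Dfd^+(A)_{>0}$ of Lemma~\ref{lem:basic} (proved with the octahedral axiom and semisimplicity of $\End_{\D(A)}(P_i)_0$), which combined with $S_A\in\H_A\ast\Dfd^+(A)_{>0}$ gives $\Dfd^+(A)_{\ge0}\subseteq\H_A\ast\Dfd^+(A)_{>0}$ and hence the aisle by shifting. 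Without that lemma, or an equivalent device, the implication is not established.
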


If a locally finite positive dg algebra $A$ satisfies one of the equivalent conditions in \cref{main:A}, we say that $A$ is \emph{pvd-finite}. Since the condition in $(4)$ is preserved under derived equivalence, we have the following theorem.

\begin{main}[\cref{cor:length-has-injcogen}]\label{main:B}
    Let $\T$ be an algebraic Hom-finite triangulated category. If one of a length heart $\H$ has an injective cogenerator, then every length heart of $\T$ has an injective cogenerator.
\end{main}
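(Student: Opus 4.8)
The plan is to reduce \cref{main:B} to \cref{main:A} by realizing each length heart of $\T$ as the canonical heart $\H_E=\Filt(E)$ of $\per(E)$ for a suitable locally finite positive dg algebra $E$, and then to exploit that, among the six equivalent conditions of \cref{main:A}, condition~(4)---Hom-finiteness of $\pvd$---is patently an invariant of the ambient triangulated category rather than of the chosen heart.

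First I would set up the realization. Fix a length heart $\H$ with simple objects $S_1,\dots,S_n$, put $S=\bigoplus_iS_i$, and---using a dg enhancement of the algebraic category $\T$---form $E:=\REnd_\T(S)$. This $E$ is \emph{positive}: for $m\ge1$ one has $H^{-m}(E)=\Hom_\T(S,\Sigma^{-m}S)=0$, since $S$ lies in the heart while $\Sigma^{-m}S\in\T^{\ge m}\subseteq\T^{\ge1}$ and $\Hom_\T(\T^{\le0},\T^{\ge1})=0$. It is \emph{locally finite} because $H^i(E)=\Hom_\T(S,\Sigma^iS)$ is finite-dimensional by Hom-finiteness of $\T$, and $H^0(E)=\End_\H(S)$ is semisimple, the $S_i$ being pairwise non-isomorphic simple objects. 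Since a length heart determines a bounded t-structure, $\T=\thick(\H)=\thick(S)$ (and $\T$ is idempotent complete), so $\RHom_\T(S,-)\colon\T\xrightarrow{\ \sim\ }\per(E)$ is a triangle equivalence by Keller. It carries each $S_i$ to the projective summand $P_i$ of $E$, hence carries $\H=\Filt_\T\{S_i\}$ to $\Filt_{\per(E)}\{P_i\}$; as $E=\bigoplus_iP_i$ and the heart $\H_E$ is closed under direct summands, this image is precisely $\H_E$, and since a bounded t-structure is determined by its heart the equivalence identifies $(\T,\H)$ with $(\per(E),\H_E)$.

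Now suppose $\H$ has an injective cogenerator; by $(1)\Rightarrow(4)$ of \cref{main:A}, $\pvd(E)$ is Hom-finite. Let $\H'$ be another length heart, and let $E'$ be the locally finite positive dg algebra produced from $\H'$ by the same recipe, so $(\T,\H')\simeq(\per(E'),\H_{E'})$. The subcategory $\pvd(E)\subseteq\D(E)$ is intrinsically characterized as the objects $X$ with $\RHom_E(P,X)$ of finite total dimension for every $P\in\per(E)$; since the equivalence $\per(E)\simeq\T\simeq\per(E')$ arises from a zig-zag of dg quasi-equivalences of enhancements, it extends to a derived equivalence $\D(E)\simeq\D(E')$ matching $\per$ with $\per$ and therefore $\pvd(E)$ with $\pvd(E')$. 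Hence $\pvd(E')$ is Hom-finite, and $(4)\Rightarrow(1)$ of \cref{main:A} gives that $\H_{E'}\simeq\H'$ has an injective cogenerator.

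I expect the main obstacle to be the realization step rather than the deduction: one must check rigorously that $E=\REnd_\T(S)$ is genuinely positive and locally finite and that Keller's equivalence matches the given heart with the canonical heart $\H_E=\Filt(E)$ of $\per(E)$---this rests on the t-structure axioms, the boundedness and idempotent-completeness coming with a length heart, semisimplicity of $H^0(E)$, and the basic structure theory of $\per$ of a positive dg algebra. Granting this dictionary, \cref{main:B} is a formal consequence of \cref{main:A} together with the derived-invariance of \emph{pvd-finiteness}.
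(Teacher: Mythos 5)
Your proposal is correct and follows essentially the same route as the paper: realize each length heart as $\H_E=\Filt(E)$ for $E=\REnd_\T(S)$ a locally finite positive dg algebra via Keller's theorem, and transfer condition (4) of \cref{main:A} (Hom-finiteness of $\pvd$) along the resulting derived equivalence, which is exactly the paper's \cref{cor:nice-is-D-inv}. The paper merely phrases it with one fixed pvd-finite $A$ and a second simple-minded object $L$, but the content is the same.
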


We show that quasi-isomorphism classes of locally finite non-positive dg algebras and quasi-isomorphism classes of pvd-finite locally finite positive dg algebras correspond bijectively to each other by taking Koszul duals.
\begin{main}[\cref{thm:KD-on-alg}]\label{main:C}
    By taking Koszul duals, quasi-isomorphism classes of locally finite non-positive dg algebras and quasi-isomorphism classes of pvd-finite locally finite positive dg algebras correspond bijectively.
\end{main}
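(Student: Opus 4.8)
The plan is to realize the two Koszul-dual operations as mutually inverse bijections, so the statement reduces to a biduality. First recall, from the preceding discussion, that $A^!$ is locally finite positive when $A$ is locally finite non-positive, and that $B^!$ is non-positive when $B$ is locally finite positive; moreover $\RHom$ is invariant under quasi-isomorphism and sends quasi-isomorphisms of dg algebras to quasi-isomorphisms, so $(-)^!$ descends to quasi-isomorphism classes. Everything then follows from the biduality statement
\[
(X^!)^!\simeq X\qquad\text{for $X$ locally finite non-positive, and for $X$ pvd-finite locally finite positive.}
\]
Indeed, granting this: if $A$ is locally finite non-positive, then $A^!$ is locally finite positive with locally finite Koszul dual $(A^!)^!\simeq A$, hence $A^!$ is pvd-finite by \cref{main:A}; so $A\mapsto A^!$ defines a map from quasi-isomorphism classes of locally finite non-positive dg algebras to quasi-isomorphism classes of pvd-finite locally finite positive dg algebras. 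Conversely, if $B$ is pvd-finite locally finite positive, then $B^!$ is non-positive and locally finite (the latter by \cref{main:A}), so $B\mapsto B^!$ defines a map in the other direction. The two biduality isomorphisms say precisely that the composites in both orders are identities, so these maps are mutually inverse bijections.

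To prove the biduality I would pass, for $X$ locally finite non-positive (resp.\ positive), to a model making $X$ an augmented algebra over the semisimple base $R=H^0(X)/\rad H^0(X)$ (resp.\ $R=H^0(X)$), so that $S_X$ is identified with $R$ and the Koszul dual is the convolution algebra $X^!\simeq\mathrm{Bar}_R(X)^{\#}$ on the $R$-linear dual of the reduced bar construction, with $S_{X^!}$ identified with $\RHom_X(X,S_X)\simeq X^!$ acting on $R$. Iterating, and combining the identity $\mathrm{Bar}(C^{\#})\simeq\mathrm{Cobar}(C)^{\#}$ with the bar--cobar quasi-isomorphism $\mathrm{Cobar}(\mathrm{Bar}_R(X))\xrightarrow{\sim}X$, one obtains
\[
(X^!)^!\ \simeq\ \mathrm{Bar}\bigl(\mathrm{Bar}_R(X)^{\#}\bigr)^{\#}\ \simeq\ \mathrm{Cobar}\bigl(\mathrm{Bar}_R(X)\bigr)^{\#\#}\ \simeq\ X,
\]
the last step because $X$, hence $\mathrm{Cobar}(\mathrm{Bar}_R(X))$, is degreewise finite-dimensional so that the double $R$-dual is the identity; the natural comparison dg algebra homomorphism $X\to(X^!)^!$, gotten by lifting the $X$-action on $S_X$ to a semifree $X^!$-resolution, is identified with this chain.

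The crux — and the point where the finiteness hypotheses enter — is keeping every $\#$ above an honest dual, i.e.\ making $\mathrm{Bar}_R(X)$ degreewise finite-dimensional. For $X$ locally finite non-positive this is automatic: in a suitable model $\bar X$ is degreewise finite-dimensional and concentrated in non-positive degrees, so each $(\bar X[1])^{\otimes_R n}$ meets a given degree for only finitely many $n$, whence $\mathrm{Bar}_R(X)$, and then $X^!=\mathrm{Bar}_R(X)^{\#}$, is degreewise finite-dimensional. For $X$ locally finite positive this can fail — the degree-zero part of $\mathrm{Bar}_R(X)$ is $\bigoplus_{n}(\bar X^{1})^{\otimes_R n}$ — and it is precisely pvd-finiteness, equivalently (by \cref{main:A}) local finiteness of $X^!$, that forces $\mathrm{Bar}_R(X)$ to be degreewise finite-dimensional and thereby lets the displayed chain run in the positive case as well. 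The remaining work — choosing the model and the Wedderburn--Malcev splitting, identifying $S_{X^!}$ with the Koszul image of $X$ (which is where an opposite semisimple base appears when $H^0(X)$ is not basic), and checking that the comparison map coincides with the bar--cobar chain — is routine bookkeeping.
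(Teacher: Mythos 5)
Your reduction of the statement to the biduality $(X^!)^!\simeq X$ is exactly the paper's strategy: the paper isolates this as \cref{thm:Koszul-double-dual} and deduces \cref{thm:KD-on-alg} from it. But the paper proves the biduality softly, as a corollary of the already-established categorical Koszul duality (\cref{thm:Koszul-duality}): since $\Phi^2\colon\per(A)\to\per(A^{!!})$ is a dg-functor-induced equivalence sending $A$ to $A^{!!}$, the canonical map $\End_{\Cdg(A)}(A)\to\End_{\Cdg(A^{!!})}(A^{!!})$ is a quasi-isomorphism. No bar or cobar construction, no choice of model, no Wedderburn--Malcev splitting is needed. Your bar--cobar route is genuinely different, and in the positive case it has a real gap.

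The gap is the claim that pvd-finiteness of a locally finite positive $X$ forces $\mathrm{Bar}_R(X)$ to be degreewise finite-dimensional. This is false: pvd-finiteness (equivalently, local finiteness of $X^!$) controls the \emph{cohomology} of the dual of the bar construction in each degree, not the underlying complex. Take $X=\k[y]/(y^3)$ with $|y|=1$ and zero differential. Here $\End_{\D(X)}(S_X)\simeq\k[\xi]/(\xi^2)$ is two-dimensional, so condition $(5)$ of \cref{thm:equiv-cond-for-nice} holds and $X$ is pvd-finite (one can check $H^{-i}(X^!)\simeq\k^2$ for all $i\ge 0$, so $X^!$ is locally finite but not degreewise finite as a naive dual of the bar complex would have to be). Yet the degree-zero part of $\mathrm{Bar}_\k(X)$ contains $\bigoplus_{n\ge 0}(\k\, sy)^{\otimes n}$, which is infinite-dimensional, and no choice of dg model can remove this since $H^1(X)\ne 0$. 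Without degreewise finite-dimensionality of $\mathrm{Bar}_R(X)$, the identification of $\mathrm{Bar}$ of the dual with the dual of $\mathrm{Cobar}$ breaks (the comparison map between $\bigoplus_n(\overline{C}^{\,\vee})^{\otimes n}$ and $\prod_n(\overline{C}^{\otimes n})^{\vee}$ is no longer an isomorphism, and there is no convergent filtration argument in the non-simply-connected range), so your displayed chain fails precisely in the direction the theorem needs. A correct repair along your lines would replace $\mathrm{Bar}_R(X)$ by its minimal $A_\infty$-coalgebra model on $H^*(\mathrm{Bar}_R(X))$ -- which \emph{is} degreewise finite-dimensional exactly when $X$ is pvd-finite -- but then one must separately justify that the cobar construction is invariant under this replacement for coalgebras whose reduced part is not concentrated in degrees $\ge 1$; that is the hard point, and it is what the paper's categorical argument circumvents entirely.
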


For the locally finite non-positive or pvd-finite locally finite positive dg algebra $A$, the triangulated functor $\Psi:=\RHom_A(-,S_A)\colon\D(A)\to\D((A^!)^\op)^\op$ is called \emph{Koszul dual functor}.
The following is the main theorem of this paper.
\begin{main}[\cref{thm:Koszul-duality}]\label{main:D}
    \
    \begin{itemize}
        \item [(1)] Let $A$ be a locally finite non-positive dg algebra. Then the Koszul dual functor $\Phi\colon\D(A)\to\D((A^!)^\op)^\op$ takes $A$ to $S_{(A^!)^\op}$ and induces the following equivalences of triangulated categories:
    \begin{center}
        \begin{tikzcd}
            \per(A)\arrow[rr,"\simeq"] & & \pvd((A^!)^\op)^\op \\[-2em]
            \rotatebox{270}{$\subseteq$} & & \rotatebox{270}{$\subseteq$} \\[-2em]
            \Dfd^-(A)\arrow[rr,"\simeq"] & & \Dfd^+((A^!)^\op)^\op \\[-2em]
            \rotatebox{90}{$\subseteq$} & & \rotatebox{90}{$\subseteq$} \\[-2em]
            \pvd(A)\arrow[rr,"\simeq"] & & \per((A^!)^\op)^\op.
        \end{tikzcd}
    \end{center} 
    \item [(2)] Let $A$ be a pvd-finite locally finite positive dg algebra. Then the Koszul dual functor $\Phi\colon\D(A)\to\D((A^!)^\op)^\op$ takes $A$ to $S_{(A^!)^\op}$ and induces the following equivalences of triangulated categories:
    \begin{center}
        \begin{tikzcd}
            \per(A)\arrow[rr,"\simeq"] & & \pvd((A^!)^\op)^\op \\[-2em]
            \rotatebox{270}{$\subseteq$} & & \rotatebox{270}{$\subseteq$} \\[-2em]
            \Dfd^+(A)\arrow[rr,"\simeq"] & & \Dfd^-((A^!)^\op)^\op \\[-2em]
            \rotatebox{90}{$\subseteq$} & & \rotatebox{90}{$\subseteq$} \\[-2em]
            \pvd(A)\arrow[rr,"\simeq"] & & \per((A^!)^\op)^\op.
        \end{tikzcd}
    \end{center} 
    \end{itemize}
\end{main}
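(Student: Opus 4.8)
The two statements concern a single construction: (2) follows from (1) applied to $B:=(A^!)^{\op}$ — which is locally finite non-positive exactly when $A$ is pvd-finite locally finite positive — together with the identification $(B^!)^{\op}\simeq A$ provided by \cref{main:C}. So I treat (1), fixing $A$ locally finite non-positive and writing $B:=(A^!)^{\op}$, which is a locally finite positive dg algebra, pvd-finite by \cref{main:A}. First I would pin down $\Phi$ on the two distinguished objects and produce a biduality transformation. By definition $\Phi(A)=\RHom_A(A,S_A)\simeq S_A$, on which $B=\REnd_A(S_A)$ acts by postcomposition; comparing the cohomology and this module structure with the defining property of $S_B$ identifies $\Phi(A)\simeq S_B$ in $\D(B)^{\op}$, and similarly $\Phi(S_A)=\REnd_A(S_A)\simeq B$ is the free rank-one module. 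Being $\RHom$ into a fixed object, $\Phi$ is contravariant and triangulated. Since $S_B\simeq S_A$ carries compatible actions of $A$ and of $B$, evaluation assembles into a natural transformation $\eta\colon\id_{\D(A)}\Rightarrow G$ with $G:=\RHom_B(\RHom_A(-,S_A),S_A)\colon\D(A)\to\D(A)$, and dually $\eta'\colon\id_{\D(B)}\Rightarrow G':=\RHom_A(\RHom_B(-,S_A),S_A)$. Evaluating $\eta$ at $A$ gives the canonical ring map $A\to\REnd_B(S_B)$; because $\End$ in an opposite category is the opposite ring, the ``$\op$'' in $B^!=((A^!)^{\op})^!$ cancels, $\REnd_B(S_B)\simeq A$, and $\eta_A$ becomes the comparison quasi-isomorphism of \cref{main:C}. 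Hence $\eta_A$, and likewise $\eta_{S_A}$, are isomorphisms, and $G,G'$ are triangulated.

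\textbf{Step 2 (the outer equivalences).} Recall $\pvd(C)=\thick_{\D(C)}(S_C)$ and $\per(C)=\thick_{\D(C)}(C)$ for the algebras in play. As $\Phi$ is triangulated with $\Phi(A)=S_B$, it sends $\per(A)=\thick(A)$ into $\thick_{\D(B)^{\op}}(S_B)=\pvd(B)^{\op}$. Since $\eta_A$ is an isomorphism and $G$ is triangulated, $\eta$ is an isomorphism on all of $\thick(A)=\per(A)$, so $\Phi|_{\per(A)}$ is fully faithful; its essential image, being the image of a fully faithful triangulated functor out of the idempotent-complete category $\per(A)$, is a thick subcategory of $\D(B)^{\op}$ containing the generator $S_B$, hence equals $\pvd(B)^{\op}$. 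This gives $\per(A)\simeq\pvd(B)^{\op}$. Repeating the argument with $S_A$ in place of $A$ — using $\Phi(S_A)\simeq B$, which generates $\per(B)^{\op}$, and $\eta_{S_A}$ an isomorphism — yields $\pvd(A)=\thick(S_A)\simeq\per(B)^{\op}$; the quasi-inverses are the evident restrictions of $\RHom_B(-,S_A)$, since $G\simeq\id$ on these subcategories.

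\textbf{Step 3 (the middle equivalence, via homotopy limits).} This is where non-positivity of $A$ enters. For $X\in\Dfd^-(A)$ one has $X\simeq\holim_n\tau^{\ge-n}X$ with $\tau^{\ge-n}X\in\Dfdb(A)=\pvd(A)$, and applying the contravariant exact $\Phi$ to the truncation triangles $\tau^{\le-n-1}X\to X\to\tau^{\ge-n}X$ yields triangles $\Phi(\tau^{\ge-n}X)\to\Phi(X)\to\Phi(\tau^{\le-n-1}X)$ in $\D(B)$. Because $A$ is non-positive and locally finite, a dg module with cohomology in degrees $\le-n-1$ has a minimal free resolution concentrated in degrees $\le-n-1$ with finite-dimensional components, so $\Phi(\tau^{\le-n-1}X)$ lies in cohomological degrees $\ge n+1$; letting $n\to\infty$, these cofibres vanish on every bounded range of degrees, whence $\Phi(X)\simeq\hocolim_n\Phi(\tau^{\ge-n}X)$. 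Each $\Phi(\tau^{\ge-n}X)\in\per(B)^{\op}$ (Step 2) is uniformly bounded below and degreewise finite-dimensional, so $\Phi(X)\in\Dfd^+(B)^{\op}$; thus $\Phi$ maps $\Dfd^-(A)$ into $\Dfd^+(B)^{\op}$. Since $\RHom_B(-,S_A)$ — being $\RHom$ into a fixed object — always converts homotopy colimits into homotopy limits, $G(X)\simeq\holim_nG(\tau^{\ge-n}X)$, so $G$ commutes with the presentation $X\simeq\holim_n\tau^{\ge-n}X$; by naturality $\eta_X$ is then the map on homotopy limits induced by the $\eta_{\tau^{\ge-n}X}$, each an isomorphism by Step 2, hence $\eta_X$ is an isomorphism. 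The symmetric argument over the positive pvd-finite algebra $B$ shows $\eta'$ is an isomorphism on $\Dfd^+(B)^{\op}$, so $\Phi$ and $\RHom_B(-,S_A)$ restrict to mutually inverse equivalences $\Dfd^-(A)\simeq\Dfd^+(B)^{\op}$. Compatibility with the displayed inclusions is automatic, one and the same $\Phi$ realising all three equivalences.

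The delicate points I anticipate are the opposite-category bookkeeping in Step 1 — needed to make the identification $\Phi(A)\simeq S_B$, and the identification of $\eta_A$ with the comparison map of \cref{main:C}, literally correct at the level of dg bimodules — and, more seriously, the homotopy-colimit passage in Step 3: the claim that $\Phi$ turns the specific homotopy limit $\holim_n\tau^{\ge-n}X$ into a homotopy colimit fails for $\RHom$ into a general module and genuinely uses non-positivity of $A$ through the boundedness of resolutions. Everything else is formal triangulated-category manipulation on top of \cref{main:A} and \cref{main:C}.
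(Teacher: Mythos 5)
Your overall strategy --- identify $\Phi(A)$ with $S_{(A^!)^\op}$ and $\Phi(S_A)$ with the free module, get the outer equivalences by d\'evissage from generators, and extend to $\Dfd^-(A)\simeq\Dfd^+((A^!)^\op)^\op$ by writing $X\simeq\holim_n\sigma^{\ge-n}(X)$ and showing $\Phi$ turns this into a homotopy colimit --- is essentially the paper's (Propositions~\ref{prop:fully-faithful}, \ref{prop:A-to-S}, \ref{prop:density}). But there are two genuine problems in how you order and ground the argument. First, your Step~1 justifies the key fact that $\eta_A\colon A\to\REnd_B(S_B)$ is a quasi-isomorphism by appealing to \cref{main:C}; in the paper \cref{main:C} is \emph{deduced from} this theorem (via \cref{thm:Koszul-double-dual}, whose proof invokes the equivalence $\per(A)\simeq\pvd((A^!)^\op)^\op$), so the appeal is circular. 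Nor is $\eta_A$ formal: when $A$ is not proper one has $A\notin\thick(S_A)$, so no d\'evissage from the tautological isomorphism $\eta_{S_A}$ reaches $A$, and indeed biduality fails in general (e.g.\ for non-pvd-finite positive algebras). The honest proof of ``$\eta_A$ iso'' is exactly your Step~3 homotopy-limit argument applied to $X=A\in\Dfd^-(A)$, which only needs $\eta$ to be invertible on the truncations $\sigma^{\ge-n}(A)\in\pvd(A)=\thick(S_A)$. So Step~2's treatment of $\per(A)$ must be \emph{subsumed into} Step~3 rather than precede it; as written, Step~2 rests on an unproved (and circularly cited) input. The paper avoids this by first proving full faithfulness against all of $\pvd(A)$ (its Remark~\ref{rmk:cohomology-of-KD}(1), by d\'evissage from $Y=S_A$) and only then passing to limits with a Mittag--Leffler argument (Lemma~\ref{lem:pre-fully-faithful}), which also settles the $\lim^1$-type compatibility you flag as delicate.

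Second, the reduction of (2) to (1) via $B:=(A^!)^\op$ hides the hardest step of the positive case: to transport the statement back to $A$ you must identify $\Phi_A(A)$, i.e.\ $S_A$ regarded as a $B$-module, with $S_B=\top H^0(B)$, and identify $\REnd_B(S_B)^\op$ with $A$. The first identification is Proposition~\ref{prop:A-to-S}(positive case), whose proof is not formal: it uses the injective cogenerator of $\H_A$ (hence pvd-finiteness in an essential way) to show $\Phi(\H_A)=\mod H^0(A^!)^\op$ and that $\Phi(I)\to\Phi(A)$ is a projective cover of a semisimple module. The second identification is again the double duality of \cref{main:C}. Your reduction therefore does not eliminate the positive-case work; it relocates it into two unproved identifications, one of which is downstream of the theorem being proved. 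The fix is either to prove (2) in parallel with (1) as the paper does (supplying the injective-cogenerator argument for $\Phi(A)\simeq S_{(A^!)^\op}$), or to first prove all of (1) including double duality for non-positive algebras, then prove $S_A|_B\simeq S_B$ separately, and only then deduce (2). The remaining steps (thickness of the essential image, $\pvd=\thick(S)$ on both sides, the hocolim computation of $H^i(\Phi(X))=\Hom_{\D(A)}(X,\Sigma^iS_A)$, and density via pulling back the truncation tower) are sound and match the paper.
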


\cref{main:D} generalizes the result of \cite[Theorem 9.1]{A13}. By combining it with \cite[Theorem A]{LP07}, the above theorem provides a generalization of \cite[Theorem B]{LP07}. As a direct consequence of \cref{main:D}, we establish the bijection between silting objects and algebraic $t$-structures \cite{KV88,KY14,KN13,SY19,F23} (this correspondence is called \emph{ST-correspondence}) for locally finite non-positive dg algebras (in this case, it has already been shown in \cite{F23}) and pvd-finite locally finite positive dg algebras.

\begin{main}[\cref{thm:ST}]\label{main:E}
    \
    \begin{itemize}
        \item [(1)] Let $A$ be a locally finite non-positive dg algebra. Then the following four sets bijectively correspond to each other naturally:
        \begin{itemize}
            \item [(i)] equivalence classes of silting objects in $\per(A)$,
            \item [(ii)] equivalence classes of simple-minded objects in $\pvd(A)$,
            \item [(iii)] algebraic $t$-structures of $\pvd(A)$,
            \item [(iv)] bounded co-$t$-structures of $\per(A)$.
        \end{itemize}
        \item [(2)] Let $A$ be a pvd-finite locally finite positive dg algebra. Then the following four sets bijectively correspond to each other in a natural way:
        \begin{itemize}
            \item [(i)] equivalence classes of silting objects in $\pvd(A)$,
            \item [(ii)] equivalence classes of simple-minded objects in $\per(A)$,
            \item [(iii)] algebraic $t$-structures of $\per(A)$,
            \item [(iv)] bounded co-$t$-structures of $\pvd(A)$.
        \end{itemize}
    \end{itemize}
\end{main}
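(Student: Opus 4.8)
The plan is to deduce statement~(2) from statement~(1) by transporting every structure through the contravariant Koszul duality of \cref{main:D}; statement~(1), which concerns locally finite non-positive dg algebras, I would take from \cite{F23}, where it is established in exactly this generality. For orientation, recall the shape of (1): the bijection (i)$\leftrightarrow$(iv) between silting objects and bounded co-$t$-structures of $\per(A)$ is the restriction of the general silting/co-$t$-structure correspondence; the bijection (ii)$\leftrightarrow$(iii) sends a simple-minded object of $\pvd(A)$ to the bounded $t$-structure it generates, whose length heart has it as its collection of simple objects; and (i)$\leftrightarrow$(ii) is the silting/simple-minded bijection constructed in \cite{F23} via derived endomorphism dg algebras. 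I would not reprove any of this.

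For (2), let $A$ be a pvd-finite locally finite positive dg algebra and set $B:=(A^!)^\op$. By \cref{main:C}, $B$ is a locally finite non-positive dg algebra, so (1) applies verbatim to $B$. By \cref{main:D}(2) the Koszul dual functor $\Phi=\RHom_A(-,S_A)$ induces contravariant triangle equivalences $\per(A)\xrightarrow{\sim}\pvd(B)^\op$ and $\pvd(A)\xrightarrow{\sim}\per(B)^\op$, carrying $A$ to $S_{B}$ (so the standard silting object and the standard simple-minded object correspond). I would then run the four bijections of (1) for $B$ through these two equivalences.

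The step that actually needs checking is that each of the four notions -- silting object, simple-minded object, algebraic $t$-structure, bounded co-$t$-structure -- is invariant under passing from a triangulated category $\mathcal{T}$ to its opposite $\mathcal{T}^\op$. I would verify this from the identity $\Hom_{\mathcal{T}^\op}(X,\Sigma^{-n}Y)=\Hom_{\mathcal{T}}(Y,\Sigma^{n}X)$: for silting objects and for co-$t$-structures the defining conditions are visibly symmetric in $\mathcal{T}$ and $\mathcal{T}^\op$; for a simple-minded object the vanishing $\Hom(X_i,\Sigma^{<0}X_j)=0$, the conditions on $\Hom(X_i,X_j)$, and the generation condition all pass to $\mathcal{T}^\op$; and for $t$-structures the assignment $(\mathcal{T}^{\le0},\mathcal{T}^{\ge0})\mapsto(\mathcal{T}^{\ge0},\mathcal{T}^{\le0})$ is a boundedness-preserving bijection between the $t$-structures of $\mathcal{T}$ and those of $\mathcal{T}^\op$ that sends a heart to the opposite abelian category and preserves the generation condition, hence restricts to a bijection on algebraic $t$-structures. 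Composing these self-dualities with the equivalences above turns the four bijections of (1) for $B$ into precisely those asserted in (2) for $A$, and naturality is inherited because every identification is induced by the single functor $\Phi$. As a sanity check, $(B^!)^\op\simeq A$ by \cref{main:C}, so the same reasoning shows that (1) and (2) are in fact equivalent to one another under Koszul duality.

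I expect the main obstacle to lie entirely in the inputs -- \cref{main:D} and the result from \cite{F23} -- rather than in this argument. Within the present proof the only delicate point is the self-duality of the word ``algebraic'': one must make sure that the opposite of a length abelian category is again length and that the generation hypothesis built into the definition of an algebraic $t$-structure survives dualization, so that the heart of the dualized $t$-structure is still a length heart generating $\mathcal{T}^\op$.
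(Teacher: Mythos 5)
Your proposal is correct and follows essentially the same route as the paper: the paper also obtains (1) as the (graded version of) \cite[Theorem B]{F23} and deduces (2) from (1) by transporting the four notions through the contravariant equivalences of \cref{thm:Koszul-duality}. The only difference is that you spell out the self-duality of the four notions under passing to the opposite category, which the paper leaves implicit.
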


Functorially finiteness of algebraic hearts of perfectly valued derived categories was shown for finite-dimensional algebras in \cite{CPP22}, for proper non-positive dg algebras in \cite{Ji23}, and for every locally finite non-positive dg algebras in \cite{F23}. 
A natural question that immediately arises is whether the converse holds. In \cite{CPP22}, it was shown that the converse holds for finite-dimensional algebras with finite global dimension (in fact, they obtained similar results for a broader class of triangulated categories, including those arising from algebraic geometry). We provide a complete answer to the above question.
\begin{main}[\cref{thm:enogh-poj-of-heart} and \cref{thm:ffhearts}]\label{main:F}\
    \begin{itemize}
        \item [(1)] A Hom-finite algebraic triangulated category $\D$ is equivalent to $\pvd(A)$ for some locally finite non-positive dg algebra $A$ if and only if $\D$ has a length heart $\H$ that has a projective generator.
        \item [(2)] Let $A$ be a locally finite non-positive dg algebra. Let $(\pvd(A)^{\H,\le0},\pvd(A)^{\H,\ge0})$ be a bounded $t$-structure on $\pvd(A)$ whose heart is $\H$. Then the following conditions are equivalent:
        \begin{itemize}
            \item[(i)] $\H$ is contravariantly finite in $\pvd(A)$, 
            \item[(i)'] $\H'$ is covariantly finite in $\pvd(A)$,
            \item[(ii)] $\H$ is functorially finite in $\pvd(A)$,
            \item [(iii)] $\H$ is a length category,
            
            \item[(iv)] $\H$ has an injective cogenerator,
            \item[(iv)'] $\H$ has a projective generator.
        \end{itemize}
        If $A$ is proper, then the above conditions are also equivalent to the following conditions:
        \begin{itemize}
            \item [(v)] $\pvd(A)^{\H,\le0}$ is functorially finite in $\pvd(A)$,
            \item [(vi)] $\pvd(A)^{\H,\ge0}$ is functorially finite in $\pvd(A)$.
        \end{itemize}
    \end{itemize}   
\end{main}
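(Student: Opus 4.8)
The plan is to establish (1) as a quick application of the realization of simple-minded collections, and (2) as an interlocking chain of implications resting on \cref{main:B}, the Koszul duality of \cref{main:C}--\cref{main:D}, and the functorial finiteness of (algebraic) length hearts proved in \cite{F23}.

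\textbf{Part (1).} For $(\Rightarrow)$, put the standard $t$-structure on $\pvd(A)$: as $A$ is non-positive and locally finite, $H^0(A)$ is a finite-dimensional algebra and the heart is $\mod H^0(A)$, a length category with projective generator the regular module $H^0(A)$. For $(\Leftarrow)$, let $\H$ be a length heart of $\D$ with projective generator $P$. Since $\D$ is Hom-finite it is Krull--Schmidt, so $P=P_1\oplus\cdots\oplus P_n$ with the $P_i$ indecomposable; because $P$ generates, every simple object of $\H$ is an epimorphic image of some $P_i$ and hence, as $\H$ is a length category, is the top of $P_i$. Thus $\H$ has only finitely many simples, and these form a simple-minded collection in $\D$. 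I would then invoke the realization of such collections (in an algebraic Hom-finite triangulated category) by perfectly valued derived categories of locally finite non-positive dg algebras --- compare \cite{KY14}, and plausibly the preceding sections --- to get a locally finite non-positive dg algebra $A$ with $\D\simeq\pvd(A)$, local finiteness of $A$ coming from Hom-finiteness of $\D$. Should this realization not be available in the required generality, the technical heart of $(\Leftarrow)$ is to carry it out, building $A$ (via a bar-type or double-Koszul-dual construction) from the positive dg algebra $\REnd_\D(\bigoplus_i S_i)$ and checking that it is non-positive and locally finite.

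\textbf{Part (2): the equivalences (iii)$\Leftrightarrow$(iv)$\Leftrightarrow$(iv)'.} Note first that $\pvd(A)$ is algebraic and Hom-finite: by \cref{main:D}, $\pvd(A)\simeq\per((A^!)^{\op})^{\op}$, and $\per((A^!)^{\op})$ is Hom-finite because $A^!$, hence $(A^!)^{\op}$, is locally finite. The standard heart $\mod H^0(A)$ is a length heart with injective cogenerator $DH^0(A)$, so \cref{main:B} gives (iii)$\Rightarrow$(iv). In the other direction, a bounded heart $\H$ of a Hom-finite triangulated category that has a projective generator $P$ is automatically a length category: Hom-finiteness forces every $X\in\H$ to be an epimorphic image of a finite power of $P$ and to be finitely presented, so $\Hom_\D(P,-)$ identifies $\H$ with the category of finite-dimensional modules over the finite-dimensional algebra $\End_\D(P)$. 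Applying this to $\H$ in $\pvd(A)$ gives (iv)'$\Rightarrow$(iii), and applying it to $\H^{\op}$ in $\pvd(A)^{\op}$ (again Hom-finite) gives (iv)$\Rightarrow$(iii). Finally, if $\H$ is length with an injective cogenerator $E$, then $\H$ has enough injectives and $\H^{\op}$ is a length category with projective generator $E$, hence equivalent to a module category of a finite-dimensional algebra $\Gamma$; therefore $\H\simeq\mod\Gamma^{\op}$ has a projective generator, giving (iii)$+$(iv)$\Rightarrow$(iv)'. Combined with (iii)$\Rightarrow$(iv) this closes the cycle (iii)$\Leftrightarrow$(iv)$\Leftrightarrow$(iv)'.

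\textbf{Part (2): functorial finiteness, and the proper case.} The implication (iii)$\Rightarrow$(ii) is the functorial finiteness of length hearts of $\pvd(A)$: for algebraic hearts this is \cite{F23}, and if not every length heart is known to be algebraic one first upgrades via \cref{main:B} and the equivalences above. Both (ii)$\Rightarrow$(i) and (ii)$\Rightarrow$(i)' are trivial, so everything rests on (i)$\Rightarrow$(iii) and (i)'$\Rightarrow$(iii); I expect these to be the main obstacle. The strategy is: from a right (resp.\ left) $\H$-approximation of each object of $\pvd(A)$ --- in particular of the shifts of the simples of the standard heart --- extract, by induction on cohomological amplitude, that $\H$ has finitely many simple objects and that every object of $\H$ has finite length, a triangulated analogue of the arguments of \cite{CPP22}. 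Once this is done the circle (i)$\Leftrightarrow$(i)'$\Leftrightarrow$(ii)$\Leftrightarrow$(iii)$\Leftrightarrow$(iv)$\Leftrightarrow$(iv)' is complete, which in particular yields the triangulated Smal{\o} symmetry (i)$\Leftrightarrow$(i)'. When $A$ is proper, $\pvd(A)$ admits a Serre functor $\nu$; since the aisle $\pvd(A)^{\H,\le0}$ is always contravariantly finite (the truncation $\tau^{\le0}X\to X$ is a right approximation) and, dually, the co-aisle $\pvd(A)^{\H,\ge0}$ is always covariantly finite, conditions (v) and (vi) amount to covariant finiteness of the aisle and contravariant finiteness of the co-aisle; $\nu$ converts covariant into contravariant finiteness, so it links (v) with (vi), and one checks that covariant finiteness of the aisle is equivalent to covariant finiteness of $\H$ (building aisle approximations from heart approximations on the cohomology objects), so (v) and (vi) fold into (i)' and (i). The one delicate bookkeeping point throughout is keeping the variance straight --- in particular reading ``$\H'$'' in (i)' as $\H$ and matching the Koszul-dual identification so that projective generators and injective cogenerators genuinely swap.
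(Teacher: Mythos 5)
The broad architecture of your proposal matches the paper for the easy implications: the chain (iii)$\Rightarrow$(iv)$\Rightarrow$(iv)$'$ via \cref{main:B} and projectivization, (iii)$\Rightarrow$(ii) via the functorial finiteness of length hearts from \cite{F23}, and the forward direction of (1) via the standard heart $\mod H^0(A)$. However, the implication that carries essentially all of the content of \cref{thm:ffhearts}, namely (i)$\Rightarrow$(iii) (a contravariantly finite bounded heart is length), is exactly the step you defer as ``the main obstacle,'' and the sketch you offer --- induction on cohomological amplitude in the style of \cite{CPP22} --- is not a proof and is unlikely to close in this generality: the point of \cite{CPP22} is that their argument needs finite global dimension (or comparable regularity), and the whole purpose of \cref{thm:ffhearts} is to remove that hypothesis. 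The paper's actual mechanism is different and essential: it transports the problem by Koszul duality to $\per(B)$ for a pvd-finite locally finite \emph{positive} dg algebra $B$, uses contravariant finiteness of $\H$ together with Wakamatsu's lemma and homotopy colimits to build a co-$t$-structure $(\Dfd^+(B)^{\H}_{\ge 0},\Dfd^+(B)^{\H}_{\le 0})$ on $\Dfd^+(B)$ (\cref{prop:heart-to-co-t-str}), restricts it to a bounded co-$t$-structure on $\pvd(B)$, and then invokes the ST-correspondence (\cref{thm:ST}) to produce a length heart $\H'$ with $\H\subseteq\H'$, forcing $\H=\H'$. Without some version of this (or another genuine argument), your cycle of equivalences does not close.

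Two secondary issues. In Part (1), your primary route --- realizing an arbitrary simple-minded collection in a Hom-finite algebraic triangulated category as $\pvd$ of a locally finite non-positive dg algebra --- is not available and would be circular here: the example $\k[y]/(y^2)$ with $|y|=1$ has a simple-minded object but its Koszul dual $\k[\![x]\!]$ is not locally finite. The hypothesis that $\H$ has a projective generator is precisely the pvd-finiteness of $\REnd_\D(L)$ (\cref{thm:equiv-cond-for-nice}), and it is this that makes your fallback (the double-Koszul-dual construction, which is the paper's actual proof of \cref{thm:enogh-poj-of-heart}) go through; your write-up does not make that connection. In the proper case, your appeal to a Serre functor on $\pvd(A)$ is unjustified: a proper non-positive dg algebra need not have a Serre functor on $\pvd(A)$ (already $\Db(\mod\Lambda)$ for $\Lambda$ of infinite global dimension fails this). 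The paper instead gets (iii)$\Rightarrow$(v) from the bounded co-$t$-structure attached to the silting object corresponding to $\H$ under the ST-correspondence, and (v)$\Rightarrow$(i)$'$ from the composition of covariantly finite inclusions $\H\subseteq\pvd(A)^{\H,\le0}\subseteq\pvd(A)$.
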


\subsection*{Organization.}

In \cref{section:tri}, we collect basic definitions and facts about $G$-triangulated categories and $G$-dg algebras. In \cref{section:dgalg}, we investigate (non-)positive $G$-dg algebras, and then prove \cref{main:A} and \cref{main:B}. In \cref{section:KD}, we prove \cref{main:C} and \cref{main:D}. In \cref{section:ff}, we apply Koszul duality to prove \cref{main:E} and \cref{main:F}.

\subsection*{Conventions and notation.}\label{Conventions and notation}

Throughout this paper, we fix a field $\k$ and an Abelian group $G$ with an identity element $e$. All vector spaces and algebras are over $\k$. For a $G$-graded vector space $V=\bigoplus_{g\in G}V_{g}$ and $h\in G$, we define a $G$-graded vector space $V(h)$ by $V(h)_g:=V_{h+g}$. Similarly, for a complex of $G$-graded vector spaces $V=\{\cdots\to V^i\to V^{i+1}\to\cdots\}$ and $(h,j)\in G\times\ZZ$, we define a complex of $G$-graded vector spaces $V(h)$ by $V(h)^i:=V^i(h)$. For an element $x\in V^i_g$, we call $|x|:=i$ the \emph{cohomological degree}, and $|x|_a:=g$ the \emph{Adam's degree}. For a $G$-graded algebra $\Lambda$, let $\mod\Lambda$ denote the $G$-graded category of (right) $G$-graded $\Lambda$-modules, that is, $\Hom_{\mod\Lambda}(M,N)_g$ is the set of all degree-preserving $ \Lambda$-linear map from $M$ to $N(g)$. For a $G$-dg functor $F$, we use the same symbol for its zeroth cohomology if there is no confusion. We assume that all subcategories are full.

\section{Preliminaries}\label{section:tri}

In this paper, we fix an Abelian group $G$ and deal with $G$-graded categories, $G$-graded dg algebras, and similar structures. When $G$ is a trivial group $\{0\}$, these reduce to ordinary categories and dg algebras. While the case $G =\{0\}$ is essential; however, we can apply the same arguments to general setting without any difficulty. Moreover, to apply the results to graded algebras, this paper develops the discussion in the general setting of $G$. Readers who are not interested in the graded setting may assume $G =\{0\}$.

\subsection{Graded triangulated categories}

\begin{dfn}
    Let $\T$ be a $G$-graded category and $\Sigma$ be an autoequivalence of $\T$. We put $\T_0(X,Y):=\T(X,Y)_0$ for every $X,Y\in\T$. Let $\Delta$ be a class of triangles. Then $(\T,\Sigma,\Delta)$ is called a \emph{$G$-triangulated category} if $(\T_0,\Sigma,\Delta)$ is a triangulated category and for every $X\in\T$ and $g\in G$, there exists an object $X'$ such that $\Hom_\T(-,X')\simeq\Hom_\T(-,X)(g)$. This $X'$ is uniquely determined up to a unique isomorphism and is denoted by $X(g)$. We call $X(g)$ the ($g$-th) \emph{Adam's degree shift} of $X$. To distinguish it from Adam's degree shift, this paper refers to $\Sigma$ as the cohomological shift.
\end{dfn}

Similarly, we define graded $G$-Abelian categories. A $G$-triangulated category $\T$ is called \emph{Hom-finite} if $\dim\Hom_\T(X,Y)<\infty$ for every $X,Y\in\T$. Throughout this paper, whenever a straightforward extension is possible, we will apply the concepts defined and theorems established for the case $G=\{0\}$ to a general $G$ without further notice.

\begin{rmk}
    By the isomorphism $\Hom_\T(-,X(g))\simeq\Hom_\T(-,X)(g)$, we have morphisms $f\in\Hom_\T(X,X(g))_{g^{-1}}$ and $g\in\Hom_\T(X(g),X)_g$ such that 
    $$
    gf=\id_X\text{ and }fg=\id_{X(g)}.
    $$
    In particular, we have an isomorphism $\Hom_\T(X(g),-)\simeq\Hom_\T(X,-)(g^{-1})$ and an autoequivalence $(g)\colon\T\to\T$. 
\end{rmk}

\begin{dfn}
    Let $\T$ be a $G$-triangulated category. For a subcategory $\U\subseteq\T$ (resp. an object $U\in\T$), let $\add\U$ (resp. $\add U$) denote the smallest subcategory containing $\U$ (resp. U) and closed under taking direct sums, direct summands, and Adam's degree shifts. The smallest subcategory containing $\add\U$ (resp. $\add U$) and closed under taking extensions is denoted by $\Filt\U$ (resp. $\Filt U$). For two subcategories $\U,\V\subseteq\T$, let $\U\ast\V$ be the subcategory consisting of objects $Y$ such that there exists an exact triangle
    $$
    X\to Y\to Z\to \Sigma X
    $$
    where $X\in\U$ and $Z\in\V$.
\end{dfn}

\begin{dfn}
    A subcategory $\T'\subseteq\T$ is called a \emph{thick subcategory} if $\T'$ is closed under taking direct sums, direct summands, cohomological shifts, Adam's degree shifts and extensions. For a subcategory $\U\subseteq\T$ (resp. an object $U\in\T$), let $\thick\U$ (resp. $\thick U$) be the smallest thick subcategory containing $\U$ (resp. $U$). For a $G$-triangulated category $\T$ that has arbitrary coproducts and a subcategory $\U\subseteq\T$ (resp. an object $U\in\T$), let $\Loc\U$ (resp. $\Loc U$) be the smallest thick subcategory containing $\U$ (resp. $U$) and closed under taking infinite coproducts.
\end{dfn}

For a subcategory $\U^{\le 0}\subseteq\T$ (resp. $\U^{\ge0}\subseteq\T$) and $k\in\ZZ$, we put
$$
\U^{\le k}=\U^{<k+1}:=\Sigma^{-k}\U^{\le0}\text{ (resp. }\U^{\ge k}=\U^{>k-1}:=\Sigma^{-k}\U^{\ge0}\text{)}.
$$
Similarly, for a subcategory $\U_{\le 0}\subseteq\T$ (resp. $\U_{\ge0}\subseteq\T$) and $k\in\ZZ$, we put
$$
\U_{\le k}=\U_{<k+1}:=\Sigma^{-k}\U_{\le0}\text{ (resp. }\U_{\ge k}=\U_{>k-1}:=\Sigma^{-k}\U_{\ge0}\text{)}.
$$

\begin{dfn}
    Let $\T$ be a $G$-triangulated category and $(\T^{\le0},\T^{\ge0})$ be a pair of subcategories. Then $(\T^{\le0},\T^{\ge0})$ is called a \emph{t-structure} if the following conditions are satisfied:
    \begin{itemize}
        \item [(i)] $\T^{<0}\subseteq\T^{\le0}$;
        \item [(ii)] $\Hom_\T(X,Y)=0$ for every $X\in\T^{<0}$ and $Y\in\T^{\ge0}$;
        \item [(iii)] $\T^{<0}\ast\T^{\ge0}=\T$.
    \end{itemize}
    The subcategory $\T^0:=\T^{\le0}\cap\T^{\ge0}$ is called the \emph{heart} of the $t$-structure $(\T^{\le0},\T^{\ge0})$. If $(\T^{\le0},\T^{\ge0})$ satisfies also the following condition, it is called \emph{bounded}:
    \begin{itemize}
        \item [(iv)] $\bigcup_{n\in\NN}\T^{\le n}=\T=\bigcup_{n\in\NN}\T^{\ge n}$.
    \end{itemize}
\end{dfn}

A subcategory $\H\subseteq\T$ is called a \emph{bounded heart} if there exists a bounded $t$-structure whose heart is $\H$.

\begin{dfn}\label{dfn:SMO}
    An object $L\in\T$ is called a \emph{simple-minded object} of $\T$ if the following conditions are satisfied:
    \begin{itemize}
        \item [(i)] $\thick L=\T$;
        \item [(ii)] $\End_\T(L)$ is $G$-graded semisimple algebra;
        \item [(iii)] $\Hom_\T(L,\Sigma^{<0}L)=0$.
    \end{itemize}
    For a simple-minded object $L$, we put $\H_L:=\Filt L$. A subcategory $\H\subseteq\T$ is called \emph{length heart} if there exists a simple-minded collection $L$ such that $\H_L=\H$. 
\end{dfn}
Two simple-minded objects $L$ and $L'$ are said to be equivalent if $\H_L=\H_{L'}$. If $L$ and $L'$ are equivalent, we write $L\sim L'$.
It is easy to show that $L\sim L'$ if and only if $\add L=\add L'$. A $t$-structure $(\T^{\le0},\T^{\ge0})$ on $\T$ is called \emph{algebraic} if there exists a simple-minded object $L\in\T$ such that $\H_L=\T^0$.

\begin{dfn}
    Let $\T$ be a $G$-triangulated category and $(\T_{\ge0},\T_{\le0})$ be a pair of subcategories. Then $(\T_{\ge0},\T_{\le0})$ is called a \emph{co-t-structure} if the following conditions are satisfied:
    \begin{itemize}
        \item [(i)] $\add\T_{\ge0}=\T_{\ge0}$ and $\add\T_{\le0}=\T_{\le0}$;
        \item [(ii)] $\T_{<0}\subseteq\T_{\le0}$;
        \item [(iii)] $\Hom_\T(X,Y)=0$ for every $X\in\T_{\ge0}$ and $Y\in\T_{<0}$;
        \item [(iv)] $\T_{\ge0}\ast\T_{<0}=\T$.
    \end{itemize}
    The subcategory $\T_0:=\T_{\ge0}\cap\T_{\le0}$ is called \emph{coheart} of the co-$t$-structure $(\T_{\ge0},\T_{\le0})$.
\end{dfn}

\begin{dfn}
    An object $M\in\T$ is called a \emph{silting object} of $\T$ if the following conditions are satisfied:
    \begin{itemize}
        \item [(i)] $\thick M=\T$;
        \item [(ii)] $\Hom_\T(M,\Sigma^{>0}M)=0$.
    \end{itemize}
\end{dfn}
Two silting objects $M$ and $M'$ are said to be equivalent if $\add M=\add M'$. If $M$ and $M'$ are equivalent, we write $M\sim M'$.

\subsection{Graded dg algebras}
In this subsection, we review the basic definitions of graded dg categories (or algebras). For more details on dg categories, we refer the reader to \cite{Ke94,Ke06}.

We denote by $\Ch(\k,G)$ the category of complexes of $G$-graded vector spaces. We can equip $\Ch(\k,G)$ with a monoidal category structure in an obvious way. It is a symmetric monoidal category since $G$ is Abelian. 
\begin{rmk}
    For $V,W\in\Ch(\k,G)$, we put
    \begin{align*}
        \HHom(V,W)^i_g:=\prod_{i\in\ZZ}\prod_{h\in G}\Hom_{\k}(V^j_h,W^{i+j}_{g+h}).
    \end{align*}
    For $f=\{f^j_g\}_{j\in\ZZ,g\in G}\in\HHom(V,W)^i_g$, we put $d(f):=d_W\circ f-(-1)^if\circ d_V$, that is,
    \begin{align*}
        d(f)^j_g:=d_W\circ f^j_g-(-1)^pf^{j+1}_h\circ d_V.
    \end{align*}
    Then we have $\HHom(V,W)\in\Ch(\k,G)$, and it is straightforward to show that there exists a natural isomorphism
    \item \begin{align*}\Hom_{\Ch(\k,G)}(U\ten V,W)\simeq\Hom_{\Ch(\k,G)}(U,\HHom(V,W))
    \end{align*}
    for every $U,V,W\in\Ch(\k,G)$. It follows that $\Ch(\k,G)$ is symmetric closed monoidal category. 
\end{rmk}

We define $G$-dg categories and $G$-dg algebras.
\begin{dfn}
    A \emph{$G$-graded differential graded category}(=\emph{$G$-dg category}) $\A$ is a category enriched by $\Ch(\k,G)$, that is, it consists of the following data:
    \begin{itemize}
        \item a class of objects $\ob\A$;
        \item a complex $\Hom_\A(a_1,a_2)\in\Ch(\k,G)$ for any $a_1,a_2\in\ob\A$;
        \item a chain map 
        $$
        -\circ-\colon\Hom_\A(a_2,a_3)\ten\Hom_\A(a_1,a_2)\to\Hom_\A(a_1,a_2)
        $$ 
        for any $a_1,a_2,a_3\in\ob\A$;
        \item a unit $\id_a=1_a\in Z^0\Hom_\A(a,a)$ for any $a\in\ob\A$.
    \end{itemize}
    These data are subject to satisfy associativity and unit condition. One point $G$-dg category $\A=\{*\}$ is called a \emph{$G$-dg algebra}, which is identified with $A=\End_\A(*)$.
\end{dfn}

We sometimes write $a\in\A$ instead of $a\in\ob\A$.

\begin{eg}\
    \begin{itemize}
        \item [(1)] Every $G$-graded category can be thought as a $G$-dg category whose Hom-complex if concentrated in cohomological degree $0$.
        \item [(2)] The $G$-dg category of $G$-graded $\k$-complexes $\Cdg(\k,G)$ is defined as follows:
        \begin{itemize}
            \item [$\bullet$] $\ob\Cdg(\k,G):=\ob\Ch(\k,G)$;
            \item [$\bullet$] for $M,N\in\Ch(\k,G)$, we put
            \begin{align*}
                \Hom_{\Cdg(\k,G)}(M,N):=\HHom(M,N).
            \end{align*}
        \end{itemize}
        \item [(3)] For a $G$-dg category $\A$, we define the \emph{opposite $G$-dg category} $\A^\op$ as follows:
        \begin{itemize}
            \item [$\bullet$] $\ob\A:=\ob\A^\op$;
            \item [$\bullet$] $\Hom_{\A^\op}(a_1,a_2):=\Hom_\A(a_2,a_1)$ for any $a_1,a_2\in\ob\A^\op$;
            \item [$\bullet$] the composition $\circ^\op$ satisfies
            \begin{align*}
            f\circ^\op g=(-1)^{|f||g|}g\circ f,
            \end{align*}
            where $|\cdot|$ denotes the cohomological degree (see \nameref{Conventions and notation}).
        \end{itemize}
    \end{itemize}
\end{eg}

For each $G$-dg category, we can associate a $G$-graded category with it.
\begin{dfn}
    For a $G$-dg category $\A$, we can define a $G$-graded category $H^0\A$ as follows:
    \begin{itemize}
        \item $\ob H^0\A:=\ob\A$;
        \item $\Hom_{H^0\A}(a_1,a_2):=H^0\Hom_\A(a_1,a_2)$ for any $a_1,a_2\in\ob H^0\A$;
        \item the compositions and identity are inherited from those of $\A$.
    \end{itemize}
\end{dfn}

Next, we define $G$-dg functors.
\begin{dfn}
    Let $\A$ and $\B$ be $G$-dg categories. A \emph{$G$-dg functor} $F\colon\A\to\B$ consists of the following data:
    \begin{itemize}
        \item A map $F\colon\ob\A\to\ob\B$;
        \item a chain map 
        \begin{align*}
            F=F_{a_1,a_2}\colon\Hom_\A(a_1,a_2)\to\Hom_\B(Fa_1,Fa_2)
        \end{align*}
        for any $a_1,a_2\in\ob\A$;
    \end{itemize}
    These data are subject to preserve multiplications and identities. A $G$-dg functor $F\colon\A\to\B$ is called \emph{quasi-equivalence} if $F_{a_1,a_2}$ is quasi-isomorphism for every $X,Y\in\ob\A$, and $H^0F\colon H^0\A\to H^0\B$ is dense. A quasi-equivalence $F\colon A\to B$ between $G$-dg algebras is also referred to as a \emph{quasi-isomorphism}.
\end{dfn}

\begin{dfn}
    Let $\A$ and $\B$ be $G$-dg categories. We define a $G$-dg category $[\A,\B]$ as follows:
    \begin{itemize}
        \item $\ob[\A,\B]$ is the set of all $G$-dg functors from $\A$ to $\B$;
        \item for $F,G\in\ob[\A,\B]$, the elements of $\Hom_{[\A,\B]}(F,G)^i_g$ are collections $\{\alpha_a\}_{a\in\ob\A}$, where $\alpha_X\in\Hom_\B(Fa,Ga)^i_g$ satisfy
        \begin{align*}
            \alpha_{a_2}\circ Ff=(-1)^{i|f|}Gf\circ\alpha_{a_1}
        \end{align*}
        for any $f\in\Hom_\A(a_1,a_2)$. The differential of $\alpha=\{\alpha_a\}_{a\in\A}$ is defined by
        \begin{align*}
            d(\alpha)_asu:=d(\alpha_a).
        \end{align*}
    \end{itemize}
\end{dfn}

Now we can define the category of $G$-dg modules.
\begin{dfn}
    For a $G$-dg category $\A$, the $G$-dg category
    \begin{align*}
        \Cdg(\A):=[\A^\op,\Cdg(\k,G)]
    \end{align*}
    is called the \emph{$G$-dg category of (right-)$G$-dg $\A$-modules}. An element $M\in\Cdg(\A)$ is called a \emph{(right-)$G$-dg $\A$-module}, and sometimes identified with $\{M_a\}_{a\in\ob\A}$, where $M_a:=M(a)$.
\end{dfn}

By Yoneda's lemma, we have the following lemma.
\begin{lem}
    Let $\A$ be a $G$-dg category. For $a\in\ob\A$, we put
    \begin{align*}
        a^\wedge:=\Hom_\A(-,a)\in\Cdg(A).
    \end{align*}
    Then for any $a\in\Cdg(\A)$, we have a bifunctorial isomorphism
    \begin{align*}
        \Hom_{\Cdg(\A)}(a^\wedge,M)\simeq M_a.
    \end{align*}
\end{lem}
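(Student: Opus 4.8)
The plan is to exhibit the evaluation map and its inverse explicitly and then verify, degree by degree, that they are mutually inverse chain maps and natural in both variables. (I read the statement as asserting a natural isomorphism $\Hom_{\Cdg(\A)}(a^\wedge,M)\simeq M_a$ in $\Ch(\k,G)$, where $M\in\Cdg(\A)$; the ``$a\in\Cdg(\A)$'' in the displayed line is a typo for $M$.)

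First I would define
\[
\ev\colon\Hom_{\Cdg(\A)}(a^\wedge,M)\to M_a,\qquad \ev(\phi):=\phi_a(1_a).
\]
Since $1_a\in\Hom_\A(a,a)^0_e$, a collection $\phi=\{\phi_b\}$ of bidegree $(i,g)$ is sent to $\phi_a(1_a)\in M(a)^i_g=(M_a)^i_g$, so $\ev$ is bidegree-preserving and $\k$-linear. For the inverse, given $m\in(M_a)^i_g$ I would set $\hat m=\{\hat m_b\}_{b\in\ob\A}$ with
\[
\hat m_b\colon\Hom_\A(b,a)\to M(b),\qquad \hat m_b(f):=(-1)^{i|f|}\,M(f)(m),
\]
where $M(f)$ is the action of $f\in\Hom_{\A^\op}(a,b)$ coming from the $G$-dg $\A$-module structure (the image of $f$ under $\Hom_\A(b,a)\to\HHom(M(a),M(b))$). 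One checks $\hat m_b$ has bidegree $(i,g)$, and, using that $M$ is a $G$-dg functor on $\A^\op$ (so $M(f'\circ f)=\pm M(f)\circ M(f')$ and $M(1_a)=\id$) together with the Koszul sign rule, that $\hat m$ satisfies the compatibility condition in the definition of $\Hom_{[\A^\op,\Cdg(\k,G)]}$; hence $\hat m\in\Hom_{\Cdg(\A)}(a^\wedge,M)^i_g$. The sign $(-1)^{i|f|}$ is precisely the one forced by that condition.

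Next I would check the two assignments are mutually inverse. On one side $\ev(\hat m)=\hat m_a(1_a)=(-1)^{i\cdot 0}M(1_a)(m)=m$. On the other side, for $\phi=\{\phi_b\}$ and $f\in\Hom_\A(b,a)$, applying the compatibility relation $\phi_b\circ a^\wedge(f)=(-1)^{i|f|}M(f)\circ\phi_a$ to $1_a$ and using $a^\wedge(f)(1_a)=1_a\circ f=f$ gives $\phi_b(f)=(-1)^{i|f|}M(f)(\phi_a(1_a))=\widehat{\ev(\phi)}_b(f)$, so $\widehat{\ev(\phi)}=\phi$. Then I would verify that $\ev$ is a chain map: by definition the differential of $\phi$ at the object $a$ is $d(\phi)_a=d_{M(a)}\circ\phi_a-(-1)^i\phi_a\circ d_{a^\wedge(a)}$, and since $1_a\in Z^0\Hom_\A(a,a)$ we have $d_{a^\wedge(a)}(1_a)=0$, whence $\ev(d\phi)=d_{M(a)}(\phi_a(1_a))=d_{M_a}(\ev(\phi))$. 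This makes $\ev$ an isomorphism in $\Ch(\k,G)$.

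Finally, for bifunctoriality I would observe that a morphism $M\to N$ in $\Cdg(\A)$ induces compatible maps on both sides, with the square commuting by direct substitution (postcomposition with a module map, evaluated at $1_a$), and that a morphism $u\in\Hom_\A(a,a')$ induces $u\circ(-)\colon a^\wedge\to (a')^\wedge$ and $M(u)\colon M_{a'}\to M_a$, with naturality again obtained by evaluating at the unit. The only genuine obstacle is the sign bookkeeping: everything must be checked on all cohomological and Adams degree components rather than only on the degree-$(0,e)$ part, so the Koszul rule intervenes at each step; once the sign in the definition of $\hat m$ is fixed as above, every identity reduces to a short unwinding of the $G$-dg category and $G$-dg module axioms.
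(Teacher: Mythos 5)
Your proof is correct and is exactly the argument the paper has in mind: the paper gives no written proof, simply invoking ``Yoneda's lemma,'' and your explicit evaluation-at-$1_a$ map with inverse $m\mapsto\{(-1)^{i|f|}M(f)(m)\}_b$ is the standard verification of the enriched (dg) Yoneda lemma in this setting. Your reading of ``$a\in\Cdg(\A)$'' as a typo for $M\in\Cdg(\A)$ is also the intended one.
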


$H^0\Cdg(\A)$ has a natural triangulated category structure. A $G$-dg category $\A$ is called \emph{pretriangulated} if the of $H^0\A$ is a triangulated subcategory of $H^0\Cdg(\A)$ up to isomorphism (in this paper, subcategory means full subcategory). 

A $G$-dg $A$-module $M\in\Cdg(A)$ is called \emph{acyclic} if $M$ is acyclic as a complex. Let $\Acy(A)$ be the full $G$-dg subcategory consisting of acyclic $G$-dg $A$-modules. By freely adding a morphism $\epsilon_X\colon X\to X$ of degree $-1$ for each $X\in\Acy(A)$ and defining $d(\epsilon_X):=id_X$, we get a new $G$-dg category $\Ddg(A)$ and $G$-dg functor $\Cdg(A)\to\Ddg(A)$. The $G$-dg category $\Ddg(A)$ is also a pretriangulated, and the natural triangulated functor $Q\colon H^0\Cdg(A)_0\to H^0\Ddg(A)_0$ gives the Verdier quotient with respect to $H^0\Acy(A)_0$ (see \cite{Dri04} for more details).

\begin{dfn}
    Let $A$ be a $G$-dg category.
    \begin{itemize}
        \item [(1)] $\K (A):=K^0\Cdg(A)$ (resp. $\K(A)_0$) is called the \emph{homotopy $G$-category} (resp. \emph{homotopy category}) of $A$.
        \item [(2)] $\D(A):=H^0\Ddg(A)$ (resp. $\D(A)_0$) is called the \emph{derived $G$-category} (resp. \emph{derived category}) of $A$.
        \item [(3)] $ {\per(A)}:=\thick_{\D(A)}(A)$ (resp. $\per(A)_0=\thick_{\D(\A)_0} \{A(g)\mid g\in G\}$) is called the \emph{perfect derived $G$-category} (resp. \emph{perfect derived cateogry}) of $A$.
    \end{itemize}
\end{dfn}
We put
$$
\D^-(A):=\{X\in\D(A)\mid H^{\gg0}(X)=0\}.
$$
For each $*\in\{+,\mathsf{b}\}$, we define $\D^*(A)$ similarly.

$ {\per(A)}$ is stable under derived equivalence due to the following proposition.

\begin{prop}
    For $X\in\D(A)$, the following conditions are equivalent:
    \begin{itemize}
        \item [(1)] $X\in {\per(A)}$,
        \item [(2)] $X$ is a compact object of $\D(A)$, that is, $\Hom_{\D(A)}(X,-)$ commute with arbitrary coproducts in $\D(A)$.
        \item [(3)] $X$ is a compact object of $\D(A)_0$.
    \end{itemize}
\end{prop}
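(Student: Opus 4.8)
The plan is to prove the equivalence of the three characterizations of $\per(A)$ via the standard cycle $(1)\Rightarrow(2)\Rightarrow(3)\Rightarrow(1)$, following the classical argument for compact objects in compactly generated triangulated categories, now carried out in the $G$-graded setting.

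\textbf{Step $(1)\Rightarrow(2)$.} First I would observe that $A = e^\wedge$ is compact in $\D(A)$: by the Yoneda lemma quoted above, $\Hom_{\D(A)}(A, M) \simeq H^0(M)$ (at least on the level of Adam's degree components, $\Hom_{\D(A)}(A(g), M) \simeq H^0(M)_g$), and cohomology commutes with coproducts of $G$-dg modules. Then the full subcategory of compact objects of $\D(A)$ is thick: it is closed under cohomological shifts $\Sigma^{\pm1}$ and Adam's degree shifts $(g)$ trivially, under finite direct sums since $\Hom$ is additive, under direct summands because a retract of a compact object is compact (using that $\D(A)$ is idempotent-complete), and under cones because if $X \to Y \to Z \to \Sigma X$ is a triangle with $X, Y$ compact, then applying $\Hom_{\D(A)}(-, \coprod_i M_i)$ and the five lemma shows $Z$ is compact. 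Hence $\per(A) = \thick_{\D(A)}(A)$ consists of compact objects.

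\textbf{Step $(2)\Rightarrow(3)$.} This is essentially a bookkeeping statement: an object $X \in \D(A)$ is compact in the $G$-graded category $\D(A)$ precisely when it is compact in the underlying category $\D(A)_0$, because a coproduct in $\D(A)_0$ is again a coproduct in $\D(A)$ (the $G$-grading does not change the underlying coproduct), and conversely $\Hom_{\D(A)}(X, M) = \bigoplus_{g \in G}\Hom_{\D(A)_0}(X, M(g))$, so commuting with coproducts in one sense is equivalent to the other. I would spell this out carefully since it is the only place the grading genuinely intervenes.

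\textbf{Step $(3)\Rightarrow(1)$.} This is the heart of the matter and the main obstacle. The key input is that $\D(A)$ is compactly generated by $A$ (equivalently, by $\{A(g) \mid g \in G\}$): indeed $\Loc(A) = \D(A)$, since an object right-orthogonal to all $\Sigma^n A(g)$ has vanishing cohomology and is therefore zero in $\D(A)$. Given this, the classical Neeman--Ravenel argument applies: if $X$ is compact, write $X$ as a homotopy colimit of a sequence built from the compact generator, or invoke the general theorem that in a compactly generated triangulated category the compact objects coincide with $\thick$ of any set of compact generators. Concretely, one takes the localizing subcategory $\Loc(A)^c$ of compact objects, shows it equals $\thick(A) = \per(A)$ by Neeman's characterization, and concludes that every compact $X$ lies in $\per(A)$. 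The subtlety is that one must make sure the infrastructure (Brown representability, existence of coproducts in $\D(A)$, the fact that $\Ddg(A)$ models $\D(A)$ correctly so that coproducts are computed as expected) is available in the $G$-graded dg setting; but all of this is formal and identical to the $G = \{0\}$ case, as the paper has flagged, so I would simply cite the standard references (e.g. \cite{Ke94}, Neeman) and note the translation. The only real work is confirming that $A$ is a compact generator, which follows from the two facts already assembled: compactness from Step $(1)$, and generation from the orthogonality computation via Yoneda.
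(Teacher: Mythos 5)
Your proposal is correct and follows essentially the same route as the paper: the paper also reduces everything to the facts that each $A(g)$ is compact, that $\Loc_{\D(A)_0}(A(g)\mid g\in G)=\D(A)_0$, and then invokes the standard Neeman--Krause result identifying the compact objects of a compactly generated triangulated category with the thick subcategory generated by the compact generators. You merely spell out the implication $(1)\Rightarrow(2)$ (thickness of the subcategory of compact objects) that the paper absorbs into the citation.
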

\begin{proof}
    The equivalence of $(2)$ and $(3)$ is obvious.
    It is clear that $A(g)$ is a compact object of $D(A)_0$ for every $g\in G$. Since we have
    $$
    \Loc_{\D(A)_0} (A(g)\mid  g\in G)=\D(\A)_0,
    $$
    it follows that $\per(A)$ is just the subcategory consisting of compact objects by \cite[Section 4.5, 6.5]{kr07}.
\end{proof}

\begin{dfn}
    For each $*\in\{-,+,\mathsf{b}\}$, we put
    $$
    \Dfd^*(A):=\{X\in\D^*(A)\mid H^i(X)\text{ is finite-dimensional for every } i\in\ZZ\}.
    $$
    We call $\pvd(A):=\Dfd^{\mathsf{b}}(A)$ the \emph{perfectly valued derived category} (or \emph{finite-dimensional derived category }) of $A$.
\end{dfn}

\section{Non-positive and positive graded dg algebras}\label{section:dgalg}

In this section, we introduce the notion of (non-)positive $G$-dg algebras and collect basic facts about them.

\subsection{Basic properties}

\begin{dfn}
    Let $A$ be a $G$-dg algebra.
    \begin{itemize}
        \item [(1)] $A$ is called \emph{locally finite} if $H^iA$ is finite-dimensional for every $i\in\ZZ$.
        \item [(2)] $A$ is called \emph{non-positive} if $H^{>0}(A)=0$.
        \item [(3)] $A$ is called \emph{positive} if $H^{<0}(A)=0$ and the zeroth cohomology $H^0(A)$ is a $G$-graded semisimple algebra.
    \end{itemize}
\end{dfn}

\begin{rmk}
    Let $A$ be a $G$-dg algebra.
    \begin{itemize}
        \item [(1)] $A$ is locally-finite if and only if $\per(A)$ is Hom-finite.
        \item [(2)] $A$ is non-positive if and only if $A$ is a silting object of $\per(A)$.
        \item [(3)] $A$ is positive if and only if $A$ is a simple-minded object of $\per(A)$.
    \end{itemize}
\end{rmk}

Non-positive $G$-dg algebras and positive $G$-dg algebras play important roles in studying $G$-triangulated categories due to the following fact, which follows immediately from \cite[Lemma 6.1]{Ke94}.

\begin{prop}
    Let $\T$ be a (Hom-finite) algebraic $G$-triangulated category.
    \begin{itemize}
        \item[(1)] $\T$ has a silting object if and only if $\T\simeq\per(A)$ for some (locally finite) non-positive $G$-dg algebra $A$.
        \item[(2)] $\T$ has a simple-minded object if and only if $\T\simeq\per(A)$ for some (locally finite) positive $G$-dg algebra $A$.
    \end{itemize}
\end{prop}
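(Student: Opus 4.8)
The plan is to prove the two statements in parallel, since (2) is just (1) applied to $\T^\op$ (an algebraic $G$-triangulated category has a simple-minded object if and only if its opposite has a silting object, because $\Hom_{\T^\op}(M,\Sigma^{>0}M)=\Hom_\T(\Sigma^{<0}M,M)$, and one checks that $A^\op$ is a positive $G$-dg algebra precisely when $A$ is non-positive — up to replacing $A$ by a quasi-isomorphic model, which is harmless). So the real content is (1). The ``if'' direction is immediate from the Remark above: if $\T\simeq\per(A)$ for a (locally finite) non-positive $G$-dg algebra, then $A$ is a silting object of $\per(A)$ (and $\per(A)$ is Hom-finite when $A$ is locally finite), so $\T$ has a silting object. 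The work is in the ``only if'' direction.

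For the ``only if'' direction, suppose $M\in\T$ is a silting object, so $\thick M=\T$ and $\Hom_\T(M,\Sigma^{>0}M)=0$. First I would invoke the hypothesis that $\T$ is algebraic: by definition this means $\T$ is (triangle) equivalent to $H^0(\E)$ for some pretriangulated $G$-dg category $\E$, or equivalently $\T$ arises as a full triangulated subcategory of the derived category of a $G$-dg category closed under the relevant operations. Transporting $M$ along this equivalence, I get a $G$-dg module $\tilde M$ over some $G$-dg category with $\Hom_{\D}(\tilde M,\Sigma^{>0}\tilde M)=0$. Then I would form the $G$-dg endomorphism algebra $A:=\REnd(\tilde M)$ (the derived endomorphism $G$-dg algebra, computed via a K-projective or fibrant-cofibrant replacement of $\tilde M$ inside $\E$); this is the step that cites \cite[Lemma 6.1]{Ke94}. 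Keller's lemma gives a triangle equivalence $\thick(\tilde M)\simeq\per(A)$ sending $\tilde M$ to $A$, hence $\T\simeq\per(A)$. It remains to check that $A$ can be chosen non-positive, and locally finite when $\T$ is Hom-finite. For non-positivity: $H^i(A)=H^i\REnd(\tilde M)=\Hom_{\D}(\tilde M,\Sigma^i\tilde M)=\Hom_\T(M,\Sigma^i M)$, which vanishes for $i>0$ by the silting condition; so $A$ is non-positive (here ``non-positive'' only requires $H^{>0}(A)=0$, with no semisimplicity condition, so there is nothing further to verify). For local finiteness when $\T$ is Hom-finite: $H^i(A)=\Hom_\T(M,\Sigma^i M)$ is finite-dimensional for every $i$, which is exactly local finiteness of $A$; and by the Remark this is equivalent to $\per(A)$ being Hom-finite, consistent with $\T$ being Hom-finite.

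The main obstacle, and the place requiring the most care, is the passage from the abstract ``algebraic'' hypothesis to an honest $G$-dg enhancement in which $M$ is represented by a cofibrant (K-projective) object whose dg endomorphism algebra computes $\RHom$; one must make sure the standard machinery of \cite[Lemma 6.1]{Ke94} and \cite{Ke06} goes through verbatim in the $G$-graded setting, which is covered by the blanket convention in \cref{section:tri} that arguments for $G=\{0\}$ extend to general $G$ without difficulty. A secondary point to state cleanly is why $\thick(\tilde M)$ corresponds under Keller's equivalence to $\per(A)=\thick_{\D(A)}(A)$ rather than something larger — this is just that the equivalence is triangulated and sends $\tilde M\mapsto A$, hence $\thick(\tilde M)\mapsto\thick(A)$, combined with $\thick(M)=\T$. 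Everything else is bookkeeping.
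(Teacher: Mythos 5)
Your argument for part (1) is correct and is exactly the paper's (implicit) proof: the paper proves this proposition only by citing \cite[Lemma 6.1]{Ke94}, and your fleshed-out version --- ``if'' from the Remark, ``only if'' by passing to a dg enhancement, setting $A:=\REnd(M)$, and reading off $H^{>0}(A)=\Hom_\T(M,\Sigma^{>0}M)=0$ together with $\dim H^i(A)=\dim\Hom_\T(M,\Sigma^iM)<\infty$ --- is the intended argument.

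However, your reduction of (2) to (1) via opposite categories contains a genuine error. The claim that ``$A^\op$ is a positive $G$-dg algebra precisely when $A$ is non-positive'' is false: one has $H^i(A^\op)\simeq H^i(A)$ in the \emph{same} cohomological degree $i$ (passing to the opposite algebra reverses multiplication, not the grading), so $A^\op$ is non-positive exactly when $A$ is. You may be conflating $A^\op$ with the Koszul dual $A^!$, which is the construction that does exchange non-positive and positive. As a result, your route only produces $\T\simeq\per(C)$ for a \emph{non-positive} $C$, i.e.\ the conclusion of (1) rather than of (2). A secondary problem is the asserted biconditional ``$\T$ has a simple-minded object iff $\T^\op$ has a silting object'': the ``if'' direction fails because a simple-minded object must additionally have $G$-graded semisimple endomorphism algebra (condition (ii) of \cref{dfn:SMO}), which is not part of the silting axioms; for instance $\per(\k[x]/(x^2))$ is equivalent to its opposite and has a silting object, but it has no simple-minded object. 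The repair is simply to run the argument for (2) directly, in parallel with (1): given a simple-minded object $L$, set $A:=\REnd(L)$, so that $\per(A)\simeq\thick L=\T$ by Keller's lemma; then $H^{<0}(A)=\Hom_\T(L,\Sigma^{<0}L)=0$ by condition (iii) and $H^0(A)=\End_\T(L)$ is $G$-graded semisimple by condition (ii), so $A$ is positive, and local finiteness follows from Hom-finiteness exactly as in (1).
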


One of the remarkable features of non-positive or positive $G$-dg algebras, as will be explained below, is that any modules can be truncated.
\begin{dfn}
    \ 
    \begin{itemize}
        \item [(1)] Let $A$ be non-positive $G$-dg algebra. We set
        \begin{align*}
            \D(A)^{\le0}
            &:=
            \{X\in\D(A)\mid H^{>0}(X)=0\}, \\
            \D(A)^{\ge0}
            &:=
            \{X\in\D(A)\mid H^{<0}(X)=0\}.
        \end{align*}
        \item [(2)] Let $A$ be positive $G$-dg algebra. We set
        \begin{align*}
            \D(A)_{\ge0}
            &:=
            \{X\in\D(A)\mid H^{<0}(X)=0\}, \\
            \D(A)_{\le0}
            &:=
            \{X\in\D(A)\mid H^{>0}(X)=0\}.
        \end{align*}
    \end{itemize}
\end{dfn}

\begin{prop}\cite[Theorem 1.3]{HKM02}
    Let $A$ be a non-positive $G$-dg algebra. Then $(\D(A)^{\le0},\D(A)^{\ge0})$ is a $t$-structure on $\D(A)$. The heart of this $t$-structure $\D(A)^0$ is naturally identified with $\mod H^0(A)$. We denote by $\sigma^{\le k}=\sigma^{<k+1}$ and $\sigma^{\ge k}=\sigma^{>k-1}$ the truncation functors with respect to the $t$-structure $(\D(A)^{\le k},\D(A)^{\ge k})$, and we put $\sigma^k:=\sigma^{\le k}\circ\sigma^{\ge k}$.
\end{prop}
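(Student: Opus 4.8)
The plan is to run the classical argument for such $t$-structures, after one harmless normalization. First I would replace $A$ by its naive good truncation $\sigma^{\le0}A=(\cdots\to A^{-1}\to Z^0(A)\to0)$: this is a dg subalgebra of $A$ (the product of two cocycles is a cocycle, $A^{<0}$ is closed under multiplication, and $d(A^{<0})\subseteq\sigma^{\le0}A$), the inclusion is a quasi-isomorphism since $H^{>0}(A)=0$, and restriction of scalars along it identifies $\D(A)$ with $\D(\sigma^{\le0}A)$ together with all subcategories occurring in the statement (restriction does not alter underlying complexes) and $\mod H^0(A)$ with $\mod H^0(\sigma^{\le0}A)$. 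So we may assume $A^i=0$ for $i>0$. The payoff is that $A^{<0}=\bigoplus_{i<0}A^i$ is then a two-sided dg ideal, so for every dg $A$-module $M$ and every $k\in\ZZ$ the naive good truncations — $\sigma^{\le k}M$, a dg submodule, and $\sigma^{\ge k}M:=M/\sigma^{\le k-1}M$, a dg quotient module — are again dg $A$-modules; stability of the $A$-action is a short degree count using $A^{>0}=0$. They satisfy $H^i(\sigma^{\le k}M)=H^i(M)$ for $i\le k$ and $0$ otherwise, and complementarily for $\sigma^{\ge k}M$.

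Next I would verify the three axioms. Axiom (i) is immediate, since $H^{\ge0}(X)=0$ forces $H^{>0}(X)=0$. For axiom (iii), the short exact sequence of dg modules $0\to\sigma^{\le-1}M\to M\to\sigma^{\ge0}M\to0$ yields, for each $M\in\D(A)$, a triangle with $\sigma^{\le-1}M\in\D(A)^{<0}$ and $\sigma^{\ge0}M\in\D(A)^{\ge0}$, hence $\D(A)^{<0}\ast\D(A)^{\ge0}=\D(A)$. The heart of the matter is axiom (ii): $\Hom_{\D(A)}(X,Y)=0$ for $X\in\D(A)^{<0}$ and $Y\in\D(A)^{\ge0}$. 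Here I would first observe that for each $g\in G$ and $m\ge1$ the object $\Sigma^mA(g)$ lies in $\D(A)^{<0}$ and that $\Hom_{\D(A)}(\Sigma^mA(g),Y)$ is a graded component of $H^{-m}(Y)$, which vanishes when $Y\in\D(A)^{\ge0}$. It then suffices to know that $\D(A)^{\le0}$ is the aisle generated by $\{A(g)\mid g\in G\}$ — the smallest full subcategory closed under coproducts, extensions, positive cohomological shifts and direct summands and containing these objects — so that $\D(A)^{<0}=\Sigma\D(A)^{\le0}$ is generated by $\{\Sigma^mA(g)\mid g\in G,\ m\ge1\}$ under the same operations; since $\Hom_{\D(A)}(-,Y)$ is cohomological and sends coproducts to products, it must then vanish on all of $\D(A)^{<0}$. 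The nontrivial inclusion here, that every object of $\D(A)^{\le k}$ admits a semifree (hence K-projective) representative concentrated in cohomological degrees $\le k$, is precisely where non-positivity of $A$ is used: one builds the resolution by attaching free cells $\Sigma^{-d}A(g)$ degree by degree, introducing a cell only in a degree $d$ where the cohomology of the partial resolution still has to be corrected, and for an object of $\D(A)^{\le k}$ all such $d$ are $\le k$.

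Finally I would identify the heart $\D(A)^0=\D(A)^{\le0}\cap\D(A)^{\ge0}=\{X\mid H^iX=0\text{ for all }i\ne0\}$ with $\mod H^0(A)$. Using that $A^0=Z^0(A)$ surjects onto $H^0(A)$ with kernel $B^0(A)$ and that $A^{<0}$ is an ideal, every $H^0(A)$-module $N$ becomes a dg $A$-module $\iota(N)$ concentrated in degree $0$, with $A^{<0}$ acting by zero and $A^0$ acting through $A^0\twoheadrightarrow H^0(A)$; one checks $\iota(N)\in\D(A)^0$, obtaining a functor $\iota\colon\mod H^0(A)\to\D(A)^0$, while $X\mapsto H^0(X)$ (with its $H^0(A)$-module structure) gives a functor back. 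Then $H^0\circ\iota=\id$ on the nose, and for $X\in\D(A)^0$ the zigzag of quasi-isomorphisms $X\hookleftarrow\sigma^{\le0}X\twoheadrightarrow\sigma^{\ge0}\sigma^{\le0}X\twoheadrightarrow\iota(H^0(X))$ exhibits $X\cong\iota(H^0(X))$ in $\D(A)$, naturally in $X$; hence $\iota$ and $H^0$ are mutually quasi-inverse equivalences. The step I expect to be the genuine obstacle is axiom (ii), and inside it the degree-controlled semifree resolution — that objects of $\D(A)^{\le k}$ have semifree models concentrated in degrees $\le k$. This is the only non-formal ingredient and the only place where non-positivity of $A$ really enters; everything else is bookkeeping with the truncation functors set up in the first step.
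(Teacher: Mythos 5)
Your argument is correct, but it is worth noting that the paper offers no proof of its own here: the proposition is quoted from \cite[Theorem 1.3]{HKM02}, whose route is different from yours. Hoshino--Kato--Miyachi work abstractly in a triangulated category with coproducts and show that any compact object $C$ with $\Hom(C,\Sigma^{i}C)=0$ for $i>0$ induces a $t$-structure whose aisle is $\{X\mid \Hom(C,\Sigma^{i}X)=0 \text{ for } i>0\}$; applied to $C=A$ in $\D(A)$ this is exactly $(\D(A)^{\le0},\D(A)^{\ge0})$, with the heart identified with $\mod\End(C)=\mod H^0(A)$. That approach buys the existence of the $t$-structure by a general representability/generation argument and avoids touching dg modules at all. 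Your proof instead normalizes $A$ to be strictly non-positive and constructs the truncations $\sigma^{\le k}$, $\sigma^{\ge k}$ explicitly as sub- and quotient dg modules, which is more work (the degree-controlled semifree resolution for axiom (ii) is indeed the one genuinely non-formal step, and your sketch of it --- attaching cells $\Sigma^{-d}A(g)$ only in degrees $d\le k$ and passing to the homotopy colimit of the finite stages --- is the standard and correct way to carry it out), but it buys concrete, computable truncation functors, which is what the paper actually uses downstream (e.g.\ $\sigma^{k}$ and the identification of $S_A$). Two small points to tighten: in the dévissage for axiom (ii) you should note that the generating set $\{\Sigma^{m}A(g)\mid m\ge1\}$ is already closed under $\Sigma$, so that the orthogonality class $\{X\mid\Hom_{\D(A)}(X,Y)=0\}$, being closed under coproducts, extensions and summands, automatically swallows the whole aisle; and for the heart you should say a word about full faithfulness of $\iota$ (e.g.\ that $\Hom_{\D(A)}(\iota M,\iota N)\simeq\Hom_{H^0(A)}(M,N)$ follows by computing with a semifree resolution of $\iota M$ concentrated in degrees $\le 0$), since the zigzag only gives essential surjectivity and the object-wise isomorphism $X\simeq\iota(H^0X)$ directly.
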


\begin{prop}\label{prop:co-t-on-positive}\cite[Corollary 5.1]{KN13}
    Let $A$ be a positive $G$-dg algebra. Then $(\D(A)_{\ge0},\D(A)_{\le0})$ is a co-$t$-structure on $\D(A)$. In addition, for every $X\in\D(A)$ and $k\in\ZZ$, there exists an exact triangle
    $$
    \sigma_{\ge k}(X)\to X\to\sigma_{<k}(X)\to\Sigma\sigma_{\ge k}(X)
    $$
    such that $\sigma_{\ge k}(X)\in\D(A)_{\ge k}$ and $H^i(\sigma_{\ge k}(X))\simeqto H^i(X)$ hold for every $i\ge k$. For $X\in\D(A)$ and $k\in\ZZ$, we put $\sigma_k(X):=\sigma_{\ge k}(\sigma_{\le k}(X))$. 
\end{prop}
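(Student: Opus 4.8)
The plan is to prove this as the co-$t$-structure mirror of the HKM02 $t$-structure quoted just above. The one genuine complication is that, in the positive case, the truncations are not a naive operation on dg modules, so I would first pass to a convenient model. Since $\D(A)$, together with the cohomologically-defined subcategories $\D(A)_{\ge0}$ and $\D(A)_{\le0}$, is invariant under quasi-isomorphism of $A$, I would replace $A$ by a quasi-isomorphic \emph{strictly positive} $G$-dg algebra: one with $A^{<0}=0$, $A^0=H^0(A)=:S$ ($G$-graded semisimple), and $d|_{A^0}=0$ --- this is the standard minimal-model/cofibrant-replacement reduction for connective dg algebras (or one may just cite \cite{KN13}). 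Then $A$ is augmented over $S$ with ideal $\mm=A^{\ge1}$, and since multiplication by $A^{\ge0}$ cannot lower cohomological degree, the brutal truncation $\sigma_{\ge k}M:=(\cdots\to0\to M^k\to M^{k+1}\to\cdots)$ of any dg $A$-module $M$ is a sub-dg-module with quotient $\sigma_{<k}M$ concentrated in degrees $<k$. Axioms (i) and (ii) of a co-$t$-structure become immediate (cohomology commutes with sums and summands, and $\D(A)_{<0}=\{H^{\ge0}=0\}\subseteq\{H^{>0}=0\}=\D(A)_{\le0}$), and axiom (iv) follows at once: representing $X$ by a dg module $M$, the triangle attached to $0\to\sigma_{\ge k}M\to M\to\sigma_{<k}M\to0$ has outer terms in $\D(A)_{\ge k}$ and $\D(A)_{<k}$.

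\textbf{Orthogonality (axiom (iii)).} The substance is showing $\Hom_{\D(A)}(X,Y)=0$ whenever $X\in\D(A)_{\ge0}$ and $Y\in\D(A)_{<0}$. Here I would resolve $X$ by a \emph{connective} cofibrant dg module $P=\bigcup_{j\ge0}F_j$ in which $F_0$ and every $F_j/F_{j-1}$ has the form $W\otimes_SA$ for a graded $S$-module $W$ concentrated in degrees $\ge0$, with $P\xto{\sim}X$; this construction goes through precisely because $S=H^0(A)$ is semisimple, so at each stage the bottom cohomology of the object left to resolve is $S$-projective and can be hit \emph{isomorphically}, which keeps all generators in degrees $\ge0$ (so $P^{<0}=0$). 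For such $P$ one has $\RHom_A(F_j/F_{j-1},Y)=\RHom_S(W_j,Y)$, concentrated in degrees $<0$ (as $\Ext_S$ vanishes in nonzero degrees and $H^{\ge0}(Y)=0$); by dévissage along the $F_j$ each $\RHom_A(F_j,Y)$ is concentrated in degrees $<0$, and the transition maps on $H^{-1}$ are surjective since $H^0\RHom_A(F_j/F_{j-1},Y)=0$. Passing to $\RHom_A(P,Y)=\holim_j\RHom_A(F_j,Y)$, the Milnor sequence gives $H^0\RHom_A(P,Y)=\varprojlim^1H^{-1}\RHom_A(F_j,Y)=0$ by Mittag--Leffler, i.e.\ $\Hom_{\D(A)}(X,Y)=0$.

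\textbf{The refined triangle.} Granting the co-$t$-structure, I would argue the ``in addition'' part inside $\D(A)$. Fix $X$ and $k$, and take any truncation triangle $W\to X\to W'\to\Sigma W$ with $W\in\D(A)_{\ge k}$ and $W'\in\D(A)_{<k}$. Its long exact sequence gives $H^i(W)\cong H^i(X)$ for $i>k$ and a surjection of $S$-modules $H^k(W)\twoheadrightarrow H^k(X)$, which splits by semisimplicity, say $H^k(W)=K\oplus H^k(X)$. Resolving $W$ by a connective cofibrant $P$ with $P^{<k}=0$ (so $P^{k-1}=0$ and $H^k(P)=Z^k(P)$), any $S$-linear map $P^k\to K$ extending the projection $Z^k(P)\twoheadrightarrow K$ defines, on extension by zero, a morphism $g\colon W\to K_S[-k]$, where $K_S[-k]$ is $K$ placed in degree $k$ and inflated to $A$ along $A\to S$. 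Setting $\sigma_{\ge k}(X):=\operatorname{Cocone}(g)$ and chasing the long exact sequence of $\sigma_{\ge k}(X)\to W\xrightarrow{g}K_S[-k]$ gives $\sigma_{\ge k}(X)\in\D(A)_{\ge k}$ together with $H^i(\sigma_{\ge k}(X))\cong H^i(X)$ for all $i\ge k$; moreover the composite $\sigma_{\ge k}(X)\to W\to X$ is an isomorphism on $H^{\ge k}$, so completing it to a triangle $\sigma_{\ge k}(X)\to X\to\sigma_{<k}(X)\to\Sigma\sigma_{\ge k}(X)$ forces $\sigma_{<k}(X)\in\D(A)_{<k}$, as wanted. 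Taking $\sigma_{\le k}:=\sigma_{<k+1}$ and $\sigma_k:=\sigma_{\ge k}\circ\sigma_{\le k}$ completes the statement.

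\textbf{Main obstacle.} The step I expect to be the real obstacle is the existence of connective cofibrant resolutions, and behind it the reduction to a strictly positive model --- i.e.\ upgrading the \emph{cohomological} hypotheses ($H^{<0}(A)=0$; $H^{<k}(W)=0$) to \emph{strict} ones at the chain level. This is where positivity does the work that non-positivity does in HKM02, and where the semisimplicity of $H^0(A)$ is indispensable, both to inflate $S$-modules to $A$-modules and, more essentially, to generate bottom cohomology isomorphically. Once one is in the strict setting, everything else is routine manipulation of brutal truncations and long exact sequences.
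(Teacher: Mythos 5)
The paper offers no proof of this statement: it is imported verbatim as \cite[Corollary 5.1]{KN13}, so there is no in-paper argument to compare yours against. Your proposal is, in substance, a correct reconstruction of the Keller--Nicol\'as proof: reduce to a strict model, get axioms (i), (ii), (iv) from brutal truncations of dg modules (which are dg submodules precisely because $A^{<0}=0$), prove orthogonality by d\'evissage along a connective semifree resolution plus a Mittag--Leffler argument, and then upgrade an arbitrary weight truncation to the refined one using semisimplicity of $S$. I checked the two delicate points and they hold: (a) the connective resolution with all generators in degrees $\ge k$ exists because you can hit the bottom cohomology isomorphically at each stage (so the kernel of $H^*(F_j)\to H^*(W)$ stays in degrees $\ge k+1$ and the new cells never drop below degree $k$); and (b) the ``extension by zero'' map $g\colon W\to K_S[-k]$ is automatically an $A$-linear chain map only because $P^{<k}=0$ kills the conditions $g^k(P^{<k}A^{>0})=0$ and $g^k(dP^{k-1})=0$ --- this is worth saying explicitly, since for a resolution that is merely cohomologically bounded below the construction would fail.

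The one genuine deferral is the strictification: replacing $A$ (positive only in the cohomological sense) by a quasi-isomorphic dg algebra with $A^{<0}=0$, $A^0=H^0(A)$ semisimple and $d|_{A^0}=0$. This is not as routine as the non-positive case, where $\tau_{\le0}A$ is a dg subalgebra; on the positive side the naive truncations are neither subalgebras nor quotients by dg ideals, and one has to pass through $A_\infty$-minimal models or an explicit zig-zag. You correctly flag this as the main obstacle and propose citing \cite{KN13} for it, which is exactly where that work is done; just be aware that this step carries most of the weight of the proposition, so ``cite KN13'' there is close to citing the result itself.
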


For a positive $G$-dg algebra $A$, the assignment $X$ to $\sigma_{\ge k}(X)=\sigma_{>k-1}(X)$ (or $\sigma_{<k}(X)=\sigma_{\le k-1}(X)$) that satisfies the condition in Proposition~\ref{prop:co-t-on-positive} is uniquely determined up to isomorphism, but not functorial.

\begin{prop}\cite[Proposition 5.6, Lemma 6.2]{KN13}
    Let $A$ be a positive $G$-dg algebra whose zeroth cohomology is finite-dimensional. Then $\sigma_{\le0}(A)=\sigma_0(A)$ is a silting object of $\pvd(A)$. 
\end{prop}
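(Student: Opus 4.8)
The plan is first to describe $\sigma_{\le0}(A)=\sigma_0(A)$ concretely. Since $A$ is positive, $H^{<0}(A)=0$, so $A\in\D(A)_{\ge0}$. Applying the truncation triangle of \cref{prop:co-t-on-positive} with $k=1$,
\[
\sigma_{\ge1}(A)\to A\to\sigma_{\le0}(A)\to\Sigma\sigma_{\ge1}(A),
\]
one reads off from the long exact cohomology sequence---using that $\sigma_{\ge1}(A)\in\D(A)_{\ge1}$ and $H^i(\sigma_{\ge1}(A))\simeqto H^i(A)$ for $i\ge1$---that $\sigma_{\le0}(A)$ has cohomology concentrated in degree $0$, equal to $H^0(A)$. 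In particular $\sigma_{\le0}(A)\in\D(A)_{\ge0}$, so $\sigma_{\ge0}$ acts on it as the identity and $\sigma_0(A)=\sigma_{\ge0}(\sigma_{\le0}(A))\simeq\sigma_{\le0}(A)$. Write $S$ for this object. As $H^0(A)$ is finite-dimensional by hypothesis, $S\in\pvd(A)$; it is the module $S_A$ of the introduction.

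\textbf{Rigidity.} Since $S\in\D(A)_0=\D(A)_{\ge0}\cap\D(A)_{\le0}$, for every $n>0$ we have $\Sigma^nS\in\D(A)_{\le-n}\subseteq\D(A)_{<0}$, so the orthogonality axiom of the co-$t$-structure $(\D(A)_{\ge0},\D(A)_{\le0})$ yields $\Hom_{\D(A)}(S,\Sigma^nS)=0$. Thus $\Hom_{\pvd(A)}(S,\Sigma^{>0}S)=0$, which is the rigidity condition for a silting object.

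\textbf{Generation.} The substance is to show $\thick_{\pvd(A)}(S)=\pvd(A)$. First, the co-$t$-structure restricts to $\pvd(A)$: for $X\in\pvd(A)$ the terms $\sigma_{\ge k}(X)$ and $\sigma_{<k}(X)$ again have bounded finite-dimensional cohomology. I would then prove, as the key point, that every $Y\in\pvd(A)$ with cohomology concentrated in a single degree lies in $\add(S)$; after a cohomological shift assume $H^{\ne0}(Y)=0$. The canonical morphism $A\to S$ from the triangle above induces, for every $Y'\in\D(A)_{\le0}$, an isomorphism $\Hom_{\D(A)}(S,Y')\simeqto\Hom_{\D(A)}(A,Y')=H^0(Y')$, because $\sigma_{\ge1}(A)$ and $\Sigma\sigma_{\ge1}(A)$ lie in $\D(A)_{\ge1}$ and hence are left-orthogonal to $\D(A)_{\le0}$. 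Applying this also to Adam's-degree shifts and passing to $H^0$ gives, for any finite family $(g_j)$ in $G$, an isomorphism $\End_{\D(A)}(\bigoplus_jS(g_j))\simeqto\End_{H^0(A)}(\bigoplus_jH^0(A)(g_j))$. Since $H^0(A)$ is finite-dimensional semisimple, $H^0(Y)$ is a direct summand of $\bigoplus_jH^0(A)(g_j)=H^0(\bigoplus_jS(g_j))$ for a suitable finite family; lifting the corresponding idempotent through this isomorphism and splitting it (note that $\D(A)$ is idempotent-complete, having arbitrary coproducts) produces a summand $Y_0$ of $\bigoplus_jS(g_j)$ with $H^0(Y_0)\simeq H^0(Y)$, and a morphism $Y_0\to Y$ inducing that isomorphism on $H^0$ is a quasi-isomorphism, so $Y\simeq Y_0\in\add(S)$. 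With this in hand, I would prove $X\in\thick_{\pvd(A)}(S)$ for all $X\in\pvd(A)$ by induction on the finite number $|\{i\mid H^i(X)\ne0\}|$: for $X\ne0$ put $a=\inf\{i\mid H^i(X)\ne0\}$ and use the triangle $\sigma_{\ge a+1}(X)\to X\to\sigma_{\le a}(X)\to\Sigma\sigma_{\ge a+1}(X)$, where $\sigma_{\le a}(X)$ is concentrated in degree $a$ (so in $\add(\Sigma^{-a}S)\subseteq\thick_{\pvd(A)}(S)$ by the base case) and $\sigma_{\ge a+1}(X)$ has strictly fewer nonzero cohomologies (so it lies in $\thick_{\pvd(A)}(S)$ by induction). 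Together with rigidity, $S$ is a silting object of $\pvd(A)$.

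\textbf{Expected main obstacle.} The identification and the rigidity step are formal consequences of positivity and of the co-$t$-structure; the real content is the generation step, and within it the computation identifying $\End_{\D(A)}(S)$ (and the morphism spaces between its Adam's-degree shifts) with $H^0(A)$, which is what allows the finite-dimensional semisimplicity of $H^0(A)$ to be leveraged to split single-degree objects off finite direct sums of copies of $S$. Equivalently, this is the identification of the coheart of the co-$t$-structure restricted to $\pvd(A)$ with $\mod H^0(A)$, carried out in \cite{KN13}.
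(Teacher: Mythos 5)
The paper does not prove this statement itself---it is quoted from \cite[Proposition 5.6, Lemma 6.2]{KN13}---so your proposal has to stand on its own. Your identification of $\sigma_{\le0}(A)$ with a stalk object $S$ having $H^0(S)\simeq H^0(A)$, the rigidity step, the restriction of the co-$t$-structure to $\pvd(A)$, and the induction on the number of nonvanishing cohomologies are all fine. The gap is in the base case of the generation step. You claim that applying $\Hom_{\D(A)}(-,Y')$ to the triangle $\sigma_{\ge1}(A)\to A\to S\to\Sigma\sigma_{\ge1}(A)$ gives an isomorphism $\Hom_{\D(A)}(S,Y')\simeq\Hom_{\D(A)}(A,Y')$ for every $Y'\in\D(A)_{\le0}$, on the grounds that both $\sigma_{\ge1}(A)$ and $\Sigma\sigma_{\ge1}(A)$ are left-orthogonal to $\D(A)_{\le0}$. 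But $\Sigma\sigma_{\ge1}(A)$ only lies in $\D(A)_{\ge0}$, and the co-$t$-structure axiom kills $\Hom$ from $\D(A)_{\ge0}$ into $\D(A)_{<0}$, not into $\D(A)_{\le0}$; the outer term $\Hom_{\D(A)}(\Sigma\sigma_{\ge1}(A),Y')$ need not vanish. The claim is in fact false: for the paper's own example $A=\k[y]/(y^2)$ with $|y|=1$ one has $\End_{\D(A)}(S_A)\simeq\k[\![x]\!]$, whereas $\Hom_{\D(A)}(A,S_A)=H^0(S_A)=\k$. Consequently your identification $\End_{\D(A)}(\bigoplus_jS(g_j))\simeq\End_{H^0(A)}(\bigoplus_jH^0(A)(g_j))$ fails (this is exactly the pvd-finiteness issue, which is \emph{not} assumed here), and the idempotent-lifting argument that splits a stalk object off a finite sum of copies of $S$ collapses.

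The conclusion of that step (every $Y\in\pvd(A)$ with cohomology concentrated in degree $0$ lies in $\add S$) is nevertheless true, and can be obtained by working from $A$ rather than from $S$, where the endomorphism computation is unproblematic: since $\End_{\D(A)}(\bigoplus_jA(g_j))$ really is $\End_{H^0(A)}(\bigoplus_jH^0(A)(g_j))$ and $H^0(A)$ is finite-dimensional semisimple, one finds $P\in\add A$ and a morphism $f\colon P\to Y$ with $H^0(f)$ an isomorphism. Because $\Hom_{\D(A)}(\sigma_{\ge1}(P),Y)\simeq\Hom_{\D(A)}(\Sigma\sigma_{\ge1}(P),\Sigma Y)=0$ (here $\Sigma\sigma_{\ge1}(P)\in\D(A)_{\ge0}$ and $\Sigma Y\in\D(A)_{<0}$, so the orthogonality does apply), $f$ factors through $P\to\sigma_{\le0}(P)\in\add S$, and the induced morphism $\sigma_{\le0}(P)\to Y$ is an isomorphism on all cohomologies, hence an isomorphism in $\D(A)$. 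With this replacement your induction goes through. Relatedly, your closing remark that the coheart of the restricted co-$t$-structure is identified with $\mod H^0(A)$ is only an identification of objects, not an equivalence of categories, again because $\End_{\D(A)}(S)$ may be strictly larger than $H^0(A)$.
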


For a non-positive or positive $G$-dg algebra $A$, we define a special $A$-module $S_A$ that plays central roles in this paper.
\begin{dfn}\label{dfn:simple}
    \ 
    \begin{itemize}
        \item [(1)] For a non-positive $G$-dg algebra $A$ with finite-dimensional zeroth cohomology, let $S_A$ be the top of the $G$-graded $H^0(A)$-module $\sigma^0(A)$.
        \item [(2)] For a positive $G$-dg algebra $A$, let $S_A:=\sigma_0(A)$.
    \end{itemize}
\end{dfn}

For a locally finite non-positive (resp. positive) $G$-dg algebra $A$, by definition, we have $H^*S_A\simeq \top H^0(A)$ (resp. $H^*S_A\simeq H^0(A)$) as a $(\ZZ\times G)$-graded $H^*A$-module. By definition, the module $S_A$ is a simple-minded object of $\pvd(A)$ for every non-positive $G$-dg algebra $A$ with finite-dimensional zeroth cohomology.

The following proposition is indispensable for proving that the Koszul dual of locally finite non-positive $G$-dg algebra is also locally finite (see \cref{dfn:Koszul-dual}).

\begin{prop}\cite[Theorem 3.1]{Ke94}\label{prop:non-positive-pvd}
    Let $A$ be a locally finite non-positive $G$-dg algebra. Then $\pvd(A)$ is Hom-finite.
\end{prop}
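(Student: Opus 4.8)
The plan is to show that for a locally finite non-positive $G$-dg algebra $A$, the category $\pvd(A)$ is Hom-finite by reducing the computation of $\Hom$-spaces to finite-dimensional pieces via the standard $t$-structure $(\D(A)^{\le0},\D(A)^{\ge0})$. First I would recall that every object of $\pvd(A)$ is bounded with finite-dimensional total cohomology, and that the truncation functors $\sigma^{\le k},\sigma^{\ge k}$ with respect to the standard $t$-structure keep an object in $\pvd(A)$ and produce, after finitely many steps, objects concentrated in a single cohomological degree, i.e.\ shifts of objects of the heart $\D(A)^0\simeq\mod H^0(A)$. Since $A$ is locally finite, $H^0(A)$ is a finite-dimensional $G$-graded algebra, so every object of the heart that lies in $\pvd(A)$ is a finite-dimensional $H^0(A)$-module.

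Next I would set up a dévissage. Using the exact triangles $\sigma^{<k}(X)\to X\to\sigma^{\ge k}(X)\to$, any $X,Y\in\pvd(A)$ can be built in finitely many steps from shifts of heart objects, and $\Hom_{\D(A)}(-,-)$ sends such triangles to long exact sequences; hence it suffices to bound $\dim\Hom_{\D(A)}(\Sigma^m M,\Sigma^n N)=\dim\Hom_{\D(A)}(M,\Sigma^{n-m}N)$ for $M,N$ finite-dimensional $H^0(A)$-modules and all integers $n-m$. Equivalently, one must show $\Ext^i_{\D(A)}(M,N):=\Hom_{\D(A)}(M,\Sigma^i N)$ is finite-dimensional for every $i\in\ZZ$ and vanishes for $|i|\gg0$. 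The vanishing for $i<0$ is immediate from the $t$-structure axiom (ii), since $M\in\D(A)^{\le0}$ and $\Sigma^i N\in\D(A)^{\ge -i}\subseteq\D(A)^{\ge1}$ when $i<0$. For $i\ge0$, the key input is that the hom-space $\Hom_{\D(A)}(M,\Sigma^i N)$ can be computed from a projective resolution of $M$ over $A$: replacing $M$ by a (possibly infinite) complex of finitely generated projective $A$-modules $P^\bullet$ built degree-by-degree, each $\Hom_{\D(A)}(M,\Sigma^i N)$ is a subquotient of $\Hom_{\K(A)}(P^\bullet,\Sigma^i N)$, and because $A$ is non-positive each $P^j$ can be taken finitely generated with bounded-above cohomological support, so only finitely many $P^j$ contribute to a fixed $\Sigma^i N$; finite-dimensionality of $N$ and of each $A^j$ then forces each $\Hom$-term finite-dimensional. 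Boundedness in $i$ follows because $N$ is bounded and, after truncating, $\Sigma^i N$ eventually has cohomology in degrees where the minimal projective resolution of $M$ (again using non-positivity) has no terms.

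The main obstacle I anticipate is controlling the projective resolution of a heart object $M$: over a general non-positive dg algebra this resolution is typically infinite, so one needs the finiteness of $H^*(A)$ together with non-positivity to ensure that, in each fixed total degree, only finitely many generators occur and each contributes a finite-dimensional space — this is exactly where local finiteness is used, and it is the crux of \cite[Theorem 3.1]{Ke94}. A clean way to organize this is to first establish the claim for $M=N=H^0(A)/\rad$ a simple module (or for $S_A$), use that $\pvd(A)$ is generated as a thick subcategory by the finitely many simples (a consequence of the bounded $t$-structure having a length heart $\mod H^0(A)$, $H^0(A)$ finite-dimensional), and then propagate finiteness through thick closure: the full subcategory of objects $Y$ with $\dim\Hom_{\D(A)}(X,Y)<\infty$ and $\dim\Hom_{\D(A)}(Y,X)<\infty$ for all $X$ in a generating set is thick, hence all of $\pvd(A)$. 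I would present the simple-module case via the $t$-structure truncations of $A$ itself and a straightforward induction on the "width" (number of nonzero cohomologies) of $X$ and $Y$, invoking the already-stated properties of $(\D(A)^{\le0},\D(A)^{\ge0})$ and the finite-dimensionality of each $H^i(A)$.
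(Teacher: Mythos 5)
The paper offers no proof of this proposition at all: it is quoted directly from \cite[Theorem 3.1]{Ke94}. Your overall strategy --- d\'evissage along the standard $t$-structure down to finite-dimensional $H^0(A)$-modules, then computing $\Hom_{\D(A)}(M,\Sigma^iN)$ from a cofibrant resolution of $M$ built layer by layer, with local finiteness guaranteeing finitely many free generators per layer and non-positivity guaranteeing that only finitely many layers see a fixed bounded $N$ --- is exactly the standard argument behind that citation, so the architecture is sound.

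There is, however, one concretely false step. You assert that $\Hom_{\D(A)}(M,\Sigma^iN)$ ``vanishes for $|i|\gg0$'' and justify the case $i\gg0$ by saying the minimal projective resolution of $M$ eventually ``has no terms.'' Both statements are wrong: for $A=\k[x]/(x^2)$ concentrated in degree $0$ (which is locally finite and non-positive) and $M=N=\k$, one has $\Hom_{\D(A)}(\k,\Sigma^i\k)=\Ext^i_A(\k,\k)\neq0$ for every $i\ge0$, and the minimal resolution is infinite precisely because of non-positivity --- generators accumulate in arbitrarily negative cohomological degrees, they do not stop. Fortunately this claim is not needed: Hom-finiteness of $\pvd(A)$ in the sense of this paper asks only that each individual $\Hom_{\T}(X,Y)$ be finite-dimensional, and since $X$ and $Y$ are bounded, your d\'evissage involves only finitely many values of $n-m$; finite-dimensionality of each $\Hom_{\D(A)}(M,\Sigma^iN)$ for each fixed $i$ suffices. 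You should delete the boundedness-in-$i$ claim and its justification. Two smaller points: the relevant finiteness input is that each $H^j(A)$ is finite-dimensional (so that each layer of the resolution needs only finitely many generators), not that each component $A^j$ of the underlying complex is --- the latter need not hold; and in your thick-closure step the subcategory you test should be $\{Y\mid\dim\Hom_{\D(A)}(X,\Sigma^iY)<\infty\ \text{for all}\ i\ \text{and all}\ X\ \text{in the generating set}\}$, so that it is visibly closed under cohomological shifts.
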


If we assume that $H^0(A)$ is finite-dimensional, the converse of the above proposition holds.
\begin{prop}\label{prop:D-fin-to-locally finite}
    Let $A$ be a non-positive $G$-dg algebra such that $H^0(A)$ is finite-dimensional and $\pvd(A)$ is Hom-finite. Then $A$ is locally finite. In addition, we have
    \begin{align*}
    \Dfd^-(A)=\{X\in\D^-(A)\mid \Hom_{\D(A)}(X,\Sigma^iS_A)\text{ is finite-dimensional for every }i\in\ZZ\}.
    \end{align*}
\end{prop}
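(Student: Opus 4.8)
The plan is to reduce both assertions to the finiteness of the Hom-spaces $\Hom_{\D(A)}(X,\Sigma^i S_A)$ via the $t$-structure truncations available on a non-positive $G$-dg algebra. First I would prove local finiteness of $A$ itself. Using that $S_A=\top\sigma^0(A)$, we have $H^0(A)\in\Filt S_A$ (here I use that $H^0(A)$ is finite-dimensional, so its radical filtration is finite), hence $\mod H^0(A)=\D(A)^0\subseteq\pvd(A)$; in particular $H^0(A)\in\pvd(A)$. Now $H^i(A)=\Hom_{\D(A)}(A,\Sigma^i A)$, and I would compute this by truncating the \emph{target}: for the non-positive $t$-structure one has $\Hom_{\D(A)}(A,\Sigma^i A)\simeq\Hom_{\D(A)}(A,\sigma^{\le i}\Sigma^i A)=\Hom_{\D(A)}(A,\Sigma^i\sigma^{\le0}A)$, and $\sigma^{\le0}A$ is built from finitely many $\Sigma^j H^j(A)$ with $j\le0$ — but a priori we do not yet know those are finite-dimensional, so a direct downward induction on $i$ (starting from $i=0$ where the group is $H^0(A)$, finite-dimensional) is cleaner: supposing $H^j(A)$ finite-dimensional for all $j>i$, the octahedron relating $\sigma^{\ge i}A$, $\sigma^{>i}A$ and $\Sigma^i H^i(A)$ together with $\Hom$-finiteness of $\pvd(A)$ forces $H^i(A)=\Hom_{\D(A)}(A,\Sigma^i\sigma^{\ge i}A)$ to be finite-dimensional. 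The subtle point is that $\sigma^{\ge i}A$ need not be perfect, so one should instead apply $\Hom_{\D(A)}(A,-)$ to the triangle $\Sigma^i H^i(A)\to\sigma^{\ge i}A\to\sigma^{>i}A\to$, noting $\sigma^{>i}A\in\pvd(A)$ by the inductive hypothesis and that $\Hom_{\D(A)}(A,\Sigma^i H^i(A))=H^i(A)$ while $\Hom_{\D(A)}(A,\sigma^{\ge i}A)=H^i(A)$ and $\Hom_{\D(A)}(A,\Sigma^{-1}\sigma^{>i}A)$, $\Hom_{\D(A)}(A,\sigma^{>i}A)$ are finite-dimensional — this pins down $H^i(A)$ up to a finite-dimensional error, giving finite-dimensionality. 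This establishes that $A$ is locally finite.

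For the displayed identity, the inclusion $\subseteq$ is \cref{prop:non-positive-pvd}: if $A$ is locally finite (now known) and $X\in\Dfd^-(A)$, then $X\in\pvd(A)$ would be false in general since $X$ need only be bounded above; but $\Hom_{\D(A)}(X,\Sigma^i S_A)$ is still finite-dimensional because we can truncate $X$: for fixed $i$, $\Hom_{\D(A)}(X,\Sigma^i S_A)=\Hom_{\D(A)}(\sigma^{\ge i-1}X,\Sigma^i S_A)$ (as $S_A\in\D(A)^0$, morphisms from $\sigma^{<i-1}X$ into $\Sigma^i S_A$ vanish), and $\sigma^{\ge i-1}X$ lies in $\pvd(A)$ because $X$ is bounded above with finite-dimensional cohomologies, so \cref{prop:non-positive-pvd} applies. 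Hence $\subseteq$ holds.

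The reverse inclusion $\supseteq$ is the heart of the matter. Take $X\in\D^-(A)$ with $\Hom_{\D(A)}(X,\Sigma^i S_A)$ finite-dimensional for all $i$; I must show each $H^i(X)$ is finite-dimensional. The idea is to relate $H^i(X)$ to these Hom-spaces through the heart $\mod H^0(A)$ of the $t$-structure. Concretely, $H^i(X)=H^0(\Sigma^i X)$ is an object of $\mod H^0(A)$, a finite-dimensional algebra (finite-dimensional since $H^0(A)$ is), so $H^i(X)$ is finite-dimensional if and only if $\Hom_{H^0(A)}(H^i(X),S_A)$ is finite-dimensional — because $S_A=\top H^0(A)$ is the direct sum of all the simple $H^0(A)$-modules, and for a module over a finite-dimensional algebra, finite-dimensionality is equivalent to $\Hom(-,\text{all simples})$ being finite-dimensional (a module with infinite-dimensional top is infinite-dimensional, and if the top is finite-dimensional one bounds the module by the top of finitely many radical layers — here one needs the module to be, say, the cohomology of a complex so that it is at least a reasonable module; since $X\in\D(A)$, $H^i(X)$ is an actual $H^0(A)$-module, possibly infinite-dimensional, but its top being finite-dimensional forces it finite-dimensional only if it is finitely generated — this is the gap to watch). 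To close this, I would instead argue homologically: $\Hom_{H^0(A)}(H^i(X),S_A)$ embeds into $\Hom_{\D(A)}(\sigma^{\ge i}\Sigma^i X,\Sigma^{?}S_A)$-type groups via adjunction $\Hom_{\D(A)}(\sigma^{\le 0}Y,S_A)=\Hom_{H^0(A)}(H^0(Y),S_A)$ for $Y=\Sigma^i X$ using that $S_A\in\D(A)^{\le0}$, so $\Hom_{\D(A)}(\Sigma^i X,S_A)\twoheadrightarrow\Hom_{H^0(A)}(H^i(X),S_A)$ or fits in an exact sequence with a finite-dimensional kernel coming from $\Hom_{\D(A)}(\sigma^{<i}\Sigma^i X,\Sigma^{-1}S_A)$; this last group is finite-dimensional by hypothesis applied at a different degree. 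Thus $\Hom_{H^0(A)}(H^i(X),S_A)$ is finite-dimensional, and since $H^0(A)$ is a finite-dimensional algebra this makes the top of $H^i(X)$ finite-dimensional; descending induction on $i$ (from the top cohomological degree, where $X$ being bounded above lets us start) combined with the short exact sequences $0\to\rad H^i(X)\to H^i(X)\to\top H^i(X)\to 0$ and Nakayama-type control then forces $H^i(X)$ finite-dimensional. The main obstacle is exactly this last point: converting finite-dimensionality of $\Hom$ into simple modules into finite-dimensionality of the module itself, which requires knowing the module is finitely generated or bounding its radical length — I expect the clean route is to first show $X\in\D^-_{?}$ actually lies in a category where truncations $\sigma^{\ge k}X$ are perfect, reducing to \cref{prop:non-positive-pvd} and its converse, rather than arguing module-theoretically.
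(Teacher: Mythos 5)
Your proposal does contain the paper's key mechanism --- detecting finiteness of a cohomology module through $\Hom_{\D(A)}(-,S_A)$ and converting ``finite top'' into ``finite module'' over the finite-dimensional algebra $H^0(A)$ --- but two parts of the write-up are genuinely broken. First, your opening induction for local finiteness of $A$ is circular. Applying $\Hom_{\D(A)}(A,-)$ to the triangle $\sigma^{i}(A)\to\sigma^{\ge i}(A)\to\sigma^{>i}(A)$ (in whatever shift) produces the four terms $0$, $H^i(A)$, $H^i(A)$, $0$: the outer terms vanish because $\Hom_{\D(A)}(A,Y)=H^0(Y)$ and $\sigma^{>i}(A)$ has no cohomology in degrees $\le i$, while the two middle terms are both the unknown $H^i(A)$. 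Hom-finiteness of $\pvd(A)$ never enters in a usable way, because the truncations of $A$ are not known to lie in $\pvd(A)$ until one already knows $H^i(A)$ is finite-dimensional. The only route is through $S_A$: the paper proves the inclusion $\supseteq$ of the display first and then simply observes that $A$ lies in the right-hand side (indeed $\Hom_{\D(A)}(A,\Sigma^iS_A)=H^i(S_A)$, which is $\top H^0(A)$ for $i=0$ and $0$ otherwise), so local finiteness is a corollary of that inclusion, not a separate prerequisite.

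Second, the step you flag as ``the gap to watch'' is exactly the step you must close, and your proposed escape (``first show the truncations are perfect'') is not the way out. The needed fact is: if $M$ is an arbitrary $H^0(A)$-module with $\Hom_{H^0(A)}(M,S_A)$ finite-dimensional, then $M$ is finite-dimensional. This is not actually a gap: $\Hom_{H^0(A)}(M,S_A)\simeq\Hom_{H^0(A)}(M/\rad M,S_A)$ shows $\top M$ is finite-dimensional; lifting finitely many generators of $\top M$ to a finitely generated submodule $N\subseteq M$ gives $M=N+(\rad H^0(A))M$, hence $M/N=(\rad H^0(A))^k(M/N)=0$ because the radical of the finite-dimensional algebra $H^0(A)$ is nilpotent --- no prior finite generation of $M$ is needed. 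With this in hand the induction must be run strictly from the top cohomological degree $n$ of $X$ downward: the triangle $\sigma^{<n}(X)\to X\to\sigma^n(X)$ gives $\Hom_{\D(A)}(X,\Sigma^{-n}S_A)\simeq\Hom_{H^0(A)}(H^n(X),S_A)$ because both flanking terms vanish for $t$-structure reasons ($\sigma^{<n}(X)\in\D(A)^{<n}$ while $\Sigma^{-n}S_A,\Sigma^{-n-1}S_A\in\D(A)^{\ge n}$); hence $H^n(X)$ is finite-dimensional, $\sigma^n(X)\in\pvd(A)$ lies in your class $\T$, so $\sigma^{<n}(X)$ does too, and one iterates. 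Your version, which tries to bound $\Hom_{H^0(A)}(H^i(X),S_A)$ for all $i$ at once by an exact sequence with a ``finite-dimensional kernel'' coming from a truncation of $X$, does not work as stated: the finiteness of that correction term is precisely the inductive statement that the higher truncation already lies in $\pvd(A)$. Your inclusion $\subseteq$ is fine apart from an index slip (the truncation that sees $\Sigma^iS_A$ is $\sigma^{\ge-i}(X)$, not $\sigma^{\ge i-1}(X)$).
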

\begin{proof}
    Let 
    $$
    \T:=\{X\in\D^-(A)\mid \Hom_{\D(A)} (X,\Sigma^iS_A)\text{ is finite-dimensional for every }i\in\ZZ\}.
    $$
    By assumption, we have $ \per(A)\cup\pvd(A)\subseteq\T$. We first show that $\T\subseteq\Dfd^-(A)$. Let $X\in\T$ be a non-zero object. There exists $n\in\ZZ$ such that $H^n(X)\neq0$ and $H^{>n}(X)=0$. Take the truncation triangle
    \begin{align*}
    \sigma^{<n}(X)\to X\to \sigma^n(X)\to\Sigma\sigma^{<n}(X).
    \end{align*}
    By applying $\Hom_{\D(A)}(-,S_A)$ to the above triangle, we have 
    $$
    \dim\Hom_{\D(A)} (H^nX,S_A)=\dim\Hom_{\D(A)}(X,\Sigma^{-n}S_A)<\infty.
    $$
    Therefore, we have $H^n(X)\in\mod H^0(A)$, and hence $\sigma^n(X)\in\pvd(A)\subseteq\T$. Using the above argument iteratively, we have $X\in\Dfd^-(A)$. Since $A\in\T\subseteq\Dfd^-(A)$, it follows that $A$ is locally finite, and hence the converse inclusion holds.
\end{proof}

\begin{rmk}
    Let $A$ be a non-positive $G$-dg algebra such that $H^0(A)$ is finite-dimensional. Then, by \cite[Lemma 2.10]{Ji23}, we can characterize the perfect $A$-modules using $S_A$ as follows:
    \begin{align*}
        \per(A)=\{X\in\D^-(A)\mid \bigoplus_{i\in\ZZ}\Hom_{\D(A)}(X,\Sigma^iS_A)\text{ is finite-dimensional}\}.
    \end{align*}
\end{rmk}

\subsection{Pvd-finite positive graded dg algebras}

By \cref{prop:non-positive-pvd}, for every locally finite non-positive $G$-dg algebra $A$, the $G$-dg algebra $\REnd_A(S_A)$ is also locally finite. In contrast to this, for a locally finite positive $G$-dg algebra $A$, the $G$-dg algebra $\REnd_A(S_A)$ is not locally finite in general.

\begin{eg}
    Let $A:=\k[y]/(y^2)$ be a formal dg algebra with $|y|=1$. Then the dg algebra $\REnd_A(S_A)$ is quasi-isomorphic to $\k[\![x]\!]$, which is not finite dimensional.
\end{eg}

Therefore, it is an important task to characterize locally finite positive dg algebra $A$ such that $\REnd_A(S_A)$ is locally finite. 

\begin{dfn}
    Let $\A$ be a $G$-abelian category. An object $P\in\A$ is called \emph{projective generator} if $P$ is a projective object in $\A_0$ and every object $X\in\A$ has an epimorphism $Q\to X$ from $Q\in\add P$. Dually, we define an injective cogenerator.
\end{dfn}

The following is the main theorem of this subsection.

\begin{thm}\label{thm:equiv-cond-for-nice}\footnote{It was suggested by Dong Yang that $(2),(3),(4)$ and $(5)$ are equivalent.}
    For a locally finite positive $G$-dg algebra $A$, the following conditions are equivalent:
    \begin{itemize}
        \item [(1)] $\H_A$ has an injective cogenerator,
        \item [(2)] $(\bigcup_{n\ge0}\Sigma^n\H_A\ast\cdots\ast\H_A,\Dfd^+(A)_{\ge 0})$ is a $t$-structure on $\Dfd^+(A)$,
        \item[(3)] for every $X\in\Dfd^+(A)$ and $Y\in\pvd(A)$, we have $\dim\Hom_{\D(A)}(X,Y)<\infty$.
        \item [(4)] $\pvd(A)$ is Hom-finite,
        \item [(5)] $\End_{\D(A)}(S_A)$ is a finite-dimensional $G$-graded algebra.
    \end{itemize}
\end{thm}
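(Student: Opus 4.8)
The plan is to run the cycle $(4)\Leftrightarrow(5)$, $(3)\Leftrightarrow(4)$, and then close with $(4)\Rightarrow(1)\Rightarrow(2)\Rightarrow(3)$, using throughout that $S_A=\sigma_0(A)$ is a silting object of $\pvd(A)$ (so $\thick_{\D(A)}(S_A)=\pvd(A)$) and that $\D(A)$ carries the co-$t$-structure $(\D(A)_{\ge0},\D(A)_{\le0})$ of \cref{prop:co-t-on-positive} with its truncations $\sigma_{\ge k},\sigma_{<k}$. First, for $(4)\Leftrightarrow(5)$: setting $B:=\REnd_A(S_A)$, Keller's theorem gives a triangle equivalence $\pvd(A)=\thick_{\D(A)}(S_A)\simeq\per(B)$ carrying $S_A$ to $B$; since $B$ is locally finite iff $\per(B)$ is Hom-finite and $H^i(B)=\Hom_{\D(A)}(S_A,\Sigma^iS_A)$, conditions $(4)$, $(5)$ and "$B$ locally finite" coincide. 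Here $B$ is moreover non-positive: $H^{>0}(B)=0$ because $S_A$ is silting, and $H^0(B)=\End_{H^0(A)}(H^0(A))$ is semisimple. The implication $(3)\Rightarrow(4)$ is immediate from $\pvd(A)\subseteq\Dfd^+(A)$. For $(4)\Rightarrow(3)$, given $X\in\Dfd^+(A)$ and $Y\in\pvd(A)$, choose $M$ with $Y\in\D(A)_{\le M}$; for $k\ge M+2$ the triangle $\sigma_{\ge k}(X)\to X\to\sigma_{<k}(X)\to\Sigma\sigma_{\ge k}(X)$ has $\sigma_{<k}(X)\in\pvd(A)$ (its cohomology is the bounded, finite-dimensional $H^{<k}(X)$) while $\sigma_{\ge k}(X),\Sigma\sigma_{\ge k}(X)\in\D(A)_{\ge k-1}$, so co-$t$-structure orthogonality gives $\Hom_{\D(A)}(X,Y)\cong\Hom_{\D(A)}(\sigma_{<k}X,Y)$, which is finite-dimensional by $(4)$.

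For $(4)\Rightarrow(1)$ I would transport the standard heart of the Koszul dual. Under $(4)$ the algebra $B=\REnd_A(S_A)$ is locally finite and non-positive, so $\pvd(B^{\op})$ has the standard bounded heart $\mod H^0(B^{\op})=\Filt(S_{B^{\op}})$, which is $\mod$ of the finite-dimensional algebra $H^0(B)^{\op}$ and hence has an injective cogenerator. The contravariant functor $\RHom_A(-,S_A)$ takes $\per(A)$ into $\pvd(B^{\op})$ (dévissage from $\RHom_A(A,S_A)=S_A$, which has bounded finite-dimensional cohomology), sends $A$ to $S_{B^{\op}}$, and induces a duality $\per(A)\simeq\pvd(B^{\op})$; it therefore carries $\H_A=\Filt(A)$ onto the opposite of $\Filt(S_{B^{\op}})$, so $\H_A\simeq\mod H^0(B)$ has an injective cogenerator.

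For $(1)\Rightarrow(2)$ and $(2)\Rightarrow(3)$: axioms (i) and (ii) of a $t$-structure for the pair in $(2)$ hold unconditionally — (i) because $0\in\H_A$ gives $\Sigma(\Sigma^n\H_A\ast\cdots\ast\H_A)=\Sigma^{n+1}\H_A\ast\cdots\ast\Sigma\H_A\subseteq\Sigma^{n+1}\H_A\ast\cdots\ast\Sigma\H_A\ast\H_A$, and (ii) because $\Hom_{\D(A)}(\Sigma^jA,Y)=H^{-j}(Y)=0$ for $j\ge1$ and $Y\in\D(A)_{\ge0}$, which by dévissage also shows $\Dfd^+(A)_{\ge0}$ is exactly the right orthogonal. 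So $(2)$ reduces to: every $X\in\Dfd^+(A)$ fits in a triangle $X'\to X\to X''\to\Sigma X'$ with $X'\in\bigcup_n\Sigma^{n+1}\H_A\ast\cdots\ast\Sigma\H_A$ and $X''\in\Dfd^+(A)_{\ge0}$. To build this I would kill the finitely many finite-dimensional negative cohomologies of $X$ one degree at a time: if $H^{-m}(X)\ne0=H^{<-m}(X)$, pick $P\in\add A$ and a map $\Sigma^mP\to X$ inducing a surjection $H^0(P)\twoheadrightarrow H^{-m}(X)$ (possible since $\Hom_{\D(A)}(\Sigma^mA,X)=H^{-m}(X)$ and $H^0(A)$ is semisimple), pass to the cone, and iterate — here the hypothesis $(1)$ is what makes the process terminate so that $X'$ is a genuinely finite extension. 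For $(2)\Rightarrow(3)$, with the $t$-structure in hand and $Y\in\pvd(A)$, $Y\in\D(A)_{\le b}$, truncate $\tau^{\le k-1}X\to X\to\tau^{\ge k}X$ with $k\ge b+1$; then $\tau^{\ge k}X\in\Dfd^+(A)_{\ge k}\subseteq\D(A)_{\ge k}$ is orthogonal to $Y$ (co-$t$-structure), so $\Hom_{\D(A)}(X,Y)\cong\Hom_{\D(A)}(\tau^{\le k-1}X,Y)$, and $\tau^{\le k-1}X$ is a finite extension of shifts of objects of $\H_A\subseteq\per(A)$, each of which pairs finite-dimensionally with $Y\in\pvd(A)$ (dévissage from $\Hom_{\D(A)}(A,\Sigma^iY)=H^{-i}(Y)$); hence $\Hom_{\D(A)}(X,Y)$ is finite-dimensional.

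The hard part will be $(1)\Rightarrow(2)$: showing that the injective cogenerator of $\H_A$ forces the degree-by-degree truncation above to stabilize — equivalently, that $\Dfd^+(A)_{\ge0}$ is a coaisle — since the naive step re-introduces cohomology in degree $-m$ (coming from $H^1$ of the free module) and $\Dfd^+(A)$ is strictly larger than the derived category of $\H_A$, so no purely formal resolution argument suffices. A secondary delicate point is making the duality $\RHom_A(-,S_A)\colon\per(A)\simeq\pvd(B^{\op})$ used in $(4)\Rightarrow(1)$ precise without circularity: it is the restriction of the Koszul dual functor of \cref{main:D}, but it should only require the elementary tilting-theoretic properties of $S_A$ and $\REnd_A(S_A)$ established so far.
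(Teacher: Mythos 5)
Your proposal leaves open precisely the two implications that carry the content of the theorem, so it does not constitute a proof. First, $(1)\To(2)$: you describe the naive process of killing negative cohomology by maps $\Sigma^m P\to X$ and then correctly observe that each step re-introduces cohomology in degree $-m$, concluding yourself that ``no purely formal resolution argument suffices'' — but you do not supply the non-formal ingredient. The paper's argument rests on two facts you are missing: the commutation relation $\add\Sigma^{-1}S_A\ast\add A\subseteq\add A\ast\Dfd^+(A)_{>0}$ (\cref{lem:basic}, proved via the octahedral axiom and minimality of the map $P_i\to S_{P_i}$), which upgrades to $\Dfd^+(A)_{>0}\ast\H_A\subseteq\H_A\ast\Dfd^+(A)_{>0}$; and the single non-trivial membership $S_A\in\H_A\ast\Dfd^+(A)_{>0}$, extracted from the injective hull $A\to I$ in $\H_A$ (\cref{prop:equiv-cond-for-nice}). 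Together these give $\Dfd^+(A)_{\ge0}=\Dfd^+(A)_{>0}\ast\add S_A\subseteq\H_A\ast\Dfd^+(A)_{>0}$ (\cref{lem:pre-t-str-for-nice}), which is exactly the coaisle statement you identify as the hard part.

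Second, your route back into condition $(1)$ is circular. For $(4)\To(1)$ you transport $\mod H^0(B^\op)$ through a duality $\RHom_A(-,S_A)\colon\per(A)\simeq\pvd(B^\op)^\op$ carrying $\Filt(A)$ onto $\Filt(S_{B^\op})$; but full faithfulness of this functor on $\per(A)$ is the double-centralizer statement $H^i(A)\simeq\Ext^i_{B^\op}(S_A,S_A)$, i.e.\ Koszul double duality, and the paper can only prove the positive-dg-algebra half of Koszul duality (in particular $\Phi(A)\simeq S_{(A^!)^\op}$ and $\Phi(\H_A)=\mod H^0(A^!)^\op$ in \cref{prop:A-to-S}) \emph{after} invoking the injective hull from \cref{prop:equiv-cond-for-nice} — that is, after condition $(1)$ is known. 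Your remark that this ``should only require elementary tilting-theoretic properties established so far'' is exactly what fails. Relatedly, your claim that $(5)$, $(4)$ and ``$B$ locally finite'' coincide is unjustified in the direction $(5)\To(4)$: condition $(5)$ only gives that $H^0(B)$ is finite-dimensional, and a non-positive dg algebra with finite-dimensional $H^0$ need not be locally finite (the paper needs the extra hypothesis that $\pvd$ is Hom-finite in \cref{prop:D-fin-to-locally finite}). The paper avoids both problems by proving $(5)\To(1)$ directly: starting from $\dim\End_{\D(A)}(S_A)<\infty$, iterated minimal right $\add A$-approximations of $S_A$ strictly decrease the quantity $\dim\Hom_{\D(A)}(-,S_A)$ by \cref{lem:ineq}, so the process terminates and yields $S_A\in\H_A\ast\Dfd^+(A)_{>0}$, from which the injective cogenerator $I_{S_A}$ is constructed explicitly. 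That termination argument is the idea your proposal lacks.
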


Since $\H_A$ is Hom-finite, it has an injective cogenerator if and only if it has a projective generator.

\begin{dfn}
    Let $\C$ be a $G$-graded additive category and $\X\subseteq\C$ be a subcategory. Let $Y\in\C$ and $X\in\X$. A morphism $f\colon X\to Y$ is called \emph{right $\X$-approximation} if 
    $$
    \Hom_\C(X',f)\colon\Hom_\C(X',X)\to\Hom_\C(X',Y)
    $$
    is surjective for every $X'\in\X$. Dually, we define a \emph{left $\X$-approximation}.
\end{dfn}

A morphism $f\colon X\to Y$ is called \emph{right minimal} if every automorphism $g\colon X\to X$ satisfying $f\circ g=f$ is an isomorphism. If $A$ is a positive $G$-dg algebra, then by definition $\End_{\D(A)}(A)=H^0(A)$ is a $G$-graded semisimple algebra, and so every $X\in\Dfd^+(A)_{\ge 0}$ admits a right minimal right ($\add A$)-approximation $f\colon P_X\to X$. It is easy to see that $\cone(f)\in\Dfd^+(A)_{\ge 0}$.

\begin{lem}\label{lem:ineq}
    Let $A$ be a locally finite positive $G$-dg algebra. Let $P\to X\to Y\to \Sigma P$ be an exact triangle such that $P\in\H _A$ and $Y\in\Dfd^+(A)_{\ge 0}$. 
    Then we have an equation
    $$
    \dim\Hom_{\D(A)}(P,S_A)+\dim\Hom_{\D(A)}(Y,S_A)=\dim\Hom_{\D(A)}(X,S_A).
    $$
\end{lem}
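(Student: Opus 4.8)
The plan is to apply the cohomological functor $\Hom_{\D(A)}(-,S_A)$ to the triangle $P\to X\to Y\to\Sigma P$ and show that the associated long exact sequence collapses to a short exact sequence
$$0\to\Hom_{\D(A)}(Y,S_A)\to\Hom_{\D(A)}(X,S_A)\to\Hom_{\D(A)}(P,S_A)\to0,$$
from which the equality of dimensions is immediate. For this it suffices to prove that the two neighbouring terms, $\Hom_{\D(A)}(\Sigma P,S_A)$ and $\Hom_{\D(A)}(\Sigma^{-1}Y,S_A)$, both vanish.

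The vanishing $\Hom_{\D(A)}(\Sigma P,S_A)=0$ will come from a dévissage along the filtration of $P$. Since $A$ represents $H^0$ on $\D(A)$, we have $\Hom_{\D(A)}(A,\Sigma^iS_A)\simeq H^i(S_A)$, and $H^{*}(S_A)\simeq H^0(A)$ is concentrated in cohomological degree $0$; hence $\Hom_{\D(A)}(Q,\Sigma^iS_A)=0$ for every $Q\in\add A$ and every $i\neq0$, while $\Hom_{\D(A)}(Q,S_A)$ is finite-dimensional by local finiteness of $A$. As $P\in\H_A=\Filt(A)$ is obtained from $\add A$ by finitely many extensions, an induction on the number of extensions — applying, at each step, the long exact sequence attached to the defining triangle and the two vanishings just established — shows that $\Hom_{\D(A)}(P,\Sigma^iS_A)=0$ for all $i\neq0$ and that $\Hom_{\D(A)}(P,S_A)$ is finite-dimensional. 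Taking $i=-1$ gives $\Hom_{\D(A)}(\Sigma P,S_A)\simeq\Hom_{\D(A)}(P,\Sigma^{-1}S_A)=0$.

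The vanishing $\Hom_{\D(A)}(\Sigma^{-1}Y,S_A)=0$ is a direct consequence of the co-$t$-structure $(\D(A)_{\ge0},\D(A)_{\le0})$ of \cref{prop:co-t-on-positive}. Indeed $Y\in\Dfd^+(A)_{\ge0}\subseteq\D(A)_{\ge0}$ gives $\Sigma^{-1}Y\in\D(A)_{\ge1}$, whereas $S_A=\sigma_0(A)$ has cohomology concentrated in degree $0$, so $S_A\in\D(A)_{\le0}=\D(A)_{<1}$; the orthogonality axiom of a co-$t$-structure, shifted by $\Sigma^{-1}$, then forces $\Hom_{\D(A)}(\Sigma^{-1}Y,S_A)=0$. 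Feeding both vanishings into the long exact sequence obtained from $P\to X\to Y\to\Sigma P$ yields the short exact sequence displayed above, and hence the asserted equation.

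The only genuinely non-formal point is the first vanishing: one cannot deduce $\Hom_{\D(A)}(\Sigma P,S_A)=0$ from co-$t$-structure orthogonality alone, because that $\Hom$-group runs in the direction a co-$t$-structure does not control, so one really must exploit that $A$ behaves as a simple object of $\H_A$ together with the degree-$0$ concentration of $S_A$. Once that step is in place, the remainder is a routine long-exact-sequence computation.
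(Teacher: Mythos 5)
Your proof is correct and follows essentially the same route as the paper's: apply $\Hom_{\D(A)}(-,S_A)$ to the triangle and observe that the outer terms $\Hom_{\D(A)}(\Sigma P,S_A)$ and $\Hom_{\D(A)}(\Sigma^{-1}Y,S_A)$ vanish, the latter by co-$t$-structure orthogonality. The only difference is that you spell out the d\'evissage through $\Filt(A)$ behind the first vanishing, which the paper simply cites as a known fact.
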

\begin{proof}
    Since $\Sigma^{-1} Y\in\D(A)_{>0}$, we have $\Hom_{\D(A)}(\Sigma^{-1}Y,S_A)=0$. Therefore, the above equation follows from the fact that $\Hom_{\D(A)}(\Sigma P,S_A)=0$.
\end{proof}

As a step toward proving \cref{thm:equiv-cond-for-nice}, we show the following.

\begin{prop}\label{prop:equiv-cond-for-nice}
    For a locally finite positive $G$-dg algebra $A$, the following conditions are equivalent:
    \begin{itemize}
        \item [(1)] $\End_{\D(A)}(S_A)$ is finite-dimensional $G$-graded algebra,
        \item [(2)] $S_A\in\H _A\ast\Dfd^+(A)_{>0}$,
        \item [(3)] $H_A$ has an injective cogenerator.
    \end{itemize}
\end{prop}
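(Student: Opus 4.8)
The plan is to run the cyclic chain $(1)\Rightarrow(2)\Rightarrow(3)\Rightarrow(2)\Rightarrow(1)$, grouped as $(1)\Leftrightarrow(2)$ and $(2)\Leftrightarrow(3)$. The technical backbone throughout is \cref{lem:ineq} together with the following \emph{dévissage fact}: for every $P\in\H_A$ one has $\Hom_{\D(A)}(P,\Sigma^iS_A)=0$ for all $i\neq0$, and $\dim\Hom_{\D(A)}(P,S_A)<\infty$. This is proved by induction on the length of a filtration of $P$ by objects of $\add A$, the base case being $\Hom_{\D(A)}(A(g),\Sigma^iS_A)\simeq H^i(S_A)(g^{-1})$, which is $0$ for $i\neq0$ and is the finite-dimensional $H^0(A)(g^{-1})$ for $i=0$ since $H^*S_A\simeq H^0(A)$; the inductive step is the long exact sequence. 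I will also use, from the co-$t$-structure of \cref{prop:co-t-on-positive}, that $\Hom_{\D(A)}(\D(A)_{\ge1},\D(A)_{\le0})=0$, hence $\Hom_{\D(A)}(Y,S_A)=0$ whenever $Y\in\D(A)_{\ge1}$, together with $S_A\in\D(A)_0$ and the fact (from the paragraph before \cref{lem:ineq}) that every object of $\Dfd^+(A)_{\ge0}$ has a minimal right $(\add A)$-approximation whose cone again lies in $\Dfd^+(A)_{\ge0}$.

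For $(1)\Rightarrow(2)$ I would build a canonical sequence $A=P^{(1)}\to P^{(2)}\to\cdots$ inside $\H_A$. Start with the truncation $g_1\colon A\to S_A$, whose cone is $\Sigma\sigma_{\ge1}(A)\in\Dfd^+(A)_{\ge0}$. Given $g_n\colon P^{(n)}\to S_A$ with $P^{(n)}\in\H_A$ and $\cone(g_n)=\Sigma Y_n\in\Dfd^+(A)_{\ge0}$, take a minimal right $(\add A)$-approximation $h_n\colon P'_n\to\Sigma Y_n$ and let $P^{(n+1)}:=\Sigma^{-1}\cone\bigl(P'_n\xrightarrow{h_n}\Sigma Y_n\to\Sigma P^{(n)}\bigr)$, where the second map is the connecting morphism; an octahedron shows $P^{(n+1)}\in P^{(n)}\ast P'_n\subseteq\H_A$ and that the induced map $g_{n+1}\colon P^{(n+1)}\to S_A$ has cone $\cone(h_n)\in\Dfd^+(A)_{\ge0}$. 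The process halts exactly when $P'_n=0$, i.e.\ when $H^0(\Sigma Y_n)=0$, and then $\Sigma Y_n\in\Dfd^+(A)_{\ge1}=\Dfd^+(A)_{>0}$, which is $(2)$. To see it halts, apply \cref{lem:ineq} to $P^{(n)}\to P^{(n+1)}\to P'_n\to\Sigma P^{(n)}$ to get $\dim\Hom(P^{(n+1)},S_A)=\dim\Hom(P^{(n)},S_A)+\dim\Hom(P'_n,S_A)$, which is strictly larger whenever $P'_n\neq0$ (since $\Hom(A,S_A)\simeq H^0(A)\neq0$), and apply it to $P^{(n)}\to S_A\to\Sigma Y_n\to\Sigma P^{(n)}$ to get $\dim\Hom(P^{(n)},S_A)\le\dim\Hom_{\D(A)}(S_A,S_A)<\infty$ by $(1)$; a strictly increasing, bounded sequence of integers is eventually constant. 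Conversely $(2)\Rightarrow(1)$ is immediate: for the triangle $P\to S_A\to Y\to\Sigma P$ of $(2)$ one has $\Hom(Y,S_A)=0$ as $Y\in\D(A)_{\ge1}$, so \cref{lem:ineq} gives $\dim\Hom_{\D(A)}(S_A,S_A)=\dim\Hom_{\D(A)}(P,S_A)<\infty$.

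For $(2)\Rightarrow(3)$, fix the triangle $P\xrightarrow{g}S_A\to Y\to\Sigma P$ with $P\in\H_A$, $Y\in\Dfd^+(A)_{>0}$. First, $P$ is injective in $\H_A$: for $X\in\H_A$ one has $\Ext^1_{\H_A}(X,P)\simeq\Hom_{\D(A)}(X,\Sigma P)$, which is a quotient of $\Hom_{\D(A)}(X,Y)$ by the dévissage fact ($\Hom_{\D(A)}(X,\Sigma S_A)=0$), and $\Hom_{\D(A)}(X,Y)=0$ again by dévissage, since $\Hom_{\D(A)}(A,Y)=H^0(Y)=0$. Second, the long exact sequence for $\Hom_{\D(A)}(A,-)$, whose neighbouring terms $H^{-1}(Y)$ and $H^0(Y)$ vanish, shows that $g$ induces an isomorphism $H^0(P)=\Hom_{\D(A)}(A,P)\xrightarrow{\sim}\Hom_{\D(A)}(A,S_A)=H^0(A)$. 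Lifting the truncation $A\to S_A$ along this isomorphism gives a map $A\to P$ whose restriction to each indecomposable summand $A_j$ of $A$ is nonzero on $H^0$, hence a monomorphism in $\H_A$ (the $A_j$ are the simples); twisting by Adam's degree, every simple of the length category $\H_A$ embeds into $\add P$, so every object embeds (via an essential extension of its socle and injectivity of the target) into an object of $\add P$. Thus $P$ is an injective cogenerator. Finally, $(3)\Rightarrow(2)$: let $I$ be an injective cogenerator; replacing $I$ by $I\oplus Q$ for a suitable $Q\in\add I$ we may assume $A\hookrightarrow I$ in $\H_A$. Injectivity gives $H^1(I)=0$ (since $\Hom_{\D(A)}(A(g),\Sigma I)\simeq\Ext^1_{\H_A}(A(g),I)=0$), and if $C\in\H_A$ is the cokernel of $A\hookrightarrow I$ then the long exact sequence for $\Hom_{\D(A)}(-,S_A)$ gives a surjection $\Hom_{\D(A)}(I,S_A)\twoheadrightarrow\Hom_{\D(A)}(A,S_A)$ (as $\Hom_{\D(A)}(C,\Sigma S_A)=0$ by dévissage); lifting the truncation $A\to S_A$ yields $\phi\colon I\to S_A$ with $H^0(\phi)$ split epic, so the cone $Z$ of $\phi$ satisfies $H^0(Z)\hookrightarrow H^1(I)=0$ and $H^{<0}(Z)=0$, whence $Z\in\Dfd^+(A)_{>0}$ and $S_A\in I\ast Z\subseteq\H_A\ast\Dfd^+(A)_{>0}$.

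The main obstacle I expect is the termination argument in $(1)\Rightarrow(2)$: one must arrange the octahedra so that the ``error term'' $\Sigma Y_n$ never leaves $\Dfd^+(A)_{\ge0}$ (so that the approximation step can be iterated and \cref{lem:ineq} keeps applying) while the dimensions $\dim\Hom_{\D(A)}(P^{(n)},S_A)$ are simultaneously forced to increase and to stay bounded by $\dim\Hom_{\D(A)}(S_A,S_A)$. Once the dévissage fact and the co-$t$-structure orthogonality are in hand, the remaining implications are routine long-exact-sequence bookkeeping.
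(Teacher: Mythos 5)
Your $(1)\Rightarrow(2)$, $(2)\Rightarrow(1)$ and $(2)\Rightarrow(3)$ are correct. The first is precisely the intended unpacking of the paper's one-line ``iterate \cref{lem:ineq}'': the octahedron producing $P^{(n+1)}\in P^{(n)}\ast P_n'\subseteq\H_A$ with $\cone(g_{n+1})=\cone(h_n)\in\Dfd^+(A)_{\ge0}$, plus the two applications of \cref{lem:ineq} giving a strictly increasing sequence bounded by $\dim\End_{\D(A)}(S_A)$, is exactly what is needed. Your $(2)\Rightarrow(3)$ proves injectivity of $P$ the same way the paper does and then replaces the paper's $3\times3$ diagram chase (which embeds an arbitrary $X\in\H_A$ into $I_{\sigma_0(X)}$) by a socle/essential-extension argument; this is legitimate since $\H_A$ is a Hom-finite length category whose simples are the indecomposable summands of the $A(g)$.

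The genuine gap is in $(3)\Rightarrow(2)$. After enlarging $I$ so that $A\hookrightarrow I$, you lift the truncation to $\phi\colon I\to S_A$ and assert $H^{<0}(Z)=0$ for $Z=\cone(\phi)$. But the cohomology sequence of $I\to S_A\to Z\to\Sigma I$ reads $0=H^{-1}(S_A)\to H^{-1}(Z)\to H^0(I)\xrightarrow{H^0(\phi)}H^0(S_A)$, so $H^{-1}(Z)\cong\ker H^0(\phi)$, and you have only established that $H^0(\phi)$ is epic, not monic. For an $I$ strictly larger than the injective hull of $A$ this kernel is genuinely nonzero: already for $A=\k$ one may take $I=\k^2$ with $A$ the first factor and $\phi$ the projection, and then $Z\cong\Sigma\k\notin\Dfd^+(A)_{>0}$, so your triangle does not witness $(2)$. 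The repair is exactly the paper's choice: take $I$ to be the injective hull of $A$ in $\H_A$ (which exists, $\H_A$ being a Hom-finite length category with enough injectives by $(3)$). Then $\soc I=\soc A=A$ because $A$ is semisimple in $\H_A$ and $A\hookrightarrow I$ is essential, whence every map $A\to I$ factors through $\soc I$ and $\dim\Hom_{\D(A)}(A,I)=\dim\End_{\H_A}(A)=\dim H^0(A)=\dim\Hom_{\D(A)}(A,S_A)$; the epimorphism $H^0(\phi)$ between spaces of equal finite dimension is then an isomorphism, which kills $H^{-1}(Z)$ and completes the argument.
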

\begin{proof}
    $(1)\To(2)$: This follows by using Lemma~\ref{lem:ineq} iteratively.

    $(2)\To(1)$: Let 
    $$
    P\to S_A\to Z\to \Sigma P
    $$
    be an exact triangle such that $P\in\H _A$ and $Z\in\Dfd^+(A)_{>0}$. By applying $\Hom_{\D(A)}(-,S_A)$, we have $\Hom_{\D(A)}(S_A,S_A)\simeqto\Hom_{\D(A)}(P,S_A)$. Since we have $\dim\Hom_{\D(A)}(P,S_A)<\infty$, it follows that $\End_{\D(A)}(S_A)$ is finite-dimensional.
    
    $(2)\To(3)$: For $L\in\add S_A$, let $I_L\to L\to K\to\sigma I_L$ be the exact triangle such that $I_L\in\H _A$ and $K\in\Dfd^+(A)_{>0}$. Note that the assignment $L\mapsto I_L$ is functorial. Let us show that $I:=I_{S_A}$ is an injective cogenerator of $\H_A$. By definition, we have
    \begin{align*}
        \Ext^1_{\H_A}(A,I)\simeq\Hom_{\D(A)}(A,\Sigma I)=0,
    \end{align*}
    and so $I$ is an injective object of $\H_A$. Next, we show that every object in $\H_A$ can be embedded into some object in $\add I$. Let $X\in\H_A$. Since $\Hom_{\D(A)}(X,Y)$ vanishes for every $Y\in\D(A)_{>0}$, we get the following commutative diagram:
    \begin{center}
        \begin{tikzcd}
             & \Sigma^{-1}K \arrow[r, equal] \arrow[d] & \Sigma^{-1}K \arrow[d] & & \\
            X \arrow[r] \arrow[d, equal] & I_{\sigma_0(X)} \arrow[r] \arrow[d] & Y \arrow[r] \arrow[d] & \Sigma X \arrow[d, equal] \\
            X \arrow[r] & \sigma_0(X) \arrow[r] \arrow[d] & \Sigma\sigma_{>0}(X) \arrow[r] \arrow[d] & \Sigma X \\
            & K \arrow[r, equal] & K. & & 
        \end{tikzcd}
    \end{center}
    Since $I_{\sigma_0(X)}\in\add I_{S_A}$, it suffices to show that $Y\in\H_A$. Since $\Sigma\sigma_{>0}(X)$ and $\Sigma K$ belong to $\Dfd^+(A)_{\ge 0}$, we have 
    \begin{align*}
    Y\in\Dfd^+(A)_{\ge 0}\cap(\H_A\ast\Sigma\H_A)\subseteq\Dfd^+(A)_{\ge 0}\cap(\Sigma\H_A\ast\H_A)=\H_A.
    \end{align*}

    $(3)\To(2)$: Let $A\to I$ be an injective hull of $A$ in $\H _A$. Since $\Hom_{\D(A)}(\Sigma^{-1}\H _A,S_A)=0$, there exists a morphism $f\colon I\to S_A$ that commutes the following diagram:
    \begin{center}
        \begin{tikzcd}
            A \arrow[rr] \arrow[d] & & I \arrow[lld, "f"] \\
            S_A &
        \end{tikzcd}
    \end{center}
    It suffices to show that $Y:=\cone(f)\in\Dfd^+(A)_{>0}$. Since $A\to I$ is injective hull, we have an isomorphism
    \begin{align*}
        \Hom_{\D(A)}(A,A)\simeq\Hom_{\D(A)}(A,I).
    \end{align*}
    By considering the commutative diagram
    \begin{center}
        \begin{tikzcd}
            \Hom_{\D(A)}(A,A) \arrow[rr, "\sim"sloped] \arrow[d, "\sim"'sloped] & & \Hom_{\D(A)}(A,I) \arrow[lld, "{\Hom_{\D(A)}(A,f)}"] \\
            \Hom_{\D(A)}(A,S_A), &
        \end{tikzcd}
    \end{center}
    we deduce that $\Hom_{\D(A)}(A,f)\colon\Hom_{\D(A)}(A,I)\to\Hom_{\D(A)}(A,S_A)$ is an isomorphism. Applying $\Hom_{\D(A)}(A,-)$ to the exact triangle $I\to S_A\to Y\to\Sigma I$, we have the following long exact sequence:
    \begin{center}
        \begin{tikzcd}[column sep=1cm]
             & |[alias=Y]| \Hom_{\D(A)}(A,S_A)\arrow[r,""]
             & \Hom_{\D(A)}(A,\cone(f))\arrow[r,""]\arrow[d, phantom, ""{coordinate, name=Z}]
             & \Hom_{\D(A)}(A,\Sigma I)\\
             &\Hom_{\D(A)}(A,\Sigma^{-1}S_A)\arrow[r,""] & \Hom_{\D(A)}(A,\Sigma^{-1}\cone(f))\arrow[r,""]
             & |[alias=X]| \Hom_{\D(A)}(A,I)
             \arrow["\sim"', rounded corners,
              to path={[pos=1](\tikztostart) -| ([xshift=1cm]X.east) |- (Z)\tikztonodes -| ([xshift=-1cm]Y.west)--(Y)}]&
        \end{tikzcd}
    \end{center}
    Since $\Hom_{\D(A)}(A,\Sigma^{-1}S_A)$ and $\Hom_{\D(A)}(A,\Sigma I)\simeq\Ext^1_{\H_A}(A,I)$ vanish, we have
    \begin{equation}\label{equation:cone-is-positive}
    \Hom_{\D(A)}(A,\Sigma^{i}\cone(f))=0
    \end{equation}
    for each $i\in\{-1,0\}$.
    Since $\cone(f)\in\add S_A\ast\Sigma\H _A\subseteq\Dfd^+(A)_{\ge-1}$, we have $\cone(f)\in\Dfd^+(A)_{>0}$ by \cref{equation:cone-is-positive}.
\end{proof}

\begin{lem}\label{lem:basic}
    Let $A$ be a locally finite positive $G$-dg algebra. Then we have a inclusion
    \begin{align*}
    \add\Sigma^{-1}S_A\ast\add A\subseteq\add A\ast\Dfd^+(A)_{>0}.
    \end{align*}
\end{lem}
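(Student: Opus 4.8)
The plan is to realize the asserted extension as the triangle attached to a right $(\add A)$-approximation of $W$, and then to use semisimplicity of $H^0(A)$ to force the cone of that approximation into strictly positive cohomological degrees.

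Concretely, I would take $W\in\add\Sigma^{-1}S_A\ast\add A$ and fix a triangle $\Sigma^{-1}L\xrightarrow{a}W\xrightarrow{b}Q\to L$ with $L\in\add S_A$ and $Q\in\add A$. Since $S_A=\sigma_0(A)$ has cohomology concentrated in degree $0$, the same holds for every $L\in\add S_A$, so $\Sigma^{-1}L\in\D(A)_{\ge0}$; also $Q\in\add A\subseteq\D(A)_{\ge0}$ because $A$ is positive, whence $W\in\D(A)_{\ge0}$. Combined with the long exact cohomology sequence and local finiteness of $A$ this gives $W\in\Dfd^+(A)_{\ge0}$, so by the discussion preceding \cref{lem:ineq} the object $W$ admits a right minimal right $(\add A)$-approximation $f\colon P_W\to W$ with $C:=\cone(f)\in\Dfd^+(A)_{\ge0}$. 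It then suffices to prove $H^0(C)=0$, since the triangle $P_W\to W\to C\to\Sigma P_W$ will exhibit $W\in\add A\ast\Dfd^+(A)_{>0}$.

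To identify $H^0(C)$ I would apply $\Hom_{\D(A)}(A,-)\cong H^0(-)$ to $P_W\xrightarrow{f}W\to C$: the approximation property makes $H^0(f)$ an epimorphism, and $H^{-1}(C)=0$ makes it a monomorphism, so $H^0(f)$ is an isomorphism and the long exact sequence gives $H^0(C)\cong\ker\bigl(H^1(f)\colon H^1(P_W)\to H^1(W)\bigr)$; thus everything reduces to injectivity of $H^1(f)$. Applying $H^0(-)$ to $\Sigma^{-1}L\xrightarrow{a}W\xrightarrow{b}Q$ and using $H^0(\Sigma^{-1}L)=H^{-1}(L)=0$ shows $H^0(b)$ is injective, so $H^0(b\circ f)=H^0(b)\circ H^0(f)$ is an injection of $H^0(A)$-modules, hence — $H^0(A)$ being semisimple — a split injection. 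But $b\circ f$ is a morphism of $\add A$, and $H^0$ restricts to an equivalence $\add A\xrightarrow{\sim}\proj H^0(A)=\mod H^0(A)$, which reflects split monomorphisms; therefore $b\circ f\colon P_W\to Q$ splits in $\add A$. Applying $H^1$ to a retraction of $b\circ f$ shows $H^1(b\circ f)=H^1(b)\circ H^1(f)$ is a split monomorphism, hence $H^1(f)$ is injective, as required, and $H^0(C)=0$.

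The only genuine obstacle is this last point: the truncation machinery delivers only $\cone(f)\in\Dfd^+(A)_{\ge0}$, and one has to exclude a nonzero contribution in cohomological degree $0$. The mechanism is that $b$ is injective on $H^0$ precisely because $\Sigma^{-1}L$ is concentrated in degrees $\ge1$, and semisimplicity of $H^0(A)$ upgrades this to a splitting that propagates to $H^1$. A variant avoiding approximations — writing $L=\sigma_0(P_L)$ for a suitable $P_L\in\add A$ and using the co-truncation triangle $\sigma_{>0}(P_L)\to P_L\to L$ — should also work, but is messier since it requires tracking primitive idempotents of $H^0(A)$ and Adam's shifts explicitly.
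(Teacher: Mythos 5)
Your proof is correct, but it takes a genuinely different route from the paper's. The paper first decomposes $A$ into indecomposables $P_i$, reduces the given triangle to a coproduct of triangles $\Sigma^{-1}S_{P_i}^{n_i}(g)\to Y_i\to P_i^{m_i}(g)\xrightarrow{f_i}S_{P_i}^{n_i}(g)$, lifts $f_i$ through the co-truncation $P_i\to S_{P_i}$ to a map $g_i\colon P_i^{m_i}(g)\to P_i^{n_i}(g)$, and applies the octahedral axiom; the fact that $\End_{\D(A)}(P_i)_0$ is a division ring then forces $\Sigma^{-1}\cone(g_i)\in\add(P_i(g)\oplus\Sigma^{-1}P_i(g))$, which yields the required filtration of $Y_i$. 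You instead take a right minimal right $(\add A)$-approximation $f\colon P_W\to W$ of the whole object and show $\cone(f)\in\Dfd^+(A)_{>0}$ by a purely cohomological argument, extracting injectivity of $H^1(f)$ from the splitting of $b\circ f$ in $\add A$, which comes from semisimplicity of $H^0(A)$ together with the equivalence $\add A\simeq\proj H^0(A)$. Both arguments rest on the same underlying mechanism --- morphisms in $\add A$ split according to their behaviour on $H^0$ because $H^0(A)$ is semisimple --- but yours avoids the indecomposable decomposition and the octahedron entirely and is arguably shorter, while the paper's is more explicit about the middle term, identifying $Y_i$ as an extension of $\sigma_{>0}(P_i)^{n_i}(g)$ by an object of $\add(P_i(g)\oplus\Sigma^{-1}P_i(g))$. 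One stylistic remark: you deduce injectivity of $H^0(f)$ from $H^{-1}(\cone(f))=0$, citing the paper's assertion that the cone lies in $\Dfd^+(A)_{\ge0}$; since that assertion is itself established via right minimality, it would be marginally cleaner to invoke right minimality directly (a minimal $(\add A)$-approximation of an object of $\Dfd^+(A)_{\ge0}$ induces an isomorphism on $H^0$ because $H^0(A)$ is semisimple). This is a matter of presentation only, not a gap.
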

\begin{proof}
    Let $A=\bigoplus_{i=1}^n\bigoplus_{g\in G}P_i(g)^{k_{i,g}}$ be the indecomposable decomposition of $A$, and put $S_{P_i}:=\sigma_0(P_i)$ for each $1\le i\le n$. Every exact triangle $X\to Y\to Z\to\Sigma X$ with $X\in\add\Sigma^{-1}S_A$ and $Z\in\add A$ is a coproduct of exact triangles of the form 
    \begin{align*}
    \Sigma^{-1}S_{P_i}^{n_i}(g)\to Y_i\to P_i^{m_i}(g)\stackrel{f_i}{\to} S_{P_i}^{n_i}(g).
    \end{align*}
    It suffices to show that $Y_i\in\add A\ast\Dfd^+(A)_{\ge 0}$. Since the exact triangle 
    \begin{align*}
    \sigma_{>0}(P_i)\to P_i\xto{h_i} S_{P_i}\to\Sigma\sigma_{>0}(P_i)
    \end{align*}
    induces an isomorphism $\Hom_{\D(A)}(P_i(g),P_i(g))_0\to\Hom_{\D(A)}(P_i(g),S_i(g))_0$, there exists a morphism $g_i\colon P_i^{m_i}(g)\to P_i^{n_i}(g)$ that commutes the following diagram:
    \begin{center}
        \begin{tikzcd}
            P_i^{m_i}(g) \arrow[rr, "f_i"] \arrow[d, "g_i"'] & & S_{P_i}^{n_i}(g)\\
            P_i^{n_i}(g). \arrow[urr,"h_i^{n_i}(g)"'] &&
        \end{tikzcd}
    \end{center}
    By octahedral axiom, we have the following commutative diagram:
    \begin{center}
        \begin{tikzcd}
            & U \arrow[r, equal] \arrow[d] & U \arrow[d] &  \\
            \Sigma^{-1}S_{A_i}^{n_i}(g) \arrow[r] \arrow[d, equal] & Y_i \arrow[r] \arrow[d] & P_i^{m_i}(g) \arrow[r, "f_i"] \arrow[d, "g_i"'] & S_{P_i}^{n_i}(g) \arrow[d, equal] \\
            \Sigma^{-1}S_{P_i}^{n_i}(g) \arrow[r] & \sigma_{>0}(P_i)^{n_i}(g) \arrow[r] \arrow[d] & P_i^{n_i}(g) \arrow[r,"h_i^{n_i}(g)"] \arrow[d] & S_{P_i}^{n_i}(g) \\
            & \Sigma U \arrow[r, equal] & \Sigma U. & 
        \end{tikzcd}
    \end{center}
    Since $\End_{\D(A)}(P_i)_0$ is a division ring, we have $U\in\add(P_i(g),\Sigma^{-1}P_i(g))$. It follows that
    $$
    Y_i\in\add(P_i(g),\Sigma^{-1}P_i(g))\ast\Dfd^+(A)_{>0}\subseteq\add A\ast\Dfd^+(A)_{>0}.
    $$
\end{proof}

\begin{lem}\label{lem:pre-t-str-for-nice}
    Let $A$ be a locally finite positive $G$-dg algebra such that $\H_A$ has an injective cogenerator. Then we have 
    \begin{align*}
    \Dfd^+(A)_{\ge 0}\subseteq\H _A\ast\Dfd^+(A)_{>0}.
    \end{align*}
\end{lem}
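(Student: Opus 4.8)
The plan is to take an arbitrary $X\in\Dfd^+(A)_{\ge 0}$ and construct an approximation triangle $P\to X\to Y\to\Sigma P$ with $P\in\H_A$ and $Y\in\Dfd^+(A)_{>0}$, exploiting the injective cogenerator $I$ of $\H_A$ whose existence we are granted, together with the numerical control from \cref{lem:ineq}. First I would record the basic finiteness fact: by \cref{prop:equiv-cond-for-nice} the hypothesis gives $\End_{\D(A)}(S_A)$ finite-dimensional, hence (again via that proposition, or directly) $\dim\Hom_{\D(A)}(P,S_A)<\infty$ for every $P\in\H_A$, and I want to argue that $\dim\Hom_{\D(A)}(X,S_A)<\infty$ for $X\in\Dfd^+(A)_{\ge0}$ as well. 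This is the quantity that will bound the ``length'' of the filtration needed, so establishing its finiteness is the first real step; it should follow by truncating $X$ via the co-$t$-structure of \cref{prop:co-t-on-positive} and using that each graded piece $H^i(X)$ is finite-dimensional together with $\Hom_{\D(A)}(\Sigma^{-i}H^i(X),S_A)$ being finite-dimensional (the only nonzero contributions to $\Hom_{\D(A)}(X,S_A)$ come from finitely many cohomological degrees because $S_A$ is concentrated in degree $0$ and $X\in\D^+$).

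Next I would build $P\to X$ iteratively. Since $A$ is positive, $\End_{\D(A)}(A)=H^0(A)$ is semisimple, so $X\in\Dfd^+(A)_{\ge0}$ admits a right minimal right $(\add A)$-approximation $f_0\colon P_0\to X$ with $P_0\in\add A\subseteq\H_A$, and $\cone(f_0)\in\Dfd^+(A)_{\ge0}$; moreover the minimality forces $\Hom_{\D(A)}(\cone(f_0),S_A)$ to have strictly smaller dimension than $\Hom_{\D(A)}(X,S_A)$ unless that dimension was already zero — here I would invoke \cref{lem:ineq} with the roles $P=P_0$, ``$X$'' $=X$, ``$Y$'' $=\cone(f_0)$, noting $\dim\Hom(P_0,S_A)>0$ as long as $X\to S_A$-maps survive. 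Iterating, after finitely many steps (bounded by $\dim\Hom_{\D(A)}(X,S_A)$) I reach an object $X_m\in\Dfd^+(A)_{\ge0}$ with $\Hom_{\D(A)}(X_m,\Sigma^iS_A)=0$ for all $i$ coming from the positive-degree part, which combined with $X_m\in\D^+$ and \cref{prop:D-fin-to-locally finite}-type reasoning should force $X_m\in\Dfd^+(A)_{>0}$. Assembling the successive approximation triangles by the octahedral axiom gives a triangle $P\to X\to Y\to\Sigma P$ with $P$ built from finitely many extensions of objects in $\add A\subseteq\H_A$, hence $P\in\H_A$ (using that $\H_A=\Filt(A)$ is closed under extensions by definition, after checking the filtration stays within $\H_A$ rather than spilling into shifts), and $Y=X_m\in\Dfd^+(A)_{>0}$.

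The point requiring care — and what I expect to be the main obstacle — is controlling the shifts so that the iteratively constructed object $P$ genuinely lands in $\H_A=\Filt(A)$ and not merely in some larger subcategory like $\Filt(\Sigma^{\le 0}A)$. Each approximation step only removes $\Hom(-,S_A)$ in cohomological degree $0$; a priori the cone could acquire cohomology in negative degrees, and then the next $(\add A)$-approximation would introduce a shift of $A$, breaking membership in $\H_A$. To handle this I would argue that $\cone(f_0)$ for a right $(\add A)$-approximation of an object in $\Dfd^+(A)_{\ge0}$ in fact again lies in $\Dfd^+(A)_{\ge0}$ (this is asserted just before \cref{lem:ineq}), so every intermediate object stays in $\Dfd^+(A)_{\ge0}$ and every $P_j$ stays in $\add A$; then $P\in\add A\ast\cdots\ast\add A\subseteq\Filt(A)=\H_A$. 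A secondary subtlety is the termination count: I need the sequence $\dim\Hom_{\D(A)}(X_j,S_A)$ to be strictly decreasing while positive, which is exactly the content of \cref{lem:ineq} provided each $P_j\ne 0$, i.e.\ provided $f_j$ is not the zero approximation — and $f_j=0$ precisely when $\Hom_{\D(A)}(X_j,S_A)=0$ already (by right-minimality and semisimplicity of $H^0(A)$), at which point we stop and conclude $X_j\in\Dfd^+(A)_{>0}$. Once these two bookkeeping issues are pinned down the lemma follows.
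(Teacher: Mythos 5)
Your proposal is correct, but it takes a genuinely different route from the paper. The paper first proves an ``exchange'' inclusion $\Dfd^+(A)_{>0}\ast\H_A\subseteq\H_A\ast\Dfd^+(A)_{>0}$ (reduced to $\Dfd^+(A)_{>0}\ast\add A\subseteq\add A\ast\Dfd^+(A)_{>0}$, which is the content of \cref{lem:basic}), writes $\Dfd^+(A)_{\ge0}=\Dfd^+(A)_{>0}\ast\add S_A$ via the co-$t$-structure truncation, and then feeds in $S_A\in\H_A\ast\Dfd^+(A)_{>0}$ from condition $(2)$ of \cref{prop:equiv-cond-for-nice} to conclude $\Dfd^+(A)_{\ge0}\subseteq\Dfd^+(A)_{>0}\ast\H_A\ast\Dfd^+(A)_{>0}\subseteq\H_A\ast\Dfd^+(A)_{>0}$. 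You instead run the approximation--descent argument directly on an arbitrary $X\in\Dfd^+(A)_{\ge0}$; this is exactly the technique the paper invokes in one line to prove $(1)\Rightarrow(2)$ of \cref{prop:equiv-cond-for-nice} for the single object $S_A$, and you correctly isolate the two points that make the general case work: (a) $\dim\Hom_{\D(A)}(X,S_A)<\infty$ for every $X\in\Dfd^+(A)_{\ge0}$, since $\Hom_{\D(A)}(X,S_A)$ is a quotient of $\Hom_{\D(A)}(\sigma_0(X),S_A)$ with $\sigma_0(X)\in\add S_A$ (using $\Hom_{\D(A)}(\D(A)_{\ge1},\D(A)_{\le0})=0$ and the finite-dimensionality of $\End_{\D(A)}(S_A)$ supplied by \cref{prop:equiv-cond-for-nice}); and (b) the cone of a minimal right $(\add A)$-approximation of an object of $\Dfd^+(A)_{\ge0}$ stays in $\Dfd^+(A)_{\ge0}$, so every $P_j$ lies in $\add A$ with no cohomological shifts and the assembled $P$ lies in $\add A\ast\cdots\ast\add A\subseteq\Filt(A)=\H_A$. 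Termination is also sound: by \cref{lem:ineq} the quantity $\dim\Hom_{\D(A)}(X_j,S_A)$ drops by $\dim\Hom_{\D(A)}(P_j,S_A)>0$ whenever $P_j\neq0$, and $P_j=0$ exactly when $H^0(X_j)=0$, which for $X_j\in\Dfd^+(A)_{\ge0}$ forces $X_j\in\Dfd^+(A)_{>0}$. Your route makes \cref{lem:basic} and the $\ast$-calculus unnecessary for this lemma, at the cost of redoing the descent in general; the paper's route concentrates the hard work in the single object $S_A$ and then deduces the statement by a purely formal manipulation of extension-closed classes.
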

\begin{proof}
    We first prove that 
    \begin{equation}\label{*:exchange}
    \Dfd^+(A)_{>0}\ast\H _A\subseteq\H _A\ast\Dfd^+(A)_{>0}.
    \end{equation}
    To prove \eqref{*:exchange}, it suffices to show that $\Dfd^+(A)_{>0}\ast\add A\subseteq\add A\ast\Dfd^+(A)_{>0}$. By Lemma~\ref{lem:basic}, we have 
    \begin{align*}
        \Dfd^+(A)_{>0}\ast\add A
        &=
        \Dfd^+(A)_{>1}\ast\add \Sigma^{-1}(S_A)\ast\add A \\
        &\subseteq
        \Dfd^+(A)_{>1}\ast\add A\ast\Dfd^+(A)_{>0} \\
        &\subseteq
        \add A\ast\Dfd^+(A)_{>0}.
    \end{align*}
    The last inclusion follows from $\Hom_{\D(A)}(\add A, \Sigma\Dfd^+(A)_{>1})=0$. By \eqref{*:exchange} and \cref{prop:equiv-cond-for-nice}, we have
    $$
    \Dfd^+(A)_{\ge 0}=\Dfd^+(A)_{>0}\ast\add S_A\subseteq\Dfd^+(A)_{>0}\ast\H _A\ast\Dfd^+(A)_{>0}\subseteq\H _A\ast\Dfd^+(A)_{>0}.
    $$
\end{proof}

Next, we prove that if $A$ is a locally finite positive dg algebra such that $\H_A$ has an injective cogenerator, then the standard $t$-structure on $\per(A)$ can be extended to that of $\Dfd^+(A)$.

\begin{prop}\label{prop:t-str-for-nice}
    Let $A$ be a locally finite positive $G$-dg algebra. If $\H_A$ has an injective cogenerator, then $(\bigcup_{n\ge0}\Sigma^n\H_A\ast\cdots\ast\H_A,\Dfd^+(A)_{\ge 0})$ is a $t$-structure on $\Dfd^+(A)$.
\end{prop}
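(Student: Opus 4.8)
The plan is to verify directly the three axioms of the paper's definition of a $t$-structure for the pair $(\mathcal{U},\Dfd^+(A)_{\ge0})$, where I abbreviate $\mathcal{U}_n:=\Sigma^n\H_A\ast\cdots\ast\Sigma\H_A\ast\H_A$ for $n\ge0$ (so $\mathcal{U}_0=\H_A$) and $\mathcal{U}:=\bigcup_{n\ge0}\mathcal{U}_n$. First I would record that both members of the pair are subcategories of $\Dfd^+(A)$: since $A$ is locally finite positive we have $A\in\Dfd^+(A)$, and $\per(A)=\thick A$ is built from $A$ by cohomological shifts, finite extensions and direct summands, all of which preserve membership in the triangulated subcategory $\Dfd^+(A)$; hence $\H_A=\Filt A\subseteq\per(A)\subseteq\Dfd^+(A)$, and closure of $\Dfd^+(A)$ under $\Sigma$ and $\ast$ gives $\mathcal{U}\subseteq\Dfd^+(A)$, while $\Dfd^+(A)_{\ge0}\subseteq\Dfd^+(A)$ is trivial. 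Axiom (i), closure of $\mathcal{U}$ under $\Sigma$, is then immediate: because $0\in\H_A$ we have $\Sigma\mathcal{U}_n=\Sigma^{n+1}\H_A\ast\cdots\ast\Sigma\H_A\subseteq\mathcal{U}_{n+1}$, so $\Sigma\mathcal{U}\subseteq\mathcal{U}$.

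Next I would dispatch the orthogonality axiom (ii), i.e. $\Hom_{\D(A)}(\Sigma\mathcal{U},\Dfd^+(A)_{\ge0})=0$. Since $\Hom_{\D(A)}(-,Y)$ is cohomological and every object of $\Sigma\mathcal{U}$ is a finite iterated extension of objects lying in $\Sigma^{j}\H_A$ with $j\ge1$, it suffices to see that $\Hom_{\D(A)}(\Sigma^{j}X,Y)\cong\Hom_{\D(A)}(X,\Sigma^{-j}Y)=0$ for $X\in\H_A$, $Y\in\Dfd^+(A)_{\ge0}$, $j\ge1$; here $\Sigma^{-j}Y\in\Dfd^+(A)_{\ge j}\subseteq\D(A)_{>0}$, so this reduces to the vanishing $\Hom_{\D(A)}(\H_A,\D(A)_{>0})=0$ already used in the proof of \cref{prop:equiv-cond-for-nice}. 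By the same cohomological-functor argument along $\H_A=\Filt A$ the latter in turn reduces to $\Hom_{\D(A)}(\add A,\D(A)_{>0})=0$, which holds because $\Hom_{\D(A)}(A,Y)\cong H^0(Y)=0$ for $Y\in\D(A)_{>0}$ and $\add A$ is generated by Adam's-degree shifts of $A$ under finite sums and summands.

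The heart of the matter is the decomposition axiom (iii), which in the paper's convention reads $\Sigma\mathcal{U}\ast\Dfd^+(A)_{\ge0}=\Dfd^+(A)$ and is equivalent, after applying the exact functor $\Sigma^{-1}$ and using $\Sigma$-stability of $\Dfd^+(A)$, to $\mathcal{U}\ast\Dfd^+(A)_{>0}=\Dfd^+(A)$. The inclusion $\subseteq$ is clear, so the content is the reverse inclusion, for which I would prove by induction on $n\ge0$ the statement: \emph{if $X\in\Dfd^+(A)_{\ge-n}$ then $X\in\mathcal{U}_n\ast\Dfd^+(A)_{>0}$}. The base case $n=0$ is exactly \cref{lem:pre-t-str-for-nice}. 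For the inductive step with $n\ge1$: from $X\in\Dfd^+(A)_{\ge-n}$ one gets $\Sigma^{-n}X\in\Dfd^+(A)_{\ge0}$, so \cref{lem:pre-t-str-for-nice} provides a triangle $P\to\Sigma^{-n}X\to Y\to\Sigma P$ with $P\in\H_A$ and $Y\in\Dfd^+(A)_{>0}$; applying $\Sigma^{n}$ yields $\Sigma^{n}P\to X\to\Sigma^{n}Y\to\Sigma^{n+1}P$ with $\Sigma^{n}P\in\Sigma^n\H_A$ and $\Sigma^{n}Y\in\Sigma^{n}\Dfd^+(A)_{>0}=\Dfd^+(A)_{\ge-(n-1)}$, so by the inductive hypothesis $\Sigma^{n}Y\in\mathcal{U}_{n-1}\ast\Dfd^+(A)_{>0}$, and associativity of $\ast$ gives $X\in\Sigma^n\H_A\ast(\mathcal{U}_{n-1}\ast\Dfd^+(A)_{>0})=\mathcal{U}_n\ast\Dfd^+(A)_{>0}$. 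Since any $X\in\Dfd^+(A)\subseteq\D^+(A)$ has cohomology bounded below, it lies in $\Dfd^+(A)_{\ge-n}$ for $n\gg0$, whence $X\in\mathcal{U}_n\ast\Dfd^+(A)_{>0}\subseteq\mathcal{U}\ast\Dfd^+(A)_{>0}$; this finishes (iii). (It should then also follow, by a short additional argument in the spirit of the proof of \cref{prop:equiv-cond-for-nice}, that the heart of this $t$-structure equals $\H_A$, though that is not required for the statement.)

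I expect the only real obstacle to be the decomposition axiom: the aisle $\mathcal{U}$ is an increasing union and is \emph{not} stable under $\Sigma^{-1}$, so one cannot produce the truncation triangle of $X$ by a single shift into the standard coaisle — instead one must strip off exactly one copy of $\Sigma^{j}\H_A$ per cohomological degree below zero, which is precisely what \cref{lem:pre-t-str-for-nice} (resting in turn on \cref{lem:basic} and \cref{prop:equiv-cond-for-nice}) makes available, and the induction above is just the bookkeeping that organises this descent. Once the dévissage is set up, axioms (i) and (ii) and the handling of the Adam's-degree shifts are entirely formal.
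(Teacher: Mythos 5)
Your proposal is correct and follows essentially the same route as the paper: the paper also dismisses the orthogonality condition as immediate and establishes the decomposition axiom by writing $\Dfd^+(A)=\bigcup_{n>0}\Sigma^n\Dfd^+(A)_{\ge0}$ and iterating \cref{lem:pre-t-str-for-nice} to peel off one copy of $\Sigma^j\H_A$ per degree, which is exactly your induction. The only difference is presentational — you spell out axioms (i) and (ii) and phrase the dévissage as an explicit induction on $n$, whereas the paper compresses it into a three-line chain of equalities.
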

\begin{proof}
    Since the orthogonal condition is obvious, it suffices to show that 
    \begin{align*}
        \Dfd^+(A)=\left(\bigcup_{n>0}\Sigma^n\H_A\ast\cdots\ast\Sigma\H_A\right)\ast\Dfd^+(A)_{\ge 0}.
    \end{align*}
    By \cref{lem:pre-t-str-for-nice}, we have 
    \begin{align*}
        \Dfd^+(A)
        &=
        \bigcup_{n>0}\Sigma^n\Dfd^+(A)_{\ge 0} \\
        &=
        \bigcup_{n>0}\Sigma^n\H_A\ast\cdots\ast\Sigma\H_A\ast\Dfd^+(A)_{\ge 0} \\
        &=
        \left(\bigcup_{n>0}\Sigma^n\H_A\ast\cdots\ast\Sigma\H_A\right)\ast\Dfd^+(A)_{\ge 0}.
    \end{align*}
\end{proof}

Now, we can prove \cref{thm:equiv-cond-for-nice}.

\begin{proof}[Proof of \cref{thm:equiv-cond-for-nice}]
    $(1)\To(2)$: This is Proposition~\ref{prop:t-str-for-nice}.

    $(2)\To(3)$: Let $X\in\Dfd^+(A)$ and $Y\in\pvd(A)$. There exist $k\in\NN$ such that $Y\in{\pvd(A)}_{\le k}$. Take an exact triangle 
    $$
    X'\to X\to X''\to\Sigma X'
    $$
    such that $X'\in\bigcup_{n\ge 0}\Sigma^n\H_A\ast\cdots\ast\Sigma^{-k}\H_A$ and $X''\in\Dfd^+(A)_{>k}$. Then we have an isomorphism 
    $\Hom_{\D(A)}(X,Y)\simeq\Hom_{\D(A)}(X',Y)$. Since $X'\in\per(A)$ and $Y\in\pvd(A)$, the Hom-space $\Hom_{\D(A)}(X',Y)$ is finite-dimensional.

    $(3)\To(4)$: Obvious.

    $(4)\To(5)$: Obvious.

    $(5)\To(1)$: This is a part of Proposition~\ref{prop:equiv-cond-for-nice}.
\end{proof}

\begin{dfn}
    Let $A$ be a locally finite positive $G$-dg algebra. We say that $A$ is \emph{pvd-finite} if $A$ satisfies one of the equivalent conditions in \cref{thm:equiv-cond-for-nice}.
\end{dfn}

Since the condition $(4)$ in \cref{thm:equiv-cond-for-nice} is preserved under derived equivalences, we have the following result.

\begin{cor}\label{cor:nice-is-D-inv}
    If two locally finite positive $G$-dg algebras $A$ and $A'$ are derived equivalent and $A$ is pvd-finite. Then $A'$ is also pvd-finite.
\end{cor}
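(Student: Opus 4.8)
The plan is to reduce everything to condition~(4) of \cref{thm:equiv-cond-for-nice}, namely the Hom-finiteness of $\pvd(A)$, since this is manifestly a derived-invariant statement. First I would recall that a derived equivalence between $A$ and $A'$ is an exact equivalence $F\colon\D(A)\xrightarrow{\sim}\D(A')$ that restricts to an equivalence $\per(A)\xrightarrow{\sim}\per(A')$ and, being compatible with coproducts, sends compact objects to compact objects. The key point is that $\pvd(A)$ admits an intrinsic description inside $\D(A)$ that is preserved by any such $F$: by definition $\pvd(A)=\Dfd^{\mathsf b}(A)$, and an object $X\in\D(A)$ lies in $\pvd(A)$ precisely when $\bigoplus_{i\in\ZZ}\Hom_{\D(A)}(P,\Sigma^i X)$ is finite-dimensional for every $P\in\per(A)$ (equivalently, for $P=A$ together with its Adam's degree shifts). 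This characterization uses only the triangulated structure, the coproduct structure, and the subcategory $\per(A)$ of compact objects, all of which $F$ transports to the corresponding data for $A'$.

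Granting this, the argument is short. Since $A$ is pvd-finite, condition~(4) gives that $\pvd(A)$ is Hom-finite. The equivalence $F$ restricts to an equivalence of triangulated categories $\pvd(A)\xrightarrow{\sim}\pvd(A')$, because $F$ preserves the property ``$\bigoplus_i\Hom(P,\Sigma^i-)$ is finite-dimensional for all compact $P$'' by the previous paragraph. An equivalence of categories preserves Hom-finiteness, so $\pvd(A')$ is Hom-finite as well. Now $A'$ is by hypothesis a locally finite positive $G$-dg algebra, so \cref{thm:equiv-cond-for-nice} applies to it, and the Hom-finiteness of $\pvd(A')$ (condition~(4) for $A'$) implies condition~(1) for $A'$, i.e.\ $\H_{A'}$ has an injective cogenerator. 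Hence $A'$ is pvd-finite.

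The only genuinely delicate point is verifying that $\pvd(A)$ has the claimed intrinsic description, so that it is carried to $\pvd(A')$ by $F$; but this is exactly the content of the \nameref{rmk} following \cref{prop:D-fin-to-locally finite} in the positive-analogue form, together with the fact that $\per(A)$ consists precisely of the compact objects of $\D(A)$. Alternatively, and even more directly, one may simply invoke that $\pvd(A)$ is the subcategory of objects $Y$ such that $\Hom_{\D(A)}(P,Y)$ is finite-dimensional for all $P\in\per(A)$ \emph{and} $Y$ is cohomologically bounded in the sense of being built from finitely many shifts of $S_A$ — but since all of these conditions are phrased purely in terms of $\per(A)\subseteq\D(A)$, the equivalence $F$ identifies $\pvd(A)$ with $\pvd(A')$, and the corollary follows.
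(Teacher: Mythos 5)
Your proposal is correct and follows the same route as the paper: the paper's entire proof is the observation that condition (4) of \cref{thm:equiv-cond-for-nice} (Hom-finiteness of $\pvd(A)$) is invariant under derived equivalence. Your additional justification that $\pvd(A)$ is intrinsically characterized inside $\D(A)$ via $\bigoplus_{i}\Hom_{\D(A)}(P,\Sigma^iX)$ being finite-dimensional for all compact $P$ (equivalently $P=A$, since $\Hom_{\D(A)}(A,\Sigma^iX)\simeq H^i(X)$) is a valid and welcome elaboration of the step the paper leaves implicit.
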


\begin{cor}\label{cor:length-has-injcogen}
    Let $\T$ be an algebraic Hom-finite $G$-triangulated category. If one of a length heart $\H$ has an injective cogenerator, then every length heart of $\T$ has an injective cogenerator.
\end{cor}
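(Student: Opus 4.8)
The plan is to translate the two length hearts into positive $G$-dg algebras and then to deduce the statement from the derived invariance of pvd-finiteness recorded in \cref{cor:nice-is-D-inv}. Let $\H$ and $\H'$ be length hearts of $\T$ and write $\H=\Filt L$, $\H'=\Filt L'$ for simple-minded objects $L,L'\in\T$. Since $\T$ is algebraic and possesses the simple-minded object $L$, \cite[Lemma 6.1]{Ke94} provides a triangle equivalence $\T\simeq\per(A)$ for some $G$-dg algebra $A$ (one may take $A=\REnd_\T(L)$ computed in a fixed dg enhancement of $\T$) sending $L$ to $A$; as $\per(A)\simeq\T$ is Hom-finite, the algebra $A$ is locally finite, and $A$ is positive because $L$ is simple-minded. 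Since $\Filt$ is preserved by triangle equivalences, this equivalence identifies $\H_A=\Filt(A)$ with $\Filt L=\H$, so $\H_A$ has an injective cogenerator; hence $A$ is pvd-finite by \cref{thm:equiv-cond-for-nice}. Applying the same construction to $L'$ yields a locally finite positive $G$-dg algebra $A'$ with $\T\simeq\per(A')$ under which $\H_{A'}$ corresponds to $\H'$, so it suffices to prove that $A'$ is pvd-finite; then $\H'=\H_{A'}$ has an injective cogenerator by \cref{thm:equiv-cond-for-nice}.

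By \cref{cor:nice-is-D-inv} it is enough to show that $A$ and $A'$ are derived equivalent. Identify $\T=\per(A)\subseteq\D(A)$. Then $L'$ is a compact object of $\D(A)$, and since $\thick L'=\T=\per(A)\ni A$ we obtain $\Loc L'\supseteq\Loc A=\D(A)$, so $L'$ is a compact generator of $\D(A)$. By the derived Morita theorem for dg algebras, $\D(A)\simeq\D(\REnd_{\D(A)}(L'))$, and $\REnd_{\D(A)}(L')$ is quasi-isomorphic to $A'$ by the recipe that produced $A'$ from $L'$; hence $\D(A)\simeq\D(A')$. Now \cref{cor:nice-is-D-inv} gives that $A'$ is pvd-finite, completing the argument.

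The step that needs the most care is the identification of the two dg algebras up to derived equivalence: one must fix a single dg enhancement of $\T$, check that the equivalences $\T\simeq\per(A)$ and $\T\simeq\per(A')$ coming from \cite[Lemma 6.1]{Ke94} are compatible with it, and invoke Morita theory of dg categories to see that $\REnd(L')$ recovers $A'$ up to quasi-isomorphism. Everything else — preservation of $\Filt$ and of Hom-finiteness under triangle equivalence, and the reformulation of ``the heart has an injective cogenerator'' as pvd-finiteness — is formal given \cref{thm:equiv-cond-for-nice} and \cref{cor:nice-is-D-inv}.
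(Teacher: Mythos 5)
Your proposal is correct and follows essentially the same route as the paper: reduce to $\T=\per(A)$ for a pvd-finite locally finite positive $G$-dg algebra $A$ via \cref{thm:equiv-cond-for-nice}, observe that any other simple-minded object $L'$ induces a derived equivalence $\D(A)\simeq\D(\REnd_A(L'))$ (the paper cites \cite[Lemma 6.1]{Ke94} where you spell out the compact-generator/Morita argument), and conclude by the derived invariance of pvd-finiteness in \cref{cor:nice-is-D-inv}. The extra care you flag about enhancements is handled in the paper by working with $\REnd_A(L')$ inside $\Ddg(A)$ directly, but the substance of the argument is identical.
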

\begin{proof}
    We may assume that $\T=\per(A)$ for some pvd-finite locally finite positive $G$-dg algebra $A$. Let $L$ be a simple-minded object of $\per(A)$. Then the triangulated functor
    \begin{align*}
        \RHom_A(L,-)\colon\D(A)\to\D(\REnd_A(L))
    \end{align*}
    is an equivalence by \cite[Lemma 6.1]{Ke94}. It follows that $\REnd_A(L)$ is pvd-finite by \cref{cor:nice-is-D-inv}. The existence of an injective cogenerator of $\H_L\simeq\H_{\REnd_A(L)}$ follows.
\end{proof}

The following proposition is a positive version of Proposition~\ref{prop:D-fin-to-locally finite}. The proof is almost the same.
\begin{prop}
    Let $A$ be a positive $G$-dg algebra such that $H^0(A)$ is finite-dimensional and $\pvd(A)$ is Hom-finite. Then $A$ is locally finite and pvd-finite. In addition, we have 
    \begin{align*}
    \Dfd^+(A)&=\{X\in\D^+(A)\mid \dim\Hom_{\D(A)}(X,\Sigma^iS_A)<\infty\text{ for every }i\in\ZZ\}.
    \intertext{and}
    \per(A)&=\{X\in\D^+(A)\mid \sum_{i\in\ZZ}\dim\Hom_{\D(A)}(X,\Sigma^iS_A)<\infty\}.
    \end{align*}
\end{prop}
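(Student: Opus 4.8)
The plan is to run, \emph{mutatis mutandis}, the proof of Proposition~\ref{prop:D-fin-to-locally finite}, working in $\D^+(A)$ in place of $\D^-(A)$, using the co-$t$-structure of Proposition~\ref{prop:co-t-on-positive} in place of the standard $t$-structure, and isolating the \emph{bottom} cohomology in place of the top one. Set
\[
\T:=\{X\in\D^+(A)\mid\dim\Hom_{\D(A)}(X,\Sigma^iS_A)<\infty\text{ for every }i\in\ZZ\}.
\]
First I would check that $\T$ is a thick subcategory of $\D^+(A)$ (closed under cohomological and Adam's degree shifts, extensions and direct summands) and that $\per(A)\cup\pvd(A)\subseteq\T$: for $Y\in\pvd(A)$ this is the hypothesis that $\pvd(A)$ is Hom-finite together with $\Sigma^iS_A\in\pvd(A)$; and since $\T$ is thick and $\per(A)=\thick_{\D(A)}(A)$, it is enough to note $\Hom_{\D(A)}(A,\Sigma^iS_A)=H^i(S_A)$, which equals $H^0(A)$ for $i=0$ and $0$ otherwise, hence is finite-dimensional because $H^0(A)$ is.

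The crux is the inclusion $\T\subseteq\Dfd^+(A)$. Given a nonzero $X\in\T$, choose $m$ with $H^{<m}(X)=0\neq H^m(X)$, so $X\in\D(A)_{\ge m}$, and consider a truncation triangle $T\to X\to X'\to\Sigma T$ exhibiting the degree-$m$ cohomology as a subobject, i.e.\ with $T$ having cohomology $H^m(X)$ concentrated in degree $m$ and $X'\in\D(A)_{\ge m+1}$ (with cohomology $H^{>m}(X)$). Applying $\Hom_{\D(A)}(-,\Sigma^{-m}S_A)$ to this triangle and using that $X'\in\D(A)_{\ge m+1}$ is left-orthogonal to $\D(A)_{<m+1}$ — which contains both $\Sigma^{-m}S_A$ and $\Sigma^{-m+1}S_A$, by the orthogonality axiom of the co-$t$-structure — one gets an isomorphism $\Hom_{\D(A)}(X,\Sigma^{-m}S_A)\simeq\Hom_{\D(A)}(T,\Sigma^{-m}S_A)$. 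Since $H^0(A)$ is $G$-graded semisimple, $H^m(X)$ is a sum of simple $H^0(A)$-modules, each occurring as a summand of $S_A$, so (using again Hom-finiteness of $\pvd(A)$ and $\add S_A\subseteq\pvd(A)$) the space $\Hom_{\D(A)}(T,\Sigma^{-m}S_A)$ is finite-dimensional if and only if $H^m(X)$ is. As $X\in\T$, this forces $H^m(X)$ to be finite-dimensional, whence $T\in\pvd(A)$ and $X'\in\T$; iterating over $m,m+1,m+2,\dots$ shows every $H^i(X)$ is finite-dimensional, i.e.\ $X\in\Dfd^+(A)$.

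Granting this, $A\in\per(A)\subseteq\T\subseteq\Dfd^+(A)$, so $A$ is locally finite; being locally finite positive with $\pvd(A)$ Hom-finite, it satisfies condition~(4) of Theorem~\ref{thm:equiv-cond-for-nice}, hence is pvd-finite. For the two displayed equalities: in the first, $\supseteq$ is the inclusion $\T\subseteq\Dfd^+(A)$ just proved, and $\subseteq$ is condition~(3) of Theorem~\ref{thm:equiv-cond-for-nice} with $Y=\Sigma^iS_A\in\pvd(A)$; the characterization of $\per(A)$ follows as in the Remark after Proposition~\ref{prop:D-fin-to-locally finite} (cf.\ \cite[Lemma~2.10]{Ji23}), transported to the positive setting: for $X\in\per(A)$, $\RHom_A(X,S_A)$ lies in the thick closure of $\RHom_A(A,S_A)=S_A$ in $\D((A^!)^{\op})$, which is $\pvd((A^!)^{\op})$, so its total cohomology is finite-dimensional, while the converse inclusion follows from $X\in\Dfd^+(A)$ together with finiteness of $\sum_i\dim\Hom_{\D(A)}(X,\Sigma^iS_A)$.

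The step I expect to be the real obstacle is producing, in the second paragraph, a triangle exhibiting $H^m(X)$ as a \emph{subobject} of $X$. In the non-positive setting this is given directly by the standard $t$-structure, but a positive $G$-dg algebra carries only the co-$t$-structure of Proposition~\ref{prop:co-t-on-positive}, whose truncation $\sigma_{<m+1}(X)$ exhibits $H^m(X)$ as a \emph{quotient} of $X$; applying $\Hom_{\D(A)}(-,\Sigma^{-m}S_A)$ to that truncation triangle then yields only a surjection $\Hom_{\D(A)}(\sigma_{<m+1}(X),\Sigma^{-m}S_A)\twoheadrightarrow\Hom_{\D(A)}(X,\Sigma^{-m}S_A)$, which does not bound the left-hand side. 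Overcoming this will require either a more careful construction of the desired triangle, or a detour through the Koszul dual $A^!=\REnd_A(S_A)$ — which in the present situation is locally finite non-positive, since $H^i(A^!)=\Hom_{\D(A)}(S_A,\Sigma^iS_A)$ is finite-dimensional and $H^{>0}(A^!)=0$ because $S_A=\sigma_0(A)$ is a silting object of $\pvd(A)$ — combined with the equivalence $\pvd(A)\simeq\per(A^!)$ and the contravariant functor $\RHom_A(-,S_A)\colon\per(A)\to\pvd((A^!)^{\op})$, which (once one knows it is faithful) transfers Hom-finiteness of $\pvd((A^!)^{\op})$, i.e.\ of the perfectly valued derived category of a locally finite non-positive dg algebra (Proposition~\ref{prop:non-positive-pvd}), back to $\per(A)$, so that $A$ is locally finite.
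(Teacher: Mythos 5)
You have correctly located the crux, and you flag the gap yourself: the triangle $T\to X\to X'$ realizing the bottom cohomology $H^m(X)$ as a \emph{subobject} of $X$ does not exist for a positive dg algebra, and your entire second paragraph rests on it. The paper's proof uses precisely the truncation you set aside, namely the co-$t$-structure triangle $\sigma_{>m}(X)\to X\to\sigma_{m}(X)\to\Sigma\sigma_{>m}(X)$ of \cref{prop:co-t-on-positive}, and asserts the \emph{equality} $\dim\Hom_{\D(A)}(\sigma_m(X),\Sigma^{-m}S_A)=\dim\Hom_{\D(A)}(X,\Sigma^{-m}S_A)$; from this $\sigma_m(X)\in\Sigma^{-m}\add S_A\subseteq\T$, one replaces $X$ by $\sigma_{>m}(X)$ and iterates. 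Your objection that co-$t$-structure orthogonality only yields a surjection $\Hom_{\D(A)}(\sigma_m(X),\Sigma^{-m}S_A)\twoheadrightarrow\Hom_{\D(A)}(X,\Sigma^{-m}S_A)$, bounding the wrong side, is exactly the point that must be (and can be) resolved: one needs the additional observation that for $Y\in\D(A)_{\ge m}$ the space $\Hom_{\D(A)}(Y,\Sigma^{-m}S_A)$ depends only on $H^m(Y)$ (for instance it is naturally isomorphic to $\Hom_{H^0(A)}(H^m(Y),H^0(A))$, by a bottom-degree K\"unneth computation against the $\k$-dual of $S_A$, or via a minimal right $\Add A$-approximation), which makes the surjection a bijection and kills the connecting map from $\Hom_{\D(A)}(\sigma_{>m}(X),\Sigma^{-m-1}S_A)$. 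Without some lemma of this kind your iteration never starts, so as written the proof of $\T\subseteq\Dfd^+(A)$ is incomplete.

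Neither of your proposed detours closes the gap. A truncation exhibiting $H^m(X)$ as a subobject only becomes available once the $t$-structure of \cref{thm:equiv-cond-for-nice}(2) exists, i.e.\ after pvd-finiteness is known, which is what is being proved. The Koszul-dual detour is circular where it matters: you invoke faithfulness of $\RHom_A(-,S_A)$ on $\per(A)$ to pull Hom-finiteness back from $\pvd((A^!)^{\op})$, but that faithfulness (\cref{prop:fully-faithful}) is established in the paper only for pvd-finite locally finite positive $A$, i.e.\ downstream of the present proposition, and proving it directly on $\thick(A)$ amounts to the double-duality statement $H^i(A)\simeq\Hom(S_{(A^!)^{\op}},\Sigma^iS_{(A^!)^{\op}})$, which is not easier than the claim at hand. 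Finally, for the displayed description of $\per(A)$ the nontrivial direction is $\supseteq$ (finite total Hom against $S_A$ implies perfect); the paper quotes \cite[Theorem 4.19]{AMY} for this, whereas your ``the converse inclusion follows from\dots'' supplies no argument.
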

\begin{proof}
    Let 
    $$
    \T:=\{X\in\D^+(A)\mid \dim\Hom_{\D(A)}(X,\Sigma^iS_A)<\infty\text{ for every }i\in\ZZ\}.
    $$
    By assumption, we have $ {\per(A)}\cup\pvd(A)\subseteq\T$. We first show that $\T\subseteq\Dfd^+(A)$. Let $X\in\T$ be a non-zero object. There exists $n\in\ZZ$ such that $H^{-n}(X)\neq0$ and $H^{<-n}(X)=0$. Take the truncation triangle
    $$
    \sigma_{>-n}(X)\to X\to \sigma_{-n}(X)\to\Sigma\sigma_{>-n}(X).
    $$
    We have 
    $$
    \dim\Hom_{\D(A)}(\sigma_{-n}(X),\Sigma^nS_A)=\dim\Hom_{\D(A)}(X,\Sigma^iS_A)<\infty.
    $$
    Therefore, we have $\sigma_{-n}(X)\in\Sigma^n\add S_A\subseteq\T$. By using the above argument iteratively, we have $X\in\Dfd^+(A)$. Since $A\in\T\subseteq\Dfd^+(A)$, it follows that $A$ is locally finite. Since $A$ is locally finite and $\pvd(A)$ is Hom-finite, $A$ pvd-finite by \cref{thm:equiv-cond-for-nice}. The converse inclusion follows from \cref{thm:equiv-cond-for-nice}.

    The assertion about $\per(A)$ follows from the above discussion and \cite[Theorem 4.19]{AMY}.
\end{proof}

In general, in general, determining whether $A$ is pvd-finite is not easy. We can show that $H^1(A)$ is directed, then $A$ is pvd-finite.

\begin{prop}\label{prop:example}
    Let $A$ be a locally finite positive $G$-dg algebra. If there is a decomposition $A=\bigoplus_{i=1}^n P_i$ in $\D(A)$ such that
    \begin{align*}
        \Hom_{\D(A)}(P_i,\Sigma P_j)=0
    \end{align*}
    for every $1\le j\le i\le n$. Then $A$ is pvd-finite.
\end{prop}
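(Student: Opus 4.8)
The plan is to verify condition~(5) of \cref{thm:equiv-cond-for-nice}, namely that $\End_{\D(A)}(S_A)$ is finite-dimensional, by exploiting the directedness hypothesis together with \cref{prop:equiv-cond-for-nice}. Write $A=\bigoplus_{i=1}^n P_i$ as in the statement, so that $S_A=\bigoplus_{i=1}^n S_{P_i}$ with $S_{P_i}=\sigma_0(P_i)$, and we may assume the ordering is chosen so that $\Hom_{\D(A)}(P_i,\Sigma P_j)=0$ whenever $j\le i$. The key observation is that the directedness propagates from the $P_i$ to the truncations $\sigma_0(P_i)$: since there is an exact triangle $\sigma_{>0}(P_i)\to P_i\to S_{P_i}\to\Sigma\sigma_{>0}(P_i)$ with $\sigma_{>0}(P_i)\in\D(A)_{>0}$, one can compute $\Hom_{\D(A)}(S_{P_i},\Sigma^k S_{P_j})$ in terms of the $\Hom$-spaces among the $P_i$'s in low degrees, and the triangularity of the Ext-quiver forces these to be controlled.

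The main step is an induction on $n$. For $n=1$ the hypothesis gives $\Hom_{\D(A)}(P_1,\Sigma P_1)=0$, so by \cref{prop:equiv-cond-for-nice} (or directly) $A$ is pvd-finite; more useful is to record the stronger statement that $S_{P_1}\in\H_A\ast\Dfd^+(A)_{>0}$. For the inductive step, split off $P_n$ (the ``last'' summand, which receives no arrows): set $B$ to be a positive dg algebra modelling $\thick(\bigoplus_{i<n}P_i)$ — equivalently, apply the truncation/recollement coming from the idempotent $e$ corresponding to $P_n$ — and use the induction hypothesis on the directed subalgebra $eAe$ (or on $\bigoplus_{i<n}P_i$). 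The directedness condition $\Hom_{\D(A)}(P_i,\Sigma P_n)=0$ for all $i\le n$ (in particular $i<n$) says precisely that $S_{P_n}$ does not appear as a first extension-layer below any $S_{P_i}$, which lets one build, for each generator $P_i$, a finite filtration of $S_{P_i}$ by Adam's shifts of the various $S_{P_j}$ with only finitely many layers. Concretely, iterating \cref{lem:ineq} along such a filtration shows that $\dim\Hom_{\D(A)}(S_{P_i},S_A)<\infty$ for every $i$, hence $\End_{\D(A)}(S_A)$ is finite-dimensional, which is condition~(5).

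Alternatively — and this may be cleaner to write — one can argue directly that $S_A\in\H_A\ast\Dfd^+(A)_{>0}$, which is condition~(2) of \cref{prop:equiv-cond-for-nice}, by induction on $n$ using \cref{lem:basic}: the hypothesis $\Hom_{\D(A)}(P_i,\Sigma P_j)=0$ for $j\le i$ guarantees that when one resolves $S_{P_n}$ by $\add A$ one only ever introduces lower-indexed summands $\Sigma^{-1}S_{P_j}$ with $j<n$, so the process of replacing $\Sigma^{-1}S_A$-terms by $\add A$-terms (as in the proof of \cref{lem:basic} and \cref{lem:pre-t-str-for-nice}) terminates after at most $n$ rounds. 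I expect the main obstacle to be bookkeeping: one must set up the induction so that the directedness is literally inherited by whatever smaller dg algebra models $\thick(\bigoplus_{i<n}P_i)$, and check that passing from the $P_i$ to the $S_{P_i}$ preserves the vanishing $\Hom_{\D(A)}(S_{P_i},\Sigma S_{P_j})=0$ for $j\le i$ in the relevant range of degrees; once that is arranged, each individual step is a short triangle-chase using \cref{lem:ineq}, \cref{lem:basic}, and \cref{prop:equiv-cond-for-nice}.
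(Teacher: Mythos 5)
Your second, ``alternative'' route is the one the paper actually takes: it verifies condition (2) of \cref{prop:equiv-cond-for-nice}, i.e. $S_A\in\H_A\ast\Dfd^+(A)_{>0}$, which reduces to showing $\Sigma\sigma_{>0}(P_i)\in\Filt(A)\ast\Dfd^+(A)_{>0}$ for each $i$. But where you leave the mechanism at the level of ``the replacement process terminates after at most $n$ rounds'', the paper makes it concrete (for $n=2$): $\Sigma\sigma_{>0}(P_1)$ already lies in $\D(A)_{>0}$ because $H^1(P_1)\simeq\bigoplus_i\Hom_{\D(A)}(P_i,\Sigma P_1)=0$; and for $P_2$ one takes a minimal right $\add P_1$-approximation $f\colon Q_1\to\Sigma\sigma_{>0}(P_2)$ and checks that $X=\cone(f)$ satisfies $\Hom_{\D(A)}(P_k,X)=0$ for $k=1,2$, using surjectivity of $f_*$ together with the vanishing of $\Hom_{\D(A)}(P_1,\Sigma Q_1)$, $\Hom_{\D(A)}(P_2,\Sigma\sigma_{>0}(P_2))$ and $\Hom_{\D(A)}(P_2,\Sigma Q_1)$ supplied by the hypothesis; hence $X\in\D(A)_{>0}$. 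This one-step triangle chase is exactly the content missing from your sketch; it is what you need to write out (and iterate for general $n$), rather than an appeal to \cref{lem:basic}, whose proof does not by itself yield termination.

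Two concrete problems remain. First, you have the directedness backwards: the hypothesis $\Hom_{\D(A)}(P_i,\Sigma P_j)=0$ for $j\le i$ gives $\Hom_{\D(A)}(P_n,\Sigma P_j)=0$ for all $j$ and $\Hom_{\D(A)}(P_i,\Sigma P_1)=0$ for all $i$ (so $H^1(P_1)=0$), whereas your claim that ``$\Hom_{\D(A)}(P_i,\Sigma P_n)=0$ for all $i\le n$'' is not implied (it holds only for $i=n$); the induction must peel off the summand on the correct side, and as stated your inductive step rests on a false vanishing. Second, the recollement/$eAe$ version of your first approach has a genuine gap: even granting that $\REnd_A(\bigoplus_{i<n}P_i)$ is a directed locally finite positive dg algebra to which the inductive hypothesis applies, you do not explain how pvd-finiteness transfers back to $A$, and this is not automatic. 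Moreover, the step ``iterating \cref{lem:ineq} along such a filtration shows $\dim\Hom_{\D(A)}(S_{P_i},S_A)<\infty$'' presupposes a finite decomposition of $S_{P_i}$ of precisely the form $\H_A\ast\Dfd^+(A)_{>0}$ --- which is the statement to be proved --- so that branch of the argument is circular as written.
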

\begin{proof}
    We provide a proof in the case $G=\{0\}$ and $n=2$; the general case follows similarly. By \cref{prop:equiv-cond-for-nice}, it suffices to show that $\Sigma\sigma_{>0}(A)\in\Filt(A)\ast\Dfd^+(A)_{>0}$. By assumption, we have $\Sigma\sigma_{>0}(P_1)\in\D(A)_{>0}$. Take a minimal $\add P_1$-approximation of $\Sigma\sigma_{>0}(P_2)$ 
    \begin{align*}
        f\colon Q_1\to\Sigma\sigma_{>0}(P_2),
    \end{align*}
    and let $X:=\cone(f)$. Applying $\Hom_{\D(A)}(P_1,-)$ to 
    \begin{align*}
        Q_1\xto{f}\Sigma\sigma_{>0}(P_2)\to X\to \Sigma Q_1,
    \end{align*}
    we have an exact sequence
    \begin{align*}
        \Hom_{\D(A)}(P_1,\Sigma\sigma_{>0}(
        P_2))\to\Hom_{\D(A)}(P_1,X)\to\Hom_{\D(A)}(P_1,\Sigma Q_1).
    \end{align*}
    By assumption, the left map is surjective and the rightmost term vanishes, and so we have $\Hom_{\D(A)}(P_1,X)=0$. Since we have
    \begin{align*}
        \Hom_{\D(A)}(P_2,\Sigma\sigma_{>0}(P_2))=0=\Hom_{\D(A)}(P_2,\Sigma Q_1)
    \end{align*}
    by assumption, it follows that $\Hom_{\D(A)}(P_2,X)=0$. Therefore, we have $X\in\D(A)_{>0}$, and hence $\Sigma\sigma_{>0}(P_2)\in\Filt(A)\ast\Dfd^+(A)_{>0}$.
\end{proof}

As a special case of \cref{prop:example}, we can show that any simply connected locally finite positive $G$-dg algebra is pvd-finite.
\begin{cor}
    Let $A$ be a locally finite positive $G$-dg algebra. If $H^1(A)=0$, then $A$ is pvd-finite.
\end{cor}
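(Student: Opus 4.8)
The plan is to reduce immediately to \cref{prop:example}. The key observation is the standard identification: for any $G$-dg algebra $A$ one has $\REnd_A(A)\simeq A$ in $\D(A)$, hence a natural isomorphism of $G$-graded vector spaces $\Hom_{\D(A)}(A,\Sigma^i A)\cong H^i(A)$ for every $i\in\ZZ$. In particular, the hypothesis $H^1(A)=0$ says exactly that $\Hom_{\D(A)}(A,\Sigma A)=0$.

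With this in hand I would simply take the trivial decomposition $A=\bigoplus_{i=1}^n P_i$ with $n=1$ and $P_1=A$. Then the requirement in \cref{prop:example} that $\Hom_{\D(A)}(P_i,\Sigma P_j)=0$ for all $1\le j\le i\le n$ collapses to the single identity $\Hom_{\D(A)}(A,\Sigma A)=0$, which holds by the previous paragraph. Applying \cref{prop:example} then yields that $A$ is pvd-finite, which is the assertion. (Alternatively, one could first decompose $A=\bigoplus_{i=1}^n P_i$ into indecomposables, which is possible because $\End_{\D(A)}(A)=H^0(A)$ is $G$-graded semisimple; since each $\Hom_{\D(A)}(P_i,\Sigma P_j)$ is then a direct summand of $H^1(A)=0$, the hypothesis of \cref{prop:example} holds for any ordering of the summands.)

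I expect no real obstacle here: once \cref{prop:example} is available, the only thing to verify is the elementary cohomological vanishing $\Hom_{\D(A)}(A,\Sigma A)=H^1(A)=0$, together with the observation that \cref{prop:example} allows a decomposition into a single summand (equivalently, repeated summands), so one may legitimately take $P_1=A$. Thus the corollary is essentially a direct specialization of \cref{prop:example}.
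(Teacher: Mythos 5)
Your proof is correct and is exactly the paper's intended argument: the corollary is presented as a special case of \cref{prop:example}, obtained precisely as you do by taking the (trivial or indecomposable) decomposition of $A$ and observing that $\Hom_{\D(A)}(A,\Sigma A)\cong H^1(A)=0$ verifies the hypothesis of that proposition.
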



\section{Koszul duality}\label{section:KD}

In this section, we define a Koszul dual of locally finite (non-)positive $G$-dg algebras and investigate their properties.

\subsection{Koszul dual}

Following \cite[\S 10.2]{Ke94}, we define a Koszul dual as follows:
\begin{dfn}\label{dfn:Koszul-dual}
    Let $A$ be a locally finite (non)-positive $G$-dg algebra. We define the \emph{Koszul dual} of $A$ by
    $$
    A^!:=\End_{\Ddg(A)}(S_A) (=\REnd_A(S_A)).
    $$
    We put
    $$ 
    \Phi':=\Hom_{\Ddg(A)}(-,S_A)\colon\Ddg(A)\to\Cdg((A^!)^\op)^\op,
    $$
    and 
    $$
    \Phi:=\Q\circ\Phi'\colon\Ddg(A)\to\Ddg((A^!)^\op)^\op,
    $$
    where $\Q\colon\Cdg((A^!)^\op)\to\Ddg((A^!)^\op)$ is the quotient $G$-dg functor.
    \begin{center}
        \begin{tikzcd}
            \Ddg(A) \arrow[rr, "\Phi'"] \arrow[rrd, "\Phi"'] & & \Cdg((A^!)^\op)^\op \arrow[d, "\Q"] \\
            & & \Ddg((A^!)^\op)^\op.
        \end{tikzcd}
    \end{center}
    We call the $G$-dg functor $\Phi$ (resp. $G$-graded functor $\Phi=\RHom_A(-,S_A)\colon\D(A)\to\D((A^!)^\op)^\op$) the \emph{Koszul dual $G$-dg functor} (resp. \emph{Koszul dual functor}).
\end{dfn}

\begin{rmk}\label{rmk:KD-of-op}
    The $\k$-dual $D\colon\pvd(A)\to\pvd(A^\op)^\op$ is an equivalence, and we have $D(S_A)\simeq S_{A^\op}$. Therefore, the two $G$-dg algebras $(A^!)^\op$ and $(A^\op)^!$ belong to the same quasi-isomorphism class.
\end{rmk}

The following two theorems are main theorems of this section.

\begin{thm}\label{thm:KD-on-alg}
    By taking Koszul dual, quasi-isomorphism classes of locally finite non-positive $G$-dg algebras and quasi-isomorphism classes of pvd-finite locally finite positive $G$-dg algebras correspond bijectively.
\end{thm}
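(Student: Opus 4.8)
\textbf{Proof plan for \cref{thm:KD-on-alg}.}

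The plan is to establish a bijection by showing that the two Koszul dual constructions are mutually inverse, after checking that each one lands in the claimed class. First I would verify the two ``well-definedness'' statements. Given a locally finite non-positive $G$-dg algebra $A$, the module $S_A = \top\sigma^0(A)$ is a simple-minded object of $\pvd(A)$, and by \cref{prop:non-positive-pvd} the category $\pvd(A)$ is Hom-finite; hence $A^! = \REnd_A(S_A)$ has $H^*(A^!) = \bigoplus_i \Hom_{\D(A)}(S_A, \Sigma^i S_A)$ finite-dimensional in each degree, so $A^!$ is locally finite. Moreover, since $\Hom_{\D(A)}(S_A,\Sigma^{<0}S_A) = 0$ by the simple-minded condition and $\End_{\D(A)}(S_A)$ is semisimple, $A^!$ is positive; and because $\pvd((A^!)^\op) \simeq \pvd(A)^\op$ is Hom-finite (this uses the equivalence on $\pvd$, which is essentially the content of \cref{main:D}, or can be extracted directly), condition (4) of \cref{thm:equiv-cond-for-nice} holds, so $(A^!)^\op$ is pvd-finite. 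Conversely, if $A$ is pvd-finite locally finite positive, then $S_A = \sigma_0(A)$ and $\End_{\D(A)}(S_A)$ is finite-dimensional by condition (5) of \cref{thm:equiv-cond-for-nice}; one checks $A^!$ is non-positive (from $H^{<0}$ vanishing of $S_A$ and the positivity of $A$, the Ext-groups $\Hom_{\D(A)}(S_A,\Sigma^{>0}S_A)$ vanish since $S_A$ is a silting object of $\pvd(A)$ by \cref{prop:co-t-on-positive} and the cited results) and locally finite.

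Next I would show the two constructions are inverse up to quasi-isomorphism, i.e.\ $(A^!)^! \simeq A$ in the appropriate sense. The key input is \cref{main:D}: the Koszul dual functor $\Phi\colon\D(A)\to\D((A^!)^\op)^\op$ sends $A$ to $S_{(A^!)^\op}$ and restricts to an equivalence $\per(A)\simeq\pvd((A^!)^\op)^\op$ (in the non-positive case) or $\pvd(A)\simeq\per((A^!)^\op)^\op$ (in the positive case). Applying the Koszul dual construction again to $(A^!)^\op$ — which is pvd-finite positive when $A$ is non-positive, and locally finite non-positive when $A$ is positive — and using $\RHom_{(A^!)^\op}(S_{(A^!)^\op}, S_{(A^!)^\op}) = \REnd(\Phi(A)) \simeq \REnd_A(A) = A$ via the equivalence, one obtains $((A^!)^\op)^! \simeq A^\op$ as $G$-dg algebras; combined with \cref{rmk:KD-of-op}, which says $(A^!)^\op$ and $(A^\op)^!$ agree up to quasi-isomorphism, this yields that taking Koszul duals twice returns $A$ up to quasi-isomorphism. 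Since quasi-isomorphic $G$-dg algebras have equivalent derived categories and hence quasi-isomorphic Koszul duals (the construction depends only on the derived category and the distinguished object $S_A$, which is determined up to isomorphism by its cohomology), the assignment descends to quasi-isomorphism classes and is a genuine bijection with inverse given by the same construction.

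The main obstacle I expect is the careful bookkeeping needed to identify $(A^!)^\op$ in the ``opposite'' conventions used throughout: the Koszul dual functor is valued in $\D((A^!)^\op)^\op$, so iterating it requires keeping track of three or four nested opposites, and one must invoke \cref{rmk:KD-of-op} at the right moment to collapse $(A^!)^\op$ versus $(A^\op)^!$. A second, more substantive point is verifying that $\Phi$ genuinely induces a \emph{$G$-dg} quasi-isomorphism $\REnd_A(S_A) \to \REnd_{(A^!)^\op}(\Phi(S_A))^\op$ and not merely an isomorphism of graded algebras or of derived categories — this is where one needs that $\Phi'$ is a $G$-dg functor (as set up in \cref{dfn:Koszul-dual}) and that $S_A$ is sent to a module quasi-isomorphic to $A^!$ itself, so that the derived endomorphism $G$-dg algebras are compared on the nose. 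Granting \cref{main:D} and \cref{thm:equiv-cond-for-nice}, the remaining arguments are formal manipulations of derived functors and truncation, and no genuinely new idea is required beyond organizing the opposites correctly.
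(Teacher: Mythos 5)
Your proposal is correct and takes essentially the same route as the paper, which isolates the key step as \cref{thm:Koszul-double-dual}: the canonical morphism $A\to A^{!!}=\Phi'\circ\Phi(A)$ is a quasi-isomorphism because $\Phi^2\colon\per(A)\to\per(A^{!!})$ is an equivalence by \cref{thm:Koszul-duality}, and the bijection on quasi-isomorphism classes (together with the well-definedness checks you spell out) follows immediately. One minor correction: for non-positive $A$, \cref{thm:Koszul-duality} identifies $\pvd((A^!)^\op)$ with $\per(A)^\op$ rather than $\pvd(A)^\op$, but Hom-finiteness, and hence pvd-finiteness of $A^!$, follows either way since $A$ is locally finite.
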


\begin{thm}[Koszul duality]\label{thm:Koszul-duality}
    \
    \begin{itemize}
        \item [(1)] Let $A$ be a locally finite non-positive $G$-dg algebra. Then the Koszul dual functor $\Phi\colon\D(A)\to\D((A^!)^\op)^\op$ takes $A$ to $S_{(A^!)^\op}$ and induces the following equivalences of $G$-triangulated categories:
    \begin{center}
        \begin{tikzcd}
            \per(A)\arrow[rr,"\simeq"] & & \pvd((A^!)^\op)^\op \\[-2em]
            \rotatebox{270}{$\subseteq$} & & \rotatebox{270}{$\subseteq$} \\[-2em]
            \Dfd^-(A)\arrow[rr,"\simeq"] & & \Dfd^+((A^!)^\op)^\op \\[-2em]
            \rotatebox{90}{$\subseteq$} & & \rotatebox{90}{$\subseteq$} \\[-2em]
            \pvd(A)\arrow[rr,"\simeq"] & & \per((A^!)^\op)^\op.
        \end{tikzcd}
    \end{center} 
    \item [(2)] Let $A$ be a pvd-finite locally finite positive $G$-dg algebra. Then the Koszul dual functor $\Phi\colon\D(A)\to\D((A^!)^\op)^\op$ takes $A$ to $S_{(A^!)^\op}$ and induces the following equivalences of $G$-triangulated categories:
    \begin{center}
        \begin{tikzcd}
            \per(A)\arrow[rr,"\simeq"] & & \pvd((A^!)^\op)^\op \\[-2em]
            \rotatebox{270}{$\subseteq$} & & \rotatebox{270}{$\subseteq$} \\[-2em]
            \Dfd^+(A)\arrow[rr,"\simeq"] & & \Dfd^-((A^!)^\op)^\op \\[-2em]
            \rotatebox{90}{$\subseteq$} & & \rotatebox{90}{$\subseteq$} \\[-2em]
            \pvd(A)\arrow[rr,"\simeq"] & & \per((A^!)^\op)^\op.
        \end{tikzcd}
    \end{center} 
    \end{itemize}
\end{thm}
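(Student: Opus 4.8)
\textbf{Proof strategy for \cref{thm:Koszul-duality}.}
The plan is to prove part (1) directly and then deduce part (2) from it via \cref{thm:KD-on-alg} and \cref{rmk:KD-of-op}. For (1), the starting point is the computation $\Phi(A)=\RHom_A(A,S_A)\simeq S_A$ viewed as an object of $\D((A^!)^\op)$; since $A^!=\REnd_A(S_A)$ acts on $S_A$, this object is exactly the free module $A^!$ of rank one, and under the identification $\D((A^!)^\op)\simeq\D((A^!)^\op)$ its image is $S_{(A^!)^\op}$ because $A$ is a silting object of $\per(A)$ whose ``top'' is itself (here I use \cref{rmk:KD-of-op} to pass between $(A^!)^\op$ and $(A^\op)^!$, and the fact, recorded after \cref{dfn:simple}, that $S_{(A^!)^\op}=\sigma_0(A^!)$). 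Dually, applying $\Phi$ to $S_A$ gives $\RHom_A(S_A,S_A)=A^!$, which is the rank-one module $(A^!)^\op$, i.e. a silting object; so $\Phi$ exchanges the distinguished silting/simple-minded objects, which is the source of the three horizontal equivalences.

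The key steps, in order, are: (i) show $\Phi$ restricts to an equivalence $\pvd(A)\simeqto\per((A^!)^\op)^\op$. This is essentially Keller's Morita theory for $\pvd$: $S_A$ is a simple-minded object of $\pvd(A)$ (by \cref{prop:non-positive-pvd} and the discussion after \cref{dfn:simple}), so $\RHom_A(-,S_A)$ induces an equivalence $\pvd(A)\simeqto\per(\REnd_A(S_A)^\op)^\op=\per((A^!)^\op)^\op$, using $\thick_{\D(A)}(S_A)=\pvd(A)$ for locally finite non-positive $A$ (the remark after \cref{prop:D-fin-to-locally finite}). (ii) Dually, show $\Phi$ restricts to an equivalence $\per(A)\simeqto\pvd((A^!)^\op)^\op$: since $\Phi(A)=S_{(A^!)^\op}$ and $\Phi$ is triangulated and sends $\thick(A)=\per(A)$ into $\thick(S_{(A^!)^\op})=\pvd((A^!)^\op)^\op$, one gets a functor $\per(A)\to\pvd((A^!)^\op)^\op$; fully-faithfulness is checked on the generator $A$ via $\RHom_{(A^!)^\op}(S_{(A^!)^\op},S_{(A^!)^\op})\simeq ((A^!)^\op)^!=((A^!)^!)^\op$, and the double Koszul dual $(A^!)^!$ is quasi-isomorphic to $A$ (this is exactly the content needed for \cref{thm:KD-on-alg} in the non-positive direction). (iii) Interpolate: both $\per(A)$ and $\pvd(A)$ sit inside $\Dfd^-(A)$, and by \cref{prop:D-fin-to-locally finite} an object of $\D^-(A)$ lies in $\Dfd^-(A)$ iff $\Hom_{\D(A)}(X,\Sigma^iS_A)$ is finite-dimensional for all $i$; since $\Phi=\RHom_A(-,S_A)$ and $\Phi$ is already known to be an equivalence on $\pvd$, the finite-dimensionality of these Hom-spaces is precisely the condition that $\Phi(X)\in\Dfd^+((A^!)^\op)^\op$ (using the analogous description of $\Dfd^+$ on the positive side, which follows from the positive-version proposition proved just before \cref{prop:example}). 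So $\Phi$ maps $\Dfd^-(A)$ into $\Dfd^+((A^!)^\op)^\op$; quasi-inverse is $\Phi$ for $(A^!)^\op$ in the opposite direction (part (2) applied to $(A^!)^\op$), so one must run (1) and (2) together, or set up the quasi-inverse by hand using that $\Phi\circ\Phi\simeq\id$ on the relevant subcategories.

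For part (2): given a pvd-finite locally finite positive $A$, \cref{thm:KD-on-alg} says $A^!$ is (quasi-isomorphic to) a locally finite non-positive $G$-dg algebra and $(A^!)^!\simeq A$; applying part (1) to the non-positive algebra $B:=(A^\op)^!$ and untwisting the ${}^\op$'s via \cref{rmk:KD-of-op} yields the three equivalences in the form stated, with the roles of $\Dfd^-$ and $\Dfd^+$ swapped because $B$ is non-positive while $A$ is positive. The genuinely non-formal point, and the one I expect to be the main obstacle, is step (iii): proving that $\Phi$ sends $\Dfd^-(A)$ exactly onto $\Dfd^+((A^!)^\op)^\op$ rather than merely into a Hom-finite subcategory. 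This requires a careful compatibility between the cohomological truncations $\sigma^{\le k}$ on the non-positive side and the co-$t$-structure truncations $\sigma_{\ge k}$ on the positive side (\cref{prop:co-t-on-positive}), together with a boundedness/convergence argument showing the natural map $X\to\Phi\Phi(X)$ is an isomorphism for $X\in\Dfd^-(A)$ — one truncates $X$, applies the already-established equivalence on each finite-dimensional layer $\sigma^k(X)\in\pvd(A)$, and passes to the (homotopy) limit, checking the limit is computed correctly because the relevant Hom-spaces stabilize. The non-positivity of $A$ (equivalently pvd-finiteness on the positive side) is exactly what makes this truncation-and-limit argument converge.
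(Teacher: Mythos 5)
Your overall architecture --- identify $\Phi(A)\simeq S_{(A^!)^\op}$ and $\Phi(S_A)\simeq A^!$, obtain the equivalences between $\per$ and $\pvd$ first, and then extend to the middle row by truncating, applying the equivalence levelwise, and passing to homotopy (co)limits where the Hom-spaces stabilize --- is exactly the paper's strategy (\cref{prop:A-to-S}, \cref{lem:holim-app}, \cref{lem:hocolim-app}, \cref{lem:pre-fully-faithful}, \cref{prop:density}). But two of your steps are circular relative to the paper's development. In step (ii) you verify full faithfulness of $\Phi$ on $\per(A)$ by invoking $(A^!)^!\simeq A$, and you propose to deduce part (2) from part (1) via \cref{thm:KD-on-alg}; both of these facts are \emph{downstream} of \cref{thm:Koszul-duality} in the paper (\cref{thm:Koszul-double-dual} is proved from the statement that $\Phi^2\colon\per(A)\to\per(A^{!!})$ is an equivalence, and \cref{thm:KD-on-alg} is its corollary), so you cannot use them here without an independent proof. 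The non-circular route, which the paper takes, is to note that $\Hom_{\D(A)}(X,Y)\to\Hom(\Phi(Y),\Phi(X))$ is an isomorphism for free whenever $Y\in\pvd(A)=\thick(S_A)$ (\cref{rmk:cohomology-of-KD}, by devissage from $Y=S_A$), and then to extend to arbitrary $Y\in\Dfd^-(A)$ --- in particular $Y=A$ --- by writing $Y\simeq\holim_n\sigma^{\ge-n}(Y)$ with each truncation in $\pvd(A)$ and using the Mittag--Leffler condition coming from Hom-finiteness. Likewise, part (2) cannot simply be untwisted from part (1): the positive case genuinely needs the pvd-finiteness hypothesis through the injective hull $A\to I$ in $\H_A$ of \cref{prop:equiv-cond-for-nice} in order to prove $\Phi(A)\simeq S_{(A^!)^\op}$, which is why the paper proves (1) and (2) in parallel by the same three propositions rather than deriving one from the other. (You half-acknowledge this when you say one ``must run (1) and (2) together,'' but that contradicts your stated plan.)

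There is also a computational error at the start: $\Phi(A)=\RHom_A(A,S_A)$ is \emph{not} ``the free module $A^!$ of rank one.'' Its cohomology is concentrated in degree $0$ and equals $\Hom_{\D(A)}(A,S_A)\simeq\top H^0(A)$, whereas $A^!$ typically has cohomology in infinitely many positive degrees; the free rank-one module is $\Phi(S_A)=\REnd_A(S_A)=A^!$. You land on the correct conclusion $\Phi(A)\simeq S_{(A^!)^\op}$, but the stated reason does not support it; the paper's argument in \cref{prop:A-to-S} is that $H^{\neq0}(\Phi(A))=0$ and that the map $H^0(\Phi(S_A))\to H^0(\Phi(A))$ identifies $\Phi(A)$ with the semisimple module $\sigma_0((A^!)^\op)$ (and, in the positive case, a projective-cover argument via the injective hull). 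With step (ii) replaced by the truncation-and-limit argument you already describe in step (iii), and the two parts run in parallel, your outline becomes the paper's proof.
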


These two theorems will be proven in the following subsections.

\subsection{Fully-faithful}

This subsection aims to prove the Koszul dual functors are fully-faithful.
\begin{prop}\label{prop:fully-faithful}
    \
    \begin{itemize}
        \item [(1)] Let $A$ be a locally finite non-positive $G$-dg algebra, then the Koszul dual functor $\Phi\colon\Dfd^-(A)\to\Dfd^+((A^!)^\op)^\op$ is fully-faithful.
        \item [(2)] Let $A$ be a pvd-finite locally finite positive $G$-dg algebra, then the Koszul dual functor $\Phi\colon\Dfd^+(A)\to\Dfd^-((A^!)^\op)^\op$ is fully-faithful.
    \end{itemize}
\end{prop}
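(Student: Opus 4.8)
The two statements are formally dual to one another via \cref{rmk:KD-of-op} (replacing $A$ by $A^\op$ interchanges non-positive and positive, and interchanges $\Dfd^-$ with $\Dfd^+$), so it suffices to concentrate on one of them, say (1), and observe that the same argument, fed through the $t$-structure of \cref{thm:equiv-cond-for-nice}(2) instead of the standard non-positive $t$-structure, gives (2). The strategy is the standard ``compare on a generating set, then extend by a five-lemma/dévissage argument'' for fully-faithfulness of a triangulated functor.

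\textbf{Step 1: the functor is well-defined on the subcategories.} First I would check that $\Phi=\RHom_A(-,S_A)$ genuinely lands in $\Dfd^+((A^!)^\op)^\op$ when applied to $X\in\Dfd^-(A)$ (and, for (2), in $\Dfd^-$ when applied to $\Dfd^+(A)$). For $X\in\per(A)\subseteq\Dfd^-(A)$ this is the observation that $\Phi(A)\simeq S_{(A^!)^\op}$ together with the fact that $\Phi$ sends the silting object $A$ to a simple-minded object, hence sends $\per(A)=\thick A$ into $\pvd((A^!)^\op)^\op$, which is Hom-finite by \cref{prop:non-positive-pvd} applied to $A^!$ (note $A^!$ is locally finite non-positive when $A$ is locally finite positive and pvd-finite, resp. positive when $A$ is non-positive, by \cref{main:A}). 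For general $X\in\Dfd^-(A)$ one uses the truncation triangles $\sigma^{<n}X\to X\to\sigma^n X$ with respect to the standard $t$-structure: $\sigma^n X\simeq\Sigma^{-n}H^n(X)$ with $H^n(X)\in\mod H^0(A)$ lies in $\pvd(A)$, and $\Phi$ applied to it has finite-dimensional cohomology concentrated in a controlled range; inducting downward (the complex is bounded above, so only finitely many steps affect any fixed cohomological degree of $\Phi(X)$) shows $\Phi(X)\in\Dfd^+$, with the sign of the boundedness flipped because $\Phi$ is contravariant.

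\textbf{Step 2: fully-faithfulness on a generating object, then propagate.} The key computation is that
\begin{align*}
\Phi\colon\Hom_{\D(A)}(A,\Sigma^i A(g))\tto\Hom_{\D((A^!)^\op)^\op}(\Phi A,\Sigma^{-i}\Phi A(g^{-1}))
\end{align*}
is an isomorphism for all $(i,g)\in\ZZ\times G$; this is immediate because $\Phi(A)\simeq S_{(A^!)^\op}$ and $A^!=\REnd_A(S_A)$ computes exactly these Hom-spaces by definition of the Koszul dual (this is essentially \cite[\S10.2]{Ke94}). From here I would run a two-step dévissage. First, fixing $X=A$ and letting $Y$ range: the class of $Y\in\Dfd^-(A)$ for which $\Phi\colon\Hom(A,\Sigma^i Y)\to\Hom(\Phi Y,\Sigma^{-i}\Phi A)$ is an isomorphism for all $i$ is closed under shifts, extensions, and (by the boundedness in Step 1) contains all of $\mod H^0(A)$; since every $Y\in\Dfd^-(A)$ is built from its cohomologies by (possibly infinite, but cohomologically locally finite) iterated extensions, a limit argument using that $\Hom(A,-)=H^0(-)$ commutes with the relevant (co)limits yields the isomorphism for all $Y\in\Dfd^-(A)$. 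Second, fixing such a $Y$ and letting $X$ range over $\Dfd^-(A)$: the class of $X$ for which $\Phi\colon\Hom(X,\Sigma^i Y)\to\Hom(\Phi Y,\Sigma^{-i}\Phi X)$ is an isomorphism for all $i$ contains $A$, is closed under shifts, summands, and extensions, hence contains $\thick A=\per(A)$; then a dual dévissage up the standard $t$-structure of $X$ (again cohomologically finite, so controlled on each degree) extends it to all of $\Dfd^-(A)$. Combining the two gives that $\Phi$ is fully faithful on $\Dfd^-(A)$.

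\textbf{Main obstacle.} The delicate point is the passage from ``iso on bounded/perfect objects'' to ``iso on all of $\Dfd^-(A)$'' in the presence of the contravariance: since $\Phi$ turns the cohomologically-bounded-above condition into bounded-below, one must be careful that the relevant homotopy (co)limits and Milnor-sequence/$\lim^1$ arguments converge — i.e. that for a fixed pair of cohomological degrees only finitely many truncation steps contribute, and that $\Hom_{\D(A)}(X,\Sigma^i Y)$ does not acquire a spurious $\lim^1$ term. The pvd-finite hypothesis in case (2) is exactly what makes the analogous truncation (now with respect to the $t$-structure $(\bigcup_n\Sigma^n\H_A\ast\cdots\ast\H_A,\Dfd^+(A)_{\ge0})$ from \cref{thm:equiv-cond-for-nice}(2)) behave well, and I expect most of the real work in the written proof to be in verifying these finiteness/convergence bookkeeping claims rather than in any conceptual step.
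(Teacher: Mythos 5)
Your overall architecture (anchor the comparison on a generating object, then extend by truncation together with a Mittag-Leffler/$\lim^1$ argument) has the right shape, and you correctly identify the convergence of the homotopy (co)limits as the real work; this matches the paper, whose proof consists of \cref{rmk:cohomology-of-KD} combined with \cref{lem:holim-app}, \cref{lem:hocolim-app} and \cref{lem:pre-fully-faithful}. However, there is a genuine gap in where you place the anchor. You take the base case to be
\[
\Phi\colon\Hom_{\D(A)}(A,\Sigma^iA(g))\to\Hom_{\D((A^!)^\op)^\op}(\Phi A,\Sigma^{-i}\Phi A(g^{-1}))
\]
and call it immediate from $A^!=\REnd_A(S_A)$. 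But $\REnd_A(S_A)$ computes $\Hom_{\D(A)}(S_A,\Sigma^iS_A)$, not $\Hom_{\D(A)}(A,\Sigma^iA)$; since $\Phi(A)\simeq S_{(A^!)^\op}$, the target of your map is a Hom-space between shifts of $S_{(A^!)^\op}$, so the claimed isomorphism is exactly the Koszul double duality $A\simeq A^{!!}$ (\cref{thm:Koszul-double-dual}), which the paper derives \emph{from} fully-faithfulness; as stated your base case is circular. The anchor that really is immediate is the one the paper uses in \cref{rmk:cohomology-of-KD}(1): for \emph{arbitrary} $X\in\D(A)$ and $Y\in\pvd(A)=\thick S_A$ the map $\Hom_{\D(A)}(X,Y)\to\Hom(\Phi Y,\Phi X)$ is an isomorphism, because $\Phi(S_A)=A^!$ is the free $(A^!)^\op$-module and Hom out of it is just cohomology, i.e. $H^i(\Phi X)\simeq\Hom_{\D(A)}(X,\Sigma^iS_A)$ by Yoneda.

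Once the anchor is placed there, only the d\'evissage in the second variable $Y$ is needed ($Y\simeq\holim_n\sigma^{\ge-n}Y$ with $\sigma^{\ge-n}Y\in\pvd(A)$, $\Phi Y\simeq\hocolim_n\Phi(\sigma^{\ge-n}Y)$, and Mittag-Leffler from Hom-finiteness), and your proposed second d\'evissage in the first variable $X$ becomes unnecessary --- which is fortunate, because as described it would not converge: an object of $\Dfd^-(A)$ is a homotopy \emph{limit} of its truncations (which live in $\pvd(A)$, not in $\per(A)$), and $\Hom_{\D(A)}(-,Y)$ does not turn homotopy limits in the first variable into anything controllable. Finally, your claimed formal duality between (1) and (2) via $A\mapsto A^\op$ is not correct: $A^\op$ is non-positive precisely when $A$ is, and \cref{rmk:KD-of-op} only identifies $(A^!)^\op$ with $(A^\op)^!$. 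Your fallback --- running the same argument through the truncations of \cref{prop:co-t-on-positive} and the $t$-structure of \cref{thm:equiv-cond-for-nice}(2) --- is what the paper actually does for part (2).
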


\begin{rmk}\label{rmk:cohomology-of-KD}
    By the definition of Koszul dual functors, the followings hold:
    \begin{itemize}
        \item [(1)] For every $X\in\D(A)$ and $Y\in \pvd(A)$, the morphism
        $$
        \Phi\colon\Hom_{\D(A)}(X,Y)\to\Hom_{\D(A)}(\Phi(Y),\Phi(X))
        $$
        is an isomorphism.
        \item [(2)] We have the following natural identification:
        $$
        H^i(\Phi(X))\simeq\Hom_{\D(A)}(X,\Sigma^i S_A).
        $$
        In particular, if $A$ is non-positive (resp. positive), then $\Phi$ induces $\Dfd^-(A)\to\Dfd^+((A^!)^\op)^\op$ (resp. $\Dfd^+(A)\to\Dfd^-((A^!)^\op)^\op$).
    \end{itemize}
\end{rmk}

To prove Proposition~\ref{prop:fully-faithful}, we prepare three lemmas.

\begin{lem}\label{lem:holim-app}
    \
    \begin{itemize}
        \item [(1)] Let $A$ be a locally finite non-positive $G$-dg algebra and $Y\in\Dfd^-(A)$. By the property of homotopy limit, there exists a morphism
        $$
        Y\to\holim_{n\in\NN}\sigma^{\ge -n}(Y)
        $$
        that commutes the following diagram for every $n\in\ZZ$:
        \begin{center}
            \begin{tikzcd}
                Y \arrow[rr] \arrow[rrd] & & \holim_{n\in\NN}\sigma^{\ge -n}(Y) \arrow[d] \\
                & & \sigma^{\ge -n}(Y). 
            \end{tikzcd}
        \end{center}
        Every such morphism is an isomorphism.
        \item [(2)] Let $A$ be a locally finite positive $G$-dg algebra and $Y\in\Dfd^+(A)$. By the property of homotopy limit, there exists a morphism
        $$
        Y\to\holim_{n\in\NN}\sigma_{\le n}(Y)
        $$
        that commutes the following diagram for every $n\in\ZZ$:
        \begin{center}
            \begin{tikzcd}
                Y \arrow[rr] \arrow[rrd] & & \holim_{n\in\NN}\sigma_{\le n}(Y) \arrow[d] \\
                & & \sigma_{\le n}(Y). 
            \end{tikzcd}
        \end{center}
        Every such morphism is an isomorphism.
    \end{itemize}
\end{lem}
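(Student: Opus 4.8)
The plan is to treat both parts simultaneously, since the only difference is the direction of the truncation. I will focus on (1); part (2) is entirely dual, interchanging the roles of the co-$t$-structure truncations $\sigma_{\le n}$ with the $t$-structure truncations $\sigma^{\ge -n}$ and using \cref{prop:co-t-on-positive} in place of the standard truncation. The statement is a ``completeness'' statement for the tower $\{\sigma^{\ge -n}(Y)\}_{n\in\NN}$ with its natural transition maps $\sigma^{\ge -(n+1)}(Y)\to\sigma^{\ge -n}(Y)$, so the first step is to recall that the homotopy limit of such a tower in a triangulated category with countable coproducts sits in a Milnor triangle
\[
\holim_{n}\sigma^{\ge -n}(Y)\to\prod_{n}\sigma^{\ge -n}(Y)\xrightarrow{\ 1-\mathrm{shift}\ }\prod_{n}\sigma^{\ge -n}(Y)\to\Sigma\holim_{n}\sigma^{\ge -n}(Y),
\]
and that any morphism $Y\to\holim_n\sigma^{\ge -n}(Y)$ compatible with the projections is determined by the maps $Y\to\sigma^{\ge -n}(Y)$ up to the usual $\lim^1$-ambiguity; since all the relevant $\lim^1$ terms will be seen to vanish, such a compatible morphism exists and, more importantly, two such agree, so it suffices to check that one choice is an isomorphism.

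The core of the argument is a cohomology computation. Fix $i\in\ZZ$. Applying $H^i(-)$ to the Milnor triangle and using that $\prod$ is exact on cohomology, one gets a short exact sequence
\[
0\to{\textstyle\lim^1_n}\,H^{i-1}\bigl(\sigma^{\ge -n}(Y)\bigr)\to H^i\Bigl(\holim_n\sigma^{\ge -n}(Y)\Bigr)\to{\textstyle\lim_n}\,H^i\bigl(\sigma^{\ge -n}(Y)\bigr)\to 0.
\]
Now the key point, which is exactly where non-positivity of $A$ enters through \cref{prop:non-positive-pvd} and the truncation structure, is that for fixed $i$ the transition maps $H^i(\sigma^{\ge -(n+1)}(Y))\to H^i(\sigma^{\ge -n}(Y))$ are \emph{eventually isomorphisms}: for $n\ge -i$ one has $H^i(\sigma^{\ge -n}(Y))\simeq H^i(Y)$ canonically, because the cocone of $\sigma^{\ge -n}(Y)\to\sigma^{\ge-(n-1)}(Y)$ (more precisely, $\sigma^{-n}(Y)$) is concentrated in cohomological degree $-n$. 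An eventually constant tower has vanishing $\lim^1$ and limit equal to the stable value, so the short exact sequence collapses to $H^i(\holim_n\sigma^{\ge-n}(Y))\simeq H^i(Y)$, and by construction this isomorphism is induced by the canonical map $Y\to\holim_n\sigma^{\ge -n}(Y)$. Since this holds for every $i$, that canonical map is a quasi-isomorphism, hence an isomorphism in $\D(A)$; and as noted above any other compatible morphism coincides with it, proving the ``every such morphism is an isomorphism'' clause.

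The main obstacle is bookkeeping rather than conceptual: one must be careful that $Y\in\Dfd^-(A)$ genuinely lives in $\D^-(A)$ (so that $H^i(Y)=0$ for $i\gg 0$ and the tower is eventually constant in each degree with no issues at the top), and that the homotopy limit is formed in $\D(A)$ which does have countable products — this is where one uses that $\D(A)$ is the derived category of a dg algebra, hence has all limits and colimits. A secondary subtlety is justifying the uniqueness clause cleanly: the set of morphisms $Y\to\holim_n\sigma^{\ge -n}(Y)$ lifting a given compatible family is a torsor under $\lim^1_n\Hom_{\D(A)}(Y,\Sigma^{-1}\sigma^{\ge-n}(Y))$, and one should observe this $\lim^1$ vanishes because $\Hom_{\D(A)}(Y,\Sigma^{-1}\sigma^{\ge -n}(Y))$ is, for each fixed $Y$ with $Y\in\D^{\le m}(A)$ say, eventually constant in $n$ as well — again by the truncation triangles. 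For part (2) one repeats verbatim with $\sigma_{\le n}$, noting that although $\sigma_{\le n}$ is not functorial (\cref{prop:co-t-on-positive}), the family can still be chosen compatibly and the same eventual-constancy of $H^i$ holds since $H^i(\sigma_{\le n}(Y))\simeq H^i(Y)$ for $n\ge i$.
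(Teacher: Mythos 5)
Your argument is correct and is essentially the paper's own proof: both reduce the claim to showing that $H^i(Y)\to H^i(\holim_n\sigma^{\ge-n}(Y))$ is an isomorphism for each $i$, identify $H^i$ of the homotopy limit with $\lim_n H^i(\sigma^{\ge-n}(Y))$ via the Milnor triangle and the Mittag--Leffler condition (which you verify through eventual constancy of the tower), and conclude from the definition of the truncations. Your extra care about the $\lim^1$-torsor of compatible lifts and the non-functoriality of $\sigma_{\le n}$ in part (2) goes slightly beyond what the paper records but does not change the route.
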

\begin{proof}
    (1) It suffices to show that 
    \begin{equation}\label{*:holim-app}
    H^i(Y)\to H^i(\holim_{n\in\NN}\sigma^{\ge -n}(Y)) 
    \end{equation}
    is an isomorphism for every $i\in\ZZ$. The inverse system $\{H^i(\sigma^{\ge-n}(X))\}_{n\in\NN}$ satisfies Mittag-Leffler condition for every $i\in\ZZ$, so we have an isomorphsim
    $$
    H^i(\holim_{n\in\NN}\sigma^{\ge -n}(Y))\simeqto\lim_{n\in\NN}H^i(\sigma^{\ge -n}(Y)).
    $$
    By definition of the truncation functor, \eqref{*:holim-app} follows.
    
    (2) Similar to (1).
\end{proof}

\begin{lem}\label{lem:hocolim-app}
    \ 
    \begin{itemize}
        \item [(1)] Let $A$ be a locally finite non-positive $G$-dg algebra and $Y\in\Dfd^-(A)$. By the property of homotopy colimit, there exists a morphism
        $$
        \hocolim_{n\in\NN}\Phi(\sigma^{\ge -n}(Y))\to\Phi(Y)
        $$
        that commutes the following diagram for every $n\in\ZZ$:
        \begin{center}
            \begin{tikzcd}
                \Phi(Y) & &  \hocolim_{n\in\NN}\Phi(\sigma^{\ge -n}(Y)) \arrow[ll] \\
                & & \Phi(\sigma^{\ge -n}(Y)). \arrow[llu] \arrow[u]
            \end{tikzcd}
        \end{center}
        Every such morphism is an isomorphism.
        \item [(2)] Let $A$ be a locally finite positive $G$-dg algebra and $Y\in\Dfd^+(A)$. By the property of homotopy colimit, there exists a morphism
        $$
        \hocolim_{n\in\NN}\Phi(\sigma_{\le n}(Y))\to\Phi(Y)
        $$
        that commutes the following diagram for every $n\in\ZZ$:
        \begin{center}
            \begin{tikzcd}
                \Phi(Y) & & \hocolim_{n\in\NN}\Phi(\sigma_{\le n}(Y)) \arrow[ll] \\
                & & \Phi(\sigma_{\le n}(Y)). \arrow[llu] \arrow[u]
            \end{tikzcd}
        \end{center}
        Every such morphism is an isomorphism.
    \end{itemize}
\end{lem}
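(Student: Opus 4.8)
The two statements in \cref{lem:hocolim-app} are the ``$\Phi$-images'' of the two statements in \cref{lem:holim-app}, so the natural strategy is to apply $\Phi$ to the defining homotopy-limit presentation $Y\simeq\holim_n\sigma^{\ge-n}(Y)$ (resp. $Y\simeq\holim_n\sigma_{\le n}(Y)$) and check that $\Phi$ sends this homotopy limit to the asserted homotopy colimit. The key point is that each truncation $\sigma^{\ge-n}(Y)$ (resp. $\sigma_{\le n}(Y)$) lies in $\pvd(A)$: for (1), $\sigma^{\ge-n}(Y)\in\Dfd^{\mathsf b}(A)$ since $Y\in\Dfd^-(A)$, and for (2), $\sigma_{\le n}(Y)\in\Dfd^{\mathsf b}(A)$ since $Y\in\Dfd^+(A)$. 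Hence by \cref{rmk:cohomology-of-KD}(1) the functor $\Phi$ restricted to the relevant subcategories, composed with the $\k$-dual, realizes these truncations via a fully faithful contravariant functor into $\per((A^!)^\op)^\op$, and more importantly sends the tower $\cdots\to\sigma^{\ge-(n+1)}(Y)\to\sigma^{\ge-n}(Y)\to\cdots$ (an inverse system) to a direct system $\cdots\to\Phi(\sigma^{\ge-n}(Y))\to\Phi(\sigma^{\ge-(n+1)}(Y))\to\cdots$ in $\D((A^!)^\op)$.

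\textbf{Carrying it out.} First I would recall that a homotopy limit $\holim_n Z_n$ of a tower $\{Z_n\}$ is defined by the triangle $\holim_n Z_n\to\prod_n Z_n\xrightarrow{1-\text{shift}}\prod_n Z_n\to\Sigma\holim_n Z_n$, and dually a homotopy colimit $\hocolim_n W_n$ of a direct system $\{W_n\}$ by $\coprod_n W_n\xrightarrow{1-\text{shift}}\coprod_n W_n\to\hocolim_n W_n\to\Sigma\coprod_n W_n$. In $\D((A^!)^\op)^\op$ a coproduct is a product in $\D((A^!)^\op)$ and a homotopy colimit is a homotopy limit; so, formally, ``$\Phi$ of a homotopy limit is a homotopy colimit in the opposite category'' would be automatic \emph{if} $\Phi$ commuted with the relevant products. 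It does not in general, but we do not need that: we only need that $\Phi$ applied to the specific triangle defining $Y\simeq\holim_n\sigma^{\ge-n}(Y)$ produces the triangle defining $\Phi(Y)\simeq\hocolim_n\Phi(\sigma^{\ge-n}(Y))$. Concretely, apply $\Phi$ to the homotopy-limit triangle for $Y$; since $\Phi$ is triangulated it yields a triangle in $\D((A^!)^\op)^\op$, i.e.\ a triangle in $\D((A^!)^\op)$ of the form $\Phi(\prod_n\sigma^{\ge-n}(Y))\to\Phi(\prod_n\sigma^{\ge-n}(Y))\to\Phi(Y)\to$. The task reduces to identifying $\Phi(\prod_n\sigma^{\ge-n}(Y))$ with $\coprod_n\Phi(\sigma^{\ge-n}(Y))$ compatibly with the shift map, using that the cohomologies of $Y$ are finite-dimensional and bounded above (resp.\ below) so that for each fixed $i$ only finitely many terms $\sigma^{\ge-n}(Y)$ differ in degree $i$; this makes the infinite product a ``stabilizing'' one on which $\Phi=\RHom_A(-,S_A)$ behaves like a coproduct on cohomology by \cref{rmk:cohomology-of-KD}(2). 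Once the triangle is identified, the existence of the comparison morphism $\hocolim_n\Phi(\sigma^{\ge-n}(Y))\to\Phi(Y)$ follows from the universal property of $\hocolim$ applied to the compatible family $\Phi(\sigma^{\ge-n}(Y))\to\Phi(Y)$ (which comes from applying $\Phi$ to the structure maps $Y\to\sigma^{\ge-n}(Y)$ of \cref{lem:holim-app}), and any such morphism is an isomorphism because it fits into the same triangle. Part (2) is verbatim the same with $\sigma^{\ge-n}$ replaced by $\sigma_{\le n}$, using that $A$ positive and $Y\in\Dfd^+(A)$ forces $\sigma_{\le n}(Y)\in\pvd(A)$.

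\textbf{Main obstacle.} The one genuinely delicate point is the interchange of $\Phi$ with the infinite product appearing in the definition of $\holim$: a priori $\RHom_A(-,S_A)$ has no reason to send products to coproducts. The resolution is that we are not dealing with an arbitrary product but with the product of a tower of truncations of a single object with finite-dimensional, one-sidedly bounded cohomology; in each cohomological degree the tower stabilizes, so the relevant $\Hom$-spaces $\Hom_{\D(A)}(\prod_n\sigma^{\ge-n}(Y),\Sigma^i S_A)$ compute as a direct sum (indeed essentially a single term in high degrees) rather than a product. Making this precise — checking at the level of $H^*$ via \cref{rmk:cohomology-of-KD}(2) that the natural map $\coprod_n\Phi(\sigma^{\ge-n}(Y))\to\Phi(\holim_n\sigma^{\ge-n}(Y))$ built from the structure maps is an isomorphism, and that it is compatible with the shift endomorphism so that it upgrades to an isomorphism of triangles — is the technical heart of the argument; everything else is formal manipulation of (co)homotopy (co)limits and the already-established facts about truncations in \cref{prop:co-t-on-positive} and \cref{lem:holim-app}.
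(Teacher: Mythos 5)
There is a genuine gap at the step you yourself single out as the technical heart: the claimed identification $\Phi(\prod_n\sigma^{\ge-n}(Y))\simeq\coprod_n\Phi(\sigma^{\ge-n}(Y))$ is false, and the ``stabilization'' argument offered for it does not hold. For fixed $i$ the terms $H^i(\sigma^{\ge-n}(Y))$ are eventually all \emph{equal} to $H^i(Y)$, so $H^i(\prod_n\sigma^{\ge-n}(Y))=\prod_nH^i(\sigma^{\ge-n}(Y))$ is an infinite product of a fixed (generically nonzero) space; nothing stabilizes at the level of the product. Moreover $\RHom_A(-,S_A)$ does not carry such a product to a coproduct: already for $A=\k$ and $Y=S_A=\k$ one has $\Hom_\k(\prod_{n\in\NN}\k,\k)=(\k^{\NN})^{*}$, which is strictly larger than $\bigoplus_n\Hom_\k(\k,\k)$. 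Hence applying $\Phi$ to the triangle $\holim_n\sigma^{\ge-n}(Y)\to\prod_n\sigma^{\ge-n}(Y)\to\prod_n\sigma^{\ge-n}(Y)$ does not yield the triangle defining $\hocolim_n\Phi(\sigma^{\ge-n}(Y))$, and the route through the defining triangles collapses. (A secondary issue: even granting the identification, ``any such morphism is an isomorphism because it fits into the same triangle'' requires an argument --- two maps with isomorphic cones need not both be isomorphisms.)

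The paper's proof sidesteps the product entirely. The existence of the comparison map $\hocolim_n\Phi(\sigma^{\ge-n}(Y))\to\Phi(Y)$ uses only the weak universal property of $\hocolim$ on the source side, i.e. $\Hom(\coprod_n-,\Phi(Y))\simeq\prod_n\Hom(-,\Phi(Y))$, which is unproblematic; this part of your argument is fine. To see that any such map is an isomorphism, one checks cohomology: $H^i$ commutes with sequential homotopy colimits (the free module is a compact generator), so
$H^i(\hocolim_n\Phi(\sigma^{\ge-n}(Y)))\simeq\colim_nH^i(\Phi(\sigma^{\ge-n}(Y)))\simeq\colim_n\Hom_{\D(A)}(\sigma^{\ge-n}(Y),\Sigma^iS_A)$
by \cref{rmk:cohomology-of-KD}, and for fixed $i$ the transition maps $\Hom_{\D(A)}(\sigma^{\ge-n}(Y),\Sigma^iS_A)\to\Hom_{\D(A)}(Y,\Sigma^iS_A)$ are isomorphisms for $n\gg0$ because the cone of $Y\to\sigma^{\ge-n}(Y)$ lies in $\Sigma\D(A)^{<-n}$. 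Your write-up would be repaired by deleting the product--coproduct interchange and substituting exactly this cohomological computation.
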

\begin{proof}
    (1) It suffices to show that 
    \begin{equation}\label{*:hocolim-app}
    H^i(\hocolim_{n\in\NN}\Phi(\sigma^{\ge -n}(Y)))\to H^i(\Phi(Y)) 
    \end{equation}
    is an isomorphism for every $i\in\ZZ$. Since we have
    $$
    H^i(\hocolim_{n\in\NN}\Phi(\sigma^{\ge -n}(Y)))
    \simeq
    \colim_{n\in\NN} H^i(\Phi(\sigma^{\ge -n}(Y)))
    $$
    for every $i\in\ZZ$, \eqref{*:hocolim-app} follows from Remark~\ref{rmk:cohomology-of-KD} and the isomorphism below:
    $$
    \colim_{n\in\NN}\Hom_{\D(A)}(\sigma^{\ge -n}(Y),\Sigma^iS_A)
    \simeqto
    \Hom_{\D(A)}(Y,\Sigma^iS_A).
    $$

    (2) Similar to (1).
\end{proof}

\begin{lem}\label{lem:pre-fully-faithful}
    \
    \begin{itemize}
        \item [(1)] Let $A$ be a locally finite non-positive $G$-dg algebra. Then, for every $X,Y\in\Dfd^-(A)$, the morphisms
        $$
        \Hom_{\D(A)}(X,Y)\to\lim_{n\in\NN}\Hom_{\D(A)}(X,\sigma^{\ge -n}(Y))
        $$
        and 
        $$
        \Hom_{\D((A^!)^\op)}(\Phi(Y),\Phi(X))\to\lim_{n\in\NN}\Hom_{\D((A^!)^\op)}(\Phi(\sigma^{\ge -n}(Y)),\Phi(X))
        $$
        are isomorphisms.
        \item [(2)] Let $A$ be a pvd-finite locally finite positive $G$-dg algebra. Then, for every $X,Y\in\Dfd^+(A)$, the morphisms
        $$
        \Hom_{\D(A)}(X,Y)\to\lim_{n\in\NN}\Hom_{\D(A)}(X,\sigma_{\le n}(Y))
        $$
        and 
        $$
        \Hom_{\D((A^!)^\op)}(\Phi(Y),\Phi(X))\to\lim_{n\in\NN}\Hom_{\D((A^!)^\op)}(\Phi(\sigma_{\le n}(Y)),\Phi(X))
        $$
        are isomorphisms.
    \end{itemize}
\end{lem}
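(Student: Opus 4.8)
The plan is to present $Y$ as a homotopy limit and $\Phi(Y)$ as a homotopy colimit --- which is exactly the content of \cref{lem:holim-app} and \cref{lem:hocolim-app} --- and then read off the two claimed isomorphisms from the Milnor $\lim$--$\lim^1$ exact sequences attached to the standard homotopy (co)limit triangles. For part (1), applying $\Hom_{\D(A)}(X,-)$ to the triangle $\holim_n\sigma^{\ge -n}(Y)\to\prod_n\sigma^{\ge -n}(Y)\to\prod_n\sigma^{\ge -n}(Y)\to\Sigma(-)$ together with $Y\simeq\holim_n\sigma^{\ge -n}(Y)$ gives
\[
0\to{\lim_n}^1\Hom_{\D(A)}(X,\Sigma^{-1}\sigma^{\ge -n}(Y))\to\Hom_{\D(A)}(X,Y)\to\lim_n\Hom_{\D(A)}(X,\sigma^{\ge -n}(Y))\to0 ,
\]
while applying $\Hom_{\D((A^!)^\op)}(-,\Phi(X))$ to the triangle $\coprod_n\Phi(\sigma^{\ge -n}(Y))\to\coprod_n\Phi(\sigma^{\ge -n}(Y))\to\hocolim_n\Phi(\sigma^{\ge -n}(Y))\to\Sigma(-)$ together with $\Phi(Y)\simeq\hocolim_n\Phi(\sigma^{\ge -n}(Y))$ (note $\Phi$ is contravariant, so $\{\Phi(\sigma^{\ge -n}(Y))\}_n$ is a direct system in $\D((A^!)^\op)$) gives
\[
0\to{\lim_n}^1\Hom_{\D((A^!)^\op)}(\Phi(\sigma^{\ge -n}(Y)),\Sigma^{-1}\Phi(X))\to\Hom_{\D((A^!)^\op)}(\Phi(Y),\Phi(X))\to\lim_n\Hom_{\D((A^!)^\op)}(\Phi(\sigma^{\ge -n}(Y)),\Phi(X))\to0 .
\]
So it suffices to show that the two $\lim^1$-terms vanish.

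The key point is that $\sigma^{\ge -n}(Y)\in\pvd(A)$ for every $n$: since $Y\in\Dfd^-(A)$ its cohomology is bounded above and degreewise finite-dimensional, so truncating below by $-n$ leaves only finitely many nonzero, finite-dimensional cohomologies. Hence $\Hom_{\D(A)}(X,\Sigma^{-1}\sigma^{\ge -n}(Y))$ is a Hom-space from an object of $\Dfd^-(A)$ into $\pvd(A)$, and such a space is finite-dimensional: choosing $k$ with $H^{<k}\bigl(\Sigma^{-1}\sigma^{\ge -n}(Y)\bigr)=0$, orthogonality of the standard $t$-structure on $\D(A)$ yields $\Hom_{\D(A)}(X,\Sigma^{-1}\sigma^{\ge -n}(Y))\simeq\Hom_{\D(A)}(\sigma^{\ge k}(X),\Sigma^{-1}\sigma^{\ge -n}(Y))$ with $\sigma^{\ge k}(X)\in\pvd(A)$, whence finiteness by \cref{prop:non-positive-pvd}. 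An inverse system of finite-dimensional vector spaces automatically satisfies the Mittag-Leffler condition, so its $\lim^1$ vanishes; this kills the first term. For the second term, \cref{rmk:cohomology-of-KD}(1) together with $\Phi\Sigma\simeq\Sigma^{-1}\Phi$ identifies $\Hom_{\D((A^!)^\op)}(\Phi(\sigma^{\ge -n}(Y)),\Sigma^{-1}\Phi(X))$ with $\Hom_{\D(A)}(\Sigma X,\sigma^{\ge -n}(Y))$, again finite-dimensional by the same reasoning; hence this $\lim^1$-term vanishes as well.

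Part (2) is proved in exactly the same way, now invoking \cref{lem:holim-app}(2) and \cref{lem:hocolim-app}(2) to write $Y\simeq\holim_n\sigma_{\le n}(Y)$ and $\Phi(Y)\simeq\hocolim_n\Phi(\sigma_{\le n}(Y))$. Here $\sigma_{\le n}(Y)\in\pvd(A)$ because $Y\in\Dfd^+(A)$ has bounded-below, degreewise finite-dimensional cohomology, and the finite-dimensionality of $\Hom_{\D(A)}(X,\Sigma^{-1}\sigma_{\le n}(Y))$ --- the step where the pvd-finiteness hypothesis on $A$ enters --- is precisely condition (3) of \cref{thm:equiv-cond-for-nice}. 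The transport of finiteness across $\Phi$ via \cref{rmk:cohomology-of-KD}(1) and the vanishing of $\lim^1$ then go through verbatim.

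The main obstacle is exactly the vanishing of the $\lim^1$-terms: everything reduces to the finiteness of the Hom-spaces $\Hom_{\D(A)}(X,\Sigma^{-1}\sigma^{\ge -n}(Y))$ (resp. $\Hom_{\D(A)}(X,\Sigma^{-1}\sigma_{\le n}(Y))$), and it is here that local finiteness (resp. pvd-finiteness) of $A$ is indispensable --- without it these spaces may be infinite-dimensional and the $\lim^1$-terms need not vanish, so the morphisms in the statement would fail to be isomorphisms.
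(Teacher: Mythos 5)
Your proof is correct and follows essentially the same route as the paper: present $Y$ as a homotopy limit and $\Phi(Y)$ as a homotopy colimit via \cref{lem:holim-app} and \cref{lem:hocolim-app}, then kill the $\lim^1$-terms of the Milnor sequences by Mittag--Leffler, which holds because the relevant inverse systems consist of finite-dimensional spaces. You additionally spell out why those Hom-spaces are finite-dimensional (truncation plus Hom-finiteness of $\pvd(A)$, resp.\ condition (3) of \cref{thm:equiv-cond-for-nice}), a point the paper only asserts.
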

\begin{proof}
    (1) By Lemma~\ref{lem:holim-app} and Lemma~\ref{lem:hocolim-app}, it suffices to show that
    \begin{equation}\label{equation:lim1}
    \Hom_{\D(A)}(X,\holim_{n\in\NN}\sigma^{\ge -n}(Y))\to\lim_{n\in\NN}\Hom_{\D(A)}(X,\sigma^{\ge -n}(Y)) 
    \end{equation}
    and 
    \begin{equation}\label{equation:lim2}
    \Hom_{\D((A^!)^\op)}(\hocolim_{n\in\NN}\Phi(\sigma^{\ge -n}(Y)),\Phi(X))\to\lim_{n\in\NN}\Hom_{\D((A^!)^\op)}(\Phi(\sigma^{\ge -n}(Y)),\Phi(X)) 
    \end{equation}
    are isomorphisms. Since $\Hom_{\D(A)}(X,\sigma^{\ge -n}(Y))$ and $\Hom_{\D((A^!)^\op)}(\Phi(\sigma^{\ge -n}(Y)),\Phi(X)))$
    are finite-dimensional for every $n\in\NN$, the inverse systems $\{\Hom_{\D(A)}(X,\sigma^{\ge -n}(Y))\}_{n\in\NN}$ and $\{\Hom_{\D((A^!)^\op)}(\Phi(\sigma^{\ge -n}(Y)),\Phi(X))\}_{n\in\NN}$ satisfy the Mittag-Leffler condition, and hence \eqref{equation:lim1} and \eqref{equation:lim2} follows.
\end{proof}

Now we are ready to prove Proposition~\ref{prop:fully-faithful}.

\begin{proof}[Proof of Proposition~\ref{prop:fully-faithful}]
    This follows directly from Remark~\ref{rmk:cohomology-of-KD} and Lemma~\ref{lem:pre-fully-faithful}.
\end{proof}

\subsection{Dense}

In this subsection, we prove that the Koszul dual functors are dense, and conclude that they are equivalences.

\begin{prop}\label{prop:A-to-S}
    Let $A$ be a locally finite non-positive or pvd-finite locally finite positive $G$-dg algebra. Then we have $\Phi(A)\simeq S_{(A^!)^\op}$.
\end{prop}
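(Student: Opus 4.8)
The plan is to compute $\Phi(A) = \RHom_A(A, S_A)$ directly and then identify the resulting dg $(A^!)^{\op}$-module with $S_{(A^!)^{\op}}$, treating the non-positive and positive cases in parallel. Recall that $\Phi'(-) = \Hom_{\Ddg(A)}(-, S_A)$, so on the object $A$ (viewed as the free module $A^\wedge$) the Yoneda lemma gives a natural isomorphism $\Hom_{\Ddg(A)}(A^\wedge, S_A) \simeq S_A$ as complexes of $G$-graded vector spaces. The point is to track the right $(A^!)^{\op}$-module structure, equivalently the left $A^! = \REnd_A(S_A)$-module structure: under this identification $S_A$ becomes $S_A$ with its tautological $\REnd_A(S_A)$-action. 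So $\Phi(A)$ is the dg $(A^!)^{\op}$-module $S_A$ with $H^*(\Phi(A)) \simeq H^*(S_A)$.

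**Next**, I would identify $H^*(\Phi(A))$ explicitly. By Remark~\ref{rmk:cohomology-of-KD}(2), $H^i(\Phi(A)) \simeq \Hom_{\D(A)}(A, \Sigma^i S_A) \simeq H^i(S_A)$, which by the discussion after Definition~\ref{dfn:simple} is $\top H^0(A)$ concentrated in cohomological degree $0$ (non-positive case) or $H^0(A)$ concentrated in degree $0$ (positive case), as a $(\ZZ\times G)$-graded $H^*(A)$-module. In either case $H^*(\Phi(A))$ is concentrated in cohomological degree $0$, and as a graded module over $H^*(A^!) = \End_{\D(A)}(S_A)$ it is precisely the top of $H^0(A^!)$ — respectively $H^0(A^!)$ itself — because the $\End_{\D(A)}(S_A)$-module $\Hom_{\D(A)}(A, S_A) \simeq H^0(S_A)$ is a simple (resp. the regular semisimple) module. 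This is exactly the cohomological description of $S_{(A^!)^{\op}}$: in the non-positive case $A^!$ is positive (as $\End_{\D(A)}(S_A)$ is semisimple by simple-mindedness, using Proposition~\ref{prop:non-positive-pvd} for local finiteness), so $S_{(A^!)^{\op}} = \sigma_0((A^!)^{\op})$ with $H^* \simeq H^0(A^!)^{\op}$; in the positive case $A^!$ is non-positive with finite-dimensional $H^0$ (here pvd-finiteness, i.e. Theorem~\ref{thm:equiv-cond-for-nice}(5), is what guarantees local finiteness), so $S_{(A^!)^{\op}}$ is the top of $\sigma^0((A^!)^{\op})$.

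**Then** I would upgrade the cohomology-level matching to a genuine isomorphism in $\D((A^!)^{\op})$. Both $\Phi(A)$ and $S_{(A^!)^{\op}}$ have cohomology concentrated in a single cohomological degree, namely the semisimple $H^0((A^!)^{\op})$-module $(\top H^0(A^!))^{\op}$ or $H^0(A^!)^{\op}$; a dg module over a (non-)positive dg algebra whose cohomology is a semisimple module concentrated in degree $0$ is determined up to isomorphism in the derived category by that cohomology module, since the relevant truncation functor (Proposition for the non-positive case, Proposition~\ref{prop:co-t-on-positive} for the positive one) identifies it with the corresponding object built from $H^0$. Concretely: in the positive case $A^!$ is non-positive, so $\D((A^!)^{\op})^0 \simeq \mod H^0((A^!)^{\op})^{\op}$ and any object with cohomology $\top H^0(A^!)^{\op}$ in degree $0$ is isomorphic to $S_{(A^!)^{\op}}$; in the non-positive case $A^!$ is positive and the same conclusion follows from the co-$t$-structure truncation, since $\sigma_0$ of a module depends only on $H^0$.

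**The main obstacle** is the bookkeeping of module structures and of the two dualizations: $S_A$ is by definition a right $A$-module, $\Phi'$ lands in right $(A^!)^{\op}$-modules (= left $A^!$-modules), and one must check that the evaluation action $A^! = \REnd_A(S_A)$ acting on $\Hom_{\Ddg(A)}(A^\wedge, S_A) \simeq S_A$ is the correct one — in particular that the opposite-algebra and opposite-category conventions introduced in Definition~\ref{dfn:Koszul-dual} are applied consistently, so that no spurious sign or side-swap appears. Once the identification $\Phi(A) \simeq S_A$ as dg $(A^!)^{\op}$-modules is pinned down, the cohomological computation and the single-degree rigidity argument are routine given the truncation machinery already established.
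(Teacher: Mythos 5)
Your overall strategy for the \emph{non-positive} case (show $H^*(\Phi(A))$ is concentrated in cohomological degree $0$, identify it as a module over $H^0(A^!)$, then use rigidity of objects with cohomology in a single degree) is sound and close in spirit to the paper's argument, which realizes $\Phi(A)$ as a truncation of the free module via the map $\Phi(S_A)\to\Phi(A)$ induced by $A\to S_A$. The \emph{positive} case, however, contains a genuine gap. You assert that $\Hom_{\D(A)}(A,S_A)\simeq H^0(S_A)$ is ``the regular semisimple module'' over $\End_{\D(A)}(S_A)=H^0(A^!)$. For a positive $A$ this algebra is \emph{not} $H^0(A)$ and is not semisimple in general: it is the cohomological-degree-zero part of $\REnd_A(\sigma_0(A))$, which is strictly larger than $H^0(A)$ as soon as $H^{>0}(A)\neq0$. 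For example, if $A$ is the positive Koszul dual of the path algebra $kQ$ of the quiver $1\to2$, then $\Hom_{\D(A)}(A,S_A)\simeq H^0(A)$ is $2$-dimensional while $\End_{\D(A)}(S_A)\simeq kQ$ is $3$-dimensional and non-semisimple; the module you must exhibit is $\top(kQ)$, the top of the regular module, not the regular module itself. (You have also swapped the two cases in your identification sentence: in the non-positive case the answer is the full regular module over the semisimple algebra $H^0(A^!)$, not a simple module.)

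Identifying $\Hom_{\D(A)}(A,S_A)$ with $\top H^0((A^!)^{\op})$ is precisely the nontrivial content of the positive case, and it is where pvd-finiteness is used. The paper's proof takes an injective hull $A\to I$ in $\H_A$ (which exists by \cref{prop:equiv-cond-for-nice}), shows that $\Phi(I)\simeq H^0((A^!)^{\op})$ is the free module, deduces from the semibrick--wide subcategory correspondence that $\Phi(\H_A)=\Filt(\Phi(A))=\mod H^0((A^!)^{\op})$ and hence that the semibrick $\Phi(A)$ is a direct sum of \emph{all} the simple modules, and finally uses that $\Phi(I)\to\Phi(A)$ is a projective cover. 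Your proposal contains no substitute for this: in particular, knowing only that $\End(\Phi(A))\simeq H^0(A)^{\op}$ is semisimple does not rule out that $\Phi(A)$ is a semibrick generating a proper wide subcategory of $\mod H^0((A^!)^{\op})$, in which case it would not be $S_{(A^!)^{\op}}$. Until this step is supplied, the positive half of the proposition is not proved.
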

\begin{proof}
    Note that for every $i\neq0$, we have 
    $$
    H^i(\Phi(A))=\Hom_{\D(A)}(A,\Sigma^iS_A)=0.
    $$
    
    First, we consider the non-positive case. The morphism $\Phi(S_A)\to\Phi(A)$ induces the following commutative diagram:
    \begin{center}
        \begin{tikzcd}
            H^0(\Phi(S_A))\arrow[r]\arrow[d,"\sim"sloped] & H^0(\Phi(A))\arrow[d,"\sim"sloped] \\
            \Hom_{\D(A)}(S_A,S_A)\arrow[r,"\sim"] & \Hom_{\D(A)}(A,S_A).
        \end{tikzcd}
    \end{center}
    Therefore, we have $\Phi(A)\simeq S_{(A^!)^\op}$.

    Next, we consider the positive case.
    By \cref{prop:equiv-cond-for-nice} and its proof, there exists an exact triangle
    $$
    I\to S_A\to K\to \Sigma I
    $$
    such that $I\in\H_A$ is an injective hull of $A$ and $K\in\Dfd^+(A)_{>0}$. By applying $\Hom_{\D(A)}(-,S_A)$ to the above exact triangle, we have 
    $$
    H^0(A)^!\simeq\Hom_{\D(A)}(S_A,S_A)\simeqto\Hom_{\D(A)}(I,S_A).
    $$
    Since $\Phi$ is fully-faithful, we have $\Phi(\H_A)=\Filt(\Phi(A))\subseteq\mod H^0(A^!)^\op$. This subcategory contains $H^0(A)^!\simeq\Phi(I)$, and it follows that $\Phi(\H_A)=\mod H^0(A^!)^\op$ by \cite[\S 1.2]{IT09} and \cite[Proposition 2.14]{Ringel76}. Since $A\to I$ is the injective hull of $A$, the morphism $H^0(A^!)^\op\simeq\Phi(I)\to\Phi(A)$ gives the projective cover of $\Phi(A)$. Since $\Phi(A)$ is a direct sum of simple modules, we have $\Phi(A)\simeq S_{(A^!)^\op}$.
\end{proof}

Proposition~\ref{prop:fully-faithful} and Proposition~\ref{prop:A-to-S} imply the following corollary.
\begin{cor}
    Let $A$ be a locally finite non-positive or a pvd-finite locally finite positive $G$-dg algebra. Then $\Phi$ induces the following triangle equivalence:
    $$ {\per(A)}\simeqto \pvd((A^!)^\op)^\op.
    $$
\end{cor}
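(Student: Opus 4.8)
The plan is to read off the statement from \cref{prop:fully-faithful} and \cref{prop:A-to-S} once one checks that $\per(A)$ lies in the domain of the former. In the non-positive case $A$ is locally finite with $H^{>0}(A)=0$, so $A\in\Dfd^-(A)$; since $\Dfd^-(A)$ is a thick subcategory of $\D(A)$ (it is closed under cohomological shifts, cones and direct summands), we get $\per(A)=\thick_{\D(A)}(A)\subseteq\Dfd^-(A)$, and symmetrically $\per(A)\subseteq\Dfd^+(A)$ in the pvd-finite positive case. Hence \cref{prop:fully-faithful} shows that $\Phi$ restricts to a fully-faithful exact functor on $\per(A)$, and it remains only to identify its essential image.

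Because $\per(A)=\thick_{\D(A)}(A)$ is idempotent complete (being a thick subcategory of $\D(A)$) and $\Phi$ is exact and fully faithful, the image $\Phi(\per(A))$ is a thick subcategory of $\D((A^!)^\op)^\op$: it is closed under cohomological shifts and cones because $\Phi$ is exact, and closed under direct summands because $\per(A)$ is and $\Phi$ is additive and fully faithful. As it contains $\Phi(A)$ and every one of its objects is obtained from $\Phi(A)$ by shifts, cones and summands, we get $\Phi(\per(A))=\thick_{\D((A^!)^\op)^\op}(\Phi(A))$. By \cref{prop:A-to-S} we have $\Phi(A)\simeq S_{(A^!)^\op}$, so, under the identification of $\D((A^!)^\op)^\op$ with $\D((A^!)^\op)$ on objects, it suffices to prove $\thick_{\D((A^!)^\op)}(S_{(A^!)^\op})=\pvd((A^!)^\op)$.

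For this, observe that $S_{(A^!)^\op}$ classically generates $\pvd((A^!)^\op)$. If $A$ is locally finite non-positive, then $S_A$ is a simple-minded object of the Hom-finite category $\pvd(A)$ (\cref{prop:non-positive-pvd}), so $A^!=\REnd_A(S_A)$ satisfies $H^{<0}(A^!)=0$, $H^0(A^!)$ is finite-dimensional $G$-graded semisimple, and all its cohomologies are finite-dimensional; thus $A^!$, and hence $(A^!)^\op$, is a locally finite positive $G$-dg algebra with finite-dimensional zeroth cohomology, and $S_{(A^!)^\op}=\sigma_0((A^!)^\op)$ is a silting object of $\pvd((A^!)^\op)$. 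If $A$ is pvd-finite locally finite positive, then $S_A=\sigma_0(A)$ is a silting object of the Hom-finite category $\pvd(A)$, so $A^!=\REnd_A(S_A)$ satisfies $H^{>0}(A^!)=0$ with $H^0(A^!)$ finite-dimensional, whence $(A^!)^\op$ is a locally finite non-positive $G$-dg algebra with finite-dimensional $H^0$ and $S_{(A^!)^\op}$ is a simple-minded object of $\pvd((A^!)^\op)$. In either case $\thick_{\pvd((A^!)^\op)}(S_{(A^!)^\op})=\pvd((A^!)^\op)$, and since $\pvd((A^!)^\op)$ is a thick subcategory of $\D((A^!)^\op)$ containing $S_{(A^!)^\op}$, the thick closure formed in $\D((A^!)^\op)$ coincides with it. Substituting back gives $\Phi(\per(A))=\pvd((A^!)^\op)^\op$, so the fully-faithful functor $\Phi\colon\per(A)\to\pvd((A^!)^\op)^\op$ is the asserted triangle equivalence.

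The step I expect to be the main obstacle — in fact the only non-formal point — is confirming that $\REnd_A(S_A)$ is genuinely a (non-)positive $G$-dg algebra in the direction opposite to that of $A$, i.e.\ that its cohomology is concentrated in the correct half-line with finite-dimensional, semisimple zeroth part; this rests on $S_A$ being simple-minded (resp.\ silting) in $\pvd(A)$ and on Hom-finiteness of $\pvd(A)$, which is \cref{prop:non-positive-pvd} in the non-positive case and pvd-finiteness of $A$ in the positive case. Everything else is a formal consequence of full faithfulness together with idempotent completeness of thick subcategories.
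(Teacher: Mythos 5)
Your proposal is correct and follows essentially the same route as the paper, which simply derives the corollary from \cref{prop:fully-faithful} and \cref{prop:A-to-S}; your write-up merely makes explicit the two points the paper leaves implicit, namely that $\Phi(\per(A))=\thick(\Phi(A))$ by full faithfulness and idempotent completeness, and that $\thick(S_{(A^!)^\op})=\pvd((A^!)^\op)$ because $S_{(A^!)^\op}$ is a silting (resp.\ simple-minded) object of $\pvd((A^!)^\op)$. The verification that $(A^!)^\op$ is locally finite positive (resp.\ non-positive with finite-dimensional $H^0$), which you flag as the only non-formal step, is exactly what the paper's \cref{prop:non-positive-pvd} and the pvd-finiteness hypothesis supply, so there is no gap.
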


Now we prove the density of Koszul dual functors.
\begin{prop}\label{prop:density}
    \
    \begin{itemize}
        \item [(1)] Let $A$ be a locally finite non-positive $G$-dg algebra. Then
        $
        \Phi\colon\Dfd^-(A)\to\Dfd^+((A^!)^\op)^\op
        $
        is dense.
        \item [(2)] Let $A$ be a pvd-finite locally finite positive $G$-dg algebra. Then
        $
        \Phi\colon\Dfd^+(A)\to\Dfd^-((A^!)^\op)^\op
        $
        is dense.
    \end{itemize}
\end{prop}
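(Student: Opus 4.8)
I want to prove the density statements in \cref{prop:density} for the two cases simultaneously, since the arguments are dual. Let me treat case (1); case (2) is identical after interchanging the roles of the cohomological truncations with the co-$t$-structure truncations (i.e.\ $\sigma^{\ge -n}$ becomes $\sigma_{\le n}$) and swapping ``non-positive'' with ``pvd-finite positive''. By the previous corollary we already know $\Phi$ restricts to a triangle equivalence $\per(A)\simeqto\pvd((A^!)^\op)^\op$, and by Proposition~\ref{prop:fully-faithful} we know $\Phi\colon\Dfd^-(A)\to\Dfd^+((A^!)^\op)^\op$ is fully faithful; so the only thing left is essential surjectivity.

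\textbf{First step: reduce to bounded objects via a limit argument.} Let $W\in\Dfd^+((A^!)^\op)^\op$; equivalently, in $\D((A^!)^\op)$ this is an object with finite-dimensional cohomology bounded below (from the $\op$ side). I would first write $W$ as a homotopy colimit (in $\D((A^!)^\op)^\op$, i.e.\ homotopy limit in $\D((A^!)^\op)$) of its truncations $W_n$ with respect to the co-$t$-structure $(\D((A^!)^\op)_{\ge0},\D((A^!)^\op)_{\le0})$ coming from the positivity of $(A^!)^\op$; each $W_n$ lies in $\pvd((A^!)^\op)^\op$. By the corollary each $W_n$ is $\Phi(X_n)$ for a unique $X_n\in\pvd(A)\subseteq\Dfd^-(A)$, and by full faithfulness the transition maps $W_n\to W_{n+1}$ come from uniquely determined maps $X_n\to X_{n+1}$ in $\Dfd^-(A)$. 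One checks, using \cref{rmk:cohomology-of-KD}(2), that these $X_n$ form a directed system whose cohomologies stabilize in each degree, so that $X:=\holim$ (or the appropriate colimit) exists in $\Dfd^-(A)$ with $H^i(X)=\colim H^i(X_n)$ finite-dimensional. Then \cref{lem:hocolim-app} gives $\Phi(X)\simeq\hocolim\Phi(X_n)=\hocolim W_n\simeq W$, which is exactly what we want. So the real content is organizing this homotopy (co)limit bookkeeping so that the maps $X_n\to X_{n+1}$ are coherent enough to take a genuine homotopy colimit — this is where \cref{lem:holim-app}, \cref{lem:hocolim-app}, and \cref{lem:pre-fully-faithful} all get used.

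\textbf{Second step: check the cohomological bounds.} I need to know $\Phi(X)$ really lands in $\Dfd^+((A^!)^\op)^\op$ and, conversely, that the $X$ produced lies in $\Dfd^-(A)$ (and not something bigger). The forward direction is \cref{rmk:cohomology-of-KD}(2) combined with Hom-finiteness of $\pvd(A)$ (\cref{prop:non-positive-pvd}) in the non-positive case, and with condition (3) of \cref{thm:equiv-cond-for-nice} in the pvd-finite positive case — this is exactly where pvd-finiteness is indispensable, because $\Hom_{\D(A)}(X,\Sigma^iS_A)$ must be finite-dimensional for $X\in\Dfd^+(A)$. For the converse, that the constructed $X$ is cohomologically bounded above: the point is that $W=\Phi(X)$ being bounded below (in the $\op$-sense) forces, via \cref{rmk:cohomology-of-KD}(2), that $\Hom_{\D(A)}(X,\Sigma^iS_A)=0$ for $i\ll0$, and then the characterization of $\Dfd^-(A)$ in Proposition~\ref{prop:D-fin-to-locally finite} (and its positive analogue proved just after \cref{cor:length-has-injcogen}) pins down $X\in\Dfd^-(A)$ resp.\ $\Dfd^+(A)$.

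\textbf{Where the difficulty lies.} The genuinely delicate point is the coherence of the inverse/direct system $\{X_n\}$: full faithfulness of $\Phi$ gives each individual transition morphism uniquely, and uniqueness forces the triangle identities and compatibility up to isomorphism, but to form an honest homotopy colimit one needs an actual tower, not just an object in the pro-category of $\D(A)$. The clean way around this is to build the tower on the model level: realize $S_A$ as a genuine dg module, note that $\Phi'=\Hom_{\Ddg(A)}(-,S_A)$ is an honest dg functor, and run the whole truncation/colimit construction with strict (co)fibrant representatives of the $W_n$'s pulled back along $\Phi$, so that the tower $X_\bullet$ exists strictly and $\Phi$ of its homotopy colimit is the homotopy colimit of $W_\bullet$ by \cref{lem:hocolim-app}. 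Once that is set up, essential surjectivity — and hence, together with Proposition~\ref{prop:fully-faithful}, the asserted triangle equivalences of \cref{thm:Koszul-duality} on the $\Dfd$ level, and by restriction on the $\per$/$\pvd$ level — follows. The remaining verification that the outer squares of the displayed diagrams in \cref{thm:Koszul-duality} commute with the stated inclusions is then a matter of comparing \cref{rmk:cohomology-of-KD}(2) against the definitions of $\per$, $\pvd$, $\Dfd^\pm$, using the $S_A$-characterizations of $\per(A)$ and $\Dfd^-(A)$ recalled above.
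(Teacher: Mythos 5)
Your proposal is correct and follows essentially the same route as the paper: truncate the given object of $\Dfd^+((A^!)^\op)$ into pieces lying in $\pvd((A^!)^\op)^\op$, pull these back through the already-established equivalence $\per(A)\simeq\pvd((A^!)^\op)^\op$, take the homotopy colimit of the resulting tower, and use that $\Phi$ sends colimits to limits together with the identification $H^i(\Phi(-))\simeq\Hom_{\D(A)}(-,\Sigma^iS_A)$ to check the colimit lands in $\Dfd^-(A)$. One small slip: the preimages $X_n$ of the truncations lie in $\per(A)$, not $\pvd(A)$, but since $\per(A)\subseteq\Dfd^-(A)$ for a locally finite non-positive $A$ this does not affect the argument.
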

\begin{proof}
    (1) Let $Z\in\Dfd^+((A^!)^\op)$. By \cref{lem:holim-app}, we have $Z\simeq\holim_{n\in\NN}\sigma_{\le n}(Z)$. Since $\Phi\colon\D(A)\to\D((A^!)^\op)$ send colimit to limit, we have
    $$
    \Phi(\hocolim_{n\in\NN}(\Phi|_{ {\per(A)}})^{-1}(\sigma_{\le n}(Z)))\simeqto\holim_{n\in\NN}\sigma_{\le n}(Z).
    $$
    Therefore, it suffices to show that $\hocolim_{n\in\NN}(\Phi|_{ {\per(A)}})^{-1}(\sigma_{\le n}(Z))\in\Dfd^-(A)$. This follows from the following identifications:
    \begin{align*}
        H^i(\hocolim_{n\in\NN}(\Phi|_{ {\per(A)}})^{-1}(\sigma_{\le n}(Z)))
        &\simeq
        \Hom_{\D(A)}(A,\Sigma^i\hocolim_{n\in\NN}(\Phi|_{ {\per(A)}})^{-1}(\sigma_{\le n}(Z))) \\
        &\simeq
        \colim_{n\in\NN}\Hom_{\D(A)}(A,\Sigma^i(\Phi|_{ {\per(A)}})^{-1}(\sigma_{\ge n}(Z)) \\
        &\simeq
        \colim_{n\in\NN}\Hom_{\D((A^!)^\op)}(\sigma_{\ge n}(Z),\Sigma^iS_{(A^!)^\op}) \\
        &\simeq
        \Hom_{\D((A^!)^\op)}(Z,\Sigma^iS_{(A^!)^\op}).
    \end{align*}

    (2) Similar to (1).
\end{proof}

\begin{proof}[Proof of \cref{thm:Koszul-duality}]
    This follows from \cref{prop:fully-faithful}, \cref{prop:A-to-S} and \cref{prop:density}.
\end{proof}

\subsection{Koszul double duality}
In this subsection, let $A$ be a pvd-finite locally finite non-positive $G$-dg algebra or locally finite positive $G$-dg algebra.
Let $A^{!!}=\End_{\Ddg((A^!)^\op)}(\Phi(A))^\op$ be the Koszul dual of $A^!$ (see \cref{rmk:KD-of-op}). By construction, we have $\Phi'\circ\Phi(A)=A^{!!}$.

\begin{thm}[Koszul double dual]\label{thm:Koszul-double-dual}
    The canonical morphism
    $$A\simeq\End_{\Cdg(A)}(A)\to\End_{\Ddg(A)}(A)\stackrel{\Phi'\circ\Phi}{\tto}\End_{\Cdg(A^{!!})}(A^{!!})\simeq A^{!!}
    $$
    is a quasi-isomorphism.
\end{thm}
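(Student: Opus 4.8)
The plan is to factor the displayed morphism as the composite of three maps,
\[
A\ \simeq\ \End_{\Cdg(A)}(A)\ \longrightarrow\ \End_{\Ddg(A)}(A)\ \xrightarrow{\,\Phi'\circ\Phi\,}\ \End_{\Cdg(A^{!!})}(A^{!!})\ \simeq\ A^{!!},
\]
and to check that each one is a quasi-isomorphism of $G$-dg algebras. The two outer identifications and the second arrow are formal; all the content lies in the middle arrow, which I will identify, after taking cohomology, with the bijection induced by the Koszul dual functor on $\Hom_{\D(A)}(A,\Sigma^\bullet A)$.

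First I would dispatch the formal part. The isomorphisms $A\simeq\End_{\Cdg(A)}(A)$ and $A^{!!}\simeq\End_{\Cdg(A^{!!})}(A^{!!})$ are the Yoneda lemma applied to the free rank-one right $G$-dg module over $A$, respectively over $A^{!!}$. For the arrow $\End_{\Cdg(A)}(A)\to\End_{\Ddg(A)}(A)$, I would use that the free module $A$ is homotopy projective, so that the localization $G$-dg functor $\Cdg(A)\to\Ddg(A)$ of the Drinfeld quotient (cf. \cite{Dri04}) induces a quasi-isomorphism already on the Hom-complex $\Hom_{\Cdg(A)}(A,A)\to\Hom_{\Ddg(A)}(A,A)$. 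Thus only the middle arrow remains.

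For the middle arrow I would unwind \cref{dfn:Koszul-dual}: the composite $\Phi'\circ\Phi$ carries the object $A$ to the free rank-one module over $A^{!!}=\End_{\Ddg((A^!)^\op)}(\Phi(A))^\op$, and on morphism complexes it becomes, through the Yoneda identification $A^{!!}\simeq\End_{\Cdg(A^{!!})}(A^{!!})$, the homomorphism on endomorphism $G$-dg algebras induced by the contravariant $G$-dg functor $\Phi\colon\Ddg(A)\to\Ddg((A^!)^\op)^\op$ at the object $A$. Since $\Phi$ commutes with the cohomological shift, passing to $H^i$ turns this into the map
\[
\Phi\colon\Hom_{\D(A)}(A,\Sigma^iA)\longrightarrow\Hom_{\D((A^!)^\op)}\!\bigl(\Phi(\Sigma^iA),\Phi(A)\bigr)
\]
for each $i\in\ZZ$, the passage to opposite rings only transposing the multiplication and being irrelevant to bijectivity (\cref{rmk:cohomology-of-KD} also records the identifications of $H^\bullet\Phi(-)$, and \cref{prop:A-to-S} identifies $\Phi(A)$ with $S_{(A^!)^\op}$ if one prefers to phrase $A^{!!}$ in terms of the latter). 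Now $A$ is locally finite non-positive (resp. pvd-finite locally finite positive), so $A\in\per(A)\subseteq\Dfd^-(A)$ (resp. $\subseteq\Dfd^+(A)$), and hence $\Sigma^iA\in\Dfd^-(A)$ (resp. $\Dfd^+(A)$) for every $i$. The displayed map is therefore an isomorphism by \cref{prop:fully-faithful}, so $\Phi'\circ\Phi$ is a quasi-isomorphism, and composing the three quasi-isomorphisms yields the claim.

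The step I expect to be the main obstacle is the identification in the third paragraph: one must track the definition of $\Phi'$ and the several module structures over $A^!$, $(A^!)^\op$ and $A^{!!}$ (with all their opposites) carefully enough to see that the middle arrow is literally the endomorphism-algebra homomorphism attached to the $G$-dg functor $\Phi$ at $A$; and, in the same vein, one must use that $\Phi$ is a genuine $G$-dg functor rather than merely its zeroth cohomology, so that the full faithfulness at the level of $\D$ provided by \cref{prop:fully-faithful} really does upgrade to a quasi-isomorphism on the pertinent Hom-complexes. Once this is in place the rest is formal.
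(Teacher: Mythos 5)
Your argument is correct and follows essentially the same route as the paper: the same factorization through $\End_{\Ddg(A)}(A)$, the same general fact that $\End_{\Cdg(B)}(B)\to\End_{\Ddg(B)}(B)$ is a quasi-isomorphism for the homotopy-projective free module, and the same ultimate appeal to Koszul duality for the middle arrow. The only (cosmetic) difference is that you identify the middle arrow directly, via Yoneda, with the action of the single functor $\Phi$ on the endomorphism complex of $A$ and invoke \cref{prop:fully-faithful}, whereas the paper appends one more localization map and deduces the claim from the equivalence $\Phi^2\colon\per(A)\to\per(A^{!!})$ by a two-out-of-three argument.
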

\begin{proof}
    Note that for every $G$-dg algebra $B$, the canonical morphism $\End_{\Cdg(B)}(B)\to\End_{\Ddg(B)}(B)$ is quasi-isomorphic. Since $\Phi^2\colon {\per(A)}\to \per(A^{!!})$ is an equivalence, the composition
    $$
    \End_{\Ddg(A)}(A)\to\End_{\Cdg(A^{!!})}(A^{!!})\to\End_{\Ddg(A^{!!})}(A^{!!})
    $$
    is a quasi-isomorphism.
\end{proof}

\begin{proof}[Proof of \cref{thm:KD-on-alg}]
    This is a direct consequence of \cref{thm:Koszul-double-dual}.
\end{proof}

By \cref{thm:Koszul-double-dual}, we have a quasi-equivalent forgetful functor $F\colon\Ddg(A^{!!})\to\Ddg(A)$. Now we show that Koszul dual functors are self-adjoint.

\begin{thm}
    The composition $\D((A^!)^\op)^\op\stackrel{\Phi}{\to}\D(A^{!!})\stackrel{F}{\to}\D(A)$ is the right adjoint of the Koszul dual functor $\Phi\colon\D(A)\to\D((A^!)^\op)^\op$:
    \begin{center}
        \begin{tikzcd}
            \D(A) \arrow [rr, "\Phi"{name=i}, bend left=30] & & \D((A^!)^\op)^\op \arrow [ll, "F\circ\Phi"{name=j}, bend left=30]  \\
            \arrow[phantom, from=i, to=j, "\dashv" rotate=-90]
        \end{tikzcd}
    \end{center}
\end{thm}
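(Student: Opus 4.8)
The plan is to identify the composite $F\circ\Phi\colon\D((A^!)^\op)^\op\to\D(A)$ in the statement with the ``transpose'' Hom functor $\RHom_{(A^!)^\op}(-,S_A)$, and then to deduce the adjunction from the standard symmetry of $\RHom$ into a dg bimodule. Write $S:=S_A$. By construction $S$ is a dg $(A^!,A)$-bimodule, with $A^!=\REnd_A(S)$ acting on the left and $A$ on the right, the two actions commuting; equivalently $S$ is a right module over $(A^!)^\op\otimes_\k A$. With this notation $\Phi=\RHom_A(-,S)\colon\D(A)\to\D((A^!)^\op)^\op$, where the right $(A^!)^\op$-structure on $\RHom_A(X,S)$ comes from the left $A^!$-action on $S$; symmetrically $\RHom_{(A^!)^\op}(Y,S)$ inherits a right $A$-action from the right $A$-action on $S$, giving a functor $\Psi\colon\D((A^!)^\op)^\op\to\D(A)$.

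First I would show $F\circ\Phi\cong\Psi$. By \cref{prop:A-to-S} (and \cref{rmk:KD-of-op}) we have $S_{(A^!)^\op}\simeq\Phi(A)\simeq S$ in $\D((A^!)^\op)$, whence the Koszul dual functor of $(A^!)^\op$ is, up to quasi-isomorphism, $\RHom_{(A^!)^\op}(-,S)\colon\D((A^!)^\op)^\op\to\D(A^{!!})$, the right $A^{!!}=\REnd_{(A^!)^\op}(S)^\op$-action being postcomposition. By construction the Koszul double-dual quasi-isomorphism of \cref{thm:Koszul-double-dual} is exactly the map $A\to A^{!!}$ sending $a$ to right multiplication by $a$ on $S$, so restricting the $A^{!!}$-action along it recovers the right $A$-structure defining $\Psi$; hence $F\circ\Phi\cong\Psi$ as functors $\D((A^!)^\op)^\op\to\D(A)$.

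Second I would produce the adjunction. For $X\in\D(A)$ and $Y\in\D((A^!)^\op)$ viewed in $\D((A^!)^\op)^\op$, the standard tensor--Hom adjunction for dg bimodules over the field $\k$ (cf.\ \cite{Ke94}) gives a chain of natural isomorphisms
\begin{align*}
\Hom_{\D((A^!)^\op)^\op}(\Phi X,Y)
&=\Hom_{\D((A^!)^\op)}\bigl(Y,\RHom_A(X,S)\bigr)\\
&\cong\Hom_{\D((A^!)^\op\otimes_\k A)}\bigl(Y\otimes_\k X,\,S\bigr)\\
&\cong\Hom_{\D(A)}\bigl(X,\RHom_{(A^!)^\op}(Y,S)\bigr)
=\Hom_{\D(A)}\bigl(X,\Psi Y\bigr),
\end{align*}
natural in $X$ and in $Y$ and compatible with the $G$-grading; combined with the first step this is exactly $\Phi\dashv F\circ\Phi$.

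The step needing genuine care is the first one: one must verify that the right $A$-module structure implicit in $F\circ\Phi$ --- obtained by restricting scalars along the double-dual morphism $A\to A^{!!}$ of \cref{thm:Koszul-double-dual} --- coincides with the one coming from $S$ being an $(A^!,A)$-bimodule. Once this bookkeeping is settled, the remainder is the purely formal symmetry of the two-variable $\RHom$, so the asserted adjunction holds on the whole derived categories, with no perfectness or finite-dimensionality restriction on the objects.
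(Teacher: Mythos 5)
Your proof is correct, but it takes a genuinely different route from the paper's. The paper argues directly: it builds a unit $M\to\Phi'(\Phi(M))$ from the Yoneda map $M\to\Hom_{\Ddg(A)}(A,M)$, assembles the candidate adjunction morphism $\Theta_{N,M}\colon\Hom_{\Ddg((A^!)^\op)}(N,\Phi(M))\to\Hom_{\Ddg(A)}(M,F(\Phi(N)))$, checks that $\Theta_{A^!,A}$ is a quasi-isomorphism, and then extends to all of $\D(A)$ and $\D((A^!)^\op)$ by infinite d\'evissage from the compact generators. You instead realize both $\Phi$ and $F\circ\Phi$ as $\RHom$ into the $(A^!,A)$-bimodule $S_A$ and invoke the two-variable tensor--Hom adjunction, which yields the isomorphism for all objects at once and dispenses with the d\'evissage entirely. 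What your route buys is conceptual transparency and a statement that needs no generation argument; what it costs is exactly the bookkeeping you flag: with the paper's definition $A^!=\End_{\Ddg(A)}(S_A)$ via the Drinfeld quotient, $S_A$ is not literally an $(A^!,A)$-bimodule, so one must first replace $S_A$ by an h-projective model over $A$ (making the left action of the endomorphism dg algebra honest, and likewise making the middle term $\Hom_{\D((A^!)^\op\otimes_\k A)}(Y\otimes_\k X,S)$ compute the correct derived Hom), and one must verify, as you sketch, that the canonical morphism $A\to A^{!!}$ of \cref{thm:Koszul-double-dual} is right multiplication on $S\simeq\Phi(A)$, so that restriction along it recovers the bimodule-induced right $A$-structure on $\RHom_{(A^!)^\op}(Y,S)$. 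Both checks go through, so your argument is sound and, arguably, more self-contained than the paper's.
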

\begin{proof}
    For each $M\in\D(A)$, we have the following morphism of $G$-dg $A$-module:
    $$
    M\to\Hom_{\Ddg(A)}(A,M)\to\Hom_{\Ddg((A^!)^\op)}(\Phi(M),\Phi(A))=\Phi'(\Phi(M)).
    $$
    We have the following $G$-dg natural transformation:
    $$
    \Theta_{N,M}\colon\Hom_{\Ddg((A^!)^\op)}(N,\Phi(M))\to\Hom_{\Ddg(A^{!!})}(\Phi^2(M),\Phi(N))\to\Hom_{\Ddg(A)}(M,F(\Phi(N))).
    $$
    It is easy to see that $\Theta_{A^!,A}$ is quasi-isomorphic. By infinite devissage, it follows that $\Theta_{N,M}$ is quasi-isomorphic for every $N$ and $M$, and so we have $\Phi\dashv F\circ\Phi$.
\end{proof}

\section{Functorially finiteness of hearts}\label{section:ff}

In this section, we investigate the functorially finiteness of bounded hearts. We show that for a locally finite non-positive $G$-dg algebra $A$, any functorially finite bounded heart of $\pvd(A)$ is length (see \cref{cor:contravariantly-finite-to-length}).

\subsection{ST-correspondence}

The bijection between silting objects and algebraic $t$-structures (ST-correspondence) is a one of the central topics in representation theory (\cite{KV88,KY14,KN13,KN?,N11,SY19,Z23,Bo24,F23}). As a consequence of the theory of Koszul duality, we obtain the following result. 
\begin{thm}[ST-correspondence]\label{thm:ST}
    \
    \begin{itemize}
        \item [(1)] Let $A$ be a locally finite non-positive $G$-dg algebra. Then the following four sets correspond bijectively to each other in a natural way:
        \begin{itemize}
            \item [(i)] equivalence classes of silting objects in $\per(A)$,
            \item [(ii)] equivalence classes of simple-minded objects in $\pvd(A)$,
            \item [(iii)] algebraic $t$-structures of $\pvd(A)$,
            \item [(iv)] bounded co-$t$-structures of $\per(A)$.
        \end{itemize}
        \item [(2)] Let $A$ be a pvd-finite locally finite positive $G$-dg algebra. Then the following four sets correspond bijectively to each other in a natural way:
        \begin{itemize}
            \item [(i)] equivalence classes of silting objects in $\pvd(A)$,
            \item [(ii)] equivalence classes of simple-minded objects in $\per(A)$,
            \item [(iii)] algebraic $t$-structures of $\per(A)$,
            \item [(iv)] bounded co-$t$-structures of $\pvd(A)$.
        \end{itemize}
    \end{itemize}
\end{thm}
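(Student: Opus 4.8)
\emph{Proof plan.}

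Part (1) is not new: for a locally finite non-positive $G$-dg algebra $A$ the four-fold correspondence is established in \cite{F23} (building on \cite{KV88,KY14,KN13,SY19}). For the reader's orientation the three bijections there are: equivalence classes of silting objects of $\per(A)$ $\leftrightarrow$ bounded co-$t$-structures of $\per(A)$ by $M\mapsto$ (the co-$t$-structure with coheart $\add M$), which makes sense because $A$ is a silting object of $\per(A)$; simple-minded objects of $\pvd(A)$ $\leftrightarrow$ algebraic $t$-structures of $\pvd(A)$ by $L\mapsto$ (the $t$-structure with heart $\H_L$), which is built into the definition of an algebraic $t$-structure, using that $\pvd(A)$ is Hom-finite by \cref{prop:non-positive-pvd} so that $\H_L$ is a bounded (length) heart; and the central bijection between equivalence classes of silting objects of $\per(A)$ and of simple-minded objects of $\pvd(A)$. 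So it remains to prove (2), and the plan is to transport (1) along Koszul duality.

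Let $A$ be a pvd-finite locally finite positive $G$-dg algebra and set $B:=(A^!)^{\op}$. By \cref{thm:KD-on-alg} the algebra $A^!$ is locally finite non-positive, and since the opposite of a locally finite non-positive $G$-dg algebra is again locally finite non-positive (the underlying complex, hence its cohomology, is unchanged), $B$ is locally finite non-positive; moreover $B$ can be an arbitrary such algebra, since for non-positive $C$ one may take $A:=(C^{\op})^!$ and use \cref{thm:Koszul-double-dual}. By \cref{thm:Koszul-duality}(2) the Koszul dual functor $\Phi$ induces triangle equivalences $\per(A)\simeqto\pvd(B)^{\op}$ and $\pvd(A)\simeqto\per(B)^{\op}$. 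Applying part (1) to $B$ gives natural bijections among (i$'$) equivalence classes of silting objects of $\per(B)$, (ii$'$) equivalence classes of simple-minded objects of $\pvd(B)$, (iii$'$) algebraic $t$-structures of $\pvd(B)$, and (iv$'$) bounded co-$t$-structures of $\per(B)$. Transporting these along $\Phi$ turns (i$'$)--(iv$'$) into, respectively, (iv), (iii), (ii) and (i) of the statement for $A$, once we know that passing from a $G$-triangulated category $\T$ to $\T^{\op}$ preserves each of the four structures; naturality of the resulting bijections is then inherited from that in (1) together with the functoriality of $\Phi$ and of $(-)^{\op}$.

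The remaining point, and the only one needing genuine (if routine) checking, is this compatibility with $(-)^{\op}$, which is where care is required so that, say, silting objects match silting objects rather than being accidentally interchanged with co-$t$-structures or their duals. Writing $\Sigma_{\T^{\op}}=\Sigma_{\T}^{-1}$, one verifies: $M$ is a silting object of $\T$ iff of $\T^{\op}$, because $\thick M$ is intrinsic and $\Hom_{\T}(M,\Sigma^{>0}M)=\Hom_{\T^{\op}}(M,\Sigma_{\T^{\op}}^{>0}M)$; $L$ is a simple-minded object of $\T$ iff of $\T^{\op}$, because the opposite of a $G$-graded semisimple algebra is $G$-graded semisimple and $\Hom_{\T}(L,\Sigma^{<0}L)=\Hom_{\T^{\op}}(L,\Sigma_{\T^{\op}}^{<0}L)$; the assignment $(\T^{\le0},\T^{\ge0})\mapsto(\T^{\ge0},\T^{\le0})$ is a bijection between $t$-structures of $\T$ and of $\T^{\op}$ preserving boundedness, and it sends $\H_L$ (computed in $\T$) to $\H_L$ (computed in $\T^{\op}$), since $\Filt L$ is insensitive to reversing triangles, so algebraicity is preserved; and, dually, $(\T_{\ge0},\T_{\le0})\mapsto(\T_{\le0},\T_{\ge0})$ is a boundedness-preserving bijection between co-$t$-structures. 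The main obstacle of the proof is thus purely bookkeeping around these dualities; with the dictionary in hand, the theorem follows at once from \cref{thm:Koszul-duality} and part (1).
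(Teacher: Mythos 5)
Your proposal is correct and follows essentially the same route as the paper: part (1) is quoted from \cite{F23} and part (2) is obtained by transporting (1) through the contravariant equivalences of \cref{thm:Koszul-duality}, with the bookkeeping that each of the four notions is preserved under passing to the opposite category. One small slip: precisely because your own dictionary shows each notion is self-dual under $(-)^{\op}$, the sets (i$'$)--(iv$'$) for $B=(A^!)^{\op}$ transport to (i)--(iv) for $A$ in the \emph{same} order (silting objects of $\per(B)\simeq\pvd(A)^{\op}$ correspond to silting objects of $\pvd(A)$, etc.), not in the reversed order you state; this does not affect the conclusion, since all four sets are in bijection anyway.
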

\begin{proof}
    (1) is just the graded version of \cite[Theorem B]{F23}, and (2) follows from (1) and \cref{thm:Koszul-duality}, or they are direct consequences of \cref{thm:Koszul-duality} (see the proof of \cite[Theorem B]{F23}).
\end{proof}

\subsection{Characterization of perfectly valued derived categories}

It is well-known that perfect derived categories of locally finite non-positive $G$-dg algebras are characterized among Hom-finite algebraic triangulated categories by the existence of silting objects. Now we characterize perfectly valued derived categories of locally finite non-positive $G$-dg algebras.
\begin{thm}\label{thm:enogh-poj-of-heart}
    Let $\D$ be a Hom-finite algebraic $G$-triangulated category. The following conditions are equivalent:
    \begin{itemize}
        \item [(1)] $\D$ has a simple-minded object $L$ such that $\Filt L$ has an injective cogenerator,
        \item [(2)] $\D$ is equivalent to $ \per(A)$ as $G$-triangulated categories for some pvd-finite locally finite positive $G$-dg algebra $A$,
        \item [(3)] $\D$ is equivalent to $\pvd(A)^\op$ as $G$-triangulated categories for some locally finite non-positive $G$-dg algebra $A$.
    \end{itemize}
\end{thm}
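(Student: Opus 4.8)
The plan is to establish the cycle of implications $(3)\To(2)\To(1)\To(3)$, leaning heavily on the Koszul duality already developed in \cref{thm:Koszul-duality} and on the characterization of $\pvd$-finiteness in \cref{thm:equiv-cond-for-nice}.

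For $(3)\To(2)$: suppose $\D\simeq\pvd(A)^\op$ for a locally finite non-positive $G$-dg algebra $A$. By \cref{thm:Koszul-duality}(1), the Koszul dual functor $\Phi$ induces an equivalence $\pvd(A)\simeqto\per((A^!)^\op)^\op$, hence $\pvd(A)^\op\simeq\per((A^!)^\op)$. By \cref{thm:KD-on-alg}, $(A^!)^\op$ (equivalently $(A^\op)^!$, by \cref{rmk:KD-of-op}) is a pvd-finite locally finite positive $G$-dg algebra, so setting $B:=(A^!)^\op$ gives $\D\simeq\per(B)$ with $B$ pvd-finite positive, as required.

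For $(2)\To(1)$: if $\D\simeq\per(A)$ for a pvd-finite locally finite positive $G$-dg algebra $A$, then $A$ itself is a simple-minded object of $\per(A)$ (by the remark following the definition of positive $G$-dg algebras, $A$ positive $\Leftrightarrow$ $A$ is a simple-minded object of $\per(A)$), and $\H_A=\Filt A$ has an injective cogenerator precisely because $A$ is pvd-finite (condition $(1)$ of \cref{thm:equiv-cond-for-nice}). Transporting along the equivalence $\D\simeq\per(A)$ produces the desired simple-minded object $L$ in $\D$ with $\Filt L$ having an injective cogenerator.

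For $(1)\To(3)$: given a simple-minded object $L\in\D$ with $\Filt L$ having an injective cogenerator, the functor $\RHom_\D(L,-)\colon\D\to\per(\REnd_\D(L))$ is an equivalence of $G$-triangulated categories by \cite[Lemma 6.1]{Ke94}, and $B:=\REnd_\D(L)$ is a locally finite positive $G$-dg algebra with $\H_B\simeq\Filt L$, hence $B$ is pvd-finite by \cref{thm:equiv-cond-for-nice}(1). Now apply \cref{thm:Koszul-duality}(2) to $B$: the Koszul dual functor gives $\per(B)\simeqto\pvd((B^!)^\op)^\op$, and by \cref{thm:KD-on-alg} the $G$-dg algebra $A:=(B^!)^\op$ is a locally finite non-positive $G$-dg algebra. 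Thus $\D\simeq\per(B)\simeq\pvd(A)^\op$, which is $(3)$.

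The main obstacle is not any single implication in isolation — each is a short application of a previously established theorem — but rather checking the compatibility of the various identifications with taking opposite categories and opposite dg algebras (the $(-)^\op$ bookkeeping in \cref{rmk:KD-of-op} and in the statement of \cref{thm:Koszul-duality}), so that the composite equivalences genuinely land in the stated form; one must verify that pvd-finiteness and local finiteness are insensitive to these operations, which follows from \cref{rmk:KD-of-op} together with \cref{cor:nice-is-D-inv}.
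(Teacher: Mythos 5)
Your proposal is correct and uses exactly the same ingredients as the paper — Keller's generator lemma to pass from a simple-minded object to $\per$ of a positive dg algebra, \cref{thm:Koszul-duality} together with \cref{thm:KD-on-alg} and \cref{rmk:KD-of-op} to exchange $\per$ of a pvd-finite positive algebra with $\pvd(\cdot)^\op$ of a non-positive one, and condition (1) of \cref{thm:equiv-cond-for-nice} to identify pvd-finiteness with $\H_A$ having an injective cogenerator — merely traversing the cycle of implications in the opposite orientation ($(3)\To(2)\To(1)\To(3)$ instead of the paper's $(1)\To(2)\To(3)\To(1)$, whose step $(3)\To(1)$ is dispatched by taking $L=S_A$). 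The only cosmetic remark is that $\REnd_\D(L)$ should be read via a fixed dg enhancement of $\D$, as the paper does by setting $A:=\End_\A(L)^\op$.
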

\begin{proof}
    $(1)\To(2)$: Since $\D$ is algebraic, $\D$ has a $G$-dg enhancement $\A$. If we put $A:=\End_\A(L)^\op$, then the triangulated functor $\A(L,-)\colon\D\to \per(A)$ is an equivalence by \cite[Lemma 6.1]{Ke94}.

    $(2)\To(3)$: This follows from \cref{thm:Koszul-duality}.

    $(3)\To(1)$: Obvious.
\end{proof}

\begin{rmk}\label{rmk:symmetry}
    A Hom-finite algebraic $G$-triangulated category $\D$ satisfies the equivalent conditions in \cref{thm:enogh-poj-of-heart} if and only if $\D^\op$ does, since the $\k$-dual induces $\pvd(A)^\op\simeqto\pvd(A^\op)$ for every locally finite non-positive $G$-dg algebra $A$.
\end{rmk}

\begin{dfn}
    A subcategory $\X\subseteq\T$ is called \emph{contravariantly finite} if every object $T\in\T$ admits a right $\X$-approximation. Dually, we define a \emph{covariantly finite} subcategory. A subcategory $\X\subseteq\T$ is called \emph{functorially finite} if $\X$ is both contravariantly and covariantly finite in $\T$.
\end{dfn}

\begin{cor}\label{cor:length-to-functorially-finite}
    Let $\D$ be a Hom-finite algebraic $G$-triangulated category satisfying one of the equivalent conditions in \cref{thm:enogh-poj-of-heart}. Then every length heart has an injective cogenerator and is functorially finite in $\D$.
\end{cor}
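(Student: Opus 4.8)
The statement to prove is \cref{cor:length-to-functorially-finite}: in a Hom-finite algebraic $G$-triangulated category $\D$ satisfying the equivalent conditions of \cref{thm:enogh-poj-of-heart}, every length heart has an injective cogenerator and is functorially finite. The first claim is essentially a restatement of \cref{cor:length-has-injcogen}: by \cref{thm:enogh-poj-of-heart}, $\D\simeq\per(A)$ for some pvd-finite locally finite positive $G$-dg algebra $A$; since $\H_A$ has an injective cogenerator (this is condition (1) of \cref{thm:equiv-cond-for-nice}, which $A$ satisfies), \cref{cor:length-has-injcogen} tells us that \emph{every} length heart of $\D$ has an injective cogenerator. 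Since $\D$ is Hom-finite, having an injective cogenerator is equivalent to having a projective generator, so each length heart $\H$ has both.

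**Reducing functorial finiteness to one length heart.** The crux is to deduce functorial finiteness of an arbitrary length heart $\H$ in $\D=\per(A)$ from the existence of a projective generator. Write $\H=\H_L=\Filt L$ for a simple-minded object $L$; by \cite[Lemma 6.1]{Ke94}, $\RHom_A(L,-)\colon\D(A)\to\D(B)$ is an equivalence with $B=\REnd_A(L)$, and it restricts to $\per(A)\simeq\pvd(B^\op)^\op$ (or, dually, identifies $\H_L$ with $\mod H^0(B)$ inside $\pvd(B)$) via \cref{thm:Koszul-duality}. So without loss of generality I may replace $\H$ by the standard heart $\mod H^0(A)\subseteq\per(A)$ coming from the positive $t$-structure, i.e. $\H=\H_A$. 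Thus it suffices to show that $\H_A$ is functorially finite in $\per(A)$.

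**Producing the approximations.** For contravariant finiteness, given $X\in\per(A)$ I would use the $t$-structure $(\bigcup_{n\ge 0}\Sigma^n\H_A\ast\cdots\ast\H_A,\ \Dfd^+(A)_{\ge 0})$ from \cref{prop:t-str-for-nice} (available since $A$ is pvd-finite): truncating $X$ via this $t$-structure, together with the co-$t$-structure $(\D(A)_{\ge0},\D(A)_{\le0})$ of \cref{prop:co-t-on-positive}, cuts $X$ down into a finite iterated extension of shifts of $\H_A$; the relevant truncation map into the piece lying in $\Sigma^{\le 0}$-part, composed with the projection onto the $\H_A$-layer (using that $\H_A$ has a projective generator $P$ to realize the approximation by an object of $\add P$), gives a right $\H_A$-approximation of $X$. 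Dually, using the injective cogenerator $I$ of $\H_A$ one builds left $\H_A$-approximations. The cleanest route, however, is probably to invoke the ST-correspondence \cref{thm:ST}: a length heart corresponds to a simple-minded object, equivalently (via Koszul duality) to a silting object on the other side, and bounded hearts of $\per$ of a non-positive dg algebra coming from silting objects are automatically functorially finite by \cite[Theorem B]{F23} — concretely, one transports the statement "the standard heart $\mod H^0(A)$ is functorially finite in $\per(A)$" through the equivalence $\Phi$ of \cref{thm:Koszul-duality}, under which $\per(A)$ of the positive side becomes $\pvd$ of a non-positive side where functorial finiteness of the standard algebraic heart is known.

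**The main obstacle.** The delicate point is not the existence of the abstract truncation triangles but verifying that the resulting morphisms are genuinely \emph{approximations} — i.e. that $\Hom_\D(\H_A,-)$ applied to the cone of the candidate map vanishes — and that they can be taken within $\add P$ (finitely generated), which is where Hom-finiteness and the projective/injective (co)generator are essential. Concretely one must check that the "negative part" of the truncation, which a priori lives in $\bigcup_{n>0}\Sigma^n\H_A\ast\cdots\ast\Sigma\H_A$, is right-orthogonal to $\H_A$ and that the "positive part" $\Dfd^+(A)_{\ge0}$ is as well after one further truncation; this orthogonality is exactly the content of the $t$-structure axioms in \cref{prop:t-str-for-nice} combined with $\Hom_{\D(A)}(\H_A,\D(A)_{>0})=0$. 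Granting these, functorial finiteness follows formally, and the corollary is complete.
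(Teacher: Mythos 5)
Your preferred route is exactly the paper's proof: the paper deduces the corollary from \cref{thm:enogh-poj-of-heart}, the ST-correspondence \cref{thm:ST}, and \cite[Theorem B, Corollary 3.12]{F23}, i.e.\ it transports the question through Koszul duality to the perfectly valued derived category of a locally finite non-positive dg algebra, where functorial finiteness and the existence of a projective generator for algebraic hearts are already known. The alternative ``direct truncation'' construction you sketch is not needed and, as stated, would not quite work (the $t$-structure truncation $H^0(X)\to\sigma^{\ge0}(X)$ need not factor through $X$, so it does not by itself produce a right $\H$-approximation; the genuine approximations come from the associated co-$t$-structure/silting object), but since you correctly flag this and fall back on the ST-correspondence argument, the proposal is sound.
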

\begin{proof}
    This follows from \cref{thm:enogh-poj-of-heart}, \cref{thm:ST}, and \cite[Theorem B, Corollary 3.12]{F23}.
\end{proof}

\subsection{Hearts of perfect derived category of positive dg algebras}

In the following, we fix a pvd-finite locally finite positive $G$-dg algebra $A$.

\begin{dfn}
    For a bounded heart $\H$ of $\per(A)$, we define subcategories of $\Dfd^+(A)$ as follows:
    \begin{itemize}
        \item [(1)] $\Dfd^+(A)^{\H,\le 0}=\pvd(A)^{\H,\le 0}:=\bigcup_{n\ge 0}\Sigma^n\H\ast\cdots\ast\Sigma\H\ast\H$,
        \item[(2)] $\Dfd^+(A)^{\H,\ge 0}:=(\Sigma\Dfd^+(A)^{\H,\le0})^\bot$,
        \item [(3)] $\pvd(A)^{\H,\ge0}:=\bigcup_{n\ge 0}\H\ast\Sigma^{-1}\H\ast\cdots\ast\Sigma^{-n}\H$.
    \end{itemize}
\end{dfn}

For every bounded heart $\H$ of $\per(A)$, the pair $(\per(A)^{\H,\le0},\per(A)^{\H,\ge0})$ is a bounded $t$-structure of $\per(A)$, and whose heart is $\H$. By \cref{thm:equiv-cond-for-nice}, $(\Dfd^+(A)^{\H_A,\le 0},\Dfd^+(A)^{\H_A,\ge 0})$ is a $t$-structure on $\Dfd^+(A)$.

The aim of this subsection is to prove the following proposition:
\begin{prop}\label{prop:heart-to-co-t-str}
    Let $\H$ be the heart of a bounded $t$-structure $(\per(A)^{\H,\le0},\per(A)^{\H,\ge0})$ on $\per(A)$. If $\H$ is contravariantly finite in $\per(A)$, then $(\Dfd^+(A)^{\H}_{\ge 0},\Dfd^+(A)^{\H}_{\le 0})$ is a co-$t$-structure on $\Dfd^+(A)$, where
    \begin{align*}
        \Dfd^+(A)^\H_{\ge 0}
        &:=
        (\Sigma^{>0}\H)^{\bot}, \\
        \Dfd^+(A)^\H_{\le 0}
        &:=
        (\Sigma^{<0}\H)^{\bot}.
    \end{align*}
\end{prop}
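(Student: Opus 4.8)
The plan is to verify directly the four axioms of a co-$t$-structure for the pair $(\Dfd^+(A)^\H_{\ge 0},\Dfd^+(A)^\H_{\le 0})$. The closure under direct sums and summands (axiom (i)) is immediate since both classes are defined as right-orthogonals to a fixed class of objects. The shift inclusion $\Sigma^{-1}\Dfd^+(A)^\H_{\le 0}\subseteq\Dfd^+(A)^\H_{\le 0}$ and the orthogonality $\Hom(\Dfd^+(A)^\H_{\ge 0},\Sigma^{-1}\Dfd^+(A)^\H_{\le 0})=0$ follow formally from the definitions together with the fact that $(\per(A)^{\H,\le 0},\per(A)^{\H,\ge 0})$ is a bounded $t$-structure on $\per(A)$ with heart $\H$: indeed $\Sigma^{<0}\H$ and $\Sigma^{>0}\H$ generate, under extensions and the relevant cohomological shifts, the aisles $\per(A)^{\H,\le -1}$ and its shifts, and the orthogonality is inherited from the $t$-structure axiom $\Hom(\per(A)^{\H,\le 0},\per(A)^{\H,\ge 1})=0$.

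The substance of the proof is the decomposition axiom: every $X\in\Dfd^+(A)$ sits in an exact triangle $U\to X\to V\to\Sigma U$ with $U\in\Dfd^+(A)^\H_{\ge 0}=(\Sigma^{>0}\H)^\bot$ and $V\in\Dfd^+(A)^\H_{\le 0}=(\Sigma^{<0}\H)^\bot$. The strategy is to build $U$ by a (homotopy) limit of successive approximations. Using that $\H$ is contravariantly finite in $\per(A)$, one extracts, for each $n$, a right $\H$-approximation; assembling these along the cohomological degrees gives, for each $n$, an object $U_n$ together with a map $U_n\to X$ whose cone lies in a bounded piece of $\per(A)^{\H,\ge 0}$ — more precisely in $\H\ast\Sigma^{-1}\H\ast\cdots\ast\Sigma^{-n}\H$-type layers. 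Here \cref{lem:holim-app}(2) is the key tool: since $X\in\Dfd^+(A)$ one has $X\simeq\holim_n\sigma_{\le n}(X)$, so it suffices to truncate and handle each $\sigma_{\le n}(X)\in\per(A)$ by the bounded $t$-structure machinery already available there, then pass to the homotopy limit. One must check that the resulting homotopy limit $U:=\holim_n U_n$ still lies in $\Dfd^+(A)$ (each cohomology is a limit of finite-dimensional spaces satisfying Mittag–Leffler, hence finite-dimensional and eventually stable) and is right-orthogonal to $\Sigma^{>0}\H$, while the cofiber $V$ is right-orthogonal to $\Sigma^{<0}\H$; the latter orthogonalities are again inherited in the limit because $\Hom(-,\Sigma^{>0}\H)$ and $\Hom(\Sigma^{<0}\H,-)$ of the relevant approximation cones vanish by construction.

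The main obstacle I anticipate is controlling the homotopy limit: one needs to know that the approximation triangles on the truncations $\sigma_{\le n}(X)$ are compatible as $n$ varies (so that one genuinely gets a tower, not just a collection of triangles), and that taking $\holim$ does not destroy the orthogonality to $\Sigma^{>0}\H$ — this is where the co-$t$-structure argument is more delicate than the $t$-structure one, since co-aisles are not closed under all limits in general. The way around this is to use that the relevant $\Hom$-groups into $\Sigma^{>0}\H$ and out of $\Sigma^{<0}\H$ are finite-dimensional (by \cref{thm:equiv-cond-for-nice}, since $A$ is pvd-finite, and because $\H$ is a bounded heart of $\per(A)$, hence sits in $\pvd(A)$ under the identification of the two derived categories), so the relevant $\varprojlim^1$ terms vanish and orthogonality passes to the limit. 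Once the triangle is produced, a standard argument shows the two classes determine each other as right-orthogonals, completing the verification that $(\Dfd^+(A)^\H_{\ge 0},\Dfd^+(A)^\H_{\le 0})$ is a co-$t$-structure on $\Dfd^+(A)$.
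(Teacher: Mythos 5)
There are two genuine gaps here. First, you dismiss the orthogonality axiom $\Hom_{\D(A)}(\Dfd^+(A)^\H_{\ge0},\Dfd^+(A)^\H_{<0})=0$ as formally ``inherited'' from the $t$-structure axiom on $\per(A)$, but it is not formal: both classes are right orthogonals inside the much larger category $\Dfd^+(A)$, and nothing in the definitions forces two right orthogonals to be Hom-orthogonal to one another. The paper has to first extend $(\per(A)^{\H,\le0},\per(A)^{\H,\ge0})$ to a $t$-structure on all of $\Dfd^+(A)$, then prove $X\simeq\hocolim_{n}\sigma^{\H,\le n}(X)$ with $\sigma^{\H,\le n}(X)\in\H\ast\Sigma^{-1}\H\ast\cdots\ast\Sigma^{-n}\H$ for every $X\in(\Sigma^{>0}\H)^\bot$, and only then conclude $\Hom_{\D(A)}(X,Y)\simeq\lim_n\Hom_{\D(A)}(\sigma^{\H,\le n}(X),Y)=0$. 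Your argument skips all of this.

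Second, your construction of the decomposition triangle does not work as stated. You write $X\simeq\holim_n\sigma_{\le n}(X)$ and propose to ``handle each $\sigma_{\le n}(X)\in\per(A)$ by the bounded $t$-structure machinery''; but for $X\in\Dfd^+(A)$ the truncation $\sigma_{\le n}(X)$ lies in $\pvd(A)$, not in $\per(A)$ (for a pvd-finite positive $A$ one has $\pvd(A)\subseteq\per(A)$ only when the Koszul dual is proper), so the $t$-structure with heart $\H$ is simply not available on these objects. Moreover, the shape of the decomposition you describe --- $U_n\to X$ with cone in $\H\ast\Sigma^{-1}\H\ast\cdots\ast\Sigma^{-n}\H$ --- is a $t$-structure--type decomposition, whereas the co-$t$-structure requires the cone to lie in $(\Sigma^{\le0}\H)^\bot$; the class $\H\ast\cdots\ast\Sigma^{-n}\H$ is not contained in that orthogonal (already $\Hom_\H\neq0$ obstructs it). The paper's route is the opposite one and goes through a homotopy \emph{colimit}: by \cite[Theorem 1.3]{M09} each $\H\ast\cdots\ast\Sigma^{-n}\H$ is contravariantly finite, one takes minimal right approximations $\sigma^{\H,n}_{\ge0}(X)\to X$ whose cones lie in $(\H\ast\cdots\ast\Sigma^{-n}\H)^\bot$ by Wakamatsu's lemma, sets $\sigma^{\H}_{\ge0}(X):=\hocolim_n\sigma^{\H,n}_{\ge0}(X)$, and verifies the required orthogonalities by commuting $\Hom$ out of fixed heart objects with the filtered colimit. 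Your Mittag--Leffler observation is the right tool for controlling $\varprojlim^1$ once a suitable tower exists, and you correctly identify contravariant finiteness and right $\H$-approximations as the key input, but the tower you propose is built from the wrong decompositions in the wrong category.
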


To prove the above proposition, we first show that the $t$-structure $(\per(A)^{\H,\le0},\per(A)^{\H,\ge0})$ can be extended to a $t$-structure on $\Dfd^+(A)$.
\begin{lem}
     $(\Dfd^+(A)^{\H,\le 0},\Dfd^+(A)^{\H,\ge 0})$ is a $t$-structure on $\Dfd^+(A)$.
\end{lem}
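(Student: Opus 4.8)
The plan is to verify the three axioms of a $t$-structure for the pair $(\Dfd^+(A)^{\H,\le 0},\Dfd^+(A)^{\H,\ge 0})$ on the ambient category $\Dfd^+(A)$, using the fact that the corresponding pair on $\per(A)$ is already a bounded $t$-structure and that, by \cref{thm:equiv-cond-for-nice} (via \cref{prop:t-str-for-nice}), the pair $(\Dfd^+(A)^{\H_A,\le 0},\Dfd^+(A)^{\H_A,\ge 0})$ is a $t$-structure on $\Dfd^+(A)$ for the standard heart $\H_A$. First I would note that $\Dfd^+(A)^{\H,\le0}=\bigcup_{n\ge0}\Sigma^n\H\ast\cdots\ast\Sigma\H\ast\H$ is closed under $\Sigma$ by construction, giving axiom (i); and the orthogonality axiom (ii), namely $\Hom_{\D(A)}(X,Y)=0$ for $X\in\Sigma\Dfd^+(A)^{\H,\le0}$ and $Y\in\Dfd^+(A)^{\H,\ge0}$, is immediate from the definition $\Dfd^+(A)^{\H,\ge0}=(\Sigma\Dfd^+(A)^{\H,\le0})^\bot$. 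So the real content is axiom (iii): every $X\in\Dfd^+(A)$ fits in a triangle $X'\to X\to X''\to\Sigma X'$ with $X'\in\Dfd^+(A)^{\H,\le0}$ and $X''\in\Dfd^+(A)^{\H,\ge0}$, i.e. that the aisle $\Dfd^+(A)^{\H,\le0}$ admits a right adjoint to the inclusion (a truncation).

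The key steps I would carry out, in order, are as follows. Step 1: reduce to the standard heart. For $X\in\Dfd^+(A)$, apply the known $\H_A$-truncation (\cref{prop:t-str-for-nice}) to produce a triangle $U\to X\to V\to \Sigma U$ with $U\in\Dfd^+(A)^{\H_A,\le0}=\bigcup_{n\ge0}\Sigma^n\H_A\ast\cdots\ast\H_A$ and $V\in\Dfd^+(A)^{\H_A,\ge0}=\Dfd^+(A)_{\ge0}$ (this is exactly the $t$-structure of \cref{thm:equiv-cond-for-nice}(2)); note that both $U$ and $V$ lie in $\per(A)\cap\Dfd^+(A)$ on the relevant "bounded below'' side — more precisely $U\in\per(A)$ since it is built from finitely many shifts of $\H_A\subseteq\per(A)$, while $V$ may only be in $\Dfd^+(A)_{\ge0}$. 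Step 2: truncate the perfect part $U$ with respect to the bounded $t$-structure $(\per(A)^{\H,\le0},\per(A)^{\H,\ge0})$ on $\per(A)$, getting $U'\to U\to U''$ with $U'\in\per(A)^{\H,\le0}\subseteq\Dfd^+(A)^{\H,\le0}$ and $U''\in\per(A)^{\H,\ge0}$. Step 3: the subtle point — I must arrange the composite truncation so that the "$\ge 0$ part'' assembled from $U''$ and $V$ genuinely lands in $\Dfd^+(A)^{\H,\ge0}=(\Sigma\Dfd^+(A)^{\H,\le0})^\bot$. For this I would show $\Dfd^+(A)_{\ge0}\subseteq\Dfd^+(A)^{\H,\ge0}$ (equivalently $\Hom_{\D(A)}(\Sigma\Dfd^+(A)^{\H,\le0},\Dfd^+(A)_{\ge0})=0$), which reduces to $\Hom_{\D(A)}(\Sigma^{\ge1}\H,\Dfd^+(A)_{\ge0})=0$; since $\H\subseteq\per(A)$, each object of $\H$ is bounded, so $\Sigma^{\ge1}\H\subseteq\D(A)_{\le -1}\subseteq\D(A)_{<0}$, and $\Hom$ from $\D(A)_{<0}$ into $\D(A)_{\ge0}$ vanishes by the co-$t$-structure orthogonality of \cref{prop:co-t-on-positive} — wait, that orthogonality is for the co-$t$-structure, so I would instead use that $\H$ bounded gives $\Sigma^{\ge1}\H\subseteq\D(A)^{<0}$ and apply orthogonality of the standard $t$-structure, or directly that $\Hom_{\D(A)}(\Sigma^i\H,Y)=0$ for $i\ge1$ and $Y\in\D(A)_{\ge0}$ because $H^{<0}(\Sigma^i\H)=0$ forces such maps to be zero after unwinding cohomological degrees. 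Step 4: then glue — from $U'\to U\to U''$ and $U\to X\to V$, the octahedral axiom produces a triangle $U'\to X\to W\to\Sigma U'$ with $W$ an extension of $V$ by $U''$; since $U''\in\per(A)^{\H,\ge0}\subseteq\Dfd^+(A)^{\H,\ge0}$ and $V\in\Dfd^+(A)_{\ge0}\subseteq\Dfd^+(A)^{\H,\ge0}$, and $\Dfd^+(A)^{\H,\ge0}$ is extension-closed (being a right-orthogonal subcategory, hence closed under extensions), we get $W\in\Dfd^+(A)^{\H,\ge0}$, completing axiom (iii).

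The main obstacle I anticipate is Step 3 together with the verification that the two kinds of "$\ge 0$'' objects — $U''$ coming from the $\H$-truncation inside $\per(A)$, and $V$ coming from the standard co-$t$-truncation — are mutually compatible and both land in the single right-orthogonal class $(\Sigma\Dfd^+(A)^{\H,\le0})^\bot$; in particular one must be careful that $\Dfd^+(A)^{\H,\le0}$ really is contained in $\per(A)$ (true, since it is a finite $\ast$-chain of shifts of the bounded heart $\H\subseteq\per(A)$, and such finite extensions of perfect objects are perfect), so that orthogonality can be checked against perfect objects where the two $t$-structures' truncations and the finiteness hypotheses of \cref{thm:equiv-cond-for-nice} interact cleanly. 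The hypothesis that $\H$ is contravariantly finite in $\per(A)$ is not actually needed for this lemma (it will enter only in the subsequent co-$t$-structure statement, \cref{prop:heart-to-co-t-str}); here one only uses that $(\per(A)^{\H,\le0},\per(A)^{\H,\ge0})$ is a bounded $t$-structure on $\per(A)$, which holds for every bounded heart. I would also double-check the degenerate/boundary cases ($X$ concentrated in one cohomological degree, or $X\in\per(A)$ already) to make sure the glued truncation restricts to the given one on $\per(A)$, which is what makes the extension "compatible'' in the sense needed for \cref{prop:heart-to-co-t-str}.
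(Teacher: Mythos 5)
Your overall architecture --- peel a perfect piece off $X$ using the standard ($\H_A$) truncation on $\Dfd^+(A)$, truncate that piece with the bounded $t$-structure $(\per(A)^{\H,\le0},\per(A)^{\H,\ge0})$, and glue by the octahedral axiom --- is exactly the paper's strategy, and your observations that only axiom (iii) has content and that contravariant finiteness of $\H$ is not needed for this lemma are both correct. However, Step 3 contains a genuine error that breaks the argument. The inclusion $\Dfd^+(A)_{\ge0}\subseteq\Dfd^+(A)^{\H,\ge0}$ is false for a general bounded heart $\H$: take $\H=\Sigma^{-k}\H_A$ with $k\ge1$, for which $\Dfd^+(A)^{\H,\ge0}=\Dfd^+(A)_{\ge k}$, and this does not contain $S_A\in\Dfd^+(A)_{\ge0}$. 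Each justification you offer fails for the same underlying reason: for a \emph{positive} dg algebra the orthogonality available on cohomological-degree grounds is $\Hom_{\D(A)}(\D(A)_{\ge0},\D(A)_{<0})=0$, i.e.\ the co-$t$-structure direction of \cref{prop:co-t-on-positive}, not $\Hom_{\D(A)}(\D(A)_{<0},\D(A)_{\ge0})=0$; and the fact that each object of $\H$ is bounded gives no control over \emph{which} cohomological degrees its cohomology occupies, so neither $\Sigma^{\ge1}\H\subseteq\D(A)_{<0}$ nor $H^{<0}(\Sigma^{\ge1}\H)=0$ is justified (both fail already for $\H=\Sigma^{-k}\H_A$).

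The repair is precisely what the paper does: do not truncate $X$ at level $0$ of the standard $t$-structure, but first use boundedness of $\H$ as a heart of $\per(A)$ to choose $l\in\NN$ with $\Dfd^+(A)^{\H,\le0}\subseteq\Dfd^+(A)^{\H_A,\le l}$, and truncate at level $l$. In the triangle $\sigma^{\H_A,\le l}(X)\to X\to\sigma^{\H_A,>l}(X)$ the left-hand term is still perfect, and now the tail satisfies
$$
\sigma^{\H_A,>l}(X)\in\Dfd^+(A)^{\H_A,>l}=\bigl(\Dfd^+(A)^{\H_A,\le l}\bigr)^\bot\subseteq\bigl(\Dfd^+(A)^{\H,\le0}\bigr)^\bot=\Dfd^+(A)^{\H,>0},
$$
simply by taking right orthogonals in the chosen inclusion of aisles --- no claim about $\Dfd^+(A)_{\ge0}$ itself is needed. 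With this modification your Steps 2 and 4 go through verbatim and reproduce the paper's proof.
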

\begin{proof}
    Since $\H$ is bounded, we have $\per(A)^{\H,\le 0}\subseteq\per(A)^{\H_A,\le l}$ for some $l\in\NN$. It suffices to show that $\per(A)^{\H,\le 0}\ast\Dfd^+(A)^{\H,>0}=\Dfd^+(A)$. Let $X\in\Dfd^+(A)$. Let $\sigma^{\H_A,\le 0}$ and $\sigma^{\H_A,\ge 0}$ be the truncation functors with respect to the $t$-structure $(\Dfd^+(A)^{\H_A,\le 0},\Dfd^+(A)^{\H_A,\ge 0})$, and let $\sigma^{\H,\le 0}$ and $\sigma^{\H,\ge 0}$ be the truncation functors with respect to the $t$-structure $(\per(A)^{\H,\le 0},\per(A)^{\H,\ge 0})$. By octahedral axiom, we have the following commutative diagram:
    \begin{center}
        \begin{tikzcd}
             & \sigma^{\H,\le 0}(\sigma^{\H_A,\le l}(X)) \arrow[r, equal] \arrow[d] & \sigma^{\H,\le 0}(\sigma^{\H_A,\le l}(X)) \arrow[d] & & \\
            \Sigma^{-1}\sigma^{\H_A,>l}(X) \arrow[r] \arrow[d, equal] & \sigma^{\H_A,\le l}(X) \arrow[r] \arrow[d] & X \arrow[r] \arrow[d] & \sigma^{\H_A,>l}(X) \arrow[d, equal] \\
            \Sigma^{-1}\sigma^{\H_A,>l}(X) \arrow[r] & \sigma^{\H,> 0}(\sigma^{\H_A,\le l}(X)) \arrow[r] \arrow[d] & X' \arrow[r] \arrow[d] & \sigma^{\H_A,>l}(X) \\
            & \Sigma\sigma^{\H,\le 0}(\sigma^{\H_A,\le l}(X)) \arrow[r, equal] & \Sigma\sigma^{\H,\le 0}(\sigma^{\H_A,\le l}(X)). & & 
        \end{tikzcd}
    \end{center}
    Since $\sigma^{\H,> 0}(\sigma^{\H_A,\le l}(X))\in\per(A)^{\H,>0}\subseteq\Dfd^+(A)^{\H,>0}$ and $\sigma^{\H_A,>l}(X)\in\Dfd^+(A)^{\H_A,>l}\subseteq\Dfd^+(A)^{\H,>0}$ hold, it follows that $X'\in\Dfd^+(A)^{\H,>0}$.
\end{proof}

In the following, we fix a bounded heart $\H\subseteq\D$. Let $\sigma^{\H,\le 0}$ and $\sigma^{\H,\ge 0}$ be the truncation functors with respect to the $t$-structure $(\Dfd^+(A)^{\H,\le 0},\Dfd^+(A)^{\H,\ge 0})$.

\begin{lem}
    For every $X\in\Dfd^+(A)$, we have $$\hocolim_{n\in\NN}\sigma^{\H,\le n}(X)\simeqto X.$$
\end{lem}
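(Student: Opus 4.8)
The strategy is to build the homotopy colimit inside the ambient category $\D(A)$ (which has all coproducts), produce a canonical morphism to $X$, and then verify it is an isomorphism degree by degree on cohomology. First I would use the $t$-structure $(\Dfd^+(A)^{\H,\le 0},\Dfd^+(A)^{\H,\ge 0})$ on $\Dfd^+(A)$ (which exists by the lemma just proved) to obtain, for $m\le n$, the canonical morphism $\sigma^{\H,\le m}(X)=\sigma^{\H,\le m}(\sigma^{\H,\le n}(X))\to\sigma^{\H,\le n}(X)$; these are compatible with the truncation morphisms $\sigma^{\H,\le n}(X)\to X$, so $\{\sigma^{\H,\le n}(X)\}_{n\in\NN}$ is a direct system in $\D(A)$ admitting a cocone to $X$. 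From the triangle defining the homotopy colimit we then get a morphism $c\colon\hocolim_{n\in\NN}\sigma^{\H,\le n}(X)\to X$ which, since $H^i(-)$ commutes with countable coproducts and hence with this sequential homotopy colimit, induces on cohomology the canonical map $\colim_{n}H^i(\sigma^{\H,\le n}(X))\to H^i(X)$. It therefore suffices to show this map is bijective for every $i\in\ZZ$.

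The only non-formal input, which I expect to be the crux, is that the coaisle $\Dfd^+(A)^{\H,\ge 0}$ is bounded below for the standard cohomological grading: there is $l\in\NN$ with $\Dfd^+(A)^{\H,\ge 0}\subseteq\{Z\in\D(A)\mid H^{<-l}(Z)=0\}$. To prove this I would invoke boundedness of the $t$-structure on $\per(A)$ with heart $\H$: since $A\in\per(A)=\bigcup_{k\ge0}\per(A)^{\H,\le k}$, we have $A\in\per(A)^{\H,\le l}$ for some $l$, hence $\Sigma^kA\in\per(A)^{\H,\le l-k}\subseteq\Dfd^+(A)^{\H,\le -1}$ whenever $k\ge l+1$. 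Because $\Hom_{\D(A)}(\Dfd^+(A)^{\H,\le -1},\Dfd^+(A)^{\H,\ge 0})=0$ (orthogonality of the $t$-structure) and $\Hom_{\D(A)}(\Sigma^kA,Z)\cong H^{-k}(Z)$, this forces $H^{-k}(Z)=0$ for all $k\ge l+1$, i.e.\ $Z\in\D^{\ge -l}$. Applying $\Sigma^{-n-1}$, it follows that $\sigma^{\H,>n}(X)=\sigma^{\H,\ge n+1}(X)\in\Dfd^+(A)^{\H,\ge n+1}$ satisfies $H^{j}(\sigma^{\H,>n}(X))=0$ for every $j\le n-l$.

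Finally, for a fixed $i$ choose $n\ge i+l$. In the truncation triangle $\sigma^{\H,\le n}(X)\to X\to\sigma^{\H,>n}(X)\to\Sigma\sigma^{\H,\le n}(X)$ both $H^{i-1}(\sigma^{\H,>n}(X))$ and $H^{i}(\sigma^{\H,>n}(X))$ vanish by the previous paragraph, so the long exact cohomology sequence gives that $H^i(\sigma^{\H,\le n}(X))\to H^i(X)$ is an isomorphism. Thus the direct system $\{H^i(\sigma^{\H,\le n}(X))\}_{n}$ has eventually invertible transition maps and its structural maps to $H^i(X)$ are eventually invertible, so $\colim_{n}H^i(\sigma^{\H,\le n}(X))\to H^i(X)$ is an isomorphism. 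As this holds for all $i$, the morphism $c$ is an isomorphism, proving $\hocolim_{n\in\NN}\sigma^{\H,\le n}(X)\simeqto X$.
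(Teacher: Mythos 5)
Your proof is correct and follows essentially the same route as the paper: both reduce to showing $H^i(\sigma^{\H,\le n}(X))\to H^i(X)$ is eventually an isomorphism by bounding the coaisle cohomologically using boundedness of the heart $\H$, and then use that $H^i$ commutes with the sequential homotopy colimit. Your derivation of the bound (testing $Z\in\Dfd^+(A)^{\H,\ge0}$ against $\Sigma^kA$ via orthogonality) is just a more explicit version of the paper's containment $\Dfd^+(A)^{\H,>n}\subseteq\Dfd^+(A)_{>n-k}$.
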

\begin{proof}
    It suffices to show that 
    \begin{equation}\label{equation:*}
    H^i(\hocolim_{n\in\NN}\sigma^{\H,\le n}(X))\simeqto H^i(X)
    \end{equation}
    holds for every $i\in\NN$. Since $\H$ is a bounded heart, we have $\per(A)^{\H_A,\le0}\subseteq\per(A)^{\H,\le k}$ for some $k\in\ZZ$. Since $\Dfd^+(A)^{\H,>n}\subseteq\Dfd^+(A)^{\H_A,>n-k}=\Dfd^+(A)_{>n-k}$ holds, we have 
    \begin{align*}
        H^i(\sigma^{\H,\le n}(X))\simeqto H^i(X)
    \end{align*}
    for every $i\le n-k$. Therefore, \cref{equation:*} holds.
\end{proof}

We are now ready to prove Proposition~\ref{prop:heart-to-co-t-str}.
\begin{proof}[Proof of Proposition~\ref{prop:heart-to-co-t-str}]
    Since $\Dfd^+(A)^{\H_A,\le 0}$ is contravariantly finite in $\Dfd^+(A)$ and $\H$ is contravariantly finite in $\Dfd^+(A)^{\H_A,\le 0}(\subseteq\per(A))$, it follows that $\H$ is contravariantly finite in $\Dfd^+(A)$. By \cite[Theorem 1.3]{M09}, it follows that $\H\ast\cdots\ast\Sigma^{-n}\H$ is also contravariantly finite in $\Dfd^+(A)$. We first show that $\Dfd^+(A)^{\H}_{\ge 0}\ast\Dfd^+(A)^\H_{<0}=\Dfd^+(A)$. For each $n\ge 0$, take a minimal right $(\H\ast\cdots\ast\Sigma^{-n}\H)$-approximation
    $$
    \sigma^{\H,n}_{\ge 0}(X)\to X.
    $$
    Let $\sigma^{\H,n}_{<0}(X)$ be the cone of the above morphism. By Wakamatsu's lemma, we have 
    $$
    \sigma^{\H,n}_{<0}(X)\in(\H\ast\cdots\ast\Sigma^{-n}\H)^\bot
    $$
    Let $\sigma^\H_{\ge 0}(X):=\hocolim_{n\in\NN}\sigma^{\H,n}_{\ge0}(X)$ and $\sigma^\H_{<0}(X)$ be the cone of $\sigma^\H_{\ge0}(X)\to X$. Then, for every $H\in\H$, we have
    \begin{align*}
        \Hom_{\D(A)}(\Sigma^{>0}H,\sigma^\H_{\ge0}(X))\simeq\colim_{n\in\NN}\Hom_{\D(A)}(\Sigma^{>0}H,\sigma^{\H,n}_{\ge0}(X))=0, \\
        \Hom_{\D(A)}(\Sigma^{\le0}H,\sigma^\H_{<0}(X))\simeq\colim_{n\in\NN}\Hom_{\D(A)}(\Sigma^{\le0}H,\sigma^{\H,n}_{<0}(X))=0.
    \end{align*}
    Since the morphism
    $$
    \Hom_{\D(A)}(H,\sigma^{\H,0}_{\ge0}(X))\to\Hom_{\D(A)}(H,\sigma^\H_{\ge0}(X))\simeq\colim_{n\in\NN}\Hom_{\D(A)}(H,\sigma^{\H,n}_{\ge0}(X))
    $$ 
    is surjective for every $H\in\H$ by the proof of \cite[Theorem 1.3]{M09}, it follows that $\sigma_{\ge0}(X)\in\Dfd^+(A)^\H_{\ge0}$ and $\sigma_{<0}(X)\in\Dfd^+(A)^\H_{<0}$. Next, we prove that $\Hom_{\D(A)}(X,Y)=0$ for every $X\in\Dfd^+(A)^\H_{\ge0}$ and $Y\in\Dfd^+(A)^\H_{<0}$. Since we have $\hocolim_{n\in\NN}\sigma^{\H,\le n}(X)\simeq X$ and $\sigma^{\H,\le n}(X)\in\H\ast\cdots\ast\Sigma^{-n}\H$, it follows that
    \begin{align*}
        \Hom_{\D(A)}(X,Y)
        &\simeq
        \Hom_{\D(A)}(\hocolim_{n\in\NN}\sigma^{\H,\le n}(X),Y) \\
        &\simeq
        \lim_{n\in\NN}\Hom_{\D(A)}(\sigma^{\H,\le n}(X),Y)\\
        &=
        0.
    \end{align*}
\end{proof}

\begin{cor}\label{cor:contravariantly-finite-to-length}
    Every contravariantly finite bounded heart of $\per(A)$ is a length category.
\end{cor}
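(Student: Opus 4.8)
The plan is to combine the co-$t$-structure produced by \cref{prop:heart-to-co-t-str} with Koszul duality, reducing the statement to its analogue for the perfectly valued derived category of a locally finite non-positive $G$-dg algebra, where the ST-correspondence is available.

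Let $\H$ be a contravariantly finite bounded heart of $\per(A)$. First, \cref{prop:heart-to-co-t-str} provides a co-$t$-structure $(\Dfd^+(A)^\H_{\ge 0},\Dfd^+(A)^\H_{\le 0})$ on $\Dfd^+(A)$; moreover, as established just before that proposition, $(\Dfd^+(A)^{\H,\le 0},\Dfd^+(A)^{\H,\ge 0})$ is a $t$-structure on $\Dfd^+(A)$ whose heart is $\H$, and its restriction to $\per(A)$ is the given bounded $t$-structure with heart $\H$. Set $B:=(A^!)^\op$, which by \cref{thm:KD-on-alg} is a locally finite non-positive $G$-dg algebra; by \cref{thm:Koszul-duality}(2) the Koszul dual functor $\Phi$ is a contravariant triangle equivalence identifying $\Dfd^+(A)$ with $\Dfd^-(B)^\op$ and $\per(A)$ with $\pvd(B)^\op$. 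Since a contravariant triangle equivalence sends $t$-structures to $t$-structures and co-$t$-structures to co-$t$-structures (interchanging the two halves, inverting the cohomological shift, preserving boundedness, and reversing the direction of approximations), the bounded $t$-structure on $\per(A)$ with heart $\H$ is carried to a bounded $t$-structure on $\pvd(B)$, so $\mathcal{K}:=\Phi(\H)$ is a bounded heart of $\pvd(B)$ and $\Phi|_\H\colon\H\to\mathcal{K}$ is an exact anti-equivalence of abelian categories; the co-$t$-structure on $\Dfd^+(A)$ is carried to a co-$t$-structure on $\Dfd^-(B)$ built from the shifts of $\mathcal{K}$ exactly as in \cref{prop:heart-to-co-t-str}; and the hypothesis on $\H$ becomes ``$\mathcal{K}$ is covariantly finite in $\pvd(B)$''. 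Since being a length category is invariant under exact anti-equivalences, it suffices to prove $\mathcal{K}$ is a length category.

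To finish on the non-positive side, I would show that the transported co-$t$-structure on $\Dfd^-(B)$ is the canonical extension of a bounded co-$t$-structure on $\per(B)$, i.e.\ that it restricts to one there; then \cref{thm:ST}(1) attaches to that bounded co-$t$-structure a simple-minded object $L\in\pvd(B)$, hence a length heart $\Filt L$ of $\pvd(B)$, and the proof concludes by identifying $\Filt L$ with $\mathcal{K}$ — both are the common core of the adjacent $t$- and co-$t$-structures transported above. (An alternative endgame: the $\k$-duality $D\colon\pvd(B)\simeqto\pvd(B^\op)^\op$ turns $\mathcal{K}$ into a contravariantly finite bounded heart of $\pvd(B^\op)$ for the locally finite non-positive $G$-dg algebra $B^\op$, at which point one may invoke the non-positive case, cf.\ \cite[Corollary 3.12]{F23}.) Pulling back through $\Phi|_\H$ then shows $\H$ is a length category, which is the assertion. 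The delicate point — and the main obstacle — is exactly this last identification: $\H$ sits inside $\per(A)$ while the co-$t$-structure lives on the strictly larger category $\Dfd^+(A)$, so one must check carefully both that the transported co-$t$-structure restricts to $\per(B)$ and that the length heart of $\pvd(B)$ read off from \cref{thm:ST}(1) is precisely $\mathcal{K}=\Phi(\H)$; the remaining transport of $t$- and co-$t$-structure data along the contravariant equivalence $\Phi$ is routine but must be handled with care over the grading conventions.
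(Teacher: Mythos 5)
Your overall strategy --- feed the co-$t$-structure from \cref{prop:heart-to-co-t-str} into the ST-correspondence and identify the resulting length heart with $\H$ --- is the same as the paper's, but the transport to $B=(A^!)^\op$ via \cref{thm:Koszul-duality} is an unnecessary detour: \cref{thm:ST}(2) already applies on the positive side, relating bounded co-$t$-structures of $\pvd(A)$ directly to simple-minded objects and length hearts of $\per(A)$. The paper therefore never leaves $A$: it intersects the co-$t$-structure on $\Dfd^+(A)$ with $\pvd(A)$, notes that boundedness of $\H$ forces the restricted co-$t$-structure to be bounded, and invokes \cref{thm:ST}(2). Keeping the detour would oblige you to verify the contravariant transport of $t$- and co-$t$-structures and that the transported co-$t$-structure restricts to $\per(B)$, none of which is needed.

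More importantly, the step you yourself flag as ``the main obstacle'' --- identifying the length heart produced by the ST-correspondence with $\H$ (or with $\Phi(\H)$) --- is not actually carried out; ``both are the common core of the adjacent $t$- and co-$t$-structures'' is a slogan, not an argument. The paper closes this gap in two lines: the coheart is $\pvd(A)^\H_0=(\Sigma^{\neq0}\H)^{\bot}\cap\pvd(A)$ by construction, so the length heart $\H'=\{X\mid\Hom_{\D(A)}(X,\Sigma^{\neq0}Y)=0\text{ for all }Y\in\pvd(A)^\H_0\}$ contains $\H$ by the very orthogonality defining the coheart, and two bounded hearts of the same triangulated category with $\H\subseteq\H'$ must coincide (their aisles are mutually perpendicular determinations of one another). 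Supplying this inclusion-plus-maximality argument, on the positive side, is what your proposal is missing.
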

\begin{proof}
    Let $\H$ be a contravariantly finite bounded heart of $\per(A)$. We put 
    $$
    \pvd(A)^\H_{\ge0}:=\Dfd^+(A)^\H_{\ge0}\cap\pvd(A)
    \text{ and } \pvd(A)^\H_{\le0}:=\Dfd^+(A)^\H_{\le0}.
    $$ 
    By \cref{prop:heart-to-co-t-str}, the pair $(\pvd(A)^\H_{\ge0},\pvd(A)^\H_{\le0})$ is a co-$t$-structure on $\pvd(A)$. Since $\H$ is a bounded heart, it follows that the co-$t$-structure $(\pvd(A)^\H_{\ge0},\pvd(A)^\H_{\le0})$ is also bounded. By \cref{thm:ST}, the subcategory

    $$
    \H':=\{X\in\Dfd^+(A)\mid \Hom_{\D(A)}(X,\Sigma^{\neq0}Y)=0\text{ for every }Y\in\pvd(A)^\H_0\}
    $$
    is a length heart of $\per(A)$. By definition of $\per(A)^\H_0$ and $\H'$, we have $\H\subseteq\H'$. Since $\H$ and $\H'$ are both bounded hearts of $\per(A)$, we have $\H=\H'$.
\end{proof}

\subsection{Functorially finite hearts are length}

In this subsection, we fix a Hom-finite algebraic $G$-triangulated category $\D$ satisfying one of the equivalent conditions in \cref{thm:enogh-poj-of-heart}.
In \cref{cor:length-to-functorially-finite}, we showed that every length heart of $\D$ is functorially finite. By \cref{cor:contravariantly-finite-to-length}, we can show the converse also holds.

\begin{thm}\label{thm:ffhearts}
    Let $(\D^{\le0},\D^{\ge0})$ be a bounded $t$-structure of $\D$ whose heart is $\H$. Then the following conditions are equivalent:
    \begin{itemize}
        \item[(1)] $\H$ is contravariantly finite in $\D$,
        \item[(1)'] $\H$ is covariantly finite in $\D$,
        \item[(2)] $\H$ is functorially finite in $\D$,
        \item [(3)] $\H$ is a length category,
        
        \item[(4)] $\H$ has an injective cogenerator,
        \item[(4)'] $\H$ has a projective generator,
        \item[(5)] $\H\simeq(\mod\Lambda)^\op$ for some finite-dimensional $G$-graded algebra $\Lambda$,
        \item[(5)'] $\H\simeq\mod\Lambda$ for some finite-dimensional $G$-graded algebra $\Lambda$.
    \end{itemize}
    If $\D\simeq\pvd(A)$ for some proper non-positive $G$-dg algebra $A$, then the above conditions are also equivalent to the following conditions:
    \begin{itemize}
        \item [(6)] $\D^{\le0}$ is functorially finite in $\D$,
        \item [(7)] $\D^{\ge0}$ is functorially finite in $\D$.
    \end{itemize}
\end{thm}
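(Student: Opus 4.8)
The plan is to establish first the block of equivalences $(1)\LR(1)'\LR(2)\LR(3)\LR(4)\LR(4)'\LR(5)\LR(5)'$, which holds for every Hom-finite algebraic $G$-triangulated category $\D$ satisfying the conditions of \cref{thm:enogh-poj-of-heart}, and then the extra equivalences with (6) and (7), which use properness of $A$. For the first block, $(3)\To(2)$ is \cref{cor:length-to-functorially-finite} and $(2)\To(1)$, $(2)\To(1)'$ are immediate. For $(1)\To(3)$ I would write $\D\simeq\per(B)$ for a pvd-finite locally finite positive $G$-dg algebra $B$ (by \cref{thm:enogh-poj-of-heart}) and apply \cref{cor:contravariantly-finite-to-length}. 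For $(1)'\To(3)$ I would pass to $\D^\op$, which again satisfies the conditions of \cref{thm:enogh-poj-of-heart} by \cref{rmk:symmetry}; a covariantly finite bounded heart $\H$ of $\D$ becomes a contravariantly finite bounded heart $\H^\op$ of $\D^\op$, and since the three defining conditions of a simple-minded object are self-dual, $\H^\op$ is a length heart, hence so is $\H$. This closes $(1)\LR(1)'\LR(2)\LR(3)$.

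To incorporate (4), (4)', (5), (5)' I would use the elementary fact that a $G$-abelian category with a projective generator $P$ whose graded endomorphism algebra $\Lambda:=\End(P)$ is finite-dimensional is equivalent to $\mod\Lambda$ via $\Hom(P,-)$, together with its dual statement for injective cogenerators. Since $\D$ is Hom-finite, $\End_\H(P)$ is always finite-dimensional, so this yields $(4)'\To(5)'$ and $(4)\To(5)$, while $(5)'\To(3)$ and $(5)\To(3)$ only record that $\mod\Lambda$ and $(\mod\Lambda)^\op$ are length categories for finite-dimensional $\Lambda$. Conversely $(3)\To(4)$ is the injective-cogenerator half of \cref{cor:length-to-functorially-finite}, and $(3)\To(4)'$ follows by applying that half to $\D^\op$. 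All eight conditions are therefore equivalent.

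Now assume $\D\simeq\pvd(A)$ with $A$ proper non-positive, so that $\per(A)\subseteq\pvd(A)$. I would first note a reduction: under the $\k$-duality $D\colon\pvd(A)^\op\simeqto\pvd(A^\op)$ (\cref{rmk:symmetry}) the flipped bounded $t$-structure on $\D^\op$ has heart $\H^\op$, aisle $(\D^{\ge0})^\op$ and co-aisle $(\D^{\le0})^\op$, and $A^\op$ is again proper non-positive; hence "$\D^{\le0}$ is functorially finite in $\D$" is condition (6) for $(\D,\H)$ and simultaneously condition (7) for $(\D^\op,\H^\op)$, while (3) is self-dual. So it is enough to prove $(3)\LR(6)$ and then apply it to $(\D^\op,\H^\op)$ to obtain $(3)\LR(7)$. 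For $(3)\To(6)$: a length heart makes the $t$-structure algebraic, so by \cref{thm:ST} it corresponds to a bounded co-$t$-structure on $\per(A)$ whose (co)heart is a silting object lying inside $\pvd(A)$ by properness; the aisle $\D^{\le0}$ is automatically contravariantly finite (via the truncation $\tau^{\le0}$), and the associated co-$t$-structure supplies the left approximations, so $\D^{\le0}$ is functorially finite. For $(6)\To(3)$: from covariant finiteness of $\D^{\le0}$ one should deduce that $\H=\D^{\le0}\cap\D^{\ge0}$ is contravariantly finite in $\D$ — which is condition (1), hence (3) by the first block — the tool for this passage being the relative Serre duality $\Hom_{\D(A)}(P,M)\cong D\Hom_{\D(A)}(M,\nu P)$ between $\per(A)$ and $\pvd(A)$, available because $A$ is proper.

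The step I expect to be the main obstacle is $(6)\To(3)$ (equivalently, after the reduction, $(7)\To(3)$): extracting contravariant finiteness of the heart from covariant finiteness of the aisle requires comparing a covariant and a contravariant approximation problem, and the only bridge between them is the Serre-type duality coming from properness of $A$. This is also why (6) and (7) cannot be appended to the list for general locally finite non-positive $A$ — without properness the aisle of an algebraic $t$-structure on $\pvd(A)$ need not be covariantly finite. All the other implications in the theorem are either formal (duality, trivial inclusions, the module-category fact) or direct applications of \cref{cor:length-to-functorially-finite}, \cref{cor:contravariantly-finite-to-length}, \cref{thm:ST} and \cref{rmk:symmetry}.
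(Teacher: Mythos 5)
Your first block $(1)\LR(1)'\LR(2)\LR(3)\LR(4)\LR(4)'\LR(5)\LR(5)'$ and your proof of $(3)\To(6)$ match the paper's argument: the paper proves $(1)\To(3)$ by \cref{cor:contravariantly-finite-to-length}, $(3)\To(2)$ and $(3)\LR(4)$ by \cref{cor:length-to-functorially-finite}, handles the primed conditions by \cref{rmk:symmetry}, and for $(3)\To(6)$ invokes the silting object from \cref{thm:ST} to realize $\D^{\le0}$ as the co-aisle of a bounded co-$t$-structure, exactly as you do.

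The one place where you diverge is $(6)\To(3)$, and there your proposal has a gap: you aim to extract \emph{contravariant} finiteness of $\H$ (condition (1)) from covariant finiteness of $\D^{\le0}$ via a relative Serre duality between $\per(A)$ and $\pvd(A)$, but you never explain how that duality converts a left $\D^{\le0}$-approximation into a right $\H$-approximation, and it is not clear it can (neither $\D^{\le0}$ nor $\H$ is of the form $\add P$ for a single perfect object, which is the setting where $\Hom(P,-)\simeq D\Hom(-,\nu P)$ trades one kind of approximation for the other). You have in fact misidentified the obstacle: the step is elementary and needs no duality. The paper proves $(6)\To(1)'$ directly: for $X\in\D$, take a left $\D^{\le0}$-approximation $X\to Y$ and compose with the truncation $Y\to\sigma^{\ge0}Y\in\H$; since $\H$ is covariantly finite in $\D^{\le0}$ via $\sigma^{\ge0}$, the composite is a left $\H$-approximation, so $\H$ is covariantly finite in $\D$. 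This is condition $(1)'$, which you have already shown is equivalent to $(3)$ in your first block, so your proof is repaired by replacing the Serre-duality step with this composition of approximations. (Properness is still needed, but only for $(3)\To(6)$, to ensure the silting coheart lives inside $\pvd(A)$ — your explanation of that point is correct.)
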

\begin{proof}
    $(1)\To(3)$: This follows from \cref{cor:contravariantly-finite-to-length}.

    $(3)\LR(4)$: This follows from \cref{cor:length-to-functorially-finite}.

    $(3)\To(2)$: This follows from \cref{cor:length-to-functorially-finite}.

    $(2)\To(1)$: Obvious.

    The rest follows from \cref{rmk:symmetry}.

    Now we assume that $\T\simeq\pvd(A)$ for some proper non-positive $G$-dg algebra $A$.

    $(3)\To(6)$: Since $\H$ is a length heart, there exists a silting object $M\in\per(A)$ such that 
    \begin{align*}
        (\bigcup_{n\ge0}\Sigma^{-n}\add M\ast\cdots\ast\add M,\D^{\le0})
    \end{align*}
    is a co-$t$-structure on $\D$ (see for example \cite{AMY}). Therefore, $\D^{\le0}$ is functorially finite in $\D$.

    $(6)\To(1)'$: Since $(\D^{\le0},\D^{\ge0})$ is a $t$-structure on $\D$, the heart $\H$ is covariantly finite in $\D^{\le0}$. Since $\D^{\le0}$ is covariantly finite $\D$ by assumption, it follows that $\H$ is covariantly finite in $\D$.
\end{proof}

\begin{rmk}\
    \begin{itemize}
        \item [(1)] For a finite-dimensional algebra $\Lambda$ with finite global dimension, the above theorem has been already shown for the case $\D=\pvd(\Lambda)$ (see \cite[Theorem C]{CPP22} and the subsequent explanation).
        \item [(2)] The equivalence between $(6)$ and $(7)$ above can be viewed as a triangulated category version of Smal{\o}'s symmetry \cite{S84}.
    \end{itemize}
\end{rmk}

By the above theorem, we can give another proof of Smal{\o}'s symmetry for module category of finite dimensional algebras.

\begin{cor}\cite{S84}
    Let $\Lambda$ be a finite dimensional algebra. For a torsion class $(\Cal{T},\F)$ of $\mod\Lambda$, the following conditions are equivalent:
    \begin{itemize}
        \item [(1)] $\Cal{T}$ is functorially finite in $\mod\Lambda$,
        \item [(2)] $\F$ is functorially finite in $\mod\Lambda$.
    \end{itemize}
\end{cor}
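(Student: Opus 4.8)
The plan is to deduce the corollary from the equivalence $(6)\Leftrightarrow(7)$ of \cref{thm:ffhearts} by passing through the Happel--Reiten--Smal{\o} (HRS) tilt. First, regard $\Lambda$ as a $G$-dg algebra concentrated in cohomological degree $0$ (with $G=\{e\}$); it is then a proper non-positive $G$-dg algebra and $\pvd(\Lambda)$ is identified with $\D:=\Db(\mod\Lambda)$, whose standard $t$-structure has heart $\mod\Lambda$. So \cref{thm:ffhearts} applies to $\D$, the additional hypothesis ``$\D\simeq\pvd(A)$ for a proper non-positive $A$'' being met by $A=\Lambda$; in particular, for every bounded $t$-structure on $\D$ its aisle is functorially finite in $\D$ if and only if its co-aisle is.

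To a torsion pair $(\Cal T,\F)$ in $\mod\Lambda$ I would attach the HRS-tilted bounded $t$-structure $(\D^{\le0}_{\Cal T},\D^{\ge0}_{\Cal T})$ on $\D$, where
\begin{align*}
\D^{\le0}_{\Cal T}&=\{X\in\D\mid H^{>0}(X)=0,\ H^0(X)\in\Cal T\},\\
\D^{\ge0}_{\Cal T}&=\{X\in\D\mid H^{<-1}(X)=0,\ H^{-1}(X)\in\F\};
\end{align*}
it is bounded because $(\Cal T,\F)$ sits inside the standard bounded heart, and one has $\D^{\le-1}\subseteq\D^{\le0}_{\Cal T}=\D^{\le-1}\ast\Cal T\subseteq\D^{\le0}$, with $\Cal T\subseteq\D^{\le0}_{\Cal T}$ and $\F[1]\subseteq\D^{\ge0}_{\Cal T}$. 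Since an aisle is automatically contravariantly finite and a torsion class automatically contravariantly finite (via the torsion radical $tM\hookrightarrow M$), while a co-aisle and a torsion-free class are automatically covariantly finite, the corollary will follow from $(6)\Leftrightarrow(7)$ once I prove the translation
$$
\D^{\le0}_{\Cal T}\text{ covariantly finite in }\D\iff\Cal T\text{ covariantly finite in }\mod\Lambda
$$
and, dually, that $\D^{\ge0}_{\Cal T}$ is contravariantly finite in $\D$ if and only if $\F$ is contravariantly finite in $\mod\Lambda$.

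For these translations, one implication of each is formal. From $\Hom_\D(\D^{\le-1},\D^{\ge0})=0$ one gets $\Hom_\D(Z,T)\cong\Hom_\Lambda(H^0(Z),T)$ for $Z\in\D^{\le0}_{\Cal T}$ and $T\in\Cal T$, and $\Hom_\D(F'[1],Z)\cong\Hom_\Lambda(F',H^{-1}(Z))$ for $Z\in\D^{\ge0}_{\Cal T}$ and $F'\in\F$; hence applying $H^0$ to a left $\D^{\le0}_{\Cal T}$-approximation of a module gives a left $\Cal T$-approximation, and applying $H^{-1}$ to a right $\D^{\ge0}_{\Cal T}$-approximation of $M[1]$ gives a right $\F$-approximation. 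The reverse implications --- producing left $\D^{\le0}_{\Cal T}$-approximations in $\D$ out of left $\Cal T$-approximations in $\mod\Lambda$, and dually --- are the substantial point: I would argue by d\'evissage along the standard truncation triangles, using that $\D^{\le0}_{\Cal T}$ is closed under extensions and the cohomological shift, together with the covariant analogue of \cite[Theorem 1.3]{M09} (which the paper uses in its contravariant form: stability of contravariantly finite subcategories under $\ast$-products). Moreover, the $\k$-duality $\Db(\mod\Lambda)^\op\simeq\Db(\mod\Lambda^\op)$ carries $(\Cal T,\F)$ to the torsion pair $(D\F,D\Cal T)$ of $\mod\Lambda^\op$ and interchanges aisle with co-aisle, so the second translation reduces to the first and only one such lemma is genuinely needed. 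I expect this lifting of module-category approximations to the bounded derived category --- formal in spirit but demanding a careful d\'evissage --- to be the main obstacle, being essentially the only ingredient not already packaged in \cref{thm:ffhearts}.
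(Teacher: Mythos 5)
Your proposal is correct and follows essentially the same route as the paper: both pass to the HRS-tilted $t$-structure and reduce to \cref{thm:ffhearts}, the only difference being that you extract the contravariant finiteness of $\F$ from the co-aisle via $(6)\Leftrightarrow(7)$, whereas the paper extracts it from the tilted heart $\Sigma\F\ast\Cal{T}$ via $(6)\Rightarrow(2)$ and the torsion radical. The star-product/d\'evissage step you flag as the main obstacle is precisely what the paper absorbs into its (terse) appeal to \cref{thm:ffhearts} together with the closure of functorially finite subcategories under $\ast$ in the spirit of \cite[Theorem 1.3]{M09}.
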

\begin{proof}
    We only show that $(1)$ implies $(2)$. Assume that $\Cal{T}$ is functorially finite in $\mod\Lambda$. By HRS-tilt \cite{H96}, the pair of subcategories $(\Db(\mod\Lambda)^{<0}\ast\Cal{T},\Sigma\F\ast\Db(\mod\Lambda)^{\ge0})$ is a bounded $t$-structure of $\Db(\mod\Lambda)$ whose heart is $\Sigma\F\ast\Cal{T}$, and $(\Sigma\F,\Cal{T})$ is a torsion pair of $\Sigma\F\ast\Cal{T}$. By \cref{thm:ffhearts}, the subcategory $\Db(\mod\Lambda)^{<0}\ast\Cal{T}$ is functorially finite in $\Db(\mod\Lambda)$. By \cref{thm:ffhearts} again, the heart $\Sigma\F\ast\Cal{T}$ is functorially finite in $\Db(\mod\Lambda)$. In particular, $\Sigma\F$ is contravariantly finite in $\Db(\mod\Lambda)$, and so $\F$ is functorially finite in $\mod\Lambda$.
\end{proof}

Finally, we state an application of \cref{thm:ffhearts} to $t$-discrete triangulated categories.

\begin{dfn}\cite{AMY}
    A $G$-triangulated category $\T$ is called \emph{$t$-discrete} if for any bounded $t$-structure $(\T^{\le0},\T^{\ge0})$ and for any positive integer $n$, the number of $t$-structures $(\T'^{\le0},\T'^{\ge0})$ satisfying $\T^{\le0}\subseteq\T'^{\le0}\subseteq\T^{\le-n}$ is finite.
\end{dfn}

$t$-discrete triangulated categories are closely related to silting discrete triangulated categories.

\begin{prop}\cite[Theorem 7.1]{AMY}
    Let $A$ be a locally finite non-positive $G$-dg algebra. Then the following conditions are equivalent:
    \begin{itemize}
        \item[(1)] $\per(A)$ is silting discrete,
        \item[(2)] $\pvd(A)$ is $t$-discrete,
        \item [(3)] every bounded heart on $\pvd(A)$ has a projective generator,
        \item[(4)] every length heart of $\pvd(A)$ has finitely many torsion classes.
    \end{itemize}
\end{prop}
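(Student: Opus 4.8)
The plan is to play the four conditions off against one another using the ST-correspondence (\cref{thm:ST}) and the heart dictionary (\cref{thm:ffhearts}). Two facts are used throughout. First, by \cref{cor:length-to-functorially-finite} and \cref{thm:ffhearts} every length heart of $\pvd(A)$ is functorially finite, hence equivalent to $\mod\Lambda$ for a finite-dimensional $G$-graded algebra $\Lambda$; in particular ``$\H$ has a projective generator'' means ``$\H$ is a length heart'', and (\cref{thm:ST}) every silting object $M\in\per(A)$ has an attached length heart $\H_M\simeq\mod\Lambda_M$ of $\pvd(A)$ for which the silting objects $N$ with $\Sigma M\preceq N\preceq M$ are in bijection with the functorially finite torsion classes of $\H_M$. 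Second, I will invoke the Demonet--Iyama--Jasso theorem that, for a finite-dimensional algebra, being $\tau$-tilting finite, having finitely many torsion classes, and having all torsion classes functorially finite are equivalent.

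Granting this, several implications become short translations. For $(2)\To(1)$: under \cref{thm:ST} a bounded silting interval $\{N:\Sigma^{n}M\preceq N\preceq M\}$ corresponds to the \emph{algebraic} $t$-structures of $\pvd(A)$ lying between two fixed aisles, so $t$-discreteness of $\pvd(A)$, which bounds \emph{all} $t$-structures in such an interval, bounds the algebraic ones, and $\per(A)$ is silting-discrete. For $(1)\To(4)$: silting-discreteness is equivalent to its two-term form (for every silting $M$, the set $\{N:\Sigma M\preceq N\preceq M\}$ is finite), which says exactly that each $\H_M$ has finitely many functorially finite torsion classes, i.e.\ each $\Lambda_M$ is $\tau$-tilting finite; Demonet--Iyama--Jasso then gives finitely many torsion classes in $\H_M$, and as $M$ ranges over all silting objects the hearts $\H_M$ exhaust all length hearts of $\pvd(A)$.

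For the implications out of $(3)$: if some $\Lambda_M$ were $\tau$-tilting infinite then $\H_M$ would carry a non-functorially-finite torsion pair, and a direct approximation argument (composing left-approximations by the relevant torsion-free class inside the HRS-tilt with left-approximations by the HRS-tilt itself) shows its HRS-tilt is a bounded heart that is not functorially finite in $\pvd(A)$, hence by \cref{thm:ffhearts} has no projective generator --- contradicting $(3)$; so every $\Lambda_M$ is $\tau$-tilting finite, whence $(1)$ and $(4)$ follow as above. Moreover $(3)$ makes every bounded heart of $\pvd(A)$ a length heart, hence algebraic, so for a bounded heart $\H\simeq\mod\Lambda$ (with $\Lambda$ $\tau$-tilting finite, as just shown) and any $n\ge1$ the $t$-structures squeezed between $\T^{\le0}_{\H}$ and $\T^{\le-n}_{\H}$ are parametrised by length-$n$ chains of torsion pairs in $\H$, of which there are finitely many; thus $(2)$.

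The remaining implication $(4)\To(3)$ is, I expect, the main obstacle: under $(4)$ one must show that \emph{every} bounded heart of $\pvd(A)$ --- not merely every length one --- is a length heart. My plan is as follows. By $(4)$ and Demonet--Iyama--Jasso the standard heart $\H_0=\mod H^{0}(A)$, and indeed every length heart, is $\tau$-tilting finite; an HRS-tilt of a length heart at a functorially finite torsion pair is again a length heart (it is the heart of a two-term silting mutation, hence of the form $\mod\Lambda'$), and $\tau$-tilting finiteness forces every torsion pair met along the way to be functorially finite, so in any bounded interval of the lattice of $t$-structures near $\H_0$ all hearts are length. The crux is then the reachability assertion that \emph{every} bounded heart lies in such an interval --- equivalently, that under $(4)$ every bounded $t$-structure of $\pvd(A)$ is algebraic. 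I would prove this by a Noetherian induction down the lattice of $t$-structures squeezed between a putative non-length heart and a cohomological shift of it: $(4)$ (via the translations above) makes each such interval finite, \cref{thm:ffhearts} upgrades any intermediate heart with finitely many torsion pairs to a length heart, and a non-length intermediate heart would, by the structure theory of Hom-finite abelian hearts, support an infinite family of torsion pairs and hence infinitely many intermediate $t$-structures --- a contradiction. Everything else is translation through \cref{thm:ST}, \cref{thm:ffhearts}, \cref{cor:length-to-functorially-finite} and the Demonet--Iyama--Jasso dictionary; one could alternatively transport the whole statement across the Koszul duality of \cref{thm:Koszul-duality} to the positive side, but this does not lighten the obstacle.
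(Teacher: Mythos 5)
First, note that the paper does not prove this proposition at all: it is quoted verbatim from \cite[Theorem 7.1]{AMY} as an external input, so there is no internal argument to compare yours with; the only meaningful comparison is with the source.

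Judged on its own terms, your outline handles the ``algebraic'' part of the statement essentially correctly: the loop through $(1)$, $(4)$ and the implications out of $(3)$ are faithful translations via \cref{thm:ST}, \cref{thm:ffhearts} and the Demonet--Iyama--Jasso theorem, and this matches the strategy of \cite{AMY}. The genuine gap is exactly where you flag it, in $(4)\To(3)$, and your proposed repair does not close it. Your Noetherian induction requires the interval of $t$-structures squeezed between a putative non-length bounded heart and a cohomological shift of it to be finite; but finiteness of intervals around \emph{arbitrary} (possibly non-algebraic) $t$-structures is condition $(2)$ itself, whereas the ``translations above'' only yield finiteness for intervals whose endpoints are algebraic. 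So the induction is circular. It moreover leans on the assertion that a non-length Hom-finite bounded heart supports infinitely many intermediate $t$-structures; this is not quotable ``structure theory'' --- a Hom-finite abelian category containing an object of infinite length does not obviously carry infinitely many torsion classes, and even if it did, one would still have to realize them as $t$-structures inside the given finite interval. Closing this step is precisely the technical core of \cite{AMY}, where it is done by a d\'evissage showing that under silting discreteness every bounded $t$-structure is intermediate with respect to some silting object and its heart is reached from a length heart by finitely many HRS tilts. Without that input, your argument establishes only the equivalence of $(1)$, $(4)$, and the restrictions of $(2)$ and $(3)$ to algebraic $t$-structures and length hearts.
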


By combining the results of \cite{AMY} and \cref{thm:ffhearts}, we have the following corollary.
\begin{cor}
    The following conditions are equivalent:
    \begin{itemize}
        \item [(1)] $\D$ is $t$-discrete,
        \item [(2)] every bounded heart of $\D$ has a projective generator,
        \item [(3)] every bounded heart of $\D$ is length,
        \item [(4)] every bounded heart of $\D$ is contravariantly finite in $\D$,
        \item [(4)'] every bounded heart of $\D$ is covariantly finite in $\D$,
        \item [(5)] every bounded heart of $\D$ is functorially finite in $\D$.
    \end{itemize}
\end{cor}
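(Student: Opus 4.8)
The plan is to read the corollary off from two inputs that are already in place: \cref{thm:ffhearts}, applied to \emph{every} bounded heart of $\D$, together with the $t$-discreteness criterion of \cite{AMY} recalled just above.

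\emph{Step 1: reduction to $\pvd$ of a non-positive algebra.} By hypothesis $\D$ satisfies the equivalent conditions of \cref{thm:enogh-poj-of-heart}, so $\D\simeq\pvd(A)^\op$ for some locally finite non-positive $G$-dg algebra $A$. By \cref{rmk:symmetry} the $\k$-dual gives a $G$-triangle equivalence $\pvd(A)^\op\simeqto\pvd(A^\op)$, and $A^\op$ is again locally finite and non-positive (its cohomology agrees with that of $A$ as a graded vector space). Setting $B:=A^\op$ we may thus assume $\D=\pvd(B)$ for a locally finite non-positive $G$-dg algebra $B$. Each of the six conditions in the statement is invariant under triangle equivalence — being $t$-discrete is phrased purely in terms of bounded $t$-structures, while having a projective generator, being a length category, and contravariant/covariant/functorial finiteness of a heart all transport along equivalences — so nothing is lost in this reduction.

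\emph{Step 2: the equivalences $(2)\Leftrightarrow(3)\Leftrightarrow(4)\Leftrightarrow(4)'\Leftrightarrow(5)$.} Let $(\D^{\le0},\D^{\ge0})$ be any bounded $t$-structure of $\D$ with heart $\H$. Then \cref{thm:ffhearts} asserts that ``$\H$ is contravariantly finite in $\D$'', ``$\H$ is covariantly finite in $\D$'', ``$\H$ is functorially finite in $\D$'', ``$\H$ is a length category'', and ``$\H$ has a projective generator'' are all equivalent. Since every bounded heart of $\D$ arises in this way, quantifying over all bounded hearts yields the equivalence of conditions $(4)$, $(4)'$, $(5)$, $(3)$, and $(2)$ of the corollary.

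\emph{Step 3: the equivalence $(1)\Leftrightarrow(2)$, and the main point.} Applied to $B$, the equivalence of conditions $(2)$ and $(3)$ in \cite[Theorem 7.1]{AMY} states exactly that $\pvd(B)$ is $t$-discrete if and only if every bounded heart of $\pvd(B)$ has a projective generator; transporting along $\D\simeq\pvd(B)$ this is $(1)\Leftrightarrow(2)$, closing the cycle of equivalences. The whole argument is essentially bookkeeping — all the substance lies in \cref{thm:ffhearts} and in \cite[Theorem 7.1]{AMY} — and the only step demanding a moment's care is Step 1, where one must observe that the class of categories in play is stable under $\k$-duality, so that the $\pvd$-formulation of the $t$-discreteness criterion of \cite{AMY} becomes applicable to $\D$, which a priori is only the \emph{opposite} of such a category.
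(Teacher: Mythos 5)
Your proposal is correct and follows essentially the same route as the paper: the equivalence of $(2)$--$(5)'$ is obtained by applying \cref{thm:ffhearts} to each bounded heart, and $(1)\Leftrightarrow(2)$ is the equivalence of conditions $(2)$ and $(3)$ in \cite[Theorem 7.1]{AMY}. Your Step 1, identifying $\D\simeq\pvd(A)^\op\simeq\pvd(A^\op)$ via \cref{rmk:symmetry} so that the $t$-discreteness criterion of \cite{AMY} literally applies, is a point the paper leaves implicit but is handled correctly here.
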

\begin{proof}
    The equivalence between $(1)$ and $(2)$ has been shown in \cite[Theorem 7.1]{AMY}. The rest follows from \cref{thm:ffhearts}. 
\end{proof}

\appendix
\section{Twisting the grading}

In \cite[\S 7.2]{LP07}, the technique of \emph{twisting the grading} for formal dg algebras was introduced, which played an important role in connecting dg (or $A_\infty$) Koszul duality and classical Koszul duality \cite{BGS96}.
In this appendix, we slightly modify their definition so that it can be applied to dg algebras that are not necessarily formal. In the following, we fix a group isomorphism $\psi\colon G\to G$ and a group homomorphism $\rho\colon G\to\ZZ$.

\begin{dfn}
    Let $V$ be a complex of $G$-graded vector spaces. Then we define a complex of $G$-graded vector spaces $V^\rho_\psi$ as follows:
    \begin{itemize}
        \item $(V^\rho_\psi)^i_g:=V^{\rho(g)+i}_{\psi(g)}$,
        \item differential on $V^\rho_\psi$ is given by that of $V$.
    \end{itemize}
\end{dfn}

\begin{prop}\label{prop:twisting}
    For two complexes of $G$-graded vector spaces $V$ and $W$, we define a map 
    \begin{align*}
        \mu=\mu_{V,W}\colon V^\rho_\psi\ten W^\rho_\psi\to(V\ten W)^\rho_\psi
    \end{align*}
    by $\mu_{V,W}(v\ten w)=(-1)^{\rho(g)j}v\ten w$, where $v\in(V^\rho_\psi)^i_g:=V^{\rho(g)+i}_{\psi(g)}$ and $w\in(W^\rho_\psi)^j_h:=W^{\rho(h)+j}_{\psi(h)}$. Then $\mu$ defines an autoequivalence of the monoidal category $\Ch(\k,G)$.
\end{prop}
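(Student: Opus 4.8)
The plan is to realise $(-)^\rho_\psi$, equipped with the transformations $\mu_{V,W}$ together with the evident comparison $\k\simeq\k^\rho_\psi$ for the units, as a strong monoidal endofunctor of $\Ch(\k,G)$, and then to exhibit an explicit quasi-inverse. On morphisms one sets $(f^\rho_\psi)^i_g:=f^{\rho(g)+i}_{\psi(g)}$, which is functorial since the assignment is componentwise. Because $\rho$ is a group homomorphism and $\psi$ a group isomorphism, $\rho\circ\psi^{-1}$ is again a homomorphism, so the functor $(-)^{-\rho\circ\psi^{-1}}_{\psi^{-1}}$ makes sense and one checks directly that it is a two-sided inverse: the two successive reindexings of the cohomological and Adams degrees cancel precisely because $\rho(\psi^{-1}(g))$ is additive in $g$ and $\psi\circ\psi^{-1}=\id$. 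As the differentials are literally unchanged under the twist, this is in fact an isomorphism of categories, in particular an autoequivalence.

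Next I would check that each $\mu_{V,W}$ is a morphism in $\Ch(\k,G)$. Preservation of the cohomological and of the Adams degree is bookkeeping: for $v\in(V^\rho_\psi)^i_g$ and $w\in(W^\rho_\psi)^j_h$, the element $v\ten w$ sits in cohomological degree $i+j$ and Adams degree $g+h$ on both sides, using $\rho(g+h)=\rho(g)+\rho(h)$ and $\psi(g+h)=\psi(g)+\psi(h)$. The substantive point is compatibility with the differentials: on $V^\rho_\psi\ten W^\rho_\psi$ the Leibniz rule carries the Koszul sign $(-1)^i$, where $i$ is the cohomological degree of $v$ \emph{inside the twisted complex}, whereas on $(V\ten W)^\rho_\psi$ the differential is that of $V\ten W$ and hence carries $(-1)^{\rho(g)+i}$; collecting these together with the factor $(-1)^{\rho(g)j}$ from $\mu$ (which becomes $(-1)^{\rho(g)(j+1)}$ after differentiating the second tensor factor) one finds $d\circ\mu=\mu\circ d$. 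Naturality in both variables is automatic, since $\mu$ is a scalar on each homogeneous component and morphisms are degree preserving, and $\mu_{V,W}$ is an isomorphism because it acts by $\pm1$ on the summands of the canonical direct-sum decomposition common to its source and target.

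It then remains to verify the coherence axioms. For associativity, both routes from $(V^\rho_\psi\ten W^\rho_\psi)\ten U^\rho_\psi$ to $((V\ten W)\ten U)^\rho_\psi$ built out of $\mu$ multiply $v\ten w\ten u$ by a sign, and a short computation shows both signs equal $(-1)^{\rho(g)j+\rho(g)k+\rho(h)k}$, again using additivity of $\rho$; since the associativity constraints on tensor products of vector spaces are the canonical (effectively identity) isomorphisms, this is exactly the hexagon for a monoidal functor. For the unit, $\rho(e)=0$ and $\psi(e)=e$ give $\k^\rho_\psi=\k$ with $\mu_{\k,-}$ and $\mu_{-,\k}$ equal to identities, so the triangle axioms hold, and $((-)^\rho_\psi,\mu)$ is a monoidal autoequivalence. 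The one place where genuine care is needed is the chain-map check for $\mu$ — specifically, keeping straight which notion of cohomological degree ($i$ in $V^\rho_\psi$ versus $\rho(g)+i$ in $V$) enters each Koszul sign; everything else is formal sign-chasing. One may also note that the twist does \emph{not} respect the symmetry of $\Ch(\k,G)$ in general: the two sides of the symmetry hexagon differ by $(-1)^{\rho(g)\rho(h)}$, which is why only a monoidal, and not a symmetric monoidal, autoequivalence is asserted.
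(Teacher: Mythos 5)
Your proof is correct and follows essentially the same route as the paper: a direct verification that $\mu$ is a chain map (with the same careful tracking of the two notions of cohomological degree) and the same sign computation for associativity and unitality. You additionally supply the explicit inverse functor $(-)^{-\rho\circ\psi^{-1}}_{\psi^{-1}}$ and the naturality of $\mu$, which the paper leaves implicit, and your closing observation that the twist fails to be \emph{symmetric} monoidal (discrepancy $(-1)^{\rho(g)\rho(h)}$) is accurate and consistent with the proposition claiming only a monoidal autoequivalence.
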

\begin{proof}
    We first show that $\mu$ is a chain map. Let us take $v\in(V^\rho_\psi)^i_g:=V^{\rho(g)+i}_{\psi(g)}$ and $w\in(W^\rho_\psi)^j_h:=W^{\rho(h)+j}_{\psi(h)}$. Then we have
    \begin{align*}
        d(\mu(v\ten w))
        &=
        (-1)^{\rho(g)j}d(v\ten w) \\
        &=(-1)^{\rho(g)j}d(v)\ten w+(-1)^{\rho(g)j+\rho(g)+i}v\ten d(w) \\
        &=
        \mu(d(v)\ten w+(-1)^iv\ten d(w)) \\
        &=
        \mu(d(v\ten w)).
    \end{align*}
    Next, we show the associativity of $\mu$. Let $u\in(U^\rho_\psi)^i_f=U^{\rho(f)+i}_{\psi(f)}$, $v\in(V^\rho_\psi)^j_g:=V^{\rho(g)+j}_{\psi(g)}$ and $w\in(W^\rho_\psi)^k_h:=W^{\rho(h)+k}_{\psi(h)}$. We have
    \begin{align*}
        \mu(\mu\ten1)(u\ten v\ten w)
        &=
        (-1)^{\rho(f)j}\mu((u\ten v)\ten w) \\
        &=
        (-1)^{\rho(f)j+\rho(f+g)k}u\ten v\ten w,
        \intertext{and}
        \mu(1\ten\mu)(u\ten v\ten w)
        &=
        (-1)^{\rho(g)k}\mu(u\ten (v\ten w)) \\
        &=
        (-1)^{\rho(g)k+\rho(f)(j+k)}u\ten v\ten w.
    \end{align*}
    It follows that $\mu(\mu\ten1)=\mu(1\ten\mu)$. The unitality of $\mu$ is obvious.
\end{proof}

\begin{dfn}
    Let $\A$ be a $G$-dg category. We define $\A^\rho_\psi$ as follows:
    \begin{itemize}
        \item $\ob\A^\rho_\psi:=\ob\A$;
        \item $\Hom_{\A^\rho_\psi}(a_1,a_2):=\Hom_\A(a_1,a_2)^\rho_\psi$ for any $a_1,a_2\in\ob\A$;
        \item the composition $\circ^\rho_\psi\colon\Hom_{\A^\rho_\psi}(a_2,a_3)\ten\Hom_{\A^\rho_\psi}(a_1,a_2)\to\Hom_{\A^\rho_\psi}(a_1,a_3)$ is given by the composition
        \begin{align*}
            \Hom_{\A^\rho_\psi}(a_2,a_3)\ten\Hom_{\A^\rho_\psi}(a_1,a_2)
            &=
            \Hom_\A(a_2,a_3)^\rho_\psi\ten\Hom_\A(a_1,a_2)^\rho_\psi \\
            &\stackrel{\mu}{\simeqto}
            (\Hom_\A(a_2,a_3)\ten\Hom_\A(a_1,a_2))^\rho_\psi \\
            &\xto{\circ}
            \Hom_\A(a_1,a_3)^\rho_\psi\\
            &=
            \Hom_{\A^\rho_\psi}(a_1,a_3).
        \end{align*}
    \end{itemize}
\end{dfn}

\begin{lem}\label{lem:twisting}
The followings hold.
    \begin{itemize}
        \item [(1)] $\Cdg(\k,G)^\rho_\psi$ is isomorphic to $\Cdg(\k,G)$.
        \item [(2)] $(-)^\rho_\psi$ induces an autoequivalence of the monoidal category $\dgcat(\k,G)$.
        \item [(3)] Let $\A$ and $\B$ be two $G$-dg categories. Then we have
        $[\A^\rho_\psi,\B^\rho_\psi]\simeq[\A,\B]^\rho_\psi$.
    \end{itemize}
\end{lem}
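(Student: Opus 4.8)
The plan is to derive all three parts from \cref{prop:twisting}, which presents $(-)^\rho_\psi\colon\Ch(\k,G)\to\Ch(\k,G)$, together with the structure isomorphism $\mu$, as a strong monoidal autoequivalence; after that the statements are formal, and the real content is the consistent bookkeeping of the Koszul signs produced by $\mu$, by the Koszul rule in the internal Hom $\HHom(-,-)$ and in the functor category $[\A,\B]$, and by the degree shift by $\rho$. The hypotheses on $\psi$ and $\rho$ enter exactly here: $\psi$ being a group isomorphism makes the reindexing $(j,h)\mapsto(\rho(h)+j,\psi(h))$ a bijection of $\ZZ\times G$, and $\rho$ being a homomorphism (so $\rho(g+h)=\rho(g)+\rho(h)$) makes it respect the cohomological degrees of tensor products and composites. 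I would also record at the outset that $(-)^\rho_\psi$ is invertible on $\Ch(\k,G)$, with inverse $(-)^{-\rho\circ\psi^{-1}}_{\psi^{-1}}$ — a one-line check that $((V^\rho_\psi)^{-\rho\circ\psi^{-1}}_{\psi^{-1}})^i_g=V^i_g$.

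For (2): a strong monoidal functor of monoidal categories acts on the categories enriched over them by transporting the Hom-objects and the composition morphisms through its structure isomorphism; applied to $\mathcal{V}=\Ch(\k,G)$ and $(-)^\rho_\psi$ this is exactly the assignment $\A\mapsto\A^\rho_\psi$ on $\dgcat(\k,G)$, and naturality of $\mu$ makes it a functor, with inverse the one induced by $(-)^{-\rho\circ\psi^{-1}}_{\psi^{-1}}$, hence an autoequivalence. For (1): the $G$-dg category $\Cdg(\k,G)$ is nothing but the canonical self-enrichment of the closed monoidal category $\Ch(\k,G)$, its Hom-objects being the internal homs $\HHom(-,-)$. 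A strong monoidal equivalence of a closed monoidal category is automatically a closed functor, so a short Yoneda argument (through $\mu$ and the equivalence $(-)^\rho_\psi$) produces a natural isomorphism $\HHom(M,N)^\rho_\psi\simeqto\HHom(M^\rho_\psi,N^\rho_\psi)$ in $\Ch(\k,G)$, compatible with composition; together with the bijection $M\mapsto M^\rho_\psi$ on objects this yields the isomorphism $\Cdg(\k,G)^\rho_\psi\simeqto\Cdg(\k,G)$ of $G$-dg categories. If a hands-on description is wanted, after the reindexing $(j,h)\rightsquigarrow(\rho(h)+j,\psi(h))$ the two degree-$(i,g)$ components are both $\prod_{j,h}\Hom_\k(M^{\rho(h)+j}_{\psi(h)},N^{\rho(g)+\rho(h)+i+j}_{\psi(g)+\psi(h)})$, and one twists the $(j,h)$-component by a sign of the shape $(-1)^{i\rho(g)}$ to make this identification a chain map, then verifies multiplicativity.

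For (3): I would argue by a direct construction rather than through the closed structure of $(\dgcat(\k,G),\otimes)$, since neither $[\A,\B]$ nor the operation $(-)^\rho_\psi$ on a single $G$-dg category invokes the symmetry of $\Ch(\k,G)$, so no hexagon identity for $\mu$ is needed. Using invertibility and monoidality of $(-)^\rho_\psi$, each $G$-dg functor $F\colon\A^\rho_\psi\to\B^\rho_\psi$ untwists to a $G$-dg functor $F^\flat\colon\A\to\B$ (same object map; Hom-maps obtained by applying $(-)^{-\rho\circ\psi^{-1}}_{\psi^{-1}}$; preservation of composition coming from monoidality), and $F\mapsto F^\flat$ is a bijection on objects. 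On morphisms, a degree-$(i,g)$ element of $[\A^\rho_\psi,\B^\rho_\psi]$ has components in $\Hom_\B(Fa,Ga)^{\rho(g)+i}_{\psi(g)}$, which is precisely the component space for a degree-$(\rho(g)+i,\psi(g))$ element of $[\A,\B]$, i.e.\ a degree-$(i,g)$ element of $[\A,\B]^\rho_\psi$; one sends it to $\pm$ itself with the sign forced so that the two naturality conditions — the one in $[\A^\rho_\psi,\B^\rho_\psi]$, with $\mu$-corrected composites and the Koszul sign $(-1)^{i|f|}$, and the one in $[\A,\B]$, with $(-1)^{(\rho(g)+i)|f'|}$ — agree, and checks that this is a chain map commuting with composition of natural transformations.

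The step I expect to be the main obstacle is exactly this sign normalization: one must verify that the sign in $\mu$, the Koszul signs in the differential of $\HHom(-,-)$ and in the naturality condition of $[\A,\B]$, and the degree shift by $\rho$ can all be reconciled so that the candidate identifications are simultaneously chain maps and multiplicative. Relatedly, $(-)^\rho_\psi$ is monoidal but \emph{not} symmetric for the standard symmetry of $\Ch(\k,G)$ — the defect being the sign $(-1)^{\rho(g)\rho(h)}$ — which is precisely why I would keep part (3) off the closed-monoidal route and carry it out by hand.
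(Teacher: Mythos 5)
Your proposal follows essentially the same route as the paper: part (1) is obtained from the tensor--hom adjunction together with \cref{prop:twisting} and a Yoneda argument (the paper's proof of (1) is literally the chain of adjunction isomorphisms you describe), part (2) is read off as base change along the strong monoidal autoequivalence, and for part (3) the paper simply says ``similar to (1)'', so your direct component-wise construction is just one way of filling in that omitted detail. The only point to flag is in your optional hands-on aside for (1): the sign $(-1)^{i\rho(g)}$ placed uniformly on the degree-$(i,g)$ part does not by itself make the identification $\HHom(V,W)^\rho_\psi\to\HHom(V^\rho_\psi,W^\rho_\psi)$ a chain map (the two terms of the differential acquire different discrepancies, so the correction must also depend on the source component $(j,h)$), but since your primary argument is the adjunction/Yoneda one that the paper itself uses, this does not affect the proof.
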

\begin{proof}
    $(1)$: Let $V$ and $W$ be two complexes of $G$-graded vector spaces. Then for every $U\in\Ch(\k,G)$, we have
    \begin{align*}
        \Hom_{\Ch(\k,G)}(U^\rho_\psi,\HHom(V^\rho_\psi,W^\rho_\psi))
        &\simeq
        \Hom_{\Ch(\k,G)}(U^\rho_\psi\ten V^\rho_\psi,W^\rho_\psi) \\
        &\simeq
        \Hom_{\Ch(\k,G)}((U\ten V)^\rho_\psi,W^\rho_\psi) \\
        &\simeq
        \Hom_{\Ch(\k,G)}(U\ten V,W) \\
        &\simeq
        \Hom_{\Ch(\k,G)}(U,\HHom(V,W)) \\
        &\simeq
        \Hom_{\Ch(\k,G)}(U^\rho_\psi,\HHom(V,W)^\rho_\psi).
    \end{align*}
    Therefore, we have
    \begin{align*}
        \Hom_{\Cdg(\k,G)}(V^\rho_\psi,W^\rho_\psi)=\HHom(V^\rho_\psi,W^\rho_\psi)\simeq\HHom(V,W)^\rho_\psi=\Hom_{\Cdg(\k,G)^\rho_\psi}(V,W).
    \end{align*}
    It follows that the assignment $V\mapsto V^\rho_\psi$ induces an isomorphism $\Cdg(\k,G)^\rho_\psi\simeqto\Cdg(\k,G)$.

    $(2)$: This is a direct consequence of \cref{prop:twisting}.

    $(3)$: Similar to $(1)$.
\end{proof}

\begin{cor}
    Let $A$ be a $G$-dg algebra. Then we have a canonical isomorphism $\Cdg(A)^\rho_\psi\simeq\Cdg(A^\rho_\psi)$ which sends $A$ to $A^\rho_\psi$.
\end{cor}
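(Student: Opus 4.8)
The plan is to deduce the statement from \cref{lem:twisting} by unwinding the definition $\Cdg(A)=[A^\op,\Cdg(\k,G)]$; the only ingredient not already recorded there is that twisting commutes with passing to opposite $G$-dg categories. So first I would establish, for every $G$-dg category $\A$, a canonical isomorphism $(\A^\op)^\rho_\psi\simeq(\A^\rho_\psi)^\op$. Both sides have the same objects, the same underlying Hom-complexes $\Hom_\A(a_2,a_1)^\rho_\psi$, and the same differentials, so only the composition laws have to be matched. Writing $\circ^\op$ together with the twisted composition (built from the sign $(-1)^{\rho(g)j}$ of $\mu$ in \cref{prop:twisting}) on homogeneous elements, a short computation shows the two composition laws differ precisely by the factor $(-1)^{\rho(g_1)\rho(g_2)}$, where $g_1,g_2$ are the Adam's degrees of the composable morphisms. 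This discrepancy is absorbed by rescaling each Hom-complex by the sign $\epsilon(g):=(-1)^{\rho(g)(\rho(g)-1)/2}$: it depends only on the Adam's degree (so it commutes with the differential), it equals $+1$ on the identity since $\rho(e)=0$, and it converts one composition law into the other because $\epsilon(g_1)\epsilon(g_2)\epsilon(g_1+g_2)=(-1)^{\rho(g_1)\rho(g_2)}$.

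With this in hand I would assemble the chain
\begin{align*}
\Cdg(A)^\rho_\psi
=\bigl[A^\op,\Cdg(\k,G)\bigr]^\rho_\psi
&\simeq\bigl[(A^\op)^\rho_\psi,\Cdg(\k,G)^\rho_\psi\bigr]\\
&\simeq\bigl[(A^\rho_\psi)^\op,\Cdg(\k,G)\bigr]
=\Cdg(A^\rho_\psi),
\end{align*}
where the first isomorphism is \cref{lem:twisting}(3) (with $\B=\Cdg(\k,G)$), and the second combines \cref{lem:twisting}(1) with the isomorphism of opposites above. All of these are the identity on objects (an object being a $G$-dg functor, which is left unchanged on the unique object and twisted or rescaled on Hom-complexes), so the composite is canonical. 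To see that $A=A^\wedge=\Hom_A(-,A)$ is carried to $A^\rho_\psi=(A^\rho_\psi)^\wedge$, I would just follow the representable through: as a $G$-dg functor $A^\op\to\Cdg(\k,G)$ it sends the unique object to the complex underlying $A$ with right multiplication, and tracing through \cref{lem:twisting}(3), (1) and the opposites isomorphism, its image sends the unique object to the complex $A^\rho_\psi$ with the $\mu$-twisted right action, which is exactly $(A^\rho_\psi)^\wedge$. Since every isomorphism in the chain preserves units, representables are sent to representables.

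The main obstacle is the sign bookkeeping behind $(\A^\op)^\rho_\psi\simeq(\A^\rho_\psi)^\op$: one must reconcile the sign introduced by $\mu$ with the Koszul sign defining the opposite category, and confirm that the reconciling rescaling of Hom-complexes is genuinely a $G$-dg functor (unit-preserving and compatible with differentials and with the twisted compositions). Everything else is a formal consequence of \cref{lem:twisting}.
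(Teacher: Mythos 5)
Your proposal is correct and follows essentially the same route as the paper: the chain $\Cdg(A)^\rho_\psi=[A^\op,\Cdg(\k,G)]^\rho_\psi\simeq[(A^\op)^\rho_\psi,\Cdg(\k,G)^\rho_\psi]\simeq[(A^\rho_\psi)^\op,\Cdg(\k,G)]=\Cdg(A^\rho_\psi)$ via \cref{lem:twisting}. The paper's proof is just this chain and leaves the identification $(\A^\op)^\rho_\psi\simeq(\A^\rho_\psi)^\op$ implicit; your sign computation (the discrepancy $(-1)^{\rho(g_1)\rho(g_2)}$ absorbed by $\epsilon(g)=(-1)^{\rho(g)(\rho(g)-1)/2}$) is a correct and worthwhile filling-in of that step.
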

\begin{proof}
    By \cref{lem:twisting}, we have
    \begin{align*}
        \Cdg(A)^\rho_\psi
        &=
        [A^\op,\Cdg(\k,G)]^\rho_\psi \\
        &\simeq
        [(A^\op)^\rho_\psi,\Cdg(\k,G)^\rho_\psi] \\
        &\simeq
        [(A^\rho_\psi)^\op,\Cdg(\k,G)] \\
        &=
        \Cdg(A^\rho_\psi).
    \end{align*}
\end{proof}

By taking homotopy category, we have an isomorphism
\begin{align*}
    \K(A)_0\simeq\K(A^\rho_\psi)_0.
\end{align*}
Since the above functor preserves acyclic modules, we have the following.

\begin{prop}
    Let $A$ be a $G$-dg algebra. Then we have a triangulated equivalence
    \begin{align*}
        \D(A)_0\simeq\D(A^\rho_\psi)_0,
    \end{align*}
    and the following diagram is commutative for every $g\in G$:
    \begin{center}
        \begin{tikzcd}
            \D(A)_0\arrow[r]\arrow[d,"\Sigma^{\rho(g)}(\psi(g))"'] & \D(A^\rho_\psi)_0\arrow[d,"(g)"] \\
            \D(A)_0\arrow[r] & \D(A^\rho_\psi)_0.
        \end{tikzcd}
    \end{center}
\end{prop}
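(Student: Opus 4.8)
The plan is to obtain the equivalence by localizing the isomorphism of the preceding corollary, and then to verify the square by a short degree-bookkeeping computation.

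First I would observe that $(-)^\rho_\psi$ preserves and reflects acyclicity: for $V\in\Ch(\k,G)$ the underlying complex of $V^\rho_\psi$ in Adams degree $g$ is the $\rho(g)$-fold cohomological shift of the underlying complex of $V$ in Adams degree $\psi(g)$, so $V^\rho_\psi$ is acyclic if and only if each $V_{\psi(g)}$ is acyclic, which---since $\psi$ is bijective---is equivalent to $V$ being acyclic. Hence the isomorphism $\Cdg(A)^\rho_\psi\simeq\Cdg(A^\rho_\psi)$ of the previous corollary, which on objects sends a dg $A$-module $M$ to $M^\rho_\psi$ equipped with the twisted $A^\rho_\psi$-action, carries $\Acy(A)$ onto $\Acy(A^\rho_\psi)$. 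Passing to $H^0$ and then to the Verdier quotients defining $\D(A)_0$ and $\D(A^\rho_\psi)_0$ therefore produces a triangulated equivalence $F\colon\D(A)_0\simeqto\D(A^\rho_\psi)_0$ with $F(M)=M^\rho_\psi$; it is triangulated because it is induced by ($G$-)dg functors and $(-)^\rho_\psi$ visibly commutes with $\Sigma$.

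Next, to check the square, I would compare the two composites on underlying $(G\times\ZZ)$-graded vector spaces. Unwinding the definitions, for a dg $A$-module $M$ and $g\in G$,
\[
\bigl((M^\rho_\psi)(g)\bigr)^i_h=(M^\rho_\psi)^i_{g+h}=M^{\rho(g)+\rho(h)+i}_{\psi(g)+\psi(h)}=\bigl((\Sigma^{\rho(g)}(\psi(g))M)^\rho_\psi\bigr)^i_h ,
\]
so the two functors agree on underlying bigraded spaces and, visibly, on the differentials. It then remains to identify the two $A^\rho_\psi$-module structures. The action on $M^\rho_\psi$ is twisted by the Koszul-type sign coming from $\mu$ in \cref{prop:twisting}, whereas on $(\Sigma^{\rho(g)}(\psi(g))M)^\rho_\psi$ the cohomological shift $\Sigma^{\rho(g)}$ and the Adams shift $(\psi(g))$ each contribute signs; evaluating both actions on a homogeneous element $a\in A^{\rho(k)+j}_{\psi(k)}$ of $A^\rho_\psi$ shows that these signs cancel, giving an isomorphism of $A^\rho_\psi$-modules that is manifestly natural in $M$.

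Finally, since the Adams shift $(g)$ on $\D(A^\rho_\psi)$ (which restricts to an autoequivalence of $\D(A^\rho_\psi)_0$) is characterised up to canonical isomorphism by representability, transporting the natural isomorphism just constructed along $F$ yields the required commutative square. I expect the middle step---matching the twisted module structures together with all the Koszul signs---to be the main, though essentially routine, obstacle.
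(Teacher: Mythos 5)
Your proposal is correct and follows essentially the same route as the paper: the paper likewise derives the equivalence by passing the isomorphism $\Cdg(A)^\rho_\psi\simeq\Cdg(A^\rho_\psi)$ to homotopy categories and quotienting by acyclics (which are preserved), and it leaves the commutativity of the square to the same kind of degree bookkeeping you sketch. In fact you supply more detail than the paper does on the square (the bigraded identification and the sign cancellation), and your computation there checks out.
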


\section*{Acknowledgement}
The author would like to thank my supervisor Hiroyuki Nakaoka for his
helpful discussions and feedback on earlier drafts. The author would like to express deep gratitude to Dong Yang for his invaluable comments and for providing key ideas that were central to this work. This work would not have been without discussions and support from him. The author would like to thank Osamu Iyama for his useful comments and extensive supports.

\bibliographystyle{my}
\bibliography{my}

\Addresses

\end{document}